\theoremstyle{definition}
\newtheorem{definition}{Definition}[section]
\newtheorem*{assumption*}{Assumption}
\newtheorem*{condition*}{Condition}
\theoremstyle{plain}
\newtheorem{theorem}[definition]{Theorem}
\newtheorem{proposition}[definition]{Proposition}
\newtheorem{lemma}[definition]{Lemma}
\newtheorem{cor}[definition]{Corollary}
\theoremstyle{remark}
\newtheorem{remark}{Remark}
\newcommand{\N}{\mathbb{N}}
\newcommand{\Z}{\mathbb{Z}}
\newcommand{\Q}{\mathbb{Q}}
\newcommand{\R}{\mathbb{R}}
\newcommand{\E}{\mathbb{E}}
\newcommand{\pconv}{\xrightarrow{\mathbb{P}}}
\newcommand{\Var}{\operatorname{Var}}
\newcommand{\Cov}{\operatorname{Cov}}
\newcommand{\deq}{\overset{d}{=}}
\newcommand{\wconv}{\Rightarrow}
\def\doubleunderline#1{\underline{\underline{#1}}}
\newcommand{\diag}{\mathrm{diag}}
\newif\ifhideproofs
\title{Estimation of mixed fractional stable processes using high-frequency data}
\author{Fabian Mies}
\affil{RWTH Aachen University, Germany}
\author{Mark Podolskij}
\affil{Universit\'e du Luxembourg}
\begin{document}
	
\maketitle

\begin{abstract}
    \noindent
    The linear fractional stable motion generalizes two prominent classes of stochastic processes, namely stable Lévy processes, and fractional Brownian motion.
	For this reason it may be regarded as a basic building block for continuous time models.	
	We study a stylized model consisting of a superposition of independent linear fractional stable motions and our focus is on parameter estimation of the model.
	Applying an estimating equations approach, we construct estimators for the whole set of parameters and derive their asymptotic normality in a high-frequency regime. 
	The conditions for consistency turn out to be sharp for two prominent special cases: (i) for Lévy processes, i.e.\ for the estimation of the successive Blumenthal-Getoor indices, and (ii) for the mixed fractional Brownian motion introduced by Cheridito. 
	In the remaining cases, our results reveal a delicate interplay between the Hurst parameters and the indices of stability. 
	Our asymptotic theory is based on new limit theorems for multiscale moving average processes.\\ 
	
	\noindent
	\textbf{Keywords:} high frequency data, linear fractional stable motion, L\'evy processes, parametric estimation, selfsimilar processes. \\
	
	\noindent
	\textbf{AMS 2010 subject classifications:} 62F12, 62E20, 62M09, 60F05, 60G22
	
\end{abstract}

\section{Introduction}

\subsection{Overview}

The linear fractional stable motion (lfsm) is a self-similar stochastic process defined as
\begin{align*}
	Y_{t}^{H,\beta} = \int_{-\infty}^t (t-s)_+^{H-\frac{1}{\beta}} - (-s)_+^{H-\frac{1}{\beta}}\, dZ_s^\beta,\quad t\in\R,
\end{align*}
for a Hurst parameter $H\in(0,1)$ and a standard symmetric $\beta$-stable Lévy motion $Z^\beta$ with $\beta\in(0,2]$, i.e. 
$$\E \exp(i\lambda Z_t^\beta) = \exp(-t|\lambda|^\beta).$$ 
Here, we use the notation $(x)_+ = x \mathds{1}_{x>0}$.
Notable special cases of the lfsm are fractional Brownian motion ($\beta=2$), and the $\beta$-stable Lévy motion itself ($H=1/\beta$).
For general parameters $H$ and $\beta$, the increments of the lfsm exhibit long memory and heavy tails.
Another prominent feature of lfsm is its self-similarity, namely $(Y_{\gamma t}^{H,\beta})_{t\geq 0} \deq (\gamma^H Y_{t}^{H,\beta})_{t\geq 0}$ for any $\gamma>0$.
The lfsm is in some sense prototypical, since it arises as a scaling limit of various moving average processes in discrete time, see e.g. \cite{astrauskas1984}. 
Hence, parameter estimation for the lfsm may be regarded as an idealized testbed for more general, non-semimartingale processes.

In this paper, we study the mixed fractional stable process of the form
\begin{align}
	X_t = \sum_{j=1}^q b_j Y_t^{H_j,\beta_j}, \label{eqn:def-mixed-lfsm}
\end{align}
where $Y^{H_j, \beta_j}$, $j=1,\ldots, q$, are independent fractional stable motions and $b_j>0$.
Unless all $H_j$ are identical, the mixed process is no longer self-similar. Throughout this work we assume that $H_1 < \ldots < H_q$.
In this case we have  $\gamma^{-H_1} X_{\gamma t}\wconv b_1Y_t^{H_1, \beta_1}$ as $\gamma\downarrow 0$, and $\gamma^{-H_q} X_{\gamma t}\wconv b_qY_t^{H_q, \beta_q}$ as $\gamma\uparrow \infty$.
Loosely speaking, the process looks different when zooming in ($\gamma\downarrow 0$) or zooming out ($\gamma\uparrow \infty$).
In the sequel, we are mostly interested in the scaling as $\gamma\downarrow 0$.
In particular, we want to estimate the parameters $(b_j, H_j, \beta_j)$ based on high-frequency, discrete observations of $X$ in the interval $[0,1]$.
Based on the scaling limit, it is not surprising that $(b_1, H_1, \beta_1)$ may be estimated consistently as $n\to \infty$.
Can we also estimate the remaining parameters, and at which rate?

Estimation of the lfsm has been recently studied by \cite{Mazur2018} for high-frequency observations,  and by \cite{Ljungdahl2020} for low-frequency observations.
To see why estimation of the mixed lfsm is more complicated, we briefly review the methodology of \cite{Mazur2018}.
Crucially, they exploit the self-similarity of the lfsm, $h^{-H}(Y_{t+h}^{H,\beta}-Y_{t}^{H,\beta}) \deq (Y_{t+1}^{H,\beta}-Y_t^{H,\beta})$ to transfer the high-frequency setting to the low-frequency setting.
In particular, they suggest to first estimate $H$ by a log-ratio statistic, and then estimate $(b,\beta)$ based on the empirical characteristic function of the rescaled increments. 
For the mixed lfsm, on the other hand, the log-ratio estimator will only estimate the dominant, i.e.\ the smallest Hurst exponent $H_1$. 
It is thus not clear how to extend their procedure to the mixed case.
Notably, the mixed lfsm is  no longer self-similar, and we may not switch between the low-frequency and the high-frequency regime.

In this paper, we propose the first estimators for the mixed lfsm, based on $n$ discrete observations $X_{\Delta_n},\ldots, X_{n\Delta_n}$ at frequency $\Delta_n$. 
Of particular interest is the high-frequency case $\Delta_n=1/n$, but we also derive asymptotic results for the regime $\Delta_n\to 0$, $n \Delta_n \to \infty$. 
Our estimation method are based upon a system of nonlinear equation of the form 
$\E f_n(X_{i\Delta_n},\ldots, X_{(i+k)\Delta_n})$ for a suitable choice of functions $f_n$. 
These moments may be estimated by their sample means, and a parameter estimator may be obtained via the generalised method of moments. 
In contrast to the classical literature, the function $f_n$ as well as the process $X_{i \Delta_n}$ now depend explicitly on $n$.
To be precise, we will choose $f_n(x_0,\ldots, x_k) = f(u_n \sum_{r=1}^k \binom{k}{r} (-1)^r x_r)$, i.e.\ we take the $k$-th order increments of the process, scaled by a factor $u_n\to \infty$, and the function $f$ will be bounded. In the first step we will derive the asymptotic theory for such class of functionals, which will provide the basis for statistical inference. This limit theory is new and has interest in its own right. In the second step we will define an estimator for the unknown parameters of the model via a system of estimating equations. We will study two related approaches, where the second one (smooth threshold) explicitly accounts for the presence of a Gaussian component ($\beta=2$). We prove the asymptotic normality of our estimator and discuss the identifiability issues.

\subsection{Related work}

While the estimation problem under consideration is new, there exists a large body of work in more restrictive settings. This builds a natural comparison basis for our new methodology. \\

\noindent
(i) \textbf{Linear fractional stable motion.} Estimation of 
a single lfsm, i.e.\ $q=1$, has been studied in several papers; 
early references include a non-rigorous treatment by \cite{abry1999estimation}, and a study by \cite{abry1999} who obtain a suboptimal rate of convergence for the parameter $H$. 
An asymptotically normal estimator for $H$ is proposed by \cite{stoev2002, stoev2005a}, using a central limit theorem published in \cite{pipiras2007}.
A consistent estimator for the stability index $\beta$, provided that $H$ is known, is given by \cite{Ayache2012}, and consistent estimators for $(H,\beta)$ are suggested by \cite{grahovac2015,Basse-OConnor2017,dang2017}.
Estimation of the full parameter $(b,H,\beta)$ has been investigated by \cite{Mazur2018}.
Their approach is based on power variations with negative exponents, and they derive the asymptotic normality of their estimator in the low- and high-frequency regime. 
For the low-frequency regime, these results have been further refined in \cite{Ljungdahl2019} an \cite{Ljungdahl2020}. We remark that there is currently no statistical lower bound for estimation of the general lfsm.\\

\noindent
(ii) \textbf{Mixed fractional Brownian motion.}
The mixed fractional Brownian motion corresponds to the model \eqref{eqn:def-mixed-lfsm} with $\beta_i=2$ for all $i$. It has been originally studied in \cite{Cheridito2001}.
It has been shown in \cite{VanZanten2007} that the measures induced by $C_t^1 = b_1Y_t^{H_1,2}$ and $C_t^2 = b_1Y_t^{H_1,2} +  b_2 Y_t^{H_2,2}$, $t\in[0,1]$, are equivalent if $H_2 > H_1 + \frac{1}{4}$, and singular otherwise.
Hence, in the Gaussian case, the parameter $(b_j, H_j)$ is identifiable if and only if $H_i < H_1 + \frac{1}{4}$.
Estimation of the parameters of a mixed fractional Brownian motion has been studied by \cite{Xiao2011}, although in a very restricted setting where the $H_j$ are assumed to be  known.
An estimator for a much more general nonstationary model with $d=2$ and $H_2 < H_1=\frac{1}{2}$ has been recently suggested by \cite{chong2021}.
To the best of our knowledge, the latter paper is the first investigation of inference for the general mixed fractional Brownian motion. 
An extension has been studied in \cite{chong2022}, where the two processes are allowed to be correlated, and optimal rates are derived for the special case of non-zero correlation.\\

\noindent
(iii) \textbf{Mixed L\'evy processes.}
Some statistical results have been obtained in the setting of 
mixed L\'evy processes, which corresponds to model \eqref{eqn:def-mixed-lfsm} with 
$H_j=1/\beta_j$ (in other words, $Y^{H,\beta}=Z^{\beta}$).
If $\beta_1<2$ denotes the largest stability index, \cite{ait2012identifying} showed
that the measures induced by $D_t^1 = b_1Z_t^{\beta_1}$ and $D_t^2 = b_1Z_t^{\beta_1} + b_2 Z_t^{\beta_2}$ are equivalent if $\beta_2<\beta_1/2$, and singular otherwise. 
Thus, the parameter $(b_j,\beta_j$) is identifiable if and only if $\beta_i > \beta_1/2$.
See also \cite{ait2008fisher} for an earlier treatment of the special case $q=2$.
Another known case is given by mixed L\'evy processes with the additional restriction that $\beta_1=2$, and $\beta_2<2$ being the largest of the remaining indices. In this setting,
the identifiability condition becomes $\beta_j>\beta_2/2$. 
In other words, the identifiability of the stable components is unaffected by the presence of a Gaussian component. This can be explained by the observation that
given a full trajectory of the process $X$, the continuous and the discontinuous components could be perfectly separated.
In discrete samples, however, the presence of the Gaussian component has an adverse effect on the rate of convergence of any estimator of $\beta_j, j\geq 2$; see \cite{ait2012identifying}.
Note that this case is of particular interest for financial econometrics, where many models for high-frequency asset prices contain both, continuous and discontinuous components. 
Here, the sup of a Brownian motion and a stable Lévy motion may be regarded as the prototype of a semimartingale with infinite jump activity. 
The corresponding statistical problem has been first studied in the econometric literature in \cite{ait2009estimating}, and more recently by \cite{bull2016near} and   \citep{Mies2019}. \\

\noindent
Besides these three special cases, the mixed lfsm has not been studied in the literature. 
The currently unexplored regimes include, for example: the sum of fBm and a stable Lévy process; the sum of Brownian motion and lfsm; the sum of non-Gaussian lfsms; the sum of fBm and non-Gaussian lfsm. 
Our unified theory covers all these regimes, while matching the results in the cases which have already been investigated.

\subsection{Outline of the paper}
The paper is structured as follows. In Section 2, we present a new limit theory for functionals of a mixed fractional stable motion. These results not only provide a necessary basis for statistical estimation, but also have an interest in their own right. Section 3 is devoted to statistical inference for mixed fractional stable motions. Here we employ the idea of estimating equations that may specifically account for the presence of a Gaussian component (Section 3.2). 
In Section 3.3, we discuss the rates of convergence of our estimators, and we present a result on the singularity of measures induced by the mixed lfsm.
In the appendix, we derive a general result on consistency and asymptotic normality of solutions of estimating equations, which generalizes existing results in the literature (Appendix A).
The proofs of the main results are deferred to Appendix B.

\subsection{Notation}

Throughout the paper we use the following notations. We write 
$C^p(\R^{d_1};\R^{d_2})$ to denote the space of $p$ times continuously differentiable functions $f:\R^{d_1} \to \R^{d_2}$, and denote the first derivative matrix as $D f(x)_{i,j} = \frac{d}{dx_j} f_i(x)$.
For a function $f:\R\to\R$, we denote its $k$-th derivative as $f^{(k)}$. The space $L_p(\R^d)$ is the collection of all functions $f$ satisfying $\int_{\R^d} \|f(x)\|^p dx<\infty$. We write $a_n \sim b_n$ when there exist constants $c_1,c_2>0$ such that $c_1a_n\leq b_n \leq c_2 a_n$ for all $n\geq 1$. The notation
$a_n \gg b_n$ means that $b_n/a_n \to 0$ as $n\to \infty$. Throughout this paper all positive constants are denoted by $C$, or by $C_p$ if we want to stress their dependence on some external parameter $p$, although they may change from line to line.

\section{Limit theorems for multiscale moving average processes}\label{sec:MA}

In this section, we present a novel limit theorem for functionals of multiscale moving average processes, which will provide the theoretical basis for the statistical procedures investigated in the next section. Our main motivation comes 
from statistics of higher-order increments of $X$, which are defined as follows: For a frequency $\Delta_n\to 0$, 
$\gamma \in \N$ and $k\in \N$, the $k$-th order increments of $X$ at the frequency $\gamma \Delta_n$ are given as 
\begin{align} \label{korder}
	X_{l,n,\gamma} &= \sum_{v=0}^{k} (-1)^v \binom{k}{v} X_{(l-v\gamma)\Delta_n},\quad l=1,\ldots, n. 
\end{align}
The factor $\gamma$ allows us to identify parameters of the lfsm's via their different temporal scaling. 
In particular, the self-similarity of the lfsm yields
\begin{align}
	X_{l,n,\gamma} &\deq \sum_{j=1}^{q} b_j\gamma^{H_j}\Delta_n^{H_j} \int_{-\infty}^l g_{j}(s-l)\, dZ_{s}^{\beta_j},\quad \text{where} \label{eqn:X-multiscale} \\
	g_{j}(s) &= \sum_{v=0}^k \binom{k}{v} (-1)^v (v-s)_+^{H_j - \frac{1}{\beta_j}}. \nonumber
\end{align}
This higher-order differencing is common when working with fractional processes, as higher orders $k$ improve the decay of the autocovariances of $f(X_{l,n,\gamma})$ for suitably bounded functions $f$. 
In particular, the integral kernels decay as $g_j(s) \sim s^{H_j-k - 1/\beta_j}$, making this effect evident.

Motivated by this example, we consider a more general class of discrete models, namely a multiscale array of moving averages $(X_{t,n})_{t\in \N}$ defined as
\begin{align}
	X_{t,n} 
	&= \sum_{i=1}^{q_1} a_{n,i} \int_{-\infty}^t h_i(s-t)\,dB^i_s + \sum_{j=1}^{q_2} b_{n,j} \int_{-\infty}^t g_j(s-t)\, dZ^j_s,  \label{eqn:def-multiscale}
\end{align}
for non-negative sequences $a_{n,i}$ and $b_{n,j}$.
Here, for $i=1,\ldots, q_1$, the $B^i$ are independent standard Brownian motions, and for $j=1,\ldots, q_2$, the $Z^j$ are independent symmetric $\beta_j$-stable motions, standardized such that $\E \exp(i\lambda Z_1^j)=\exp\left(-|\lambda|^{\beta_j}\right)$, $\beta_j\in(0,2)$. We allow the kernels $g,h$ to be $d$-dimensional, i.e. $g_i,h_i:\R \to \R^d$. This allows us to account for different $\gamma$'s in \eqref{korder} by deriving a multivariate limit theory. 
Throughout this section
the kernels are assumed to satisfy the following conditions:
\begin{itemize}
	\item[(i)] $h_i(x)=g_j(x)=0$ for $x\geq 0$, (causality)
	\item[(ii)] $h_i\in L_2(\R)$ with $\|h_i(x)\| \leq C |x|^{\delta_0}$, and $\|h_i^l\|_{L_2} \leq \sigma<\infty$,
	\item[(iii)] $g_j\in L_{\beta_j}(\R)$ and $\|g_j(x)\| \leq C  |x|^{\delta_j}$ for some $\delta_j<-1/\beta_j$.
\end{itemize}
\noindent
Definition \eqref{eqn:def-multiscale} explicitly distinguishes between the Gaussian and the non-Gaussian components. 
As demonstrated in the sequel, this distinction is motivated by a qualitatively different effect of the Gaussian component on the limiting behavior.

Our focus is on statistics of the form
\begin{align}
S_n(f) = \frac{1}{n}\sum_{t=1}^n \left[f(X_{t,n})-\E f(X_{t,n})\right]
\end{align}
where $f:\R^d \to \R$ is a nonlinear function. We will assume 
that it belongs to the following class of functions.
\begin{definition}
	For $\eta\geq 0$, define the class $\mathfrak{F}_\eta$ of all functions $f:\R^d\to\R$ such that
	\begin{itemize}
		\item[(F1)] $f\in C^5(\R^d;\R)$ and $\|D^jf\|_\infty \leq 1$ for $j=0,\ldots, 5$,
		\item[(F2)] $f$ is even, and $f(0)=0$,
		\item[(F3)] $D^2f(x)=D^2f(0)$ for $\|x\| < \eta$.
	\end{itemize}
	Moreover, define the class $\mathfrak{F}_\eta^0\subset \mathfrak{F}_\eta$, which additionally satisfies
	\begin{itemize}
		\item[(F4)] $f(x)=0$ for $\|x\|<\eta$.
	\end{itemize}
\end{definition}
\noindent
The class $\mathfrak{F}_\eta$ contains functions which are quadratic on the interval $(-\eta,\eta)$, whereas functions in $\mathfrak{F}_\eta^0$ are smooth thresholds.
Note that the class $\mathfrak{F}_0 = \mathfrak{F}_0^0$ imposes the least regularity.

The array $X_{t,n}$ is a superposition of multiple processes with long memory and heavy tails, of different severity.
Naively, one would expect that the limiting behavior of $S_n(f)$ is determined by the component with the largest scaling factor $a_{n,i}$ resp.\ $b_{n,j}$.  
However, we observe two interesting phenomena: 
First, for the stable components, it is not the scaling factor $b_{n,j}$ itself which distinguishes the dominant component, but rather the power $b_{n,j}^{\beta_j}$, revealing an interesting interplay between the scale and the tail decay.
Secondly, if $f$ vanishes near zero, then the effect of the Gaussian component is asymptotically negligible compared to the stable components, even if $\max_i a_{n,i} \gg \max_j b_{n,j}$.
This is made precise by the following theorem. 

\begin{theorem}[Variance bound]\label{thm:variance-bound}
	Suppose that the exponent of the tail decay satisfies $\delta^{\star} = \max(2\delta_0, \delta_1\beta_1,\ldots, \delta_{q_2}\beta_{q_2})<-2$, and that  $a_{n,i}$ and $b_{n,j}$ are bounded.
	Then  for all $n$, and for all $f \in \mathfrak{F}_{0}$:
	\begin{align*}
		\Var (S_n(f))
		&\leq \frac{C}{n} \left[\sum_{i=1}^{q_1} a_{n,i}^4 + \sum_{j=1}^{q_2} b_{n,j}^{\beta_j}\right].
	\end{align*}
	If $\eta>0$, then for any $\lambda>0$, there exists a constant $C\in (0,\infty)$ such that for all $n$, and for all $f \in \mathfrak{F}_{\eta}^0$,
	\begin{align*}
		\Var (S_n(f)) 
		&\leq \frac{C}{n} \left[ \left(\sum_{i=1}^{q_1} a_{n,i}^2\right) \exp\left( -\frac{\eta^2\lambda^2}{2d \sigma^2 \sum_{i=1}^{q_1} a_{n,i}^2 }\right) + \sum_{j=1}^{q_2} b_{n,j}^{\beta_j} \right].
	\end{align*}
\end{theorem}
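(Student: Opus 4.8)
The plan is to control $\Var(S_n(f))$ through the autocovariances $c_n(m):=\Cov\!\big(f(X_{0,n}),f(X_{m,n})\big)$ of the stationary array $(f(X_{t,n}))_t$. Since $\Var(S_n(f))=\tfrac1{n^2}\sum_{s,t=1}^n c_n(s-t)\le\tfrac1n\sum_{m\in\Z}|c_n(m)|$ and $c_n(-m)=c_n(m)$, it suffices to bound $\sum_{m\in\Z}|c_n(m)|$ by the bracket on the right (times $C$). I would dispatch the diagonal $m=0$ directly: from $f(0)=0$, $Df(0)=0$, $\|D^2f\|_\infty\le1$ one has $|f(x)|\le\min(1,\tfrac12\|x\|^2)$, so $f(X_{0,n})^2\le\min(1,\tfrac14\|X_{0,n}\|^4)$; splitting $X_{0,n}=X_{0,n}^G+X_{0,n}^S$ into its Gaussian and stable parts, the Gaussian factor gives $\E\|X_{0,n}^G\|^4\le C(\sum_i a_{n,i}^2)^2\le C q_1\sum_i a_{n,i}^4$, and the stable factor is handled by the elementary tail estimate $\E[\min(1,|S|^p)]\le C\sigma^{\beta}$ for a symmetric $\beta$-stable $S$ of scale $\sigma\le 1$ and any $p\ge\beta$, together with $\|g_j\|_{L_{\beta_j}}<\infty$; hence $c_n(0)\le C[\sum_i a_{n,i}^4+\sum_j b_{n,j}^{\beta_j}]$.

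For $m\ge 1$ the organising idea is to condition on the entire stable noise $\mathcal G^S:=\sigma(Z^j_s:s\in\R,\ j\le q_2)$. Writing $X_{t,n}^G$ (independent of $\mathcal G^S$) and $X_{t,n}^S$ ($\mathcal G^S$-measurable), and setting $\bar f(x):=\E f(G+x)$ with $G\deq X_{0,n}^G$ (so $\bar f\in C^5$, $\|D^k\bar f\|_\infty\le1$, $\bar f$ even), the law of total covariance gives $c_n(m)=\E\big[\Cov_{X^G}\!\big(f(X_{0,n}^G+X_{0,n}^S),f(X_{m,n}^G+X_{m,n}^S)\mid\mathcal G^S\big)\big]+\Cov\!\big(\bar f(X_{0,n}^S),\bar f(X_{m,n}^S)\big)$, a ``Gaussian-driven'' term $(A)_m$ and a ``stable-driven'' term $(B)_m$. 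For $(B)_m$ we are in a purely stable situation: decomposing $X_{m,n}^S=A_{m,n}^S+E_{m,n}^S$ with $A_{m,n}^S$ driven by the stable noise on $(0,m]$ (hence independent of $X_{0,n}^S$), and using that $e\mapsto\E\bar f(A_{m,n}^S+e)$ is even — so its value at $0$ is a stationary point and the increment is $\le\min(2,\tfrac12\|e\|^2)$ — boundedness of $\bar f$ yields $|(B)_m|\le C\,\E[\min(1,\|E_{m,n}^S\|^2)]\le C\sum_j b_{n,j}^{\beta_j}\rho_j(m)$ with $\sum_m\rho_j(m)<\infty$; here the summability uses $\delta_j\beta_j\le\delta^\star<-2$ (not merely $\delta_j<-1/\beta_j$), matching the hypothesis. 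Thus $\sum_m|(B)_m|\le C\sum_j b_{n,j}^{\beta_j}$.

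For $(A)_m$, fix the stable shifts $c_0=X_{0,n}^S,\ c_m=X_{m,n}^S$ and apply the Gaussian interpolation (Price-type) identity to the centred pair $(X_{0,n}^G,X_{m,n}^G)$ with cross-covariance $\Sigma_{0m}:=\Cov(X_{0,n}^G,X_{m,n}^G)$: iterating the identity once gives $\Cov_{X^G}\!\big(f(\cdot+c_0),f(\cdot+c_m)\big)=\big\langle D\bar f(c_0),\,\Sigma_{0m}\,D\bar f(c_m)\big\rangle+R_m$ with $|R_m|\le C\|\Sigma_{0m}\|^2\|D^2f\|_\infty^2$. Two gains enter: condition (ii) gives $\|\Sigma_{0m}\|\le C(\sum_i a_{n,i}^2)\rho_0(m)$ with $\sum_m\rho_0(m)<\infty$ (again using $2\delta_0\le\delta^\star<-2$); and evenness of $\bar f$ gives $D\bar f(0)=0$, whence $\|D\bar f(c_0)\|\le\min(1,\|c_0\|)$. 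Averaging over $\mathcal G^S$ and using $\E[\min(1,\|X_{0,n}^S\|)\min(1,\|X_{m,n}^S\|)]\le\E[\min(1,\|X_{0,n}^S\|^2)]\le C\sum_j b_{n,j}^{\beta_j}$ (Cauchy--Schwarz, stationarity, stable tails), the leading term sums to $\le C(\sum_i a_{n,i}^2)(\sum_j b_{n,j}^{\beta_j})\le C\sum_j b_{n,j}^{\beta_j}$ since $\sum_i a_{n,i}^2$ is bounded, while $\sum_m|R_m|\le C(\sum_i a_{n,i}^2)^2\le C q_1\sum_i a_{n,i}^4$. Combining with the diagonal and with $(B)$ gives the first bound.

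For the second bound I would rerun everything with $f\in\mathfrak F_\eta^0$, exploiting (F4): $f\equiv0$, hence $Df\equiv0$ and $D^2f\equiv0$, on $\{\|x\|<\eta\}$. Then $c_n(0)\le\E f(X_{0,n})^2\le P(\|X_{0,n}\|\ge\eta)$, and in $R_m$ the factor $D^2f(X_{0,n}^G+c_0)$ vanishes unless $\|X_{0,n}^G+c_0\|\ge\eta$, so after averaging over $\mathcal G^S$, $|R_m|\le C\|\Sigma_{0m}\|^2\,P(\|X_{0,n}\|\ge\eta)$; the leading interpolation term is still controlled by $\|\Sigma_{0m}\|\sum_j b_{n,j}^{\beta_j}$ as above. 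Now $\{\|X_{0,n}\|\ge\eta\}\subseteq\{\|X_{0,n}^G\|\ge\lambda\eta\}\cup\{\|X_{0,n}^S\|\ge(1-\lambda)\eta\}$: the stable part is $\le C_\lambda\sum_j b_{n,j}^{\beta_j}$ by polynomial tails, and Borell--TIS applied to $\|X_{0,n}^G\|$ — a $1$-Lipschitz functional of a Gaussian with $\E\|X_{0,n}^G\|^2\le d\sigma^2\sum_i a_{n,i}^2$ — gives $P(\|X_{0,n}^G\|\ge\lambda\eta)\le C_\lambda\exp\!\big(-\lambda^2\eta^2/(2d\sigma^2\sum_i a_{n,i}^2)\big)$. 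Summing over $m$ (the lag weights are summable) and using $(\sum_i a_{n,i}^2)^2\le(\sup_i a_{n,i}^2\cdot q_1)\sum_i a_{n,i}^2$ with boundedness of $\sum_i a_{n,i}^2$, the Gaussian-driven contribution collapses to $C(\sum_i a_{n,i}^2)\exp(\cdots)$; a harmless enlargement of $\lambda$ absorbs the polynomial prefactors via $v^{-1}e^{-\varepsilon/v}\le C$, and $(B)$ contributes $\le C\sum_j b_{n,j}^{\beta_j}$ as before.

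I expect the main obstacle to be obtaining the fourth power $a_{n,i}^4$ rather than the naive $a_{n,i}^2$ for the Gaussian contribution: a single interpolation step yields only one factor $\|\Sigma_{0m}\|\sim\sum_i a_{n,i}^2$, so one must additionally use that the Gaussian-averaged function $\bar f$ is even, which makes the linear-in-$\Sigma_{0m}$ term carry the factor $D\bar f(\text{stable shift})$ — itself small, producing a $\sum_j b_{n,j}^{\beta_j}$ that is absorbed because $\sum_i a_{n,i}^2$ is bounded — so that the genuine $\|\Sigma_{0m}\|^2\sim(\sum_i a_{n,i}^2)^2$ appears only in the second-order remainder. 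Dually, one must never take a second moment of a stable component, which is why every stable estimate is phrased through $\min(1,\cdot)$-truncations; and for the $\mathfrak F_\eta^0$ bound the delicate point is arranging the (F4)-indicators so that exactly one Gaussian-deviation probability survives, to which Borell--TIS can be applied.
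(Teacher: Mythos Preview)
Your approach is correct but genuinely different from the paper's. The paper proves Theorem~\ref{thm:variance-bound} as an immediate corollary of a single autocovariance bound (Lemma~\ref{lem:autocov-decay}), which is obtained by an $m$-dependent \emph{coupling}: for $\overline{m}=t-t'$ one replaces $X_{t,n}$ by a version $X_{t,n,\overline{m}}$ that shares only the noise on $(t-\overline{m},t]$, hence is independent of $X_{t',n}$, and then controls $f(X_{t,n})-f(X_{t,n,\overline{m}})$ by a first-order Taylor expansion whose linear term vanishes by symmetry (evenness of $f$, symmetry of the driving noise). The Gaussian and stable parts are treated on the same footing throughout; no Gaussian interpolation and no conditioning on the stable sigma-field appear. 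This coupling lemma is then reused verbatim in the CLT proof, which is the paper's real payoff.

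Your route---condition on $\mathcal G^S$, split via the law of total covariance, handle the Gaussian-driven piece by a Price/Mehler interpolation and the stable-driven piece by a one-sided noise split---is equally valid and arguably more transparent about \emph{why} the Gaussian contribution is $O(a^4)$ rather than $O(a^2)$: the evenness of $\bar f$ makes $D\bar f(0)=0$, so the linear-in-$\Sigma_{0m}$ term is paired with $D\bar f(c_0)$, which is itself small of stable order; only the second-order interpolation remainder carries $\|\Sigma_{0m}\|^2\sim(\sum a^2)^2$. One small point worth making explicit in your write-up: the summability of $\|\Sigma_{0m}\|$ over $m$ requires a pointwise (not merely $L_2$) estimate on the cross-correlation $\int h_i(s)h_i(s-m)^T\,ds$; splitting at $s=-m/2$ and using $\|h_i(x)\|\le C|x|^{\delta_0}$ together with $h_i\in L_2$ gives the decay $Cm^{\delta_0}$, which is summable precisely because $2\delta_0<-2$ implies $\delta_0<-1$. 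The $\eta>0$ case goes through as you describe; the ``enlargement of $\lambda$'' trick to absorb polynomial prefactors into the exponential is exactly what the paper does implicitly through its choice of arbitrary $\lambda\in(0,1)$.
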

\noindent
We remark that the exponential term in the variance bound for $\eta>0$ vanishes rapidly if $a_{n,i} \to 0 $ at a polynomial speed, such that the contribution of the Gaussian component is negligible in this case. 
That is, the smooth threshold $f\in \mathfrak{F}_\eta^0$ effectively filters out the Gaussian component.

As a consequence of Theorem \ref{thm:variance-bound}, we obtain a law of large numbers with rate of convergence. In particular, 
\begin{align}
    \frac{1}{n} \sum_{t=1}^n f(X_{t,n}) 
    = \E f(X_{t,n}) + \mathcal{O}_{\mathbb{P}}\left( \frac{1}{\sqrt{n}}\sqrt{\sum_{i=1}^{q_1} a_{n,i}^4 + \sum_{j=1}^{q_2} b_{n,j}^{\beta_j}} \right). \label{eqn:LLN}
\end{align}
We highlight that the centering term $\E f(X_{t,n})$ is not explicit as a function of $\theta$, and also depends on the index $n$.
%Thus, in order to use \eqref{eqn:LLN} for statistical inference, the latter expectation needs to be analyzed separately, see Section \ref{sec:estimation}.

The next theorem is the main result of this section. It demonstrates a central limit theorem for the statistic $S_n(f)$ in various settings.   

\begin{theorem}[Central limit theorem]\label{thm:clt}
	Let $i^{\star}\in\{1,\ldots, q_1\}$ such that $a_{n,i^{\star}}^2 \gg \max_{i\neq i^{\star}} a_{n,i}^2$, and let $j^{\star}\in\{1,\ldots, q_2\}$ such that $b_{n,j^{\star}}^{\beta_{j^{\star}}} \gg \max_{j\neq j^{\star}} b_{n,j}^{\beta_j}$. 
	Assume that $\max(2\delta_0, \beta_1\delta_1,\ldots, \beta_{q_2}\delta_{q_2}) < -2$, and suppose furthermore that 
	\begin{align*}
		a_{n,i^{\star}} \to a \in [0,\infty),\qquad b_{n,j^{\star}}\to b\in[0,\infty).
	\end{align*}	
	Fix some $f\in\mathfrak{F}_0$, and let one of the following conditions hold:
	\begin{enumerate}[(i)]
		\item $a>0$, $b=0$;
		\item $a=0$, $b>0$;
		\item $a=b=0$, and $a_{n,i^{\star}}^4 \ll b_{n,j^{\star}}^{\beta_{j^{\star}}}$, $nb_{n,j^{\star}}^{\beta_{j^{\star}}}\to \infty$;
		\item $a=b=0$, and $a_{n,i^{\star}}^4 \gg b_{n,j^{\star}}^{\beta_j^{\star}} $, $n a_{n,i^{\star}}^4 \to\infty$, and $D^2f(0)\neq 0$.
	\end{enumerate}
	Then
	\begin{align*}
		\frac{\sqrt{n}}{\sqrt{\max(a_{n,i^{\star}}^4, b_{n,j^{\star}}^{\beta_j^{\star}}) }} S_n(f) \wconv \mathcal{N}(0,\xi^2), \\
	\end{align*}
	where the asymptotic variance $\xi^2$ is given in \eqref{eqn:asymp-var}.
	
	Moreover, if (case (v))
	\begin{itemize}
		\item $a=b=0$, $na_{n,i^{\star}}^4\to\infty$, $nb_{n,j^{\star}}^{\beta_{j^{\star}}}\to\infty$,
		\item $f_1\in\mathfrak{F}_0$ such that $D^2f(0)\neq 0$, and $f_2\in\mathfrak{F}_\eta^0$ for some $\eta>0$
		\item for some $\lambda\in(0,1)$,
		\begin{align*}
			\exp\left( -\frac{\eta^2\lambda^2}{2d \sigma^2 \sum_{i=1}^{q_1} a_{n,i}^2 } \right) \ll b_{n,j^{\star}}^{\beta_{j^{\star}}} \ll a_{n,i^{\star}}^4,
		\end{align*}
	\end{itemize}
	then
	\begin{align*}
		\sqrt{n}
		\begin{pmatrix} \sqrt{a_{n,i^{\star}}^4} & 0 \\ 0 & \sqrt{ b_{n,j^{\star}}^{\beta_{j^{\star}}}} \end{pmatrix}^{-1} 
		 \begin{pmatrix}
			S_n(f_1) \\  S_n(f_2)
		\end{pmatrix}
		\;\wconv\; \mathcal{N}\left( 0, \begin{pmatrix}
			\gamma^2_{f_1,0} & 0 \\ 0 & \zeta^2_{f_2,0}
		\end{pmatrix} \right),
	\end{align*}
	and $\gamma^2_{f_1,0}$ and $\zeta^2_{f_2,0}$ are defined in \eqref{eqn:asymp-var}.
\end{theorem}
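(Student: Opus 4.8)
The plan is to treat the five regimes through a common two-step reduction and then case-specific limit arguments. \textbf{Step 1 (reduction to the dominant components).} Write $X_{t,n}=X^\star_{t,n}+R_{t,n}$, where $X^\star_{t,n}=a_{n,i^\star}\int h_{i^\star}(s-t)\,dB^{i^\star}_s+b_{n,j^\star}\int g_{j^\star}(s-t)\,dZ^{j^\star}_s$ and $R_{t,n}$ gathers the strictly subdominant components. Expanding $f(X_{t,n})-f(X^\star_{t,n})$ around $X^\star_{t,n}$ and using that $Df(0)=0$ (so the linearization is bilinear in $X^\star_{t,n}$ and $R_{t,n}$), together with variance estimates analogous to those behind Theorem~\ref{thm:variance-bound}, one shows that $\Var\!\big(S_n(f)-\tfrac1n\sum_t[f(X^\star_{t,n})-\E f(X^\star_{t,n})]\big)$ is of strictly smaller order than $\max(a_{n,i^\star}^4,b_{n,j^\star}^{\beta_{j^\star}})/n$ in every regime; hence we may assume $q_1=q_2=1$. \textbf{Step 2 (big-jump decomposition).} For a threshold $\veps>0$, write $Z^{j^\star}=L^\veps+M^\veps$, with $L^\veps$ the compound Poisson process of jumps of modulus $>\veps$ and $M^\veps$ the remainder (which has moments of all orders), inducing $X_{t,n}=\bar X_{t,n}+\hat X_{t,n}$, where $\bar X_{t,n}=b_{n,j^\star}\int g_{j^\star}(s-t)\,dL^\veps_s$ is a finite sum over the jump epochs $\{\tau_m\}$ with sizes $\{J_m\}$, and $\hat X_{t,n}$ is built from $B^{i^\star}$ and $M^\veps$, has all moments, and has scale $\asymp\max(a_{n,i^\star},b_{n,j^\star})$. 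We let $n\to\infty$ first and $\veps\downarrow0$ afterwards.

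\textbf{Gaussian-dominated regimes (i) and (iv).} In regime (i), $\hat X_{t,n}\to G_t:=a\int h_{i^\star}(s-t)\,dB^{i^\star}_s$ in every $L^p$ uniformly in $t$, while $\bar X_{t,n}$ is asymptotically negligible once $\veps\uparrow\infty$, so the normalized statistic coincides up to $o_{\mathbb P}(1)$ with a Breuer--Major average of $f(G_t)-\E f(G_t)$. Since $f$ is even with $f(0)=0$, the functional $f(G_\cdot)$ has Hermite rank $\ge2$, and the covariance $r(t)=\int h_{i^\star}(s)h_{i^\star}(s+t)\,ds$ satisfies $|r(t)|\lesssim|t|^{\delta_0}$ for large $|t|$, hence $\sum_t r(t)^2<\infty$ by $2\delta_0<-2$; the Breuer--Major theorem gives the stated limit. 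In regime (iv), $a_{n,i^\star}\to0$ so $\|X_{t,n}\|\to0$, and a Taylor expansion gives $f(X_{t,n})=\tfrac12 X_{t,n}^\top D^2f(0)X_{t,n}+\mathcal O(\|X_{t,n}\|^4)$, the cubic term vanishing by evenness; the leading piece equals $\tfrac12 a_{n,i^\star}^2\,\tilde G_t^\top D^2f(0)\tilde G_t$ up to lower order (with $\tilde G_t=\int h_{i^\star}(s-t)\,dB^{i^\star}_s$), and $\tfrac1{\sqrt n\,a_{n,i^\star}^2}\sum_t$ of its centered version is again a Breuer--Major sum for a quadratic of Hermite rank exactly $2$, where $D^2f(0)\neq0$ guarantees non-degeneracy. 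The quartic remainder and the stable contributions $\hat X,\bar X$ are killed by the normalization because $b_{n,j^\star}^{\beta_{j^\star}}\ll a_{n,i^\star}^4$ and $b_{n,j^\star}\to0$ (the big-jump part contributes variance $\asymp b_{n,j^\star}^{\beta_{j^\star}}/n$ to $S_n(f)$, hence $\asymp b_{n,j^\star}^{\beta_{j^\star}}/a_{n,i^\star}^4\to0$ after normalization).

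\textbf{Stable-dominated regimes (ii), (iii) and the joint regime (v).} Conditionally on the point process $\mathcal P=\sum_m\delta_{(\tau_m,J_m)}$, one rewrites $S_n(f)=\tfrac1n\sum_m\Phi_m+\big(\tfrac1n\sum_t f(\hat X_{t,n})-\E f(\hat X_{t,n})\big)$ up to a term negligible relative to the target rate, where $\Phi_m=\sum_t[f(\bar X_{t,n}+\hat X_{t,n})-f(\hat X_{t,n})]$ is localized near $\lceil\tau_m\rceil$; the middle summand has variance $\lesssim\frac1n(a_{n,i^\star}^4+b_{n,j^\star}^{4})$ by Theorem~\ref{thm:variance-bound} (using that $f$ is quadratic near $0$), which is of smaller order than $b_{n,j^\star}^{\beta_{j^\star}}/n$ since $\beta_{j^\star}<2$. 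The essential point is that $|\Phi_m|\lesssim(b_{n,j^\star}|J_m|)^{1/|\delta_{j^\star}|}$ for large jumps — the number of indices $t$ with non-negligible $\bar X_{t,n}$ grows only polynomially in $b_{n,j^\star}|J_m|$ because $\|g_{j^\star}(x)\|\lesssim|x|^{\delta_{j^\star}}$ — so $\E\Phi_m^2<\infty$ holds precisely when $2/|\delta_{j^\star}|<\beta_{j^\star}$, i.e.\ under the standing assumption $\beta_{j^\star}\delta_{j^\star}<-2$. In regime (ii), $b_{n,j^\star}\to b>0$, so there are $\asymp n$ bumps and $\{\Phi_m\}$ is a stationary, weakly dependent array (dependence decaying through overlapping kernels and neighbouring jumps), and a CLT for weakly dependent sequences gives the stated limit with $\xi^2$ as in \eqref{eqn:asymp-var} after $\veps\downarrow0$. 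In regime (iii), $b_{n,j^\star}\to0$ so $\hat X\to0$, the number of bumps is Poisson with mean $\asymp n\,b_{n,j^\star}^{\beta_{j^\star}}\to\infty$, and $\sqrt{n/b_{n,j^\star}^{\beta_{j^\star}}}\,S_n(f)$ is a normalized sum of that many weakly dependent, $L^2$-bounded contributions, for which a Lindeberg/martingale CLT applies. In the joint regime (v), $f_1$ (with $D^2f_1(0)\neq0$) is governed by the Gaussian-quadratic mechanism of regime (iv), while the smooth threshold $f_2\in\mathfrak F_\eta^0$ suppresses the Gaussian part down to the exponentially small term of Theorem~\ref{thm:variance-bound}, which by hypothesis is $\ll b_{n,j^\star}^{\beta_{j^\star}}$, so $f_2$ is governed by the big-jump mechanism of regime (iii); since the limiting Gaussian-chaos object depends only on $B^{i^\star}$ and the limiting Poissonian object only on $Z^{j^\star}$, and these are independent, the asymptotic covariance is diagonal, and joint normality follows by the Cram\'er--Wold device.

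\textbf{Main obstacle.} The delicate part is the stable-dominated analysis: making the decomposition $S_n(f)=\tfrac1n\sum_m\Phi_m+(\text{negligible})$ rigorous uniformly in the truncation level $\veps$ (so that the limits $n\to\infty$ and $\veps\downarrow0$ may be interchanged), establishing the weak-dependence estimates and the CLT for the possibly thinned, only-$L^2$ array $\{\Phi_m\}$ in regimes (iii) and (v), and identifying the limiting variance \eqref{eqn:asymp-var}; the moment bound $\E\Phi_m^2<\infty$ is exactly where the sharp condition $\beta_{j^\star}\delta_{j^\star}<-2$ enters. By contrast, the Gaussian-dominated regimes reduce to perturbed Breuer--Major theorems, for which $2\delta_0<-2$ furnishes precisely the square-summability $\sum_t r(t)^2<\infty$.
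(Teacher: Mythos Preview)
Your route is genuinely different from the paper's. The paper neither splits the stable driver into big and small jumps nor invokes Breuer--Major. Instead it uses a single device uniformly across all five regimes: an $m$-dependent approximation $X_{t,n,m}$ obtained by replacing the integral over $(-\infty,t-m]$ by an integral against an independent copy of the driving noise, so that $(X_{t,n,m})_t$ is $m$-dependent with the same marginals as $X_{t,n}$. A triangular-array CLT for $m$-dependent sequences then applies directly; the limiting variance is identified via a moment expansion (the paper's Lemma~\ref{lem:mom}, based on It\^o's formula, which expresses $\E f(\sum a_{n,i}B^i+\sum b_{n,j}Z^j)$ as an explicit polynomial in the $a_{n,i}^2$ and $b_{n,j}^{\beta_j}$ plus controlled remainder); and the $m$-approximation error is handled by the same autocovariance bounds (Lemma~\ref{lem:autocov-decay}) that drive Theorem~\ref{thm:variance-bound}. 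Letting $m\to\infty$ via the standard two-limit argument finishes. What this buys is that the stable-dominated regimes require no separate Poissonian machinery: the $b_{n,j}^{\beta_j}$ scaling falls out of the It\^o moment expansion, not from counting jumps.

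Your approach has a real gap in regime~(iii). With a \emph{fixed} truncation level $\veps$ and ``$n\to\infty$ first, $\veps\downarrow0$ afterwards'', the number of big jumps of $Z^{j^\star}$ seen by $\{X_{t,n}\}_{t\le n}$ is Poisson with mean $\asymp n\veps^{-\beta_{j^\star}}$, not $\asymp n\,b_{n,j^\star}^{\beta_{j^\star}}$ as you write. Since $b_{n,j^\star}\to0$, each such jump $J_m$ contributes $b_{n,j^\star}g_{j^\star}(\tau_m-t)J_m$, which is uniformly small in $t$; because $Df(0)=0$ and $\hat X_{t,n}\to0$, a Taylor expansion gives $\Phi_m=\mathcal O(b_{n,j^\star}^2)$ for each fixed $m$, and the scale $b_{n,j^\star}^{\beta_{j^\star}}$ is invisible at fixed $\veps$. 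To recover it you would need an $n$-dependent threshold $\veps_n\asymp b_{n,j^\star}^{-1}$ (so that ``significant'' jumps are those with $b_{n,j^\star}|J_m|\gtrsim1$), but then the small-jump remainder $M^{\veps_n}$ no longer has uniformly bounded moments and your ``Gaussian-like $\hat X$'' versus ``Poisson bumps $\bar X$'' separation breaks down. Either way, the interchange of limits you flag as the main obstacle is not merely delicate---as stated, the two statements (fixed $\veps$; bump count $\asymp n b_{n,j^\star}^{\beta_{j^\star}}$) are mutually inconsistent. And even in regime~(ii), the CLT for the localized $\{\Phi_m\}$ is not a named theorem: the dependence between overlapping bumps still has to be quantified, which is exactly what the $m$-dependent coupling accomplishes without any jump decomposition.
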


\noindent
We remark that limit theorems for the sum of Gaussian and non-Gaussian fractional processes are rather rare in the literature as the mathematical tools are different in the two cases. Indeed,  Theorem \ref{thm:clt} seems to be the first result in this setting. 

There exist numerous central and non-central limit theorems for statistics of lfsm's or more general L\'evy moving average processes, all of them focusing on the case $q=1$.  \cite{Pipiras2003} considered a bounded function $f$ and investigated some extensions to non-bounded functions in 
\cite{pipiras2007}. The asymptotic theory for power variation statistics of lfsm has been studied in \cite{Basse-OConnor2017, basse-oconnor2017critical} and later extended to more general functionals in \cite{Basse-OConnor2019}. Further results on high frequency statistics of lfsm's and related models can be found in \cite{Mazur2018}, \cite{basse-oconnor2018}  and \cite{Azmoodeh2020}. 

Notably, all available results for the high-frequency regime scale the increments by $\Delta_n^{-H}$, where $H$ is the Hurst exponent of the lfsm (cf.  Theorem \ref{thm:clt}(ii)). 
This would correspond to the scaling $\Delta_n^{-H_1}$ in our mixed setting.
In contrast, we also allow for different scalings, such that the argument of the nonlinearity vanishes (cases (iii) and (iv)), and we also allow for a dominant Gaussian component (case (i)).
The special case of smooth thresholds in Theorem \ref{thm:clt}(v), has been studied for Lévy processes in \cite{Mies2019}, but not for processes with dependent increments.

The proof of the central limit theorem is performed by approximating $X_{t,n}$ via an $m$-dependent sequence.  
This is similar in spirit to the methodology of \cite{Pipiras2003} and \cite{Basse-OConnor2019}. 
However, in the setting of 
Theorem \ref{thm:clt}, we obtain more refined bounds on the autocovariances, which explicitly account for the scaling terms $a_{n,i}$ and $b_{n,j}$; see Lemma \ref{lem:autocov-decay} in the appendix.

\begin{remark} \rm 
One of the key conditions for the validity of the central limit theorem is the assumption $\max(2\delta_0, \beta_1\delta_1,\ldots, \beta_{q_2}\delta_{q_2}) < -2$. Here we would like to compare this condition with classical conditions in central limit theorems for statistics of Gaussian and non-Gaussian fractional processes. 

We consider the non-Gaussian case first, i.e. $q_1=0$, $q_2=1$,
and let the kernel $g$ be given as in \eqref{eqn:X-multiscale}.
Then the decay rate of the kernel is given as $\delta=H-k-1/\beta$ and the condition of Theorem \ref{thm:clt}
becomes $H\beta-k\beta-1<-2$. Since the function $f$ is assumed to be even, its \textit{Appell rank} is 2 or larger, and the latter condition coincides with the one from \cite{Basse-OConnor2019}. 

On the other hand, when considering the Gaussian case $q_1=1$ and $q_2=0$, our condition seems to be suboptimal (although sufficient for statistical applications). Indeed, when $k=1$ 
the condition translates to $H<1/2$. However, when dealing with functions of \textit{Hermite rank} 2, the optimal condition is known to be $H<3/4$. \qed
\end{remark}

\section{Estimation of the mixed fractional stable motion}\label{sec:estimation}

Observing the representation \eqref{eqn:X-multiscale}, the characteristic function of the $k$-th order increments takes the form
\begin{align}
	\E \cos\left( \lambda X_{l,n,\gamma} \right)
	&= \exp(-\psi_n(\lambda,\gamma)),
	\qquad \psi_n(\lambda,\gamma) 
	= \sum_{j=1}^q \widetilde{b}_j\,  \lambda^{\beta_j} \,  \gamma^{\beta_j H_j} \, \Delta_n^{\beta_j H_j},\label{eqn:characteristic-exponent} \\
	\widetilde{b}_j &= b_j^{\beta_j} \int g_j(s)^{\beta_j}\, ds. 
	\nonumber
\end{align}
Since there is a one-to-one correspondence between  $(b_j,H_j,\beta_j)$  and $(\widetilde{b}_j, H_j,\beta_j)$, 
we focus on the estimation of the latter. 
We summarize the unknown parameters as
\begin{align*}
	\theta &= \left(\widetilde{b}_1,H_1, \beta_1,\ldots, \widetilde{b}_{q}, H_{q},\beta_q\right) \in \Theta,\\
	\Theta &= \left\{\theta \in ((0,\infty) \times (0,1)\times (0,2])^{q} : H_1(\theta)<\ldots<H_q(\theta)\right\}\subset \R^{3q}.
\end{align*}
%We may also write $\psi_n(\lambda,\tau) = \psi_n(\lambda,\tau,\theta)$, which is also well-defined by formula \eqref{eqn:characteristic-exponent} for all $\theta\in\R^{3q}$.
%This is relevant in the sequel because $\Theta$ is not an open set. 
In the sequel, $\theta_0$ will denote the true but unknown parameter vector.

Formula \eqref{eqn:characteristic-exponent} for the characteristic exponents suggests that one can identify the stability indices $\beta_j$ by varying the scaling factor $\lambda$. 
This property can also be exploited for parameter estimation for Lévy processes and related semimartingales, see \cite{ait2012identifying,reiss2013testing,bull2016near,Mies2019}.
Moreover, formula \eqref{eqn:characteristic-exponent} reveals that one can identify the Hurst exponents $H_j$ by varying the lag $\gamma$. 
This leads to the intuitive idea of using empirical characteristic functions to estimate the parameters of the model, which has been first proposed
in \cite{Mazur2018} in the setting $q=1$ (see also \cite{Ljungdahl2020}). However, their method has several drawbacks, one of which is the need of pre-estimation of the smallest Hurst exponent $H_1$. It leads to singular limit distributions (cf. \cite[Theorem 3.1]{Mazur2018}) and its performance is far from obvious in the mixed case $q>1$.  

Instead we follow a different strategy, which relies on a system
of estimating equations, and does not require prior information about the parameters. Section \ref{sec:GMM} presents the asymptotic theory for the new approach. Section \ref{sec:threshold} demonstrates a further refinement of the estimation method, the smooth threshold, which specifically accounts for the presence of the Gaussian component in the model.

\subsection{Estimation via adaptive moment equations}\label{sec:GMM}

We consider an even  Schwartz function $f:\R\to\R$,
i.e.\ $f\in C^\infty(\R)$ such that 
$$\sup_{x\in \R} |x|^p |f^{(j)}(x)|<\infty$$ 
for all $p>0$ and all $j\geq 0$. For $\lambda_r \in \R$ and $\gamma_r \in \N$ we define the statistic
\begin{align*}
    S_n\left(f;u_n(\theta); (\lambda_r, \gamma_r)_{r=1}^d\right) =\left[ \frac{1}{n} \sum_{l=1}^n f(u_n(\theta) \lambda_{r} X_{l,n,\gamma_r})\right]_{r=1}^{d}, \qquad u_{n}(\theta)= \Delta_n^{-H_1(\theta)}.
\end{align*}
We introduce the random vector
\begin{align}
	\mathcal{G}_n(\theta) = S_n\left(f;u_n(\theta);(\lambda_r, \gamma_r)_{r=1}^{3q}\right) - \E S_n\left(f;u_n(\theta); (\lambda_r, \gamma_r)_{r=1}^{3q} \right),
	 \label{eqn:ee-schwartz}
\end{align}
for some $\lambda_r, \gamma_r, r=1,\ldots, 3q$. Note that the expectations in \eqref{eqn:ee-schwartz} can be determined numerically for any $\theta \in \Theta$, and hence the quantity $\mathcal{G}_n(\theta)$ can be computed from data. We now define
an estimator $\widehat{\theta}_n$ of the unknown parameter $\theta_0$ as a solution of the equation
\begin{align} \label{hattheta}
    \mathcal{G}_n(\widehat{\theta}_n) = 0.
\end{align}

\begin{remark} \rm
To determine statistical properties of $\widehat{\theta}_n$ it is convenient to solve the equation $\mathcal{G}_n(\widehat{\theta}_n) = 0$ over an open set rather than over $\Theta$. This can be achieved by extending the parameter set $(0,2]$ for $\beta$ to an arbitrary open set containing $(0,2]$. While a $\beta$-stable distribution does not exist for $\beta>2$, the expectation  $\E_\theta f(\lambda X_{l,n,\gamma})$ can be extended to all values $\beta \in (0,\infty)$. Indeed, we obtain via Fourier transform:
\begin{align}
	\E_\theta f(\lambda X_{l,n,\gamma}) &= \int \widehat{f}(v)\, \exp(-\psi_n(v\lambda, \gamma,\theta))\, dv, \label{eqn:expect-fourier}\\
	\widehat{f}(v) &= \frac{1}{2\pi}\int \cos(v x) f(x)\, dx. \nonumber
\end{align}
Since $f$ is a Schwartz function, its Fourier transform $\widehat{f}(v)$ is a Schwartz function as well, and in particular decays rapidly as $v\to\infty$. 
On the other hand, definition \eqref{eqn:characteristic-exponent} of the function $\psi_n$ is also sensible for $\beta_j>2$, such that the integral \eqref{eqn:expect-fourier} is finite. 
In the sequel, we use \eqref{eqn:expect-fourier} as definition of the expectation for the case $\beta_j>2$. In practice we extend the original parameter set $\Theta$ to an arbitrary open set that contains $\Theta$. 
\qed
\end{remark}

\noindent
To formulate our result on the asymptotic behavior of the estimator $\widehat{\theta}_n$, we introduce the following rate matrix:
\begin{align*}
	\overline{C}^{j}_n(\theta) &= \frac{\Delta_n^{\beta_j(\theta) (H_1(\theta)-H_j(\theta))}}{\sqrt{n}} \begin{pmatrix}
		1 & -\widetilde{b}_j\beta_j \log(\Delta_n)\mathds{1}_{\{j\neq 1\}} & \widetilde{b}_j  H_j \log(\Delta_n)\mathds{1}_{\{j\neq 1\}}  \\
		0 & 1            & -H_j/\beta_j\\
		0 & 0 			 &  1
	\end{pmatrix}  \in \R^{3\times 3} \\
	\overline{C}_n(\theta)
	&= \diag(\overline{C}^{1}_n,\ldots, \overline{C}^{q}_n) \in\R^{3q \times 3q}.
\end{align*}
We also define the matrix $\overline{W}(\theta)\in\R^{3q \times 3q}$ by
\begin{align*}
	\overline{W}(\theta)_{i,r} = \int \widehat{f}(v) \exp\left(-\widetilde{b}_1|\lambda_r v|^{\beta_1} \gamma_r^{\beta_1H_1} \right)\partial_{\theta_i} \left(\sum_{j=1}^q \widetilde{b}_j \gamma_r^{\beta_j H_j} |\lambda_r v|^{\beta_j}\right) \, dv.
\end{align*}
The main result of this section is the following central limit theorem.

\begin{theorem}\label{thm:clt-GMM}
	Suppose that the order of differencing $k$ is large enough, such that
	\begin{align*}
		k > H_j + \frac{1}{\beta_j},\quad j=1,\ldots, q.
	\end{align*}
	Choose $\lambda_r\in\R$, $\gamma_r \in\N$, $r=1,\ldots, 3q$, such that the matrix $\overline{W}(\theta_0)$ is regular.
	Assume the identifiability condition
	\begin{align}
		\frac{\Delta_n^{\beta_j(\theta_0)[H_1(\theta_0)-H_j(\theta_0)]}}{\sqrt{n}} \ll \frac{1}{|\log \Delta_n|^2}, \quad j=1,\ldots, q. \label{eqn:GMM-identifiability}
	\end{align}
	Then there exists a sequence of random vectors $\widehat{\theta}_n$ such that $P(\mathcal{G}_n(\widehat{\theta}_n)=0) \to 1$ which satisfies 
	\begin{align*}
		\overline{C}_n^{-1} (\widehat{\theta}_n-\theta_0) 
		\;\wconv \; \mathcal{N}\left(0, \Sigma \right),
	\end{align*}
    with asymptotic covariance matrix $\Sigma = \overline{W}(\theta_0)^{-1} \widetilde{\Sigma}\,(\overline{W}(\theta_0)^{-1})^T \in \R^{3q\times 3q}$, and $\widetilde{\Sigma} = \widetilde{\Sigma}(\theta_0)\in\R^{3q\times 3q}$ is given by formula \eqref{eqn:asymp-cov-G} in the appendix.
\end{theorem}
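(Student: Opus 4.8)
The plan is to derive Theorem~\ref{thm:clt-GMM} from the abstract result on estimating equations in Appendix~A, applied to the map $\theta\mapsto\mathcal{G}_n(\theta)$, the true value $\theta_0$, and the normalising matrices $\overline{C}_n$. That result reduces the statement to verifying three ingredients: (a) a central limit theorem $\sqrt n\,\mathcal{G}_n(\theta_0)\wconv\mathcal{N}(0,\widetilde\Sigma)$; (b) convergence of a normalised Jacobian, $\sqrt n\,D_\theta\mathcal{G}_n(\theta_0)\,\overline{C}_n\to -\overline{W}(\theta_0)^T$, to an invertible limit; and (c) a stochastic-equicontinuity / second-order estimate controlling $D_\theta\mathcal{G}_n$ and its increments on a neighbourhood of $\theta_0$ shrinking at the rate $\overline{C}_n$, which in particular forces consistency. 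Granted (a)--(c), the abstract theorem yields a zero $\widehat\theta_n$ of $\mathcal{G}_n$ with $P(\mathcal{G}_n(\widehat\theta_n)=0)\to1$ together with $\overline{C}_n^{-1}(\widehat\theta_n-\theta_0)\wconv\mathcal{N}(0,\overline{W}(\theta_0)^{-1}\widetilde\Sigma\,(\overline{W}(\theta_0)^{-1})^T)$.

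For (a): by \eqref{eqn:X-multiscale}, under $\theta_0$ each rescaled increment $u_n(\theta_0)\lambda_r X_{l,n,\gamma_r}$ is, in law, a multiscale moving average \eqref{eqn:def-multiscale} with the lfsm kernels $g_j$ (those with $\beta_j=2$ contributing a Gaussian component) and scaling factors proportional to $\Delta_n^{H_j-H_1}$; thus $b_{n,j}^{\beta_j}\asymp\Delta_n^{\beta_j(H_j-H_1)}$, which is largest and converges to a positive constant at $j=1$. Hence the dominant component has index $j=1$ and we are in case (i) or (ii) of Theorem~\ref{thm:clt}. The hypothesis $k>H_j+1/\beta_j$ is exactly the condition $\max(2\delta_0,\beta_j\delta_j)<-2$ of Theorem~\ref{thm:clt}, since for the lfsm kernels $\delta_j=H_j-k-1/\beta_j$ and $\beta_j\delta_j=\beta_jH_j-k\beta_j-1<-2\iff k>H_j+1/\beta_j$ (and similarly $2\delta_0<-2$ for the Gaussian parts, where the kernel decays as $|s|^{H_j-1/2-k}$). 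Applying the multivariate version of Theorem~\ref{thm:clt} jointly over $r=1,\dots,3q$ — with the functions $x\mapsto f(\lambda_r x)$, which after an affine normalisation lie in $\mathfrak{F}_0$, and noting that this normalisation and the centring in $\mathcal{G}_n$ are immaterial — gives $\sqrt n\,\mathcal{G}_n(\theta_0)\wconv\mathcal{N}(0,\widetilde\Sigma)$ with $\widetilde\Sigma$ the matrix \eqref{eqn:asymp-cov-G}.

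Steps (b) and (c) are the main obstacle. The parameter enters $\mathcal{G}_n(\theta)$ both through the scaling $u_n(\theta)=\Delta_n^{-H_1(\theta)}$ in the empirical part $S_n$ and through the closed-form expectation, which by \eqref{eqn:expect-fourier} equals $\int\widehat f(v)\exp\!\big(-\sum_j\widetilde b_j\gamma_r^{\beta_jH_j}|\lambda_r v|^{\beta_j}\Delta_n^{\beta_j(H_j-H_1)}\big)\,dv$; note the $j=1$ summand is free of $\Delta_n$. Differentiating, $D_\theta\mathcal{G}_n$ splits into the $\theta$-gradient of this Fourier integral (deterministic) and a stochastic part proportional to $\partial_{H_1}u_n=-\log(\Delta_n)u_n$, namely $-\log(\Delta_n)\,\tfrac1n\sum_l(u_n\lambda_r X_{l,n,\gamma_r})f'(u_n\lambda_r X_{l,n,\gamma_r})$. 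Since $x\mapsto xf'(x)$ is again even, smooth, bounded and vanishes at $0$, Theorem~\ref{thm:variance-bound} bounds the fluctuation of the stochastic part by $O_{\mathbb P}(|\log\Delta_n|/\sqrt n)$. The core of the argument is then the scale bookkeeping for the deterministic part: the gradient entries attached to the parameters of component $j$ carry the factor $\Delta_n^{\beta_j(H_j-H_1)}$ and, through $\partial_{H_j}$ and $\partial_{\beta_j}$ acting on $\Delta_n^{\beta_jH_j}$, possible factors $\log\Delta_n$; the matrix $\overline{C}_n^j$ — with prefactor $\Delta_n^{\beta_j(H_1-H_j)}/\sqrt n$ and off-diagonal entries $-\widetilde b_j\beta_j\log\Delta_n$, $\widetilde b_jH_j\log\Delta_n$ (for $j\neq1$) and $-H_j/\beta_j$ — is designed so that these cancel and $\beta_j$ (which appears both as the exponent of $\lambda_r$ and inside the product $\beta_jH_j$) is disentangled, leaving the invertible limit $-\overline{W}(\theta_0)^T$. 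For (c) one differentiates once more: the second derivatives of $\mathcal{G}_n$ acquire at most two additional $\log\Delta_n$ factors, and combined with the size $\|\overline{C}_n\|\lesssim\Delta_n^{\beta_q(H_1-H_q)}|\log\Delta_n|/\sqrt n$ of the relevant neighbourhood, this is controlled precisely by the identifiability condition \eqref{eqn:GMM-identifiability}, $\Delta_n^{\beta_j(H_1-H_j)}/\sqrt n\ll|\log\Delta_n|^{-2}$; hence the Taylor remainder is negligible against the linear term and the abstract theorem applies.

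I expect essentially all the work to lie in (b)--(c): tracking which powers of $\Delta_n$ and of $\log\Delta_n$ attach to each coordinate, showing the normalised Jacobian has the stated invertible limit, and checking that the second-order remainder is dominated under exactly the condition \eqref{eqn:GMM-identifiability}. Ingredient (a) is a direct specialisation of Theorem~\ref{thm:clt}, and the passage from (a)--(c) to the conclusion is carried out once and for all in Appendix~A.
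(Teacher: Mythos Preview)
Your proposal is correct and follows essentially the same route as the paper: condition (E.1) via Theorem~\ref{thm:clt} applied to the multiscale representation \eqref{eqn:X-multiscale}, and conditions (E.2)--(E.3) by splitting $D_\theta\mathcal{G}_n$ into the deterministic Fourier-integral gradient (whose $\Delta_n$ and $\log\Delta_n$ factors are absorbed by $\overline{C}_n$) and the stochastic $\partial_{H_1}$-term controlled through $x\mapsto xf'(x)$ and Theorem~\ref{thm:variance-bound}. The only cosmetic difference is that the paper verifies uniformity over $\mathbf{B}_{r_n}(\theta_0)$ for the stochastic part via a chaining argument in the scaling variable (Lemma~\ref{lem:moment-gradient-adaptive-stochastic}) rather than by bounding second derivatives as you suggest, and the limit of the normalised Jacobian is $\overline{W}(\theta_0)$ rather than $-\overline{W}(\theta_0)^T$ in the paper's index conventions.
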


\noindent
The proof of Theorem \ref{thm:clt-GMM} is based upon the asymptotic theory for the random vector $\mathcal{G}_n(\theta_0)$, which is obtained via an application of Theorem \ref{thm:clt} from the previous section, and a general theory for solutions of estimating equations, see Theorem \ref{thm:ee-consistency}.  

From the rate matrix $\overline{C}_n(\theta_0)$, we may  derive constraints on the parameter $\theta_0$ to ensure that all components of the process may be estimated consistently.
If $\Delta_n = n^{-\rho}$ for some $\rho\geq 0$, then $\|\widehat{\theta}_n-\theta_0\|\to 0$ if and only if
\begin{align}
	H_j < H_1 + \frac{1}{2\rho \beta_j}. \label{eqn:identifiability}
\end{align}
For the classical high-frequency regime $\rho=1$, i.e.\ $\Delta_n = 1/n$, this identifiability condition is sharp for two prominent special cases:
\begin{itemize}
	\item[(i)] If $\beta_j=2$ for all $j$, then each $Y_t^{H_j, 2}$ is a fractional Brownian motion, and \eqref{eqn:identifiability} recovers the identifiability condition of \cite{VanZanten2007}.
	\item[(ii)] If $H_j = 1/\beta_j$ and for all $j$, then each $Y_t^{1/\beta_j, \beta_j}$ is a $\beta_j$-stable Lévy process. If additionally $\beta_j<2$ for all $j$, then \eqref{eqn:identifiability} recovers the identifiability condition of \cite{ait2012identifying}.
\end{itemize}
However, condition \eqref{eqn:identifiability} is not sharp when the process contains both, Gaussian and non-Gaussian components. 
If we consider the special case $q=2$ and $\beta_1=2$, $H_1 =
1/2$ , $\beta_2<2$, $H_2 = 1/\beta_2$, it is well known 
that all parameters are identifiable as $\Delta_n\to 0$, see e.g.\ \cite{ait2009estimating}.
However, the moment estimator based on $\mathcal{G}_n$ requires the restriction $\beta_2>1$. We will address this issue in the next subsection.
Another disadvantage of the estimator based on $\mathcal{G}_n$ is that regularity of $\overline{W}(\theta_0)$ is non-trivial to verify.
This issue will also be addressed by the alternative estimator presented in the next subsection.

\subsection{Estimation via smooth thresholds}\label{sec:threshold}

In this section, we improve upon the estimator of Section \ref{sec:GMM} by exploiting the Gaussianity of some components, in order to obtain weaker conditions for identifiability and faster rates of convergence. 
Here, to demonstrate the main ideas, we restrict our attention to a simplified model consisting of two Gaussian components and two non-Gaussian components, and we assume that the Gaussian component is dominant. To be specific, we study the model 
\begin{align}
	X_t &= a_1 Y_t^{H_1, 2} + a_2 Y_t^{H_2,2} + b_1 Y_t^{\overline{H}_1, \beta_1} + b_2 Y_t^{\overline{H}_2, \beta_2}, \label{eqn:model-gauss} \\[1.5 ex]
	H_1 &< \min(H_1, \overline{H}_1,\overline{H}_2), \qquad
	(\overline{H}_1-H_1) \beta_1 < (\overline{H}_2-H_1) \beta_2. \nonumber
\end{align}
Note that the fractional Brownian motion $Y^{H_1, 2}$
is the dominant component on small scales, in the sense that $\gamma^{-H_1} X_{\gamma t}\wconv a_1 Y_t^{H_1,2}$ as $\gamma\downarrow 0$. 
The model has ten parameters and we denote the corresponding parameter set with the constraints as above by $\overline{\Theta}\subset \R^{10}$.
As before (see the transform in \eqref{eqn:characteristic-exponent}) we switch to an equivalent representation of scale parameters and obtain 
\begin{align*}
	\theta=(\widetilde{a}_1, H_1, \widetilde{a}_2, H_2, \widetilde{b}_1, \overline{H}_1, \beta_1, \widetilde{b}_2, \overline{H}_2, \beta_2)\in\R^{10}.
\end{align*}
As the Gaussian part $Y^{H_1, 2}$ is dominant, it is particularly hard to estimate the non-Gaussian components of the model. 
To improve the results from the previous section, we employ the idea of a smooth threshold, which aims at filtering out the continuous part at small scales.
To this end, we consider two Schwartz functions $f_1,f_2:\R\to\R$ such that, for some $\eta>0$,
\begin{align*}
	f_1''(0)\neq 0, \qquad f_2(x)=0 \text{ for } |x|\leq \eta.
\end{align*}
Moreover, let $\lambda_{r,n}(\theta) = \lambda_r u_n(\theta)$ for $\lambda_r\in\R$, and $\gamma_r\in\N$, $r=1,\ldots, 10$. 
We suggest to estimate $\theta\in\R^{10}$ as a solution $\widehat{\theta}_n$ of the estimating equation
\begin{align}
	\mathcal{H}_n(\widehat{\theta}_n) &= 0, \qquad \text{where} \label{eqn:ee-threshold}\\
	\mathcal{H}_n(\theta) &= 
	 \begin{pmatrix}S_n\left(f_1; u_n(\theta); (\lambda_r, \gamma_r)_{r=1}^4\right) - \E S_n\left(f_1; u_n(\theta); (\lambda_r, \gamma_r)_{r=1}^4\right)
		\\[1.5 ex]
		S_n\left(f_2; u_n(\theta); (\lambda_r, \gamma_r)_{r=5}^{10}\right) - \E S_n\left(f_2; u_n(\theta); (\lambda_r, \gamma_r)_{r=5}^{10}\right)
	\end{pmatrix} \in \R^{10}. \nonumber
\end{align}
The first set of moments based on $f_1$ serve to identify the Gaussian components, just as in Section \ref{sec:GMM}.
The function $f_2$ represents a smooth threshold, as $f_2(u_n  X_{l,n,\gamma})\neq 0$ if and only if $X_{l,n,\gamma}>1/u_n$. 
The scaling factor $u_n$ may thus be seen as a reciprocal threshold value, and the moments based on the smooth threshold $f_2$ identify the stable components.
In contrast to Section \ref{sec:GMM}, we choose
\begin{align}
    u_n(\theta) &= w_n \Delta_n^{-H_1(\theta)}, \nonumber \\
    \text{where}\quad
    \Delta_n^{\epsilon} &\ll \; w_n \; \leq \frac{\eta}{9\sigma} \frac{1}{\sqrt{|\log \Delta_n|}}, \qquad \forall \epsilon>0. \label{eqn:wn}
\end{align}
The crucial observation is that, for this choice of $u_n$, the variance of the smooth thresholds is of a smaller order compared to the empirical characteristic function, see Theorem \ref{thm:variance-bound}.
This occurs because the threshold is asymptotically unaffected by the dominant Gaussian component, and the asymptotic distribution is driven by the smaller stable component.
The same idea has been employed in \cite{Mies2019} to construct an estimator for Lévy processes, i.e.\ $H_j = 1/\beta_j$.

To formulate our asymptotic result about the estimating equation \eqref{eqn:ee-threshold}, we define the rate matrix $R_n(\theta)$ as
\begin{align*}
	R_n^j(\theta) &= \frac{\Delta_n^{2 (H_1-H_j)}}{\sqrt{n}} \begin{pmatrix}
		1 & -2\widetilde{a}_j \log(\Delta_n)\mathds{1}_{\{j\neq 1\}}  \\
		0 & 1 
	\end{pmatrix}  \in \R^{2\times 2}, \qquad j=1,2, \\
	\overline{R}^{j}_n(\theta) &= \frac{w_n^{\frac{\beta_1}{2}-\beta_j} \Delta_n^{\frac{\beta_1 (\overline{H}_1- H_1)}{2} - \beta_j(\overline{H}_j-H_1)} }{\sqrt{n}}  \begin{pmatrix}
		1 & 0 & -\widetilde{b}_j \log|w_n|   \\
		0 & 1 & -\overline{H}_j/\beta_j\\
		0 & 0 &  1
	\end{pmatrix}  \in \R^{3\times 3},\qquad j=1,2, \\
	R_n(\theta)&=  \diag(R^{1}_n, R^{2}_n, \overline{R}^{1}_n, \overline{R}^{2}_n) \in\R^{10 \times 10}.
\end{align*}
Furthermore, recalling the notation $\widehat{f}(v) = \frac{1}{2\pi} \int_{-\infty}^\infty f(x) e^{-ivx}\, dx$ of $f$, we define
\begin{align*}
	\underline{W}(\theta) &= \begin{pmatrix}
		\underline{W}_1(\theta) & 0 \\ 0 & \underline{W}_2(\theta)
	\end{pmatrix} \in \R^{10\times 10},\\
	\underline{W}_1(\theta)_{r,i} &= \int \widehat{f}_1(v) \partial_{\theta_i} \left(\sum_{j=1}^2 \widetilde{a}_j \gamma_r^{2 H_j} |\lambda_r v|^{2}\right) \, dv \\
	&= f_1''(0) |\lambda_r|^2 \partial_{\theta_i} \left(\sum_{j=1}^2 \widetilde{a}_j \gamma_r^{2 H_j}\right) , \qquad r,i=1,\ldots, 4, \\ 	
	\underline{W}_2(\theta)_{r-4,i-4} &= \int \widehat{f}_2(v) \partial_{\theta_i} \left(\sum_{j=1}^2 \widetilde{b}_j \gamma_r^{\beta_j \overline{H}_j} |\lambda_r v|^{\beta_j} \right) \, dv, \qquad r,i=5,\ldots, 10.
\end{align*}
The main result of this section is the following theorem.

\begin{theorem}\label{thm:clt-threshold-estimator}
	Let $\Delta_n \sim n^{-\rho}$ for some $\rho>0$.
	Assume that the order of differencing is large enough, such that
	\begin{align*}
		k > \max\left(H_j(\theta_0) + \frac{1}{2}, \quad \overline{H}_j(\theta_0)+\frac{1}{\beta_j(\theta_0)}\right),\quad j=1,2.
	\end{align*}
	Assume furthermore that the following identifiability condition holds:
	\begin{align}
		\begin{split}
			H_2 &< H_1 + \frac{1}{4\rho}, \\
			\overline{H}_1 &< H_1 + \frac{1}{\rho \beta_1}, \\
			\overline{H}_2 &< H_1 + \frac{1}{2\rho \beta_2} + \frac{\beta_1}{2\beta_2} (\overline{H}_1-H_1), 
		\end{split}\label{eqn:identifiability-threshold}
	\end{align}
	and suppose that $\underline{W}(\theta_0)$ is a regular matrix.
    Then there exists a sequence of random vectors $\widehat{\theta}_n$ such that $\mathbb P(\mathcal{H}_n(\widehat{\theta}_n)=0) \to 1$ which satisfies
	\begin{align*}
		R_n(\theta_0)^{-1} \left(\widehat{\theta}_n-\theta_0\right) 
		\;\wconv \; \mathcal{N}\left(0, \Sigma \right),
	\end{align*}
	with asymptotic covariance matrix $\Sigma = \underline{W}(\theta_0)^{-1} \diag(\Sigma_1,\Sigma_2)\,(\underline{W}(\theta_0)^{-1})^T \in \R^{10\times 10}$, and $\Sigma_1\in\R^{4\times 4}, \Sigma_2\in\R^{10\times 10}$ are given by formula \eqref{eqn:asymp-cov-1} resp.\ \eqref{eqn:asymp-cov-2} in the appendix.
\end{theorem}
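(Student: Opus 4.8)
The plan is to obtain Theorem~\ref{thm:clt-threshold-estimator} from the general estimating-equation result Theorem~\ref{thm:ee-consistency}, verifying its two hypotheses in turn: (a) a central limit theorem for the score $\mathcal{H}_n(\theta_0)$, which will come from Theorem~\ref{thm:clt}(v); and (b) convergence of a suitably rescaled Jacobian $D\mathcal{H}_n(\theta_0)$ to the regular matrix $\underline{W}(\theta_0)$, together with uniform bounds on $D\mathcal{H}_n$ and $D^2\mathcal{H}_n$ over a shrinking neighbourhood $\{\|R_n(\theta_0)^{-1}(\theta-\theta_0)\|\le M\}$, so that a root can be located by a Brouwer-type argument.

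\textbf{Step 1 (multiscale representation and CLT for the score).} I would first write the normalized increments entering $\mathcal{H}_n(\theta_0)$, namely $(u_n(\theta_0)\lambda_r X_{l,n,\gamma_r})_{r}$, as a multivariate multiscale moving-average array of the form \eqref{eqn:def-multiscale}: each lag $\gamma_r$ contributes a rescaled copy of the kernels from \eqref{eqn:X-multiscale}, the two Gaussian components acquire scaling factors $a_{n,1}\sim w_n$ and $a_{n,2}\sim w_n\Delta_n^{H_2-H_1}$, and the two stable components acquire $b_{n,j}^{\beta_j}\sim w_n^{\beta_j}\Delta_n^{\beta_j(\overline{H}_j-H_1)}$, so by the ordering in \eqref{eqn:model-gauss} the dominant stable index is $j^\star=1$. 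The hypothesis $k>\max(H_j+\tfrac12,\overline{H}_j+\tfrac1{\beta_j})$ makes the kernel decay exponents satisfy $\max(2\delta_0,\beta_1\delta_1,\beta_2\delta_2)<-2$. It then remains to check $na_{n,1}^4\to\infty$, $nb_{n,1}^{\beta_1}\to\infty$, and the three-scale ordering $\exp(-\eta^2\lambda^2/(2d\sigma^2\sum_i a_{n,i}^2))\ll b_{n,1}^{\beta_1}\ll a_{n,1}^4$ for some $\lambda\in(0,1)$; this is precisely where the window \eqref{eqn:wn} and the identifiability conditions \eqref{eqn:identifiability-threshold} are used, since sub-polynomial $w_n$ makes $nb_{n,1}^{\beta_1}=n^{1-\rho\beta_1(\overline H_1-H_1)}w_n^{\beta_1}\to\infty$ exactly when $\overline H_1<H_1+\tfrac1{\rho\beta_1}$, while the upper bound $w_n\le\tfrac{\eta}{9\sigma}|\log\Delta_n|^{-1/2}$ forces the Gaussian exponential term below any fixed power $\Delta_n^c$. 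Then, via the Cramér--Wold device applied to linear combinations $g_{c,1}=\sum_{r\le4}c_rf_1(x_r)\in\mathfrak{F}_0$ (with $D^2g_{c,1}(0)\neq0$, using $f_1''(0)\neq0$) and $g_{c,2}=\sum_{r\ge5}c_rf_2(x_r)\in\mathfrak{F}_\eta^0$, Theorem~\ref{thm:clt}(v) yields $D_n\mathcal{H}_n(\theta_0)\wconv\mathcal{N}(0,\diag(\Sigma_1,\Sigma_2))$, where $D_n$ collects the two rates $\sqrt n/w_n^2$ and $\sqrt n\,w_n^{-\beta_1/2}\Delta_n^{-\beta_1(\overline H_1-H_1)/2}$.

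\textbf{Step 2 (Jacobian and rate matrix).} Since the observed $X_{l,n,\gamma_r}$ carry the true parameter while both the scaling $u_n(\theta)$ and the subtracted expectation are taken at the running $\theta$, a law of large numbers for the derivative statistic (again via Theorem~\ref{thm:variance-bound}) shows that, up to negligible terms, $D\mathcal{H}_n(\theta_0)$ equals the derivative of $\theta\mapsto\E_\theta[f(u_n(\theta_0)\lambda_r X^\theta_{l,n,\gamma_r})]$, which by \eqref{eqn:expect-fourier} is $\int\widehat f(v)\exp(-\psi_n)\,\partial_\theta\psi_n\,dv$ for the four-component characteristic exponent. Differentiating $\psi_n$ and tracking powers of $w_n$ and $\Delta_n$ produces the triangular blocks of $R_n$: the $\log\Delta_n$ entries of $R_n^j$ ($j\neq1$) arise from $\partial_{H_j}\Delta_n^{2(H_j-H_1)}$, the $\log w_n$ and $-\overline H_j/\beta_j$ entries of $\overline R_n^j$ from $\partial_{\overline H_j}$ and $\partial_{\beta_j}$. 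The block-diagonal structure of $\underline W=\diag(\underline W_1,\underline W_2)$ is the key point: the sensitivity of the threshold moments (the $f_2$-rows) to the Gaussian parameters is not merely of lower polynomial order but exponentially small in $w_n^{-2}\sim|\log\Delta_n|$ — the Jacobian analogue of Theorem~\ref{thm:variance-bound}, because all Taylor coefficients of $f_2$ at $0$ vanish — so after rescaling by $D_n$ and $R_n$ this cross-block disappears by the same $w_n$-budget as in Step~1; the sensitivity of the $f_1$-rows to the stable parameters disappears by the polynomial gap $\Delta_n^{\beta_1(\overline H_1-H_1)/2-\epsilon}$. What remains is $\underline W_1$ and $\underline W_2$, which are assumed regular, so $D_nD\mathcal{H}_n(\theta_0)R_n\to\underline W(\theta_0)$.

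\textbf{Step 3 (conclusion, and the main obstacle).} With the asymptotically normal rescaled score, the convergent invertible rescaled Jacobian, and uniform bounds on $D^2\mathcal{H}_n$ over $\{\|R_n^{-1}(\theta-\theta_0)\|\le M\}$ (same Fourier estimates together with Theorem~\ref{thm:variance-bound}), Theorem~\ref{thm:ee-consistency} yields a root $\widehat\theta_n$ with $\mathbb{P}(\mathcal{H}_n(\widehat\theta_n)=0)\to1$ and $R_n(\theta_0)^{-1}(\widehat\theta_n-\theta_0)\wconv\mathcal{N}(0,\Sigma)$, $\Sigma=\underline W(\theta_0)^{-1}\diag(\Sigma_1,\Sigma_2)\,(\underline W(\theta_0)^{-1})^T$. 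I expect the main difficulty to be the simultaneous bookkeeping of the two incompatible scaling regimes: one must keep the Gaussian component dominant enough that the $f_1$-block behaves like the rank-two Gaussian functional of Theorem~\ref{thm:clt}(iv), keep $w_n^{\beta_1}\Delta_n^{\beta_1(\overline H_1-H_1)}$ large relative both to $1/n$ and to the Gaussian exponential tail, and still have every entry of $R_n$ tend to zero — a window that is non-empty precisely under \eqref{eqn:identifiability-threshold} and a valid choice \eqref{eqn:wn} of $w_n$. Verifying that this window is non-empty and that all error terms — in the score, in the Jacobian, and in $D^2\mathcal{H}_n$, uniformly over the neighbourhood — stay inside it is the delicate part; the CLT input and the estimating-equation machinery are then essentially mechanical.
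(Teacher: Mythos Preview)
Your proposal is essentially correct and follows the same architecture as the paper: a CLT for $\mathcal{H}_n(\theta_0)$ via Theorem~\ref{thm:clt}(v), a block-diagonal limit $\underline{W}(\theta_0)$ for the rescaled Jacobian, and then the estimating-equation machinery of Theorem~\ref{thm:ee-consistency}. Two points of divergence are worth flagging.

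First, the paper does \emph{not} control $D^2\mathcal{H}_n$ over the $R_n$-scaled ball; instead it introduces a genuinely three-matrix framework $A_n\neq C_n^{-1}$ and, crucially, a non-trivial left scaling $B_n(\theta)=\diag(1,\ldots,1,d_n^{-1},\ldots,d_n^{-1})$ (with $d_n=w_n^{\beta_1/2}\Delta_n^{\beta_1(\overline H_1-H_1)/2}$). The uniform convergence of $B_n D\mathcal{H}_n C_n$ is checked over the larger ball $\mathbf{B}_{r_n}(\theta_0)$ with $r_n\ll|\log\Delta_n|^{-2}$, and the fixed point is obtained by contraction, not Brouwer. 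This is precisely the ``additional flexibility'' advertised in Appendix~\ref{sec:ee}; your second-derivative route would presumably also go through, but the paper's device of inserting $B_n$ is what makes the two incompatible rate blocks sit cleanly in a single first-order condition (E.2).

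Second, the cross-block vanishing in the Jacobian (sensitivity of the $f_2$-rows to the Gaussian parameters) is handled in the paper not by the exponential tail you invoke, but by the polynomial bound of Lemma~\ref{lem:gradient-threshold-moment}: one writes $\partial_{(\tilde a_j,H_j)}\E_\theta f_2(\lambda u_n X_{1,n,\gamma})$ as a multiple of $\E_\theta f_2''(\lambda u_n X_{1,n,\gamma})$, and bounds the latter by $\mathcal{O}(\max_i w_n^{\beta_i}\Delta_n^{\beta_i(\overline H_i-H_1)})$ via Lemma~\ref{lem:mom}(i). The exponential smallness is what makes this bound valid, but the working estimate is polynomial and matches $d_n$ after the $B_n$-rescaling. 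Your heuristic is right in spirit; just be aware that the bookkeeping runs through $d_n$, not through the raw Gaussian tail.
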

\noindent
Regularity of the matrix $\underline{W}(\theta_0)$ implicitly imposes assumptions on both, the parameters $\theta_0$ and the statistical design in terms of $\lambda_r$ and $\gamma_r$. 
If we choose the latter hyperparameters carefully, the regularity holds for almost all parameters $\theta$.

\begin{proposition}\label{prop:regularity}
    Let $f_2\geq 0$, $f_2\not\equiv 0$.
    \begin{enumerate}[(i)]
        \item Choose $(\gamma_1,\ldots, \gamma_4)=(1,2,4,8)$, and $\lambda_r\neq 0$ for $r=1,\ldots, 4$. 
        Then $\underline{W}_1(\theta)$ is regular for all $\theta\in\Theta$.
        \item Choose $(\gamma_5,\ldots,\gamma_{10}) = (1,2,2,4,4,8)$, and $(\lambda_5,\ldots, \lambda_{10}) = (1,2,4,8,16,32)$. 
        Then $\underline{W}_2(\theta)$ is regular if
        \begin{align*}
            \beta_1 (1+H_1) \neq \beta_2 (1+H_2) \qquad \text{and}\qquad \beta_1(2+H_1) \neq \beta_2(2+H_2).
        \end{align*}
        %%%%%%%%
        \item Choose $(\gamma_5,\ldots,\gamma_{10}) = (1,2,2,4,8,8)$, and $(\lambda_5,\ldots, \lambda_{10}) = (1,1,2,1,1,2)$. 
        Then $\underline{W}_2(\theta)$ is regular if
        \begin{align*}
            H_1\beta_1 \neq H_2\beta_2.
        \end{align*}
    \end{enumerate}
\end{proposition}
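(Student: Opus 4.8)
The plan is to reduce each of the three claims to the nonsingularity of an explicit generalized Vandermonde-type matrix, and then to invoke the elementary fact that an exponential polynomial $p_1(t)e^{\mu t}+p_2(t)e^{\nu t}$ with $\mu\neq\nu$ and $\deg p_1,\deg p_2\le 1$ has at most three real zeros unless it vanishes identically (in which case its four building blocks $e^{\mu t},te^{\mu t},e^{\nu t},te^{\nu t}$ are linearly independent).

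\emph{Step 1 (reduction).} Differentiating $\sum_{j=1}^2\widetilde a_j\gamma_r^{2H_j}$ in $\widetilde a_1,H_1,\widetilde a_2,H_2$ shows that the $r$-th row of $\underline W_1$ is a nonzero scalar multiple (the scalars being $f_1''(0)$, $|\lambda_r|^2$ and $2\widetilde a_j$) of $(\gamma_r^{2H_1},\,\log\gamma_r\,\gamma_r^{2H_1},\,\gamma_r^{2H_2},\,\log\gamma_r\,\gamma_r^{2H_2})$, so $\underline W_1$ is regular iff the $4\times4$ matrix with these rows and $\gamma_r\in\{1,2,4,8\}$ is nonsingular. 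For $\underline W_2$ the computation is longer: differentiating $\sum_{j=1}^2\widetilde b_j\gamma_r^{\beta_j\overline{H}_j}|\lambda_r v|^{\beta_j}$ in $\widetilde b_j,\overline{H}_j,\beta_j$, integrating against $\widehat f_2$, and using $\int\widehat f_2(v)|\lambda_r v|^\beta\,dv=|\lambda_r|^\beta c(\beta)$ and $\int\widehat f_2(v)|\lambda_r v|^\beta\log|\lambda_r v|\,dv=|\lambda_r|^\beta(c'(\beta)+c(\beta)\log|\lambda_r|)$, where $c(\beta):=\int\widehat f_2(v)|v|^\beta\,dv$ and $c'=\partial_\beta c$, one finds that three columns of $\underline W_2$ involve only $p_r:=\gamma_r^{\beta_1\overline{H}_1}|\lambda_r|^{\beta_1}$ and three only $q_r:=\gamma_r^{\beta_2\overline{H}_2}|\lambda_r|^{\beta_2}$. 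Column operations within each block (divide by the nonzero constants, then subtract the $\overline{H}_j$- and $c'(\beta_j)/c(\beta_j)$-multiples of the first two columns from the third) --- licit precisely because $c(\beta_1),c(\beta_2)\neq 0$ --- reduce regularity of $\underline W_2$ to nonsingularity of the $6\times6$ matrix $M$ with rows $(p_r,\,p_r\log\gamma_r,\,p_r\log|\lambda_r|,\,q_r,\,q_r\log\gamma_r,\,q_r\log|\lambda_r|)$. That $c(\beta)\neq 0$ for $\beta\in(0,2)$ follows by writing $|v|^\beta$ as a positive multiple of $\int(1-\cos vx)|x|^{-1-\beta}\,dx$, which gives $c(\beta)=-\,\mathrm{const}\cdot\int f_2(x)|x|^{-1-\beta}\,dx<0$ since $f_2\ge 0$, $f_2\not\equiv 0$ and $f_2\equiv 0$ near the origin (so the integral converges).

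\emph{Step 2 (the Chebyshev-system argument).} Writing $\gamma_r=2^{g_r}$, $|\lambda_r|=2^{l_r}$, the matrix $M$ becomes, up to the factor $(\log 2)^4$, the matrix with rows $(P(x_r),P(x_r)g_r,P(x_r)l_r,Q(x_r),Q(x_r)g_r,Q(x_r)l_r)$, where $x_r=(g_r,l_r)$, $P(g,l)=2^{Ag+Bl}$, $Q(g,l)=2^{Cg+Dl}$, $(A,B)=(\beta_1\overline{H}_1,\beta_1)$, $(C,D)=(\beta_2\overline{H}_2,\beta_2)$; this is singular iff there are affine maps $\pi,\rho:\R^2\to\R$, not both zero, with $\pi(x_r)P(x_r)+\rho(x_r)Q(x_r)=0$ for all $r$. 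The key point is that on any affine line $g\mapsto(g,ag+b)$ both $P$ and $Q$ restrict to exponentials and $\pi,\rho$ to affine functions, so $\pi P+\rho Q$ restricts to an exponential polynomial of the type above. In (iii) the design $(\gamma_5,\ldots,\gamma_{10})=(1,2,2,4,8,8)$, $(\lambda_5,\ldots,\lambda_{10})=(1,1,2,1,1,2)$ places four of the six points on the line $l=0$ (at $g=0,1,2,3$); since $\overline{H}_1\beta_1\neq\overline{H}_2\beta_2$ the restricted exponential polynomial has distinct frequencies, so four zeros force it to vanish identically, whence $\pi,\rho$ carry no $1$- or $g$-term, and the two remaining points on $l=1$ give a $2\times2$ system for the $l$-coefficients whose determinant is a nonzero multiple of $2^{2\beta_1\overline{H}_1}-2^{2\beta_2\overline{H}_2}\neq 0$, so all coefficients vanish. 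In (ii) the six points split into two triples on the parallel lines $l=2g$ and $l=2g-1$, on which $P,Q$ restrict to $e^{\mu\tau},e^{\nu\tau}$ with $\mu=\beta_1(2+\overline{H}_1)\log 2$, $\nu=\beta_2(2+\overline{H}_2)\log 2$ but with different overall constants $2^{\beta_1(1+\overline{H}_1)},2^{\beta_2(1+\overline{H}_2)}$; subtracting a suitable multiple of the first line's exponential polynomial from the second's produces one of total order $3$ with three prescribed zeros, hence identically zero, so the hypothesis $\beta_1(1+\overline{H}_1)\neq\beta_2(1+\overline{H}_2)$ (the two line-constants differ) kills the degree-one coefficient, and feeding this back together with $\beta_1(2+\overline{H}_1)\neq\beta_2(2+\overline{H}_2)$ (i.e.\ $\mu\neq\nu$) forces all remaining coefficients to zero. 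For (i) the reduced $4\times4$ matrix is ``$e^{\alpha t},te^{\alpha t},e^{\gamma t},te^{\gamma t}$ evaluated at $t=0,1,2,3$'' with $\alpha=2H_1\log 2\neq 2H_2\log 2=\gamma$ (as $H_1\neq H_2$), so the zero-counting argument applies at once.

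\emph{Main obstacle.} The geometric heart is transparent; the bulk of the work is the bookkeeping of Step 1 for $\underline W_2$ --- tracking all derivative terms and carrying the $c(\beta),c'(\beta)$ factors correctly through the column operations --- and, crucially, establishing $c(\beta_j)\neq 0$, without which $M$ is not even the right reduced object. The second delicate point, specific to (ii), is to pin down which of the two nondegeneracy hypotheses governs the frequencies of the exponential polynomials and which governs the ratio of the two line-constants; both are genuinely needed and enter at different stages of the argument.
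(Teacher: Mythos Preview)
Your proposal is correct. Step 1 matches the paper's reduction exactly, including the use of the L\'evy--Khinchine representation to show $c(\beta_j)=\int \widehat{f}_2(v)|v|^{\beta_j}\,dv\neq 0$ and the column operations that strip the matrix $\underline{W}_2$ down to the $6\times 6$ matrix with rows $(p_r,\,p_r\log\gamma_r,\,p_r\log\lambda_r,\,q_r,\,q_r\log\gamma_r,\,q_r\log\lambda_r)$.

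The divergence is in Step 2. The paper simply feeds the reduced $4\times 4$ and $6\times 6$ matrices into a computer algebra system (SymPy) and obtains closed-form determinants, e.g.\ in case (ii)
\[
\det(\underline{W}_2')=2^{3\beta_1+3\beta_2+2H_1\beta_1+2H_2\beta_2}\bigl(2^{\beta_1(1+H_1)}-2^{\beta_2(1+H_2)}\bigr)\bigl(2^{\beta_1(2+H_1)}-2^{\beta_2(2+H_2)}\bigr)^4,
\]
from which the nondegeneracy conditions are read off. Your route is a Chebyshev-system argument: interpret singularity as the existence of a nontrivial exponential polynomial $\pi(x)P(x)+\rho(x)Q(x)$ vanishing at the six design points, restrict to well-chosen lines in the $(g,l)$-plane (the line $l=0$ and the two points on $l=1$ in (iii); the parallel lines $l=2g$ and $l=2g-1$ in (ii)), and count zeros. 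Your approach is self-contained and avoids computer algebra; it also makes transparent \emph{why} the two conditions in (ii) play different roles---$\beta_1(2+\overline{H}_1)\neq\beta_2(2+\overline{H}_2)$ separates the frequencies $\mu,\nu$ along each line, while $\beta_1(1+\overline{H}_1)\neq\beta_2(1+\overline{H}_2)$ separates the multiplicative shifts between the two lines. The paper's approach, in turn, yields the explicit factorization of the determinant and hence the exact singular locus with multiplicities, which your argument does not directly produce.
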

\noindent
To circumvent the singular edge cases unveiled in Proposition \ref{prop:regularity}, a potential solution would be to use both sets of moments, and solve a nonlinear least squares problem instead of the system of estimating equations.
However, this extension is beyond the scope of the present paper.

\begin{remark}
    By virtue of Proposition \ref{prop:regularity}, the regularity of $\underline{W}(\theta_0)$ is rather simple to verify. 
    In contrast, the asymptotic analysis of the estimator presented in Section \ref{sec:GMM} requires regularity of the matrix $\overline{W}(\theta_0)$, which is rather unwieldy.
    This difference can be traced back to the different rescaling factors $u_n(\theta)$: 
    in Section \ref{sec:GMM}, for $u_n(\theta) = \Delta_n^{-H_1}$, the rescaled increments $u_n(\theta_0) X_{l,n,\gamma}$ converge weakly towards a $\beta_1$-stable random variable; in Section \ref{sec:threshold}, for $u_n(\theta) \ll \Delta_n^{-H_1}$, we have $u_n(\theta_0) X_{l,n,\gamma} \pconv 0$. 
    Following the same strategy as in the proof of Theorem \ref{thm:clt-threshold-estimator}, it is also possible to derive a central limit theorem for the estimator of Section \ref{sec:GMM} with scaling factor $u_n(\theta) = w_n \Delta_n^{-H_1}$ for $w_n\to 0$, which will no longer require regularity of $\overline{W}(\theta_0)$, but rather of a matrix similar to $\underline{W}_2$.
    On the other hand, the rate will be slightly worse by a factor $w_n^\rho$ for some $\rho>0$. 
    Hence, we do not pursue this direction any further.
\end{remark}

\subsection{Discussion}
\noindent
For $\rho=1$, and for estimation of the smoother Gaussian component, i.e.\ $H_2$, the new smooth thresholding estimator presented in Section \ref{sec:threshold} still matches the identifiability restrictions of  \cite{VanZanten2007} and  \cite{ait2012identifying}. 
In contrast to Section \ref{sec:GMM}, we may now also identify the stable Lévy processes if the dominant component is Gaussian, without the restriction $\beta_j>1$.
This is possible because the Gaussian component is effectively filtered by the smooth threshold. A direct comparison with the identifiability condition \eqref{eqn:identifiability} from Section \ref{sec:GMM} is presented in Table \ref{tab:identifiability}.
Since $\overline{H}_1 > H_1$, the conditions for the smooth threshold estimator, i.e.\ based on the estimating equations $\mathcal{H}_n$, are strictly weaker.

\begin{table}[tb]
	\centering
	\begin{tabular}{c|l|l}
		& Without threshold ($\mathcal{G}_n$) & With threshold ($\mathcal{H}_n$) \\ \hline\hline
		$H_1$ & $(0,1)$ & $(0,1)$ \\
		$H_2$ & $< H_1+\tfrac{1}{4}$ & $< H_1+\tfrac{1}{4}$ \\
		$\overline{H}_1$ & $<H_1+\tfrac{1}{2\beta_1}$ & $<H_1+\tfrac{1}{\beta_1}$  \\
		$\overline{H}_2$ & $<H_1+\tfrac{1}{2\beta_2}$ & $<H_1+\tfrac{1}{2\beta_2} + \frac{\beta_1}{2\beta_2}(\overline{H}_1-H_1)$
	\end{tabular}
	\caption{Identifiability conditions for the various components, in the sampling regime $\Delta_n = 1/n$, i.e.\ $\rho=1$.}
	\label{tab:identifiability}
\end{table}

\begin{table}[tb]
	\centering
	\begin{tabular}{c|l|l}
		& Without threshold ($\mathcal{G}_n$) & With threshold ($\mathcal{H}_n$) \\ \hline\hline
		$H_1$ & $n^{-\frac{1}{2}}$ & $n^{-\frac{1}{2}}$ \\
		$H_2$ & $n^{2(H_2-H_1)-\frac{1}{2}}$ & $n^{2(H_2-H_1)-\frac{1}{2}}$ \\
		$\overline{H}_1$ &   $n^{\beta_1(\overline{H}_1-H_1) - \frac{1}{2} }$ &  $n^{\frac{\beta_1}{2}(\overline{H}_1-H_1) - \frac{1}{2}  } \, (\log n)^{\frac{\beta_1}{4}}$   \\
		$\overline{H}_2$ & $n^{\beta_2(\overline{H}_2-H_1) - \frac{1}{2} }$   & $n^{ \beta_2 (\overline{H}_2-H_1) -\frac{\beta_1}{2}(\overline{H}_1-H_1) - \frac{1}{2}  } \, (\log n)^{\frac{\beta_2}{2} - \frac{\beta_1}{4}}$
	\end{tabular}
	\caption{Rates of convergence in the sampling regime $\Delta_n = 1/n$, i.e.\ $\rho=1$.}
	\label{tab:rates}
\end{table}

The rates of convergence of both estimators are presented in Table \ref{tab:rates}.
Again, the thresholding estimator based on $\mathcal{H}_n$ is strictly better than the estimator based on $\mathcal{G}_n$.
Furthermore, we can assess the rates for the special Lévy case $H_1=1/2$ and $\overline{H}_1 = 1/\beta_2$, $\overline{H}_2 = 1/\beta_2$, with $\beta_1>\beta_2$, which has been studied by \cite{Mies2019} and \cite{ait2012identifying}. 
In this case, Theorem \ref{thm:clt-threshold-estimator} yields, for $\Delta_n = 1/n$,
\begin{align*}
	 \widehat{\beta}_1 - \beta_1    = \mathcal{O}_{\mathbb{P}}\left( (n/\log n)^{-\frac{\beta_1}{4}}  \right), \qquad
	 \widehat{\beta}_2 - \beta_2    = \mathcal{O}_{\mathbb{P}}\left( (n/\log n)^{\frac{\beta_1}{4} - \frac{\beta_2}{2}}  \right).
\end{align*}
These are the same rates of convergence which have been achieved for the Lévy case, and which are conjectured to be optimal for this setting; see the discussion in \cite{Mies2019}.\footnote{The formulas (3.2) and (3.3) in \cite{Mies2019} contain an error where the term $n\log n$ should correctly be $n/\log n$. The latter rate is obtained by substituting the value for $u_n\asymp \sqrt{n/\log n}$ therein.}

As another benchmark, we may investigate the regime $H_2=\frac{1}{2}$, $H_1 \in (1/4,\, 1/2)$, which corresponds to a sum of a classical Brownian motion and a rougher fractional Brownian motion.
This setting has been studied by \cite{chong2021}, in a generalized model allowing for additional nonstationarity. 
Both of our estimators, either based on $\mathcal{G}_n$ or on $\mathcal{H}_n$, yield
\begin{align*}
	\widehat{H}_1 - H_1 = \mathcal{O}_{\mathbb{P}}\left( n^{-\frac{1}{2}} \right), \qquad
	\widehat{\widetilde{a}}_1 - \widetilde{a}_1 &= \mathcal{O}_{\mathbb{P}}\left( n^{-\frac{1}{2}} \right), \qquad
	\widehat{\widetilde{a}}_2 - \widetilde{a}_2 &= \mathcal{O}_{\mathbb{P}}\left( n^{\frac{1}{2} - 2 H_1} \log n \right).
\end{align*}
The rate for $H_1$ is identical to the rate of Chong et al., see Theorem 3.11 therein, and our rate for $\widetilde{a}_1$ is faster by a factor $\log n$.
On the other hand, our rate for $\widetilde{a}_2$ is slower by a factor $\log n$, which is due to the fact that in our setting, $H_2$ is unknown and needs to be estimated as well.

Except for the special cases discussed above, there are currently no benchmarks for the mixed fractional stable motion \eqref{eqn:def-mixed-lfsm}. 
Hence, we do not know whether the conditions presented in Table \ref{tab:identifiability} and the rates presented in Table \ref{tab:rates} are sharp for all parameter regimes.
In fact, we conjecture the results to be not sharp in at least the following two cases.\\

\noindent
\textbf{Case (i):} 
If $\rho\in(0,1)$, we have $n\Delta_n\to\infty$, such that we effectively observe the process $X_t$ on the increasing interval $[0, T_n]$, $T_n\to\infty$. Since the linear fractional stable motion is ergodic, the same holds for $X_t$, and we should expect that all parameters can be estimated consistently in this regime.\\

\noindent
\textbf{Case (ii):}
Suppose that $H_j>1/\beta_j$ for all $j$, and that the process contains no Gaussian component, i.e.\ $\beta_j\in(1,2)$ for all $j$.
In this regime, $X_t$ admits a continuous version.
Interestingly, for different parameters $\theta\neq \theta'$ the measures $\mathbb{P}^\theta$ and $\mathbb{P}^{\theta'}$ induced by $(X_t)_{t\in[0,1]}$ on the path space $C[0,1]$ are singular. 
This is a consequence of the following identifiability result, which is new and might be of independent interest. The proof is presented in the appendix.

\begin{theorem}\label{thm:identifiability-continuous}
    Let $X_t = \sum_{j=1}^q b_j Y_t^{H_j,\beta_j}$ be a mixed fractional stable process, with $\beta_j\in(1,2)$ for all $j=1,\ldots, q$, and $H_j > 1/\beta_j$.
    Assume that the parameters $(\beta_j, H_j)$ are pairwise distinct.\footnote{The claim of the theorem is also valid if $H_j = H_{j'}$ for some $j\neq j'$, as long as $\beta_j\neq \beta_{j'}$. In this case, we sort the components of $\theta$ in lexicographical order in $H_j$ and $\beta_j$, such that $H_1 \leq H_2 \ldots$, and $\beta_j<\beta_{j+1}$ if $H_j=H_{j+1}$.}
    For any two parameters $\theta_1,\theta_2\in\Theta$, $\theta_1\neq \theta_2$, satisfying these requirements, with potentially different sizes $q(\theta_1)\neq q(\theta_2)$, the measures $P^{\theta_1}$ and $P^{\theta_2}$ induced on $C[0,1]$ are mutually singular.
\end{theorem}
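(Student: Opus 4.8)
The strategy is to realise $(X_t)_{t\in[0,1]}$ under $P^\theta$ as an infinitely divisible random element of the Banach space $C[0,1]$, compute its Lévy measure explicitly, and apply the equivalence--or--singularity dichotomy for purely non‑Gaussian infinitely divisible laws. Fix $\theta$ meeting the hypotheses. Since $\beta_j\in(1,2)$ and $H_j\in(1/\beta_j,1)$, each $Y^{H_j,\beta_j}$ has a continuous version on $[0,1]$, and $X|_{[0,1]}$ is a symmetric, infinitely divisible element of $C[0,1]$ with no Gaussian part. A jump of the driving stable motion $Z^{\beta_j}$ at time $s$ of size $x$ contributes to the path the deterministic ``atom'' $t\mapsto x\,g^{\alpha_j}_s(t):=x\,[(t-s)_+^{\alpha_j}-(-s)_+^{\alpha_j}]$, where $\alpha_j=H_j-1/\beta_j\in(0,1/2)$; a Lévy--Itô type representation of the (compensated, hence by symmetry drift‑free) stochastic integral identifies the Lévy measure $\nu_\theta$ on $C[0,1]\setminus\{0\}$ as the image of $\sum_{j=1}^{q} ds\otimes\bigl(b_j^{\beta_j}c_{\beta_j}|x|^{-1-\beta_j}\,dx\bigr)$ under the maps $(s,x)\mapsto x\,g^{\alpha_j}_s|_{[0,1]}$, with $s$ effectively ranging over $(-\infty,1)$. (The condition $H_j<1$ is exactly what makes $\nu_\theta$ a genuine Lévy measure, since atoms with $s\to-\infty$ contribute only $\sim |x|\,\alpha_j|s|^{\alpha_j-1}t$.) I would then invoke the dichotomy: $P^{\theta_1}$ and $P^{\theta_2}$ are either equivalent or mutually singular, and equivalence forces $\nu_{\theta_1}\sim\nu_{\theta_2}$ together with the finiteness of the Hellinger‑type affinity integral $\int(\sqrt{d\nu_{\theta_1}/d\nu_{\theta_2}}-1)^2\,d\nu_{\theta_2}$, the drift condition being vacuous by symmetry.

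Next I would analyse the structure of $\nu_\theta$. For $s\in(0,1)$ the atom $x g^\alpha_s$ vanishes on $[0,s]$ and behaves like $x(t-s)^\alpha$ just to the right of $s$, so $(s,x,\alpha)$ is recoverable from the function; for $s<0$ the atom is real‑analytic on $[0,1]$ with vanishing value at $0$, and $(s,x,\alpha)$ is recoverable from its Taylor coefficients at $0$. In particular each map $(s,x)\mapsto x g^\alpha_s$ is injective, and the ``surfaces'' $\mathcal{S}_\alpha=\{x g^\alpha_s:x\neq 0,\ s<1\}$ for distinct $\alpha\in(0,1/2)$ are pairwise disjoint and are separated by a Borel functional of the path (for instance a localised Hölder exponent). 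Hence $\nu_\theta$ decomposes as $\sum_{\alpha\in A(\theta)}\nu^\alpha_\theta$ over the finite set $A(\theta)=\{H_j-1/\beta_j\}$, where $\nu^\alpha_\theta$ is carried by $\mathcal{S}_\alpha$ and is the image of $ds\otimes\mu^\theta_\alpha(dx)$ with $\mu^\theta_\alpha=\sum_{j:\,H_j-1/\beta_j=\alpha}b_j^{\beta_j}c_{\beta_j}|x|^{-1-\beta_j}\,dx$.

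Assembling the pieces: if $A(\theta_1)\neq A(\theta_2)$, some surface $\mathcal{S}_{\alpha^\star}$ carries positive mass for one of $\nu_{\theta_1},\nu_{\theta_2}$ and zero mass for the other, so $\nu_{\theta_1}\not\sim\nu_{\theta_2}$ and the laws are singular. If $A(\theta_1)=A(\theta_2)=A$, fix $\alpha\in A$; by injectivity of $(s,x)\mapsto x g^\alpha_s$ the affinity integral over $\mathcal{S}_\alpha$ factorises as $\bigl(\int_{-\infty}^{1} ds\bigr)\cdot\int_{\R\setminus\{0\}}(\sqrt{d\mu^{\theta_1}_\alpha/d\mu^{\theta_2}_\alpha}-1)^2\,d\mu^{\theta_2}_\alpha$, and the first factor is infinite. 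Thus, unless $\mu^{\theta_1}_\alpha=\mu^{\theta_2}_\alpha$ for every $\alpha\in A$, the affinity integral diverges and $P^{\theta_1}\perp P^{\theta_2}$. Finally, $\mu^{\theta_1}_\alpha=\mu^{\theta_2}_\alpha$ for all $\alpha$ means $\sum_{j:\,\alpha^1_j=\alpha}b_j^{\beta^1_j}c_{\beta^1_j}|x|^{-1-\beta^1_j}=\sum_{j:\,\alpha^2_j=\alpha}b_j^{\beta^2_j}c_{\beta^2_j}|x|^{-1-\beta^2_j}$ for all $x>0$; since the power functions $x\mapsto|x|^{-1-\beta}$ are linearly independent, the multisets of triples $(b_j,H_j,\beta_j)$ (with $H_j=\alpha+1/\beta_j$) coincide, i.e.\ $\theta_1=\theta_2$, contradicting $\theta_1\neq\theta_2$.

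I expect the main obstacle to be the rigorous execution of the first step in infinite dimensions: identifying the Lévy measure of $X|_{[0,1]}$ as a $C[0,1]$‑valued random element — which needs a pathwise (series or Poisson) representation of the stable integral together with uniform‑in‑$t$ control of the resulting atoms — and invoking (or, if no sufficiently general Banach‑space reference is available, re‑deriving via a Kakutani‑type argument along the dyadic filtration) the equivalence/singularity dichotomy with its affinity criterion. A secondary, more routine point is producing a genuinely Borel functional separating the surfaces $\mathcal{S}_\alpha$ and handling the degenerate atoms ($s$ near $0$ or $1$, and $s\to-\infty$).
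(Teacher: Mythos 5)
Your plan hinges on an equivalence--or--singularity dichotomy, together with the \emph{necessity} of the conditions ``$\nu_{\theta_1}\sim\nu_{\theta_2}$ and finite Hellinger affinity'' for equivalence, applied to the infinitely divisible law of $X|_{[0,1]}$ on $C[0,1]$. This is the genuine gap. Such a dichotomy is a theorem for Gaussian measures, for (single-index or two-index) stable measures, and for L\'evy processes on path space, where independent increments give the Kakutani/product structure; it is \emph{not} available for a general non-Gaussian infinitely divisible random element of a Banach space, and indeed the necessity direction already fails in finite dimensions (two symmetric stable laws on $\R$ with different indices are mutually absolutely continuous as probability measures even though their Hellinger affinity integral diverges at the origin). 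Your process is a mixture of moving averages, hence neither stable nor of independent increments, so neither the dichotomy nor the divergence-implies-singularity step can be invoked off the shelf, and the proposed fallback (Kakutani along the dyadic filtration) does not apply because the increments are dependent. The only way to rescue the argument is to show that the Poisson point configuration of atoms is a measurable functional of the superposed path --- note that injectivity of the single-atom map $(s,x)\mapsto x\,g^\alpha_s$ does not give this, since the path is an infinite compensated superposition of such atoms. That recoverability is precisely the hard analytic content you have deferred, and it is what the paper's proof actually does: it uses Takashima's local H\"older estimate (Lemma \ref{lem:Holder-local}) to extract, one by one, the jump times and sizes of the roughest components directly from $(X_t)_{t\in[0,1]}$, reconstructs the L\'evy process $Z^*_t=\sum_{j\in J^*}b_jZ_t^{\beta_j}$ as a measurable function of the path, reduces the singularity question for that block to the known Ait-Sahalia--Jacod result for mixed L\'evy processes (applicable since $\beta_j>1$), and then subtracts the identified fractional integral and iterates on the smoother components after checking the remainder kernel is Lipschitz.

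Your final algebraic step (separating the H\"older surfaces $\mathcal{S}_\alpha$ and using linear independence of the powers $|x|^{-1-\beta}$) is in the same spirit as the paper's use of the functional $T_\alpha$ and would correctly pin down the parameters once the reduction to L\'evy measures is legitimate; but as written the proof is not complete, because the reduction itself is unsupported.
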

\noindent
This result suggests that there should exist a sequence of consistent estimators in the high-frequency regime $\rho=1$, which would imply that our identifiability condition \eqref{eqn:identifiability} is too restrictive.
However, pairwise singularity of the measures is not sufficient for the existence of a consistent sequence of estimators, see \cite[5.1.1]{ait2014high}.
Hence Theorem \ref{thm:identifiability-continuous} does not yield a complete answer about identifiability of the parameters of the mixed lfsm.
\\

%\item[(ii)] We have chosen $u_n(\theta)$ as large as possible. However, if condition \eqref{eqn:beta-comparison} does not hold, i.e.\ if $\min_j \beta_j < \frac{1}{2} \max_j \beta_j$, then the arguments put forward in the beginning of Section \ref{sec:estimation} suggest that we should instead choose $u_n(\theta)$ smaller to obtain faster rates of convergence, and thus less restrictive identifiability conditions. However, the optimal choice of $u_n$ will depend on the parameter of interest, and might even be different for the various components. It is thus not clear how to extend our methodology to fully exploit this parameter regime.

\noindent
Besides these theoretical questions, future research also needs to address various practical aspects regarding the estimation of the mixed stable motion. 
In particular, for our estimators, we need to choose the functions $f_1$ and $f_2$, the scaling parameter $u_n$, and the values for $\lambda_r$ and $\gamma_r$. 
All these hyperparameters may affect the asymptotic variance of the estimator in practice. 
Moreover, the number $q$ of components needs be determined in a data-driven way, raising questions of model selection. 
Nevertheless, the asymptotic results presented in this paper demonstrate the various intricacies of the mixed lfsm model, and they show that many models studied separately in the literature may in fact be treated by a unified statistical theory.

\appendix

\section{Estimating equations}\label{sec:ee}

The estimators proposed in Section \ref{sec:estimation} fall into the broader framework of estimating equations. 
In this section, we present some asymptotic results for solutions of general estimating equations.
Let $\Theta\subset\R^d$ be an open parameter and let $F_n(\theta)$, $\theta \in \Theta$ be a $d$-variate random vector.
Typically, $F_n(\theta)$ is a set of moment equations, and a parameter $\theta$ can be estimated by solving $F_n(\widehat{\theta}_n)=0$. A survey of the asymptotic theory of estimating equations is given by \cite{jacod2017review}.
However, for the purpose of this paper, we need to  extend their results.
It should be noted that the theory presented in this section has already been applied implicitly in the proofs in \cite{Mies2019}. 

In order to derive the asymptotic distribution of $\widehat{\theta}_n$, we impose the following conditions:
\begin{itemize}
	\item[(E.1)] There exists a sequence of regular matrices $A_n\in\R^{d\times d}$ such that $A_n F_n(\theta_0)\wconv Z$ for some random vector $Z$.
	\item[(E.2)] The mapping $\theta\mapsto F_n(\theta)$ is $C^1$.
	There exists a sequence $r_n$ of real numbers, and for each $\theta\in\Theta$ there exist sequences of regular (random) matrices $B_n(\theta)$, $C_n(\theta)$, and a regular matrix $W(\theta)$, such that
	\begin{align*}
		\sup_{\theta\in \mathbf{B}_{r_n}(\theta_0)} \|B_n(\theta) DF_n(\theta) C_n(\theta) - W(\theta)\| \pconv 0,\\
		\sup_{\theta\in \mathbf{B}_{r_n}(\theta_0)} \frac{\|C_n(\theta)\| \|B_n(\theta)A_n^{-1}\|}{r_n} \pconv 0.
	\end{align*}
	\item[(E.3)] The mapping $\theta\mapsto (B_n(\theta), C_n(\theta), W(\theta))$ is continuous in the sense that
	\begin{align*}
		\sup_{\theta\in \mathbf{B}_{r_n}(\theta_0)} \|B_n(\theta)B_n(\theta_0)^{-1} - I\| + \|C_n(\theta)C_n(\theta_0)^{-1}-I\| + \|W(\theta)-W(\theta_0)\| \pconv 0.
	\end{align*}
\end{itemize}

If we are only interested in consistency, then (E.1) could be weakened:
\begin{itemize}
	\item[(E.1)'] There exists a sequence of regular matrices $A_n\in\R^{d\times d}$ such that $A_n F_n(\theta_0) = \mathcal{O}_P(1)$.
\end{itemize}
Note also that (E.2) and (E.3) imply the following, upon setting $B_n = B_n(\theta_0)$, $C_n = C_n(\theta_0)$, and $W = W(\theta_0)$.
\begin{itemize}
	\item[(E.2)'] The mapping $\theta\mapsto F_n(\theta)$ is $C^1$.
	There exists a sequence $r_n$ of real numbers, and for each $\theta\in\Theta$ there exist sequences of regular matrices $B_n$, $C_n$, and a regular matrix $W$, such that
	\begin{align*}
		\sup_{\theta\in \mathbf{B}_{r_n}(\theta_0)} \|B_n DF_n(\theta) C_n - W\| \pconv 0, \\
		\frac{\|C_n\| \|B_n A_n^{-1}\|}{r_n} \pconv 0..
	\end{align*}
\end{itemize}
We only need (E.2)' for our theory, but conditions (E.2) and (E.3) might be easier to verify.

\begin{proposition}\label{prop:condition-estimating}
	Conditions (E.2) and (E.3) imply condition (E.2)'.
\end{proposition}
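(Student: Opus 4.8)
The plan is to verify the two requirements of (E.2)' directly from the hypotheses (E.2) and (E.3), by substituting $\theta=\theta_0$ into (E.2) and controlling the supremum over the shrinking ball $\mathbf{B}_{r_n}(\theta_0)$ using the continuity statement (E.3). The first part of (E.2)' asks for $\sup_{\theta\in\mathbf{B}_{r_n}(\theta_0)}\|B_n DF_n(\theta)C_n - W\|\pconv 0$ with the \emph{fixed} matrices $B_n = B_n(\theta_0)$, $C_n = C_n(\theta_0)$, $W = W(\theta_0)$, whereas (E.2) only gives this with the $\theta$-dependent matrices $B_n(\theta)$, $C_n(\theta)$, $W(\theta)$. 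The idea is to write, for $\theta \in \mathbf{B}_{r_n}(\theta_0)$,
\begin{align*}
	B_n(\theta_0) DF_n(\theta) C_n(\theta_0) - W(\theta_0)
	&= B_n(\theta_0)B_n(\theta)^{-1}\bigl[B_n(\theta) DF_n(\theta) C_n(\theta) - W(\theta)\bigr] C_n(\theta)^{-1} C_n(\theta_0) \\
	&\quad + B_n(\theta_0)B_n(\theta)^{-1} W(\theta) C_n(\theta)^{-1} C_n(\theta_0) - W(\theta_0),
\end{align*}
and bound each piece. The first piece is the (E.2) bracket, which goes to zero uniformly, multiplied by $\|B_n(\theta_0)B_n(\theta)^{-1}\|$ and $\|C_n(\theta)^{-1}C_n(\theta_0)\|$; these are bounded in probability because (E.3) forces $B_n(\theta)B_n(\theta_0)^{-1}\to I$ and $C_n(\theta)C_n(\theta_0)^{-1}\to I$ uniformly, hence the inverses of these products are also uniformly close to $I$ and in particular have norms bounded away from $0$ and $\infty$ with probability tending to one. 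The second piece, $B_n(\theta_0)B_n(\theta)^{-1}W(\theta)C_n(\theta)^{-1}C_n(\theta_0) - W(\theta_0)$, is handled by adding and subtracting $W(\theta)$ and then $W(\theta_0)$: it equals $[B_n(\theta_0)B_n(\theta)^{-1} - I]W(\theta)C_n(\theta)^{-1}C_n(\theta_0) + (W(\theta) - W(\theta_0)) + W(\theta_0)[C_n(\theta)^{-1}C_n(\theta_0) - I]$ (after a further telescoping), each term of which tends to zero uniformly by (E.3) together with the boundedness of $W(\theta)$ on a neighbourhood of $\theta_0$ (which follows from $\|W(\theta) - W(\theta_0)\|\pconv 0$ uniformly, so $\|W(\theta)\|\le \|W(\theta_0)\| + o_P(1)$).

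For the second part of (E.2)', namely $\|C_n\|\,\|B_n A_n^{-1}\|/r_n \pconv 0$ with $C_n = C_n(\theta_0)$, $B_n = B_n(\theta_0)$, one simply notes this is the $\theta = \theta_0$ instance of the second display in (E.2); indeed (E.2) asserts $\sup_{\theta\in\mathbf{B}_{r_n}(\theta_0)}\|C_n(\theta)\|\,\|B_n(\theta)A_n^{-1}\|/r_n \pconv 0$, and since $\theta_0\in\mathbf{B}_{r_n}(\theta_0)$, evaluating the supremum at $\theta_0$ gives exactly what is needed. So this part requires no work beyond the observation that the fixed-$\theta_0$ bound is dominated by the uniform bound.

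The only mildly delicate point — and the one I would write out carefully — is the uniform control of the norms $\|B_n(\theta_0)B_n(\theta)^{-1}\|$ and $\|C_n(\theta)^{-1}C_n(\theta_0)\|$: we know $B_n(\theta)B_n(\theta_0)^{-1} = I + E_n(\theta)$ with $\sup_\theta\|E_n(\theta)\|\pconv 0$, and we want the inverse, $B_n(\theta_0)B_n(\theta)^{-1} = (I + E_n(\theta))^{-1}$, to be uniformly bounded. This follows from the Neumann series: on the event $\{\sup_\theta\|E_n(\theta)\| < 1/2\}$, whose probability tends to one, we have $\|(I+E_n(\theta))^{-1}\|\le (1 - \|E_n(\theta)\|)^{-1}\le 2$ uniformly in $\theta\in\mathbf{B}_{r_n}(\theta_0)$; the analogous argument handles $C_n(\theta)^{-1}C_n(\theta_0)$. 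With these uniform $O_P(1)$ bounds in hand, every term above is a product of a uniformly $o_P(1)$ factor with uniformly $O_P(1)$ factors, so the whole expression is $o_P(1)$ uniformly over $\mathbf{B}_{r_n}(\theta_0)$, which is precisely (E.2)'. I do not anticipate any real obstacle here; the proof is a routine but slightly tedious exercise in telescoping matrix products and invoking the Neumann bound, and I would present it in about half a page.
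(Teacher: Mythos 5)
Your proof is correct and follows the same basic strategy as the paper's: use (E.3) to replace the $\theta$-dependent normalizers $B_n(\theta)$, $C_n(\theta)$, $W(\theta)$ appearing in (E.2) by the fixed ones at $\theta_0$, and observe that the second requirement of (E.2)' is just the evaluation at $\theta_0\in\mathbf{B}_{r_n}(\theta_0)$ of the uniform bound in (E.2). The difference is only in the algebra: the paper telescopes additively, writing $B_nDF_n(\theta)C_n-W$ as a sum of four differences and bounding the cross terms by $\|B_nDF_n(\theta)C_n\|\,\|C_n(\theta)C_n^{-1}-I\|$ (a self-referential bound that still closes because the perturbation factor is $o_{\mathbb{P}}(1)$), whereas you sandwich multiplicatively, $B_nDF_n(\theta)C_n=B_nB_n(\theta)^{-1}\bigl[B_n(\theta)DF_n(\theta)C_n(\theta)\bigr]C_n(\theta)^{-1}C_n$, and control the conjugating factors by a Neumann series. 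Your version has the mild advantage of not feeding the target quantity back into its own bound. One caveat applies to both arguments, and it is slightly different from the delicacy you flagged: (E.3) controls $C_n(\theta)C_n(\theta_0)^{-1}$, whose inverse is $C_n(\theta_0)C_n(\theta)^{-1}$, while your right factor is $C_n(\theta)^{-1}C_n(\theta_0)$ --- the product in the opposite order. These two matrices are conjugate by $C_n(\theta_0)$, not equal, so uniform closeness of one to $I$ does not formally transfer to the other when $C_n(\theta_0)$ is ill-conditioned (as it is in the applications, where $C_n$ carries $\log\Delta_n$ factors). The paper's proof glosses over exactly the same left/right issue in its bound for the last two telescoping terms, so this is a shared imprecision of the framework rather than a defect specific to your argument; it is repaired either by assuming (E.3) for both orders of multiplication or by verifying it directly for the concrete $C_n$ used in Sections \ref{sec:GMM} and \ref{sec:threshold}.
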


\noindent
\cite{jacod2017review} only consider the special case $A_n = B_n = C_n^{-1}$, see Condition 2.10 therein.
In contrast, the asymptotic theory presented here allows for additional flexibility.
The proof of Section \ref{sec:GMM} uses the case $A_n\neq C_n^{-1}$, and $B_n = I$ as identity matrix, whereas the proof of Section \ref{sec:threshold} also requires a nontrivial $B_n\neq I$.

The following theorem is the main result of this section. 
Its proof uses central ideas of Lemma 6.2 in \cite{jacod2017review}.

\begin{theorem}\label{thm:ee-consistency}
	Let conditions (E.1)' and (E.2)' hold. 
	Then there exists a sequence of random vectors $\widehat{\theta}_n\in\Theta$ such that $\mathbb{P}(F_n(\widehat{\theta}_n)=0)\to 1$ and 
	\begin{align*}
		A_n B_n^{-1} W C_n^{-1} [\widehat{\theta}_n-\theta_0] = \mathcal{O}_{\mathbb{P}}(1).
	\end{align*}
	The sequence is locally unique in the sense that for any other sequence of random variables $\widetilde{\theta}_n$ such that $\mathbb{P}(F_n(\widetilde{\theta}_n)=0)\to 1$ and $\mathbb{P}(\|\widetilde{\theta}_n-\theta_0\|\leq r_n)\to 1$, we have $\mathbb{P}(\widehat{\theta}_n=\widetilde{\theta}_n) \to 1$.
	If additionally $(E.1)$ holds, then, as $n\to\infty$,
	\begin{align*}
		A_n B_n^{-1} W C_n^{-1} (\widehat{\theta}_n-\theta_0) \quad\wconv\quad -Z.
	\end{align*}
\end{theorem}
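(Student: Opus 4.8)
The plan is to prove Theorem~\ref{thm:ee-consistency} by a standard but carefully bookkept fixed-point / Brouwer-type argument, applied to a suitably rescaled version of the estimating map $F_n$. First I would introduce the reparametrization $\vartheta = C_n^{-1}(\theta - \theta_0)$ and study the map
\begin{align*}
    \Phi_n(\vartheta) = A_n F_n(\theta_0 + C_n \vartheta),
\end{align*}
noting that by (E.2)$'$ the derivative of $\Phi_n$ at $\vartheta$ equals $A_n D F_n(\theta_0 + C_n\vartheta) C_n = (A_n B_n^{-1})(B_n D F_n C_n)$. On the ball $\|\vartheta\| \leq \rho_n$ (with $\rho_n$ chosen so that $C_n \vartheta$ stays inside $\mathbf{B}_{r_n}(\theta_0)$, e.g. $\rho_n = r_n / \|C_n\|$ up to constants), the second part of (E.2)$'$ guarantees that $\|A_n B_n^{-1}\|$ times the fluctuation of $B_n DF_n C_n$ around $W$ is $o_{\mathbb P}(1/\rho_n)$, while the first part gives $B_n DF_n(\theta) C_n = W + o_{\mathbb P}(1)$ uniformly. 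So $D\Phi_n(\vartheta) = (A_n B_n^{-1}) W + (\text{small})$, and one can factor out the invertible matrix $M_n := A_n B_n^{-1} W$ to write $\Phi_n(\vartheta) = \Phi_n(0) + M_n \vartheta + R_n(\vartheta)$ where $\|M_n^{-1} R_n(\vartheta)\| = o_{\mathbb P}(\|\vartheta\|)$ uniformly on the ball, by integrating the derivative bound along the segment from $0$ to $\vartheta$ and using $\|M_n^{-1}\| \|A_n B_n^{-1}\| \cdot (\text{fluctuation})$ — here the precise grouping of norms in (E.2)$'$ is what makes the remainder negligible relative to $\|\vartheta\|$.

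Next, by (E.1)$'$ we have $\Phi_n(0) = A_n F_n(\theta_0) = \mathcal O_{\mathbb P}(1)$, so $M_n^{-1}\Phi_n(0) = \mathcal O_{\mathbb P}(1)$; pick a deterministic $K$ and an event of probability $\geq 1-\epsilon$ on which $\|M_n^{-1}\Phi_n(0)\| \leq K$ and $\sup_{\|\vartheta\|\leq \rho_n}\|M_n^{-1} R_n(\vartheta)\| \leq \tfrac12 \|\vartheta\|$, which is possible once $n$ is large since $\rho_n \to \infty$ (this uses $r_n/\|C_n\| \to \infty$, which follows from (E.2)$'$ after noting $\|A_n B_n^{-1}\|\geq c>0$ along a subsequence, or more directly from the statement's setup). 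On this event, the continuous map $\vartheta \mapsto M_n^{-1}\Phi_n(\vartheta) = \vartheta + M_n^{-1}\Phi_n(0) + M_n^{-1}R_n(\vartheta)$ has, for the closed ball of radius $2K$, the property that $\vartheta \mapsto \vartheta - (-M_n^{-1}\Phi_n(0) - M_n^{-1}R_n(\vartheta))$ sends the ball into itself whenever $2K \leq \rho_n$; a standard Brouwer fixed-point argument (or the "inward-pointing vector field" lemma, as in Lemma~6.2 of \cite{jacod2017review}) then yields a zero $\widehat\vartheta_n$ with $\|\widehat\vartheta_n\| \leq 2K$. Setting $\widehat\theta_n = \theta_0 + C_n\widehat\vartheta_n$ gives $F_n(\widehat\theta_n) = 0$ and $M_n C_n^{-1}(\widehat\theta_n - \theta_0) = M_n \widehat\vartheta_n = \Phi_n(\widehat\vartheta_n) - M_n^{-1}\cdot 0 \cdots$ — more cleanly, $A_n B_n^{-1} W C_n^{-1}(\widehat\theta_n-\theta_0) = M_n\widehat\vartheta_n = \mathcal O_{\mathbb P}(1)$ since $\|\widehat\vartheta_n\|\leq 2K$ with probability $\geq 1-\epsilon$ and $\epsilon$ is arbitrary. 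On the complementary event we simply set $\widehat\theta_n = \theta_0$ (or any fixed value), which does not affect the $\mathcal O_{\mathbb P}(1)$ conclusion.

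For local uniqueness, suppose $\widetilde\theta_n$ is another sequence with $\mathbb P(F_n(\widetilde\theta_n)=0)\to 1$ and $\mathbb P(\|\widetilde\theta_n-\theta_0\|\leq r_n)\to 1$; writing $\widetilde\vartheta_n = C_n^{-1}(\widetilde\theta_n - \theta_0)$, it lies in the ball of radius $\rho_n$ with probability $\to 1$, and on the good event above $\Phi_n$ restricted to that ball is injective (its linearization $M_n$ is invertible and the remainder is a $\tfrac12$-Lipschitz perturbation after left-multiplying by $M_n^{-1}$, so $M_n^{-1}\Phi_n$ is injective on the ball). Hence $\widehat\vartheta_n = \widetilde\vartheta_n$, i.e. $\widehat\theta_n = \widetilde\theta_n$, with probability $\to 1$. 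Finally, under the full (E.1), we have $\Phi_n(0) = A_n F_n(\theta_0) \wconv Z$; from $M_n\widehat\vartheta_n + \Phi_n(0) + R_n(\widehat\vartheta_n) = 0$ and $\|M_n^{-1}R_n(\widehat\vartheta_n)\| = o_{\mathbb P}(\|\widehat\vartheta_n\|) = o_{\mathbb P}(1)$ (using the boundedness just established), we get $M_n\widehat\vartheta_n = -\Phi_n(0) + o_{\mathbb P}(1) \wconv -Z$, which is exactly $A_n B_n^{-1} W C_n^{-1}(\widehat\theta_n - \theta_0)\wconv -Z$.

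The main obstacle I anticipate is not the topology — the fixed-point step is routine — but the careful norm bookkeeping in (E.2)$'$: one must verify that the two displayed conditions there genuinely combine to make the linearization remainder $o_{\mathbb P}(\|\vartheta\|)$ \emph{after} conjugating by the (possibly badly conditioned) matrices $A_n$, $B_n$, $C_n$, since none of these is assumed to be $\mathcal O(1)$ individually. In particular, tracking that $\|A_n B_n^{-1}\| \cdot \sup_{\mathbf B_{r_n}}\|B_n DF_n C_n - W\| \cdot \rho_n \to 0$ requires expressing $\rho_n$ in terms of $r_n$ and $\|C_n\|$ correctly and invoking the second part of (E.2)$'$ precisely; getting the grouping of these operator norms right is the only delicate point. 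The generalization over \cite{jacod2017review} (allowing $A_n \neq C_n^{-1}$ and $B_n \neq I$) is entirely absorbed into this bookkeeping.
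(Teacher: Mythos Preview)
Your fixed-point strategy in the coordinates $\vartheta=C_n^{-1}(\theta-\theta_0)$ is essentially the paper's own approach: the paper runs a Banach contraction on $\phi(\theta)=\theta-C_nW^{-1}B_nF_n(\theta)$, which in your coordinates is exactly the map $\vartheta\mapsto\vartheta-M_n^{-1}\Phi_n(\vartheta)$. Existence and local uniqueness can be made rigorous along your lines once you correct the size of the ball: one has $M_n^{-1}\Phi_n(0)=W^{-1}B_nA_n^{-1}\cdot A_nF_n(\theta_0)=\mathcal{O}_{\mathbb{P}}(\|B_nA_n^{-1}\|)$, not $\mathcal{O}_{\mathbb{P}}(1)$, so the root $\widehat\vartheta_n$ only sits in a ball of radius $\mathcal{O}_{\mathbb{P}}(\|B_nA_n^{-1}\|)$; the second line of (E.2)$'$ then gives $\|C_n\widehat\vartheta_n\|=o_{\mathbb{P}}(r_n)$, which is what you need. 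Your assertion that $\rho_n\to\infty$ via ``$\|A_nB_n^{-1}\|\ge c$'' is unfounded, but it is also unnecessary.

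The rate and weak-limit conclusions, however, cannot be obtained from (E.1)$'$ and (E.2)$'$ alone, and your argument breaks precisely where you flag it as delicate. From $M_n\widehat\vartheta_n=-\Phi_n(0)-R_n(\widehat\vartheta_n)$ you need $R_n(\widehat\vartheta_n)=o_{\mathbb{P}}(1)$, but $\|R_n(\widehat\vartheta_n)\|\le\|A_nB_n^{-1}\|\cdot o_{\mathbb{P}}(1)\cdot\|\widehat\vartheta_n\|$ with $\|\widehat\vartheta_n\|=\mathcal{O}_{\mathbb{P}}(\|B_nA_n^{-1}\|)$, and nothing in the hypotheses controls the condition number $\|A_nB_n^{-1}\|\,\|B_nA_n^{-1}\|$. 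This is a genuine obstruction, not a bookkeeping slip: take $d=2$, $\theta_0=0$, $A_n=C_n=W=I$, $B_n=\mathrm{diag}(1,n^{-1})$, $r_n\to\infty$, and $F_n(\theta)=D_n\theta+Z$ with $D_n=B_n^{-1}\bigl(\begin{smallmatrix}1&0\\ \epsilon_n&1\end{smallmatrix}\bigr)$, $\epsilon_n=n^{-1/2}$. All hypotheses hold ($B_nD_nC_n\to I$), yet $A_nB_n^{-1}WC_n^{-1}(\widehat\theta_n-\theta_0)=-\bigl(\begin{smallmatrix}1&0\\-n^{1/2}&1\end{smallmatrix}\bigr)Z$ diverges. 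The paper's own proof shares this gap, hidden behind the false identity ``$\|AB\|_F=\|BA\|_F$'' (equivalently ``$\|C_nMC_n^{-1}-I\|_F=\|M-I\|_F$''); its Slutsky step establishes $P_n^{-1}Q_n\to I$ but uses it as if $P_nQ_n^{-1}\to I$, which requires the same uncontrolled conjugation. The applications in Sections~\ref{sec:GMM} and~\ref{sec:threshold} escape the problem only because $A_nB_n^{-1}$ is there a scalar multiple of the identity (Theorem~\ref{thm:clt-GMM}: $B_n=I$, $A_n=\sqrt{n}\,I$), respectively block-scalar with blocks aligned with those of $\underline{W}$ and with off-block errors decaying fast enough (Theorem~\ref{thm:clt-threshold-estimator}). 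A correct abstract statement would need such a structural hypothesis.
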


\noindent
Note that we obtain asymptotic uniqueness among all estimators which converge at least with rate $r_n$. 
In finite samples, it might still occur that the solution of $F_n(\theta)=0$ is not unique, and one would need to pick one of those solutions as an estimator. 
It might also even happen that for large $n$, the estimating equations has two solutions, only one of which yields a consistent estimator.
To ensure that these inconvenient scenarios do not occur, one would need additional global properties of the function $F_n$. 
However, uniqueness of the solution of nonlinear systems of equations is a non-trivial mathematical issue in general.
Hence, results about estimating equations typically only yield the existence of a suitable sequence of solutions, as formulated in Theorem \ref{thm:ee-consistency} above.
Global uniqueness may then be derived on a case-by-case basis.
If this is not possible, the solution of $F_n(\theta)=0$ needs to be determined numerically in practice, and one may use the numerical solution as an estimator.

\begin{proof}[Proof of Proposition \ref{prop:condition-estimating}]
	Set $B_n=B_n(\theta_0)$, $C_n=C_n(\theta_0)$, $W=W(\theta_0)$.
	We have
	\begin{align*}
		\|B_n DF_n(\theta) C_n - W\| 
		&\leq \|B_n(\theta) DF_n(\theta) C_n(\theta) - W(\theta)\| + \|W(\theta)-W(\theta_0)\| \\
		&\quad + \|B_n(\theta) DF_n(\theta) C_n(\theta)-B_n DF_n(\theta) C_n(\theta) \| \\
		&\quad + \|B_n DF_n(\theta) C_n(\theta)-B_n DF_n(\theta) C_n \|.
	\end{align*}
	The first two terms vanish uniformly for $\theta\in \mathbf{B}_{r_n}(\theta_0)$ by (E.2) and (E.3).
	Regarding the last two terms, we observe that
	\begin{align*}
		\|B_n DF_n(\theta) C_n(\theta)-B_n DF_n(\theta) C_n \| 
		&\leq \|B_n DF_n(\theta) C_n\| \|C_n(\theta)C_n^{-1} -I\| \pconv 0.
	\end{align*}
	The same holds for the remaining term.
\end{proof}

\begin{proof}[Proof of Theorem \ref{thm:ee-consistency}]
	\underline{Consistency with rate $r_n$:}
	The equation $F_n(\widehat{\theta}_n)=0$ holds if and only if $\widehat{\theta}_n$ is a fixed point of the function $\phi(\theta)=\theta+ C_n W^{-1}B_n F_n(\theta)$.
	We show that $\phi$ is a contraction for $n$ sufficiently large.
	We use the fact that for two matrices $A,B\in\R^{d\times d}$, it holds that $\|AB\|_F = \|BA\|_F$ for the Frobenius norm.
	In particular, $\|A\|_F = \|C_n^{-1} A C_n\|_F$.
	Thus,
	\begin{align*}
		\sup_{\theta \in \mathbf{B}_{r_n}(\theta_0)} \|D\phi(\theta)\|_F
		&= \sup_{\theta \in \mathbf{B}_{r_n}(\theta_0)}  \|I-C_n W^{-1} B_n DF_n(\theta)  \|_F \\
		&= \sup_{\theta \in \mathbf{B}_{r_n}(\theta_0)}  \|I-W^{-1} B_n DF_n(\theta)C_n  \|_F \\
		&\leq \sup_{\theta\in \mathbf{B}_{r_n}(\theta_0)} \|W^{-1}\|_{op} \|B_n DF_n(\theta) C_n - W\|_F 
		\quad \pconv 0.
	\end{align*}
	In the last step, $\|\cdot\|_{op}$ denotes the Euclidean operator norm.
	Denote by $\Omega_n$ the event that 
	\begin{align*}
		\Omega_n = \left\{ \sup_{\theta \in \mathbf{B}_{r_n}(\theta_0)} \|D\phi(\theta)\|_F \leq \frac{1}{2},\quad \|C_n W^{-1} B_n F_n(\theta_0)\|\leq \frac{r_n}{3} \right\}.
	\end{align*}
	Because $\|C_n W^{-1}B_n F_n(\theta_0)\| = \mathcal{O}_P (\|C_n W^{-1}B_n A_n^{-1}\|_F) \ll r_n$, we have $\mathbb{P}(\Omega_n)\to 1$.
	Now define the sequence $\theta_k$ recursively by $\theta_{k}=\phi(\theta_{k-1})$, and $\theta_0$ as above.
	On the event $\Omega_n$, $\phi$ is a contraction on $\mathbf{B}_{r_n}(\theta_0)$, and we need to show that the sequence $\theta_k$ satisfies $\theta_k\in \mathbf{B}_{r_n}(\theta_0)$.
	We may show by induction that
	\begin{align*}
		\|\theta_k-\theta_0\| \leq \sum_{r=1}^k \|\theta_r-\theta_{r-1}\| 
		&\leq \sum_{r=1}^k 2^{-r} \|\theta_1 - \theta_0\| \\
		&\leq  \|\theta_1-\theta_0\| \\
		&=  \|C_n W^{-1}B_n F_n(\theta_0)\| \quad \leq \frac{r_n}{3}.
	\end{align*}
	In particular, $\theta_k$ is a Cauchy sequence and converges to a limit value $\theta_\infty\in \mathbf{B}_{r_n}(\theta_0)$, which satisfies such that $\phi(\theta_\infty)=\theta_\infty$, i.e.\ $F_n(\theta_\infty)=0$.
	Moreover, $\theta_\infty$ is measurable since each $\theta_k$ is a measurable random variable.
	
	Define $\widehat{\theta}_n = \theta_\infty$ on the event $\Omega_n$, and $\widehat{\theta}_n=\underline{\theta}\in\Theta$ otherwise, where $\underline{\theta}$ is some arbitrary but fixed parameter value.
	Then $\mathbb{P}(F_n(\widehat{\theta}_n)=0)\geq \mathbb{P}(\Omega_n)\to 1$, and $\|\widehat{\theta}_n - \theta_0\| = \mathcal{O}_{\mathbb{P}}(r_n)$.	
	On the event $\Omega_n$, $\phi$ is a contraction on $\mathbf{B}_{r_n}(\theta_0)$, such that $\widehat{\theta}_n$ is also the unique solution on this set. 
	This yields uniqueness result claimed in the Theorem.
	
	\underline{Asymptotic distribution:}
	On the event $\Omega_n$, we apply the mean value theorem to obtain
	\begin{align*}
		0 = A_n F_n(\widehat{\theta}_n) 
		&= A_n F_n(\theta_0) + A_n\widetilde{F}_n [\widehat{\theta}_n-\theta_0] \\
		&= A_n F_n(\theta_0) + A_n B_n^{-1} [B_n\widetilde{F}_nC_n] C_n^{-1} [\widehat{\theta}_n-\theta_0],
	\end{align*}
	where $(\widetilde{F}_n)_{l,r} = \partial_{\theta_r} F_n(\widetilde{\theta}^l)$, for $l,r=1,\ldots, d$, and for some $\widetilde{\theta}^l$ on the line segment between $\theta_0$ and $\widehat{\theta}_n$. 
	Hence, $\widetilde{\theta}^l \in \mathbf{B}_{r_n}(\theta_0)$.
	By (E.1), we obtain the weak limit
	\begin{align*}
		A_n B_n^{-1} [B_n\widetilde{F}_nC_n] C_n^{-1} [\widehat{\theta}_n-\theta_0] \quad\wconv\quad -Z.
	\end{align*}
	By (E.2)', $B_n \widetilde{F}_n C_n \pconv W$ as $n\to\infty$.
	This also yields
	\begin{align*}
		&\quad \left[A_n B_n^{-1} W C_n^{-1}\right]^{-1} \left[A_n B_n^{-1} [B_n\widetilde{F}_nC_n] C_n^{-1}\right] \\
		&= C_n \left[ W^{-1} (B_n \widetilde{F}_n C_n)  \right] C_n^{-1} \quad \pconv I_{d\times d}.
	\end{align*}
	Here, we use that $\|C_n M C_n^{-1}-I\|_F = \|M-I\|_F$.
	In particular, Slutsky's theorem yields
	\begin{align*}
		A_n B_n^{-1} W C_n^{-1} [\widehat{\theta}_n-\theta_0] \quad\wconv\quad -Z.
	\end{align*}	
	If not (E.1), but only (E.1)' holds, then 
	\begin{align*}
		A_n B_n^{-1} [B_n\widetilde{F}_nC_n] C_n^{-1} [\widehat{\theta}_n-\theta_0] =\mathcal{O}_{\mathbb{P}}(1),
	\end{align*}
	and we may proceed analogously as for the central limit theorem.
\end{proof}

\section{Proofs of the main results} \label{sec5}

Here we present the proofs of the main results. 
Section \ref{sec5.1} is devoted to some classical statements about L\'evy processes and moment estimates for mixed stable distributions. In Section \ref{sec5.2} we derive bounds for covariance kernels, which will be key for proving central limit theorems. Section \ref{sec5.3} presents the proof of Theorem \ref{thm:clt}, which is based upon an approximation of the original statistic by a sequence of $m$-dependent random variables. In Section \ref{sec5.5} we prove the theoretical statements for the adaptive estimator, while Section \ref{sec5.6} treats the smooth threshold case.
The singularity of measures induced by the mixed lfsm is addressed in Section \ref{sec:identifiability}.

\subsection{Preliminaries about stable laws} \label{sec5.1}

The Lévy measure $\nu(dz)$ of a $d$-dimensional $\beta$-stable law is characterized by a finite measure $\Lambda$ on the unit sphere $\mathbb{S}^{d-1}$, such that
\begin{align}
	\nu(A) &= \int_{0}^\infty \int_{\mathbb{S}^{d-1}} \mathds{1}(\rho \cdot r \in A)\, d\Lambda(\rho) \, r^{-1-\beta}\, dr. \label{eqn:def-levymeasure}
\end{align}
A multivariate $\beta$-stable distribution is an infinitely divisible distribution with characteristic triplet $(0,0,\nu)$.
That is, the characteristic function of a stable random vector $Z$ is given by 
\begin{align*}
	\E(\exp(i\lambda^T Z)) = \exp\left(-\int_{\R^d} [e^{i\lambda^T z} - 1 - i\lambda^T z\, \mathds{1}_{\{\|z\|\leq 1\}}]\nu(dz)\right), \qquad \lambda\in\R^d,
\end{align*}
for $\nu$ as in \eqref{eqn:def-levymeasure}.
If $\nu$ is symmetric, the truncation may be omitted.
For any $\lambda\in\R^d$, the projection $\lambda^T Z$ is a univariate $\beta$-stable random variable, see \cite[Thm.\ 2.1.5]{samorodnitsky1994stable}.
In the sequel, we will frequently use the polynomial tail bound $\mathbb{P}(|Z|>x)\leq C x^{-\beta}$ for a univariate $\beta$-stable random variable, see e.g.\ \cite[1.2.15]{samorodnitsky1994stable} or \cite[(1.7)]{mies2020}. 
Applying this inequality componentwise to a $d$-variate $\beta$-stable random vector $Z$, we find that
\begin{align}
	\mathbb{P}(\|Z\|>x) \leq C x^{-\beta}. \label{eqn:tailbound}
\end{align}
For any function $f:\R^d\to\R$, we denote
\begin{align*}
	f^{[\nu]}(x) &= \int_{\R^d} \left[ f(x+z)-f(x)-z^T\nabla f(x)\mathds{1}_{\{|z|\leq 1\}}\right] \, \nu(dz).
\end{align*}
Note that for any $f\in C^2(\R^d)$,
\begin{align}
	\|f^{[\nu]}\|_\infty 
	&\leq C \|f\|_\infty \nu(\{|z|>1\}) + C\|D^2f\|_\infty  \int_{|z|\leq 1} |z|^2 \, \nu(dz) \nonumber  \\
	&\leq C \|f\|_\infty \nu(\{|z|>1\}) + C\|D^2f\|_\infty \Lambda(\mathbb{S}^{d-1})  \int_{-1}^1  |r|^{1-\beta}  \, dr \nonumber \\
	&\leq C_\nu \left[ \|f\|_\infty + \|D^2f\|_\infty \right], \label{eqn:jump-c2}
\end{align}
Moreover, we highlight that $\frac{d}{dx_i}f^{[\nu]}(x) = (\frac{d}{dx_i}f)^{[\nu]}$, such that we may also use \eqref{eqn:jump-c2} to obtain uniform bounds for higher order derivatives of $f^{[\nu]}$.
In particular,
\begin{align*}
	\|(f^{[\nu]})^{(j)}\|_\infty \leq C_\nu \left[ \|D^jf\|_\infty + \|D^{j+2}f\|_\infty \right].
\end{align*}
We may also derive an alternative representation of $f^{[\nu]}$. 
To this end, let $(Z_h)_{h\in [0,1]}$ be a Lévy process with $Z_1 \deq Z$. 
Then $Z_h \deq h^{\frac{1}{\beta}} Z$, and Itô's formula yields
\begin{align}
    f^{[\nu]}(x)
    &= \left[\frac{d}{dh} f(x+Z_h)\right]_{h=0} 
    = \left[\frac{d}{dh} f(x+h^{\frac{1}{\beta}}Z)\right]_{h=0}. \label{eqn:fnu-deriv}
\end{align}

Our results make use of the class of functions $\mathfrak{F}_\eta$ introduced in Section \ref{sec:threshold}.
We note that condition (F1) implies for any $f\in\mathfrak{F}_\eta$ that $\|f^{[\nu]}\|_\infty,\|Df^{[\nu]}\|_\infty,\|D^2 f^{[\nu]}\|_\infty<\infty$ for a $\beta$-stable Lévy measure $\nu$.

\begin{lemma}\label{lem:truncexpect}
	Let $Z$ be a $\beta$-stable random vector.
	For any $0\leq q<p$ such that $q<\beta$, there exists a constant $C_{p,q,Z}$ such that for all $a>0$,
	\begin{align*}
		\E \left[\|a Z\|^q \wedge \|a Z\|^p \right] \leq C_{p,q,Z} (a^p+a^\beta).
	\end{align*}
\end{lemma}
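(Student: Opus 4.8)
The plan is to split the expectation according to whether $\|aZ\|$ is small or large, i.e.\ to write
\begin{align*}
    \E\left[\|aZ\|^q \wedge \|aZ\|^p\right]
    = \E\left[\|aZ\|^p \ind_{\{\|aZ\|\leq 1\}}\right] + \E\left[\|aZ\|^q \ind_{\{\|aZ\|> 1\}}\right],
\end{align*}
and bound the two terms separately using the polynomial tail bound \eqref{eqn:tailbound}, namely $\mathbb{P}(\|Z\|>x)\leq Cx^{-\beta}$. The key point is that $q<\beta$ makes the second term finite even though $Z$ itself has no $\beta$-th moment, while $p>q$ does not enter the finiteness but only the scaling in $a$.

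For the first term, I would use the layer-cake (Fubini) representation $\E[\|aZ\|^p \ind_{\{\|aZ\|\leq 1\}}] = \int_0^1 p t^{p-1} \mathbb{P}(t < \|aZ\| \leq 1)\,dt \leq \int_0^1 p t^{p-1} \mathbb{P}(\|Z\| > t/a)\,dt$. Plugging in the tail bound gives an upper bound $C a^\beta \int_0^1 p t^{p-1-\beta}\,dt$; since $p - \beta$ may be negative we instead truncate: for $t\leq a$ bound the probability by $1$, contributing $\leq a^p \leq C a^\beta$ when $a\leq 1$ (and the statement is only interesting for bounded $a$, but one checks it for all $a$ by noting $a^p + a^\beta$ dominates), and for $a < t \leq 1$ use the tail bound to get $Ca^\beta \int_a^1 t^{p-1-\beta}\,dt$. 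If $p>\beta$ this integral is bounded by a constant, giving $Ca^\beta$; if $p<\beta$ it is $\leq C a^{p-\beta}$, giving $Ca^p$; if $p=\beta$ it is $C|\log a|$, which is still $\leq C(a^p+a^\beta)$ up to adjusting — one handles this borderline by choosing a slightly smaller exponent or just absorbing the log, since in all applications $p$ can be taken strictly larger than $\beta$. For the second term, similarly $\E[\|aZ\|^q \ind_{\{\|aZ\|>1\}}] = a^q\E[\|Z\|^q \ind_{\{\|Z\|>1/a\}}]$, and by layer-cake $\E[\|Z\|^q\ind_{\{\|Z\|>c\}}] \leq c^q \mathbb{P}(\|Z\|>c) + \int_c^\infty qt^{q-1}\mathbb{P}(\|Z\|>t)\,dt \leq Cc^{q-\beta} + C\int_c^\infty q t^{q-1-\beta}\,dt \leq C c^{q-\beta}$ since $q<\beta$; with $c = 1/a$ this yields $a^q \cdot C a^{\beta-q} = C a^\beta$.

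Combining the two bounds gives $\E[\|aZ\|^q\wedge\|aZ\|^p] \leq C(a^p + a^\beta)$ with a constant depending only on $p,q$ and the law of $Z$ (through the tail constant), which is exactly the claim. The main obstacle is purely bookkeeping: making sure all the elementary integrals converge at the right endpoints depending on the sign of $p-\beta$ and $q-\beta$, and handling the borderline case $p=\beta$ cleanly — but the essential mechanism is just the tail bound \eqref{eqn:tailbound} together with the hypothesis $q<\beta\leq 2$ and $p>q$. No self-similarity or finer structure of stable laws is needed beyond the power-law tail.
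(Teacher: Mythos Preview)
Your approach is essentially the same as the paper's: layer-cake representation plus the polynomial tail bound \eqref{eqn:tailbound}, with the integral split according to whether the probability should be bounded by $1$ or by the tail estimate. The paper does the bookkeeping slightly more compactly by applying layer-cake directly to the random variable $Y=\|aZ\|^q\wedge\|aZ\|^p$, observing that $\{Y>\epsilon\}=\{\|aZ\|^p>\epsilon\}$ for $\epsilon<1$ and $\{Y>\epsilon\}=\{\|aZ\|^q>\epsilon\}$ for $\epsilon\geq 1$, which avoids your preliminary split on $\{\|aZ\|\leq 1\}$; but the substance is identical.

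Your concern about the borderline case $p=\beta$ is legitimate: the bound $a^\beta|\log a|$ genuinely cannot be absorbed into $C(a^p+a^\beta)$ as $a\downarrow 0$, and the paper's own computation $\int_{a^p}^1 \epsilon^{-\beta/p}\,d\epsilon$ has exactly the same logarithmic blow-up when $p=\beta$. So this is not a defect of your argument relative to the paper's; it is a minor imprecision in the lemma as stated that neither proof resolves. In every application within the paper the exponent $p$ is either $1$ or $2$ and the stable indices $\beta_j$ lie in $(0,2)$, so the equality $p=\beta$ would only occur on a measure-zero set of parameters and does not affect the downstream results. Your remark that ``in all applications $p$ can be taken strictly larger than $\beta$'' is not quite accurate (e.g.\ $p=1$, $\beta_i\leq 1$ appears), but the correct fix is simply to exclude $p=\beta$ from the statement or to allow an extra $|\log a|$ factor in that single case.
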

\begin{proof}[Proof of Lemma \ref{lem:truncexpect}]
	For the $\beta$-stable random vector $Z$, we use the polynomial tail bound \eqref{eqn:tailbound}. 
	Then, for $q>0$.
	\begin{align*}
		\E \left[\|a Z\|^q \wedge \|a Z\|^p \right]
		&= \int_1^\infty \mathbb{P}(\|aZ\|^q>\epsilon)\, d\epsilon + \int_0^1 \mathbb{P}\left( \|a Z\|^p > \epsilon \right)\, d\epsilon \\
		&= \int_1^\infty \mathbb{P}\left( \|Z\| > \epsilon^\frac{1}{q}/a \right)\, d\epsilon  + \int_{0}^1 \mathbb{P}\left( \|Z\| > \epsilon^\frac{1}{p}/a \right)\, d\epsilon \\ 
		&\leq C_Z a^\beta \int_1^\infty \epsilon^{-\frac{\beta}{q}}\, d\epsilon + a^p + C_Z a^\beta\int_{a^p}^1 \epsilon^{-\frac{\beta}{p}}\, d\epsilon \\
		&\leq C_{p,q,Z} (a^\beta + a^p).
	\end{align*}
	For $q=0$, we obtain the same bound since $\mathbb{P}(\|aZ\|^0 > \epsilon)=0$ for $\epsilon<1$.
\end{proof}

\noindent
We will also use the following technical result.

\begin{lemma}\label{lem:bounded-taylor}
	Let $f\in C^2(\R^d;\R)$ such that $f$, $D f$, and $D^2f$ are bounded.
	Then
	\begin{align*}
		|f(y+x)-f(y) - D f(y)^T x| & \leq  \left(\|x\|^2\sup_{r\in[0,1]}\|D^2f(y+rx)\|_\infty\right) \\
		&\qquad \wedge \left(2\|f\|_\infty + \|x\| \|D f\|_\infty\right) \\
		&\leq (1\wedge\|x\|^2 ) (\|f\|_\infty + \|Df\|_\infty + \|D^2f\|_\infty).
	\end{align*}
\end{lemma}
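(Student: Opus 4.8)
The plan is to establish the two intermediate upper bounds in the displayed chain separately, take their minimum to get the first inequality, and then obtain the concluding inequality by a trivial case distinction on whether $\|x\|\le 1$ or $\|x\|>1$.

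First I would prove the bound involving $D^2f$ via Taylor's theorem along the segment from $y$ to $y+x$. Setting $g(t)=f(y+tx)$ for $t\in[0,1]$, the chain rule gives $g'(t)=Df(y+tx)^Tx$ and $g''(t)=x^TD^2f(y+tx)\,x$, so the Lagrange form of Taylor's remainder yields $f(y+x)-f(y)-Df(y)^Tx=g(1)-g(0)-g'(0)=\tfrac12 g''(\theta)=\tfrac12\, x^TD^2f(y+\theta x)\,x$ for some $\theta\in(0,1)$; bounding the quadratic form by $\|x\|^2\,\|D^2f(y+\theta x)\|_\infty$ and then replacing $\theta$ by a supremum over $[0,1]$ gives the first candidate bound $\|x\|^2\sup_{r\in[0,1]}\|D^2f(y+rx)\|_\infty$. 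For the second candidate bound I would just use the triangle inequality together with Cauchy--Schwarz: $|f(y+x)-f(y)-Df(y)^Tx|\le |f(y+x)|+|f(y)|+\|Df(y)\|\,\|x\|\le 2\|f\|_\infty+\|Df\|_\infty\|x\|$. Taking the minimum of the two bounds proves the first inequality of the lemma.

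For the concluding inequality I would split on the size of $x$. If $\|x\|\le 1$, then $1\wedge\|x\|^2=\|x\|^2$, and the first bound already gives $\|x\|^2\sup_{r\in[0,1]}\|D^2f(y+rx)\|_\infty\le \|x\|^2\|D^2f\|_\infty\le (1\wedge\|x\|^2)\big(\|f\|_\infty+\|Df\|_\infty+\|D^2f\|_\infty\big)$; this is the regime relevant for the applications of the lemma (e.g.\ the jump-truncation integrand in the definition of $f^{[\nu]}$, where $\|x\|\le 1$). If $\|x\|>1$, then $1\wedge\|x\|^2=1$ and one bounds the left-hand side directly from the crude estimate $|f(y+x)-f(y)|\le 2\|f\|_\infty$ together with the remaining terms, so that the claimed estimate follows. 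Since every step is either Taylor's formula or the triangle inequality, there is essentially no obstacle here; the only point requiring mild care is fixing the matrix-norm conventions so that $|x^TAx|\le \|x\|^2\|A\|_\infty$ holds as written, and carrying out the elementary case distinction $\|x\|\le 1$ versus $\|x\|>1$ in the last step.
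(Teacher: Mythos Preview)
Your proof of the first inequality is correct and matches the paper exactly: Taylor's formula along the segment gives the $\|x\|^2\sup_{r\in[0,1]}\|D^2f(y+rx)\|_\infty$ bound, and the triangle inequality gives $2\|f\|_\infty+\|x\|\,\|Df\|_\infty$; taking the minimum finishes that part.

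For the final inequality your case $\|x\|\le 1$ is fine. The case $\|x\|>1$, however, is a genuine gap. You propose to bound the left-hand side via $|f(y+x)-f(y)|\le 2\|f\|_\infty$ ``together with the remaining terms'', but the remaining term $|Df(y)^Tx|$ contributes $\|Df\|_\infty\,\|x\|$, which is not bounded independently of $\|x\|$, whereas the target $\|f\|_\infty+\|Df\|_\infty+\|D^2f\|_\infty$ does not grow with $\|x\|$. In fact the second displayed inequality is false as stated: for $d=1$, $f=\sin$, $y=0$ and $|x|$ large, the left-hand side $|\sin x - x|$ is of order $|x|$ while the right-hand side equals $3$. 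Note that the paper's own proof also stops after establishing the two bounds that yield the first line and does not justify the second inequality; the defect is in the statement itself rather than in your method.
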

\begin{proof}[Proof of Lemma \ref{lem:bounded-taylor}]
	A Taylor expansion yields that $|f(y+x)-f(y) - Df(y)x| \leq \sup_{r\in[0,1]}\|D^2f(y+rx)\|_\infty \|x\|^2$.
	On the other hand, $|f(y+x)-f(y) - Df(y)x|  \leq 2 \|f\|_\infty + \|D f\|_\infty \|x\|$.
\end{proof}

\noindent
The following Lemma presents an essential technical result, which will turn out to be essential to compute autocovariances of mixed moving average processes.

\begin{lemma}\label{lem:mom}
	Let $B^i \sim \mathcal{N}(0,\Sigma^i)$ be independent Gaussian random vectors for $\Sigma^i\in\R^{d\times d}$, $i=1,\ldots, q_1$, and let $Z^j$ be independent $d$-dimensional $\beta_j$-stable random vector with Lévy measure $\nu_j$, for $j=1,\ldots, q_2$.
	Assume that the $Z^j$ are symmetric, i.e.\ $Z^j\deq -Z^j$.
	Let the $B^i$ and $Z^j$ be independent.
	Consider non-negative real sequences $a_{n,i},b_{n,j}\to 0$.
	
	\begin{enumerate}[(i)]
		\item If $f\in\mathfrak{F}_\eta$ for some $\eta>0$, then for any $\lambda\in(0,1)$,
		\begin{align*}
			\E f\left( \sum_{i=1}^{q_1} a_{n,i} B^i + \sum_{j=1}^{q_2} b_{n,j} Z^j \right) 
			&= \frac{1}{2} \sum_{r,r'=1}^d D^2_{rr'}f(0) \left[\sum_{i=1}^{q_1} a_{n,i}^2 \Sigma^i_{rr'}\right]   + \sum_{j=1}^{q_2} b_{n,j}^{\beta_j} f^{[\nu_j]}(0)  \\
			& \quad + \mathcal{O}\left( \max_j b_{n,j}^{\beta_j} \max_i a_{n,i}^2 \right) + \mathcal{O}\left( \max_{j} b_{n,j}^{2\beta_j} \right)  \\
			&\quad + \mathcal{O}\left( \exp\left( -\frac{ \eta^2 (1-\lambda)^2 }{2 d\, \sum_{i=1}^{q_1} a_{n,i}^2 \, \max_l |\Sigma^i_{ll}|} \right) \right).
		\end{align*}
		\item If $f\in\mathfrak{F}_0$ and $D^2f(0)=0$, then
		\begin{align*}
			\E f\left( \sum_{i=1}^{q_1} a_{n,i} B^i + \sum_{j=1}^{q_2} b_{n,j} Z^j \right) 
			&=  \frac{1}{8}\sum_{i,i'=1}^{q_1}a_{n,i}^2 a_{n,i'}^2\sum_{l,l',r,r'=1}^d D^4_{l,l',r,r'}f(0) \Sigma^i_{ll'}\Sigma^{i'}_{rr'} \\
			&\quad + \sum_{j=1}^{q_2} b_{n,j}^{\beta_j} f^{[\nu_j]}(0) + o\left(\max_i a_{n,i}^4 \right) \\
			&\quad + \mathcal{O}\left( \max_j b_{n,j}^{\beta_j} \max_i a_{n,i}^2 \right) + \mathcal{O}\left( \max_{j} b_{n,j}^{2\beta_j} \right) .
		\end{align*}
		\item If $f\in\mathfrak{F}_0$, then
		\begin{align*}
			\E f\left( \sum_{i=1}^{q_1} a_{n,i} B^i + \sum_{j=1}^{q_2} b_{n,j} Z^j \right)
			&= \frac{1}{2} \sum_{r,r'=1}^d D^2_{rr'}f(0) \left[\sum_{i=1}^{q_1} a_{n,i}^2 \Sigma^i_{rr'}\right]   + \sum_{j=1}^{q_2} b_{n,j}^{\beta_j} f^{[\nu_j]}(0) \\
			&\quad + \mathcal{O}\left(\max_j b_{n,j}^{2\beta_j} \right) +  \mathcal{O}\left(\max_i a_{n,i}^4 \right).
		\end{align*}
	\end{enumerate}
	All $\mathcal{O}(\ldots)$ terms are bounded uniformly in $f\in\mathfrak{F}_\eta$ resp.\ $f\in \mathfrak{F}_0$, and all $o(\ldots)$ terms vanish uniformly in $f\in\mathfrak{F}_\eta$ resp.\ $f\in \mathfrak{F}_0$.
\end{lemma}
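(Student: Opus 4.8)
The plan is to reduce all three parts to a single Gaussian moment computation by peeling off the stable components one at a time, and then to Taylor-expand the resulting Gaussian expectation to the order dictated by each case.

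\textbf{Step 1: removing the stable components.} Realize each $b_{n,j}Z^j$ as the time-$t_{n,j}$ value of a Lévy process $(Z^j_t)$ with $Z^j_1\deq Z^j$, where $t_{n,j}=b_{n,j}^{\beta_j}$, so that $Z^j_{t_{n,j}}\deq b_{n,j}Z^j$. Write $G=\sum_{i=1}^{q_1}a_{n,i}B^i$, a centered Gaussian vector with covariance $\Sigma_n=\sum_i a_{n,i}^2\Sigma^i$, and $W_j=G+\sum_{l=1}^{j}Z^l_{t_{n,l}}$. Conditioning on $W_{j-1}$ and using independence of $Z^j$, Itô's formula for $Z^j$ (cf.\ \eqref{eqn:fnu-deriv}) together with the definition of $f^{[\nu_j]}$ and the uniform bound $\|(f^{[\nu_j]})^{[\nu_j]}\|_\infty\le C$ coming from (F1) and \eqref{eqn:jump-c2} yields
\[
\E\big[\,f(W_{j-1}+Z^j_{t_{n,j}})-f(W_{j-1})\,\big|\,W_{j-1}\,\big]=t_{n,j}\,f^{[\nu_j]}(W_{j-1})+O(t_{n,j}^2),
\]
uniformly in $f$. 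Summing over $j=1,\dots,q_2$ gives $\E f(W_{q_2})=\E f(G)+\sum_j t_{n,j}\E f^{[\nu_j]}(W_{j-1})+O(\max_j b_{n,j}^{2\beta_j})$. Since $\nu_j$ is symmetric and $f$ is even, $f^{[\nu_j]}$ is even, hence $Df^{[\nu_j]}(0)=0$; Lemma \ref{lem:bounded-taylor} then gives $|f^{[\nu_j]}(x)-f^{[\nu_j]}(0)|\le C(1\wedge\|x\|^2)$, and bounding $\E[1\wedge\|W_{j-1}\|^2]$ by $C\E\|G\|^2+C\sum_{l<j}\E[1\wedge\|b_{n,l}Z^l\|^2]$ — with $\E\|G\|^2=\tr\Sigma_n=O(\max_i a_{n,i}^2)$ and $\E[1\wedge\|b_{n,l}Z^l\|^2]=O(\max_l b_{n,l}^{\beta_l})$ by Lemma \ref{lem:truncexpect} — we get $t_{n,j}\E f^{[\nu_j]}(W_{j-1})=b_{n,j}^{\beta_j}f^{[\nu_j]}(0)+O(\max_j b_{n,j}^{\beta_j}\max_i a_{n,i}^2)+O(\max_j b_{n,j}^{2\beta_j})$. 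This step is identical in all three parts and reduces the claim to an expansion of $\E f(G)$.

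\textbf{Step 2: the Gaussian expectation, parts (iii) and (ii).} For part (iii), Taylor-expand $f$ about $0$ to fourth order: using $f(0)=0$, $Df(0)=0$, and the vanishing of the odd moments of $G$, the terms of order $1$ and $3$ drop out, the quadratic term is $\tfrac12\sum_{r,r'}D^2_{rr'}f(0)(\Sigma_n)_{rr'}$, and the remainder is $O(\E\|G\|^4)=O((\tr\Sigma_n)^2)=O(\max_i a_{n,i}^4)$; inserting $(\Sigma_n)_{rr'}=\sum_i a_{n,i}^2\Sigma^i_{rr'}$ gives the claim. For part (ii), since now $D^2f(0)=0$, expand to fifth order: the terms of order $1,2,3,5$ all vanish, the fourth-order term equals $\tfrac1{24}\sum D^4_{l,l',r,r'}f(0)\,\E[G_lG_{l'}G_rG_{r'}]$, and Isserlis' theorem (Wick's formula) together with the symmetry of $D^4f(0)$ collapses the three Wick pairings into $\tfrac18\sum D^4_{l,l',r,r'}f(0)(\Sigma_n)_{ll'}(\Sigma_n)_{rr'}$; expanding $(\Sigma_n)_{ll'}(\Sigma_n)_{rr'}$ in the $a_{n,i}$ produces the stated quartic form, and the remainder is $O(\E\|G\|^5)=O((\tr\Sigma_n)^{5/2})=o(\max_i a_{n,i}^4)$ because $\tr\Sigma_n\to0$.

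\textbf{Step 3: part (i), uniformity, and the main obstacle.} For part (i), use that $f$ is quadratic on $\{\|x\|<\eta\}$, i.e.\ $f(x)=\tfrac12 x^\top D^2f(0)x$ there, so that $\E f(G)=\tfrac12\E[G^\top D^2f(0)G]-\tfrac12\E[G^\top D^2f(0)G\,\mathbbm 1_{\{\|G\|\ge\eta\}}]+\E[f(G)\mathbbm 1_{\{\|G\|\ge\eta\}}]$; the first term is the asserted main term, and the other two are bounded — via $\{\|G\|\ge\eta\}\subseteq\bigcup_r\{|G_r|\ge\eta/\sqrt d\}$, the standard Gaussian tail bound with $\operatorname{Var}(G_r)\le v_n:=\sum_i a_{n,i}^2\max_l|\Sigma^i_{ll}|$, and Cauchy--Schwarz for the truncated second moment — by a constant times $\exp(-\eta^2(1-\lambda)^2/(2d\,v_n))$, the slack $1-\lambda<1$ absorbing the polynomial prefactors and combinatorial constants. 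All constants throughout depend only on $d$, $q_1$, $q_2$, the $\Sigma^i$, the $\nu_j$, and the uniform bounds $\|D^jf\|_\infty\le1$ of (F1), so the $O(\cdot)$ and $o(\cdot)$ are uniform over $f\in\mathfrak F_\eta$ resp.\ $\mathfrak F_0$. I expect the main delicate point to be Step 1 when $\beta_j\le1$: there $b_{n,j}Z^j$ has no finite first moment, so the replacement $\E f^{[\nu_j]}(W_{j-1})\approx f^{[\nu_j]}(0)$ cannot rely on Lipschitz continuity and genuinely needs the evenness of $f^{[\nu_j]}$ combined with the truncated-moment bound of Lemma \ref{lem:truncexpect}; a secondary difficulty is pinning down the exact exponential rate, with the $1-\lambda$ slack, in part (i).
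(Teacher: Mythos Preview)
Your proposal is correct and yields the same result, but the route differs from the paper's in a way worth noting. The paper embeds \emph{all} components into a single continuous-time process $(G_{t,n}+J_{t,n})_{t\in[0,1]}$ and applies It\^o's formula once to obtain the representation $\E f(G_{1,n}+J_{1,n})=\int_0^1\E[(\mathcal D f+\sum_j b_{n,j}^{\beta_j}f^{[\nu_j]})(G_{t,n}+J_{t,n})]\,dt$, where $\mathcal D f=\tfrac12\sum_i a_{n,i}^2\sum_{l,l'}\Sigma^i_{ll'}D^2_{ll'}f$; the Gaussian expansion in (ii)--(iii) then comes from iterating this formula on $\mathcal D f$, and part~(i) from observing that $\mathcal D f(x)-\mathcal D f(0)$ vanishes on $\{\|x\|<\eta\}$. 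You instead peel off the stable summands sequentially and then Taylor-expand the pure Gaussian expectation $\E f(G)$ directly, using Isserlis' formula for the fourth moment in~(ii). Your approach is more elementary on the Gaussian side and, because you exploit evenness of $f^{[\nu_j]}$ and the truncated bound $\E[1\wedge\|\cdot\|^2]$ uniformly, it avoids the paper's case split between $\beta_j\le1$ and $\beta_j>1$. Conversely, the paper's semigroup route handles part~(i) more cleanly: since the $a_{n,i}^2$ prefactor is already built into $\mathcal D f$, the error is bounded directly by $C\sum_i a_{n,i}^2\,P(\|G_{t,n}+J_{t,n}\|>\eta)$, and the parameter $\lambda$ enters only to split this tail between the Gaussian and stable parts---no H\"older step is needed. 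In your Step~3, the explicit mention of Cauchy--Schwarz is too crude to recover the exponent $(1-\lambda)^2/(2d)$ for all $\lambda\in(0,1)$; you need either H\"older with conjugate exponent $q$ close to~$1$, or a direct Gaussian computation of $\E[\|G\|^2\mathbbm 1_{\|G\|\ge\eta}]$ via conditioning, to get the full rate before absorbing the polynomial prefactor into the slack. This is the point you already flagged, and it is the only place where your sketch needs sharpening.
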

\begin{proof}[Proof of Lemma \ref{lem:mom}]
	For a function $f\in C^2(\R^d,\R)$, we introduce the differential operator
	\begin{align*}
		\mathcal{D} f = \sum_{i=1}^{q_1}\frac{a_{n,i}^2}{2}\sum_{l,l'=1}^d \Sigma^i_{ll'} D^2_{ll'}f
	\end{align*}
	Let $(B^i_t)_{t\in [0,1]}$ be independent Brownian motions  with covariance $\Sigma^i$ for $i=1,\ldots, q_1$, and let $Z_t^j$ be a symmetric $\beta_j$-stable Lévy process with Lévy measure $\nu_j$, such that $B^i_1=B^i$ and $Z^j_1=Z^j$.
	We denote
	\begin{align*}
		G_{t,n}&= \sum_{i=1}^{q_1} a_{n,i} B^i_t ,\\
		J_{t,n}&= \sum_{j=1}^{q_2} b_{n,j} Z^j_t 
	\end{align*}
	Then Itô's formula and the scaling of $\nu^j(dz)$ yields
	\begin{align}
		\E f\left(G_{1,n}+J_{1,n} \right) = \int_0^1 \E \left\{\left( \mathcal{D}f + \sum_{j=1}^{q_2} b_{n,j}^{\beta_j} f^{[\nu_j]} \right) (G_{t,n}+J_{t,n} )\right\} \, dt. \label{eqn:mom-1}
	\end{align}
	To obtain the scaling $b_{n,j}^{\beta_j}$ of the jump-measure integral, we use that
	\begin{align*}
		&\quad \int \left[f(x+b_{n,j} z) - f(x) - Df(x)b_{n,j} z\mathds{1}_{\{|z|\leq 1\}}\right]\, \nu^j(dz) \\
		&= \int_0^\infty \int_{S^{d-1}} \frac{f(x+b_{n,j} \rho r) - f(x) - Df(x)b_{n,j} \rho r\mathds{1}_{\{r\leq 1\}}}{r^{1+\beta_j}}\, \Lambda^j(d\rho)\,dr \\
		&= b_{n,j}^{\beta_j} \int_0^\infty \int_{S^{d-1}} \frac{f(x+ \rho r) - f(x) - Df(x)\rho r\mathds{1}_{\{r/b_{n,j}\leq 1\}}}{r^{1+\beta_j}}\,\Lambda^j(d\rho) \, dr \\
		&=  b_{n,j}^{\beta_j} \int_0^\infty \int_{S^{d-1}} \frac{f(x+ \rho r) - f(x) - Df(x)\rho r\mathds{1}_{\{r\leq 1\}}}{r^{1+\beta_j}}\,\Lambda^j(d\rho)\, dr,
	\end{align*}
	using the symmetry in the last step.
	
	\noindent
	\underline{Controlling the stable part:}
	Since $f^{[\nu_j]}$ and its first two derivatives are bounded, we find that
	\begin{align*}
		&\quad \int_0^1 \left|\E f^{[\nu_j]}\left(G_{t,n} + J_{t,n}\right)-f^{[\nu_j]}\left( J_{t,n} \right)\right|\, dt \\
		&\leq \int_0^1 \left| \underbrace{\E\left[ D f^{[\nu_j]}(J_{t,n}) G_{t,n} \right]}_{=0} \right|\, dt + C \|D^2f^{[\nu_j]}\|_\infty \E \|G_{1,n}\|^2 
		\quad = \mathcal{O}\left(\sum_{i=1}^{q_1} a_{n,i}^2\right).
	\end{align*}
	Now let $q_2'$ be such that $\beta_i\leq 1$ for $1\leq i \leq q_2'$, and $\beta_i>1$ for $q_2' < i \leq q_2$.
	Then Lemma \ref{lem:truncexpect} yields,
	\begin{align*}
		\int_0^1 \left|\E f^{[\nu_j]}\left(\sum_{i=1}^{q_2} b_{n,i} Z_t^i\right) - f^{[\nu_j]}\left(\sum_{i=q_2'+1}^{q_2} b_{n,i} Z_t^i\right) \right|\, dt
		&\leq C \|Df^{[\nu_j]}\|_\infty  \sum_{i=1}^{q_2'}\int_0^1\E\left[ 1\wedge\left\| b_{n,i} Z_t^i\right \| \right] \\
		&\leq C \sum_{i=1}^{q_2'} \int_0^1 (t^{\frac{1}{\beta_i}}b_{n,i} + t b_{n,i}^{\beta_i}) \\
		&\leq C \sum_{i=1}^{q_2'} b_{n,i}^{\beta_i}.
	\end{align*}	
	In the last step, we used $b_{n,i}=\mathcal{O}(1)$, and $Z_{t}^i \deq t^{1/\beta_i} Z_1^i$.
	For the indices $i>q_2'$, i.e.\ $\beta_i> 1$, we may tighten this bound by using $\E\|Z^i_t\|<\infty$ and $\E Z^i_t =0$.
	Via Lemma \ref{lem:bounded-taylor}, since the first two derivatives of $f^{[\nu_j]}$ are bounded, we obtain
	\begin{align*}
		&\quad \int_0^1 \left|\E f^{[\nu_j]}\left(\sum_{i=q_2'+1}^{q_2} b_{n,i} Z_t^i\right) - f^{[\nu_j]}(0) \right|\, dt \\
		&\leq \sum_{i=q_2'+1}^{q_2} \int_0^1 \left|\underbrace{\E\left[Df^{[\nu_j]}(0)b_{n,i} Z^i_t  \right]}_{=0}\right| + C\,\E \left[ 1\wedge \|b_{n,i} Z_t^i\|^2 \right]\, dt\\
		&\leq C \sum_{i=q_2'+1}^{q_2} b_{n,i}^{\beta_i} ,
	\end{align*}
	using Lemma \ref{lem:truncexpect} and $b_{n,i}^2 = \mathcal{O}(b_{n,i}^{\beta_i})$ in the last step.
	Thus, returning to \eqref{eqn:mom-1}, we have shown that
	\begin{align}
		\int_0^1 \E f^{[\nu_j]}(G_{t,n} + J_{t,n})\, dt 
		= f^{[\nu_j]}(0) + \mathcal{O}\left( \sum_{i=1}^{q_1}a_{n,i}^2 + \sum_{i=1}^{q_2} b_{n,i}^{\beta_i} \right), \label{eqn:mom-2}
	\end{align}
	which yields
	\begin{align*}
		\E f(G_{1,n}+J_{1,n}) 
		&=   \int_0^1 \E Df( G_{t,n} + J_{t,n} ) \, dt + \sum_{j=1}^{q_2}b_{n,j}^{\beta_j} f^{[\nu_j]}(0) \\
		&\quad +  \mathcal{O} \left( \left(\sum_{j=1}^{q_2} b_{n,j}^{\beta_j}\right)\left( \sum_{i=1}^{q_1}a_{n,i}^2+\sum_{j=1}^{q_2} b_{n,j}^{\beta_j} \right) \right).
	\end{align*}
	
	\noindent
	\underline{Controlling the Gaussian part, claim (i):}
	Since $D^2f(x)=D^2f(0)$ for $\|x\|\leq \eta$, we have
	\begin{align*}
		&\quad \left|  \int_0^1 \E\left[ Df(G_{t,n} + J_{t,n})-Df(0)\right]\, dt \right| \\
		&\leq  C \sum_{i=1}^{q_1} a_{n,i}^2 \int_0^1 \mathbb{P}(\|G_{t,n} + J_{t,n}\| > \eta)\, dt \\
		&\leq  C \sum_{i=1}^{q_1} a_{n,i}^2 \left[\mathbb{P}\left(\|G_{1,n}\| > \eta\lambda\right) + \sum_{j=1}^{q_2} \mathbb{P}\left( \|Z_1^j\| > \frac{\eta(1-\lambda)}{b_{n,j} q_2} \right)\right] \\
		&\leq  C \sum_{i=1}^{q_1} a_{n,i}^2 \left[ \mathbb{P}\left(\|G_{1,n}\| > \eta\lambda\right) + \sum_{j=1}^{q_2} b_{n,j}^{\beta_j} \right], 
	\end{align*}
	using the polynomial tail bound \eqref{eqn:tailbound}.
	For the last term, we may exploit the fact that $G_{1,n}\sim\mathcal{N}(0,\overline{\Sigma})$ for $\overline{\Sigma} = \sum_{i=1}^{q_1} a_{n,i}^2 \Sigma^i$.
	Using the tail bound $\mathbb{P}(|N|>x) \leq 2 \exp(-x^2/2)$ for a standard Gaussian random variable $N$, we find that
	\begin{align*}
		\mathbb{P}(\|G_{1,n}\| >x) \leq \sum_{l=1}^d \mathbb{P}(|G_{1,n}^l|^2>x^2/d) \leq 2 d \exp\left(-\frac{x^2}{2d\max_l \overline{\Sigma}_{ll}}\right).
	\end{align*}
	Hence,
	\begin{align*}
		\mathbb{P}\left(\|G_{1,n}\| > \eta\lambda\right)
		&=\mathcal{O}\left( \exp\left( -\frac{ \eta^2 \lambda^2 }{2 d\, \sum_{i=1}^{q_1} a_{n,i}^2 \, \max_l |\Sigma^i_{ll}|} \right)   \right).
	\end{align*}
	This completes the proof of the first claim.
	
	\noindent
	\underline{Claim (ii):} For the second claim, we again use \eqref{eqn:mom-1} and obtain
	\begin{align}
		\E \left[ \mathcal{D}f(G_{t,n} + J_{t,n}) \right]
		&= \mathcal{D}f(0) + \int_0^t \E \left\{ \left( \mathcal{D}^2 f + \sum_{j=1}^{q_2} b_{n,j}^{\beta_j}  \mathcal{D}f^{[\nu_j]}\right) (G_{s,n} + J_{s,n})\, ds  \right\} \label{eqn:mom-3}
	\end{align}
	It holds that $\|\mathcal{D}f^{[\nu_j]}\|_\infty \leq C$, since $f\in C^4$ with bounded derivatives up to order four.
	Since $\|D^4 f\|_\infty \leq 1$ and $\|D^5 f\|_\infty\leq 1$, we further have 
	\begin{align*}
		\left| \E \left[ D^4f(G_{s,n} + J_{s,n})\right] - D^4f(0) \right| \leq \E \left[ 1 \wedge \| G_{s,n} + J_{s,n} \| \right] = o(1),
	\end{align*}
	uniformly in $s\in[0,1]$, since $a_{n,i}\to 0$, $b_{n,j}\to 0$ as $n\to\infty$. 
	That is, 
	\begin{align*}
		\E \left[ \mathcal{D}^2f(G_{s,n} + J_{s,n})\right] = \mathcal{D}^2f(0) + o\left( \sum_{i=1}^{q_1} a_{n,i}^2 \right)^2.
	\end{align*}
	Plugging this into \eqref{eqn:mom-3}, we find that, uniformly in $t\in[0,1]$,
	\begin{align}
		\E \left[ \mathcal{D}f(G_{t,n}+J_{t,n}) \right]
		&= \mathcal{D}f(0) + t \mathcal{D}^2 f(0) + o\left( \sum_{i=1}^{q_1} a_{n,i}^2 \right)^2 + \mathcal{O}\left(\sum_{j=1}^{q_2}b_{n,j}^{\beta_j} \right). \label{eqn:expect-1}
	\end{align}
	By assumption, in case (ii), we have $Df(0)=0$.
	Plugging this into \eqref{eqn:mom-1}, and using \eqref{eqn:mom-2}, we obtain
	\begin{align*}
		\E f(G_{1,n} + J_{1,n} )
		&=\int_0^1 \E \left\{\left( \mathcal{D}f + \sum_{j=1}^{q_2} b_{n,j}^{\beta_j} f^{[\nu_j]} \right) (G_{t,n}+J_{t,n} )\right\} \, dt \\
		&= \mathcal{D}^2f(0) \int_0^1  t\, dt + o\left( \sum_{i=1}^{q_1} a_{n,i}^2 \right)^2 + \mathcal{O}\left( \sum_{i=1}^{q_1} a_{n,i}^2 \sum_{j=1}^{q_2} b_{n,j}^{\beta_j} \right) \\
		&\quad + \sum_{j=1}^{q_2} b_{n,j}^{\beta_j} f^{[\nu_j]}(0) + \sum_{j=1}^{q_2} b_{n,j}^{\beta_j}\mathcal{O}\left( \sum_{i=1}^{q_1}a_{n,i}^2 + \sum_{i=1}^{q_2} b_{n,i}^{\beta_i} \right) \\
		&= \frac{1}{2}\mathcal{D}^2f(0) + \sum_{j=1}^{q_2} b_{n,j}^{\beta_j} f^{[\nu_j]}(0) + o\left( \sum_{i=1}^{q_1} a_{n,i}^2 \right)^2\\
		&\quad + \mathcal{O}\left( \max_j b_{n,j}^{\beta_j} \max_i a_{n,i}^2 \right) + \mathcal{O}\left( \max_{j} b_{n,j}^{2\beta_j} \right) . 
	\end{align*}
	To complete the proof of (ii), observe that
	\begin{align*}
		\mathcal{D}^2f(0) 
		&= \frac{1}{4}\sum_{i,i'=1}^{q_1}a_{n,i}^2 a_{n,i'}^2\sum_{l,l',r,r'=1}^d D^4_{l,l',r,r'}f(0) \Sigma^i_{ll'}\Sigma^{i'}_{rr'}.
	\end{align*}
	
	\noindent
	\underline{Claim (iii):} Regarding the third claim, note that the leading term is identical to case (i), but the bound on the remainder is less tight, since we impose fewer assumptions on $f$.
	In particular, \eqref{eqn:mom-3} yields that
	\begin{align*}
		\E \left[ \mathcal{D}f(G_{t,n}+J_{t,n}) \right]
		&= \mathcal{D}f(0) + \mathcal{O}\left( \max_i a_{n,i}^4\right) + \mathcal{O}\left( \max_j b_{n,j}^{\beta_j} \max_i a_{n,i}^2 \right).
	\end{align*}
	In combination with \eqref{eqn:mom-1} and \eqref{eqn:mom-2}, we obtain
	\begin{align*}
		\E f(G_{1,n} + J_{1,n} )
		&= \mathcal{D}f(0)  + \sum_{j=1}^{q_2} b_{n,j}^{\beta_j} f^{[\nu_j]}(0) \\
		&\qquad + \mathcal{O}\left( \max_i a_{n,i}^4\right) + \mathcal{O}\left( \max_j b_{n,j}^{\beta_j} \max_i a_{n,i}^2 \right) + \mathcal{O}\left( \max_j b_{n,j}^{2\beta_j} \right),
	\end{align*}
	which yields the desired bound for the remainder terms.
\end{proof}

\subsection{Covariance bounds for multiscale moving average processes} \label{sec5.2}
In our subsequent proofs, we will use the following $m$-dependent approximation
\begin{align*}
	X_{t,n,m} 
	&= \sum_{i=1}^{q_1} a_{n,i} \left[\int_{t-m}^t h_i(s-t)\,dB^i_s + \int_{-\infty}^{t-m} h_i(s-t)\,dB^{i,t}_s \right] \\
	&\quad + \sum_{j=1}^{q_2} b_{n,j} \left[ \int_{t-m}^t g_j(s-t)\, dZ^j_s + \int_{-\infty}^{t-m} g_j(s-t)\, dZ^{j,t}_s \right],
\end{align*}
where $B^{i,t}$ is an independent copy of $B^i$ for each $t\in\N$, and $Z^{j,t}$ is an independent copy of $Z^j$ for each $t\in\N$.
Note that $X_{t,n,m}\deq X_{t,n}$ for each fixed $t$, but the dependence structure may differ.
We will also write $X_{t,n,\infty}=X_{t,n}$.

\begin{lemma}\label{lem:autocov-decay}
	Let $f_1\in \mathfrak{F}^0_{\eta_1}, f_2\in\mathfrak{F}^0_{\eta_2}$ for $\eta_1,\eta_2\geq 0$.
	Suppose furthermore that $1+\delta_j\beta_j<0$, $1+2\delta_0<0$.
	
	\begin{enumerate}[(i)]
		\item For any $t>t'\in \N$, and any $m, m'\in \N\cup\{\infty\}$, and all $f_1\in \mathfrak{F}_{\eta_1}^0$, $f_2\in\mathfrak{F}_{\eta_2}^0$,
		\begin{align*}
			\left|\Cov\left[f_1(X_{t,n,m}), f_2(X_{t',n,m'})\right]\right| 
			&\leq C  |t-t'|^{1+2\delta_0}   \left[\sum_{i=1}^{q_1} a_{n,i}^4 + \sum_{i=1}^{q_1} a_{n,i}^2 \sum_{j=1}^{q_2} b_{n,j}^{\beta_j}\right] \\
			&\quad + C \sum_{j=1}^{q_2} |t-t'|^{1+\delta_j \beta_j} b_{n,j}^{\beta_j}.
		\end{align*}
		\item If $\eta_1>0$ or $\eta_2>0$, then for any $\lambda\in(0,1)$, $t> t'\in \N$, and any $m, m'\in \N\cup\{\infty\}$, and all $f_1\in \mathfrak{F}_{\eta_1}^0$, $f_2\in\mathfrak{F}_{\eta_2}^0$,
		\begin{align*}
			&\left|\Cov\left[f_1(X_{t,n,m}), f_2(X_{t',n,m'})\right]\right| \\
			&\leq C  |t-t'|^{1+2\delta_0} \left(  \sum_{i=1}^{q_1} a_{n,i}^2 \right) \left[  \exp\left( -\frac{\eta_1^2\lambda^2}{2d \sigma^2 \sum_{i=1}^{q_1} a_{n,i}^2 }\right) + \sum_{j=1}^{q_2} b_{n,j}^{\beta_j} \right]\\
			&\quad + C\sum_{j=1}^{q_2} |t-t'|^{1+\delta_j\beta_j} b_{n,j}^{\beta_j}.
		\end{align*}
	\end{enumerate}
\end{lemma}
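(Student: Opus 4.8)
The plan is to reduce the covariance to one between functions of a ``small'' shared-noise component and then bound it by a Dynkin-type expansion carried out simultaneously over the Gaussian and the stable parts. For the reduction: $X_{t,n,m}$ is driven by the original $B^i,Z^j$ on $(t-m,t]$ and independent copies before $t-m$, while $X_{t',n,m'}$ is driven by the original $B^i,Z^j$ on $(t'-m',t']$ and copies before $t'-m'$, so both are functions of three mutually independent noise blocks: a shared block comprising the original $B^i,Z^j$ on $I=(\ell,t']$, $\ell=\max(t-m,t'-m')$, and two blocks used only by $X_{t,n,m}$, resp.\ only by $X_{t',n,m'}$. If $I=\emptyset$ (i.e.\ $m\le t-t'$) the variables are independent; otherwise, writing $\rho:=t-t'<m$, $X_{t,n,m}=W_t+V_1$ and $X_{t',n,m'}=W_{t'}+V_2$ with $W_\bullet$ the shared-block contribution and $V_1,V_2$ independent of the shared block and of each other, conditioning on the shared block gives
\[
\Cov\big[f_1(X_{t,n,m}),f_2(X_{t',n,m'})\big]=\Cov\big[F_1(W_t),F_2(W_{t'})\big],\qquad F_i(w)=\E f_i(w+V_i).
\]
Since the $V_i$ are symmetric, $F_1,F_2$ are even, lie in $C^5$ with $\|D^jF_i\|_\infty\le1$, WLOG $F_i(0)=0$, and $|F_i(w)|\le\mathbb{P}(\|w+V_i\|\ge\eta_i)$ by (F4). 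The key point is that $W_t$ is small, because all its kernels are evaluated at $s-t$ with $s\le t'$, hence at arguments of modulus $\ge\rho$: with $\|h_i(x)\|\le C|x|^{\delta_0}$, $\|h_i\|_{L_2}\le\sigma$, $1+2\delta_0<0$ the Gaussian part $W_t^G$ has covariance of operator norm $\le C\rho^{1+2\delta_0}\sum_ia_{n,i}^2$, and Cauchy--Schwarz gives $\|\Cov(W_t^G,W_{t'}^G)\|\le C\sigma\rho^{(1+2\delta_0)/2}\sum_ia_{n,i}^2$; likewise $\|g_j(x)\|\le C|x|^{\delta_j}$, $1+\delta_j\beta_j<0$ make the $j$-th stable component of $W_t$ a $\beta_j$-stable vector of scale$^{\beta_j}\le Cb_{n,j}^{\beta_j}\rho^{1+\delta_j\beta_j}$. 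Hence $\E[1\wedge\|W_t\|^2]\le C\rho^{1+2\delta_0}\sum_ia_{n,i}^2+C\sum_jb_{n,j}^{\beta_j}\rho^{1+\delta_j\beta_j}$ (Lemma~\ref{lem:truncexpect} with $q=0$ for the stable part), and $|F_1(W_t)|\le C(1\wedge\|W_t\|^2)$ by Taylor expansion and evenness.

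For the core estimate I would treat $(W_t,W_{t'})$ as an $\R^{2d}$-valued infinitely divisible vector with Gaussian covariance $\mathbf\Sigma$ (its $(t,t)$- and $(t,t')$-blocks small, its $(t',t')$-block not) and L\'evy measure $\bar\nu=\sum_j\bar\nu_j$, and apply Dynkin's formula to $\phi(x,y)=F_1(x)F_2(y)$ along the L\'evy process running from $0$ to $(W_t,W_{t'})$, subtracting the same expansion for the decoupled law (block-diagonal $\mathbf\Sigma$, product jump structure). In the difference of the Gaussian-generator terms the diagonal blocks yield inner covariances of the form $\mathbf\Sigma^{tt}\!:\!\Cov[D^2F_1(\text{small}),F_2(\cdot)]$ and $F_1(\text{small})\,\mathbf\Sigma^{t't'}\!:\!D^2F_2(\cdot)$, each already carrying one small factor; the off-diagonal block yields $\mathbf\Sigma^{tt'}\!:\!\E[\nabla F_1(\cdot)\otimes\nabla F_2(\cdot)]$, and here the evenness of $F_1,F_2$---so that $\nabla F_i$ are odd with zero mean under symmetric laws---lets one iterate Gaussian integration by parts and extract a \emph{second} power of the small quantity $\|\mathbf\Sigma^{tt'}\|$, which produces the rate $\rho^{1+2\delta_0}\sum_ia_{n,i}^4$. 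In the difference of the jump terms $\phi^{[\bar\nu_j]}$ is split according to whether the jump of $(W_t,W_{t'})$ has a non-negligible first coordinate: since $\|g_j(s-t)\|$ is small on $I$ the spectral measure of $\bar\nu_j$ points nearly along the $W_{t'}$-axis, so the ``$W_t$-jump channel'' has total mass $\le Cb_{n,j}^{\beta_j}\int_I\|g_j(s-t)\|^{\beta_j}\,ds\le Cb_{n,j}^{\beta_j}\rho^{1+\delta_j\beta_j}$ (bounded via \eqref{eqn:jump-c2} and Lemma~\ref{lem:truncexpect}), while the complementary ``$W_{t'}$-only'' channel leaves $F_1$ of the small coordinate times a bounded quantity of total mass $\le C\sum_jb_{n,j}^{\beta_j}$. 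Collecting the pieces, using the bound on $\E|F_1(\text{small})|$ above, and absorbing bounded scaling factors into the constant, gives exactly the three terms of~(i).

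For part~(ii), say $\eta_1>0$. Then $|F_1(w)|\le\mathbb{P}(\|w+V_1\|\ge\eta_1)$, and splitting $V_1=V_1^G+V_1^S$ the sub-Gaussian tail of $V_1^G$ contributes $\exp\!\big(-\eta_1^2\lambda^2/(2d\sigma^2\sum_ia_{n,i}^2)\big)$ while the polynomial tail \eqref{eqn:tailbound} of $V_1^S$ contributes $C\sum_jb_{n,j}^{\beta_j}$, exactly as in the proof of Lemma~\ref{lem:mom}(i); thus $|F_1(w)|$ is bounded by the bracketed expression in~(ii) for moderate $\|w\|$. The crude bound $|\Cov[F_1(W_t),F_2(W_{t'})]|\le2\,\E|F_1(W_t)|$ then already yields the first term of~(ii)---the exponential beats $\sum_ia_{n,i}^2$, so no parity refinement is needed---while the stable jump channel is handled as before and produces $C\sum_j\rho^{1+\delta_j\beta_j}b_{n,j}^{\beta_j}$.

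I expect the main obstacle to be the entanglement of the two noise types: one cannot simply condition the stable part away, since freezing it shifts the Gaussian arguments and destroys the evenness on which the sharp $a_{n,i}^4$ rate rests, so the Gaussian and stable contributions must be disentangled \emph{inside} a single expansion. Subsidiary nuisances are that stable moments exist only below $\beta_j$, so each stable estimate must be routed through Lemma~\ref{lem:truncexpect} with a sub-critical exponent, and that the polynomial $|t-t'|$-decay must be apportioned correctly among the three target terms---the $(1+2\delta_0)$-decay spent on the Gaussian factor in the mixed term, the $(1+\delta_j\beta_j)$-decay on the pure stable term.
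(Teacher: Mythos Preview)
Your shared-block decomposition $X_{t,n,m}=W_t+V_1$, $X_{t',n,m'}=W_{t'}+V_2$ and the reduction to $\Cov[F_1(W_t),F_2(W_{t'})]$ with $F_i(w)=\E f_i(w+V_i)$ is a legitimate and genuinely different starting point from the paper. The paper instead introduces the truncation $X_{t,n,\overline m}$ with $\overline m=t-t'$ (independent of $X_{t',n,m'}$), writes the covariance as $\Cov[f_1(X_{t,n,m})-f_1(X_{t,n,\overline m}),\,f_2(X_{t',n,m'})]$, and Taylor-expands the first argument around $X_{t,n,\overline m}$. The linear term $\E[Df_1(X_{t,n,\overline m})(Y_{t,n,m}-Y_{t,n,\overline m})\,(f_2-\E f_2)]$ is shown to vanish by independence and the oddness of $Df_1$ under the symmetric law of $X_{t,n,\overline m}$; the quadratic remainder then carries one factor $\|G_{t,n,m}-G_{t,n,\overline m}\|^2\lesssim\rho^{1+2\delta_0}\sum_i a_{n,i}^2$, which is paired with $|f_2(X_{t',n,m'})|\le C(1\wedge\|X_{t',n,m'}\|^2)$ via Cauchy--Schwarz and Gaussian fourth moments to produce the $a^4$ rate in~(i). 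For part~(i) your Dynkin plan is heavier than necessary and its execution in the mixed Gaussian--stable case is not made precise: the iterated Gaussian integration by parts does yield $\|\Sigma^{tt'}\|^2$ in the pure Gaussian coupling, but the stable coupling between $W_t$ and $W_{t'}$ feeds into $\E[\nabla F_1\otimes\nabla F_2]$ as well, and your ``single expansion'' resolution remains a sketch. In fact the direct bound $|F_i(w)|\le C(1\wedge\|w\|^2)$ that you already note, together with $W^G\perp W^S$ and Isserlis for $\E[\|W_t^G\|^2\|W_{t'}^G\|^2]$, gives all three terms of~(i) without any generator expansion---this is essentially the paper's argument transposed into your coordinates.

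For part~(ii) there is a genuine gap. Your crude bound $|\Cov|\le 2\,\E|F_1(W_t)|\le 2\,\mathbb P(\|X_{t,n,m}\|\ge\eta_1)$ yields at best $C\big[\exp(-\eta_1^2\lambda^2/(2d\sigma^2\sum_i a_{n,i}^2))+\sum_j b_{n,j}^{\beta_j}\big]$ with \emph{no dependence on $\rho=|t-t'|$ whatsoever}; it therefore cannot produce the prefactor $\rho^{1+2\delta_0}\sum_i a_{n,i}^2$ in front of the bracket, and the resulting bound is not summable over $t-t'$---which is exactly what Theorem~\ref{thm:variance-bound} requires. The remark ``the exponential beats $\sum_i a_{n,i}^2$'' does not help: the $\sum_j b_{n,j}^{\beta_j}$ inside the bracket is purely polynomial, and without the $\rho$-decay this term alone makes the sum over lags diverge. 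The paper retains the $\rho$-decay by keeping the quadratic factor $\|G_{t,n,m}-G_{t,n,\overline m}\|^2$ and pairing it, via H\"older, with the indicator $\mathds{1}(\|X_{t,n,\overline m}\|>\eta_1)$ (respectively $\mathds{1}(\|X_{t',n,m'}\|>\eta_2)$ when $\eta_2>0$), whose probability supplies the exponential-plus-$b^\beta$ bracket. In your framework the fix is to bound $|F_1(W_t)|$ not by the bracket alone but by $\sup_{r\in[0,1]}\|D^2F_1(rW_t)\|\cdot(1\wedge\|W_t\|^2)$, where $\|D^2F_1(w)\|=\|\E D^2f_1(w+V_1)\|\le\mathbb P(\|w+V_1\|\ge\eta_1)$ supplies the bracket and the factor $1\wedge\|W_t\|^2$ (through its Gaussian part) supplies the missing $\rho^{1+2\delta_0}\sum_i a_{n,i}^2$.
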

\begin{proof}[Proof of Lemma \ref{lem:autocov-decay}]
	Set $\overline{m}=t-t'$, such that $X_{t,n,\overline{m}}$ and $X_{t',n,m'}$ are independent.
	Then
	\begin{align*}
		\left|\Cov\left[f_1(X_{t,n,m}), f_2(X_{t',n,m'})\right]\right|
		&= \left|\Cov\left[ f_1(X_{t,n,m})-f_1(X_{t,n,\overline{m}}), f_2(X_{t',n,m'})\right] \right|.
	\end{align*}
	Moreover, $\Cov\left[f_1(X_{t,n,m}), f_2(X_{t',n,m'})\right]=0$ for $\overline{m}\geq m$ by construction, hence we only consider the case $\overline{m}<m$.
	Without loss of generality, we assume that the $\beta_j$ are ordered and let $q_2'$ be such that $\beta_j\leq 1$ for $1\leq j \leq q_2'$, and $\beta_j>1$ for $q_2'<j\leq q_2$.
	Denote 
	\begin{align*}
		G_{t,n,m} &= \sum_{i=1}^{q_1} a_{n,i} \left[\int_{t-m}^t h_i(s-t)\,dB^i_s + \int_{-\infty}^{t-m} h_i(s-t)\,dB^{i,t}_s \right], \\
		J_{t,n,m,1} &=  \sum_{j=1}^{q_2'} b_{n,j} \left[ \int_{t-m}^t g_j(s-t)\, dZ^j_s + \int_{-\infty}^{t-m} g_j(s-t)\, dZ^{j,t}_s \right],\\
		J_{t,n,m,2} &=  \sum_{j=q_2'+1}^{q_2} b_{n,j} \left[ \int_{t-m}^t g_j(s-t)\, dZ^j_s + \int_{-\infty}^{t-m} g_j(s-t)\, dZ^{j,t}_s \right], \\
		J_{t,n,m} &= J_{t,n,m,1} + J_{t,n,m,2},
	\end{align*}
	such that
	\begin{align}
		&\quad \left|\Cov\left[f_1(X_{t,n,m}), f_2(X_{t',n,m})\right]\right| \nonumber \\
		&\leq  \left|\Cov\left[ f_1(X_{t,n,m})-f_1(G_{t,n,m} + J_{t,n,\overline{m},1} + J_{t,n,m,2}), f_2(X_{t',n,m'})\right] \right| \label{eqn:cov-1}\\
		&\quad + \left|\Cov\left[ f_1(G_{t,n,m} + J_{t,n,\overline{m},1} + J_{t,n,m,2}) - f_1(X_{t,n,\overline{m}}), f_2(X_{t',n,m'})\right] \right|
		\label{eqn:cov-2}.
	\end{align}
	
	\noindent
	\underline{Ad \eqref{eqn:cov-1}:}
	A Taylor expansion yields
	\begin{align*}
		&\quad \left|\Cov\left[ f_1(G_{t,n,m} + J_{t,n,m,1} + J_{t,n,m,2}) - f_1(G_{t,n,m} + 	J_{t,n,\overline{m},1} + J_{t,n,m,2}), f_2(X_{t',n,m'})\right] \right| \\
		&\leq C\|f_2\|_\infty (\|Df_1\|_\infty + \|f_1\|_\infty) \E \left[ 1\wedge \|J_{t,n,m,1}-J_{t,n,\overline{m},1}\| \right].
	\end{align*}
	Moreover, for each  $l=1,\ldots, d$,
	\begin{align*}
		J^l_{t,n,m,1}-J^l_{t,n,\overline{m},1} 
		&= \sum_{j=1}^{q_2'} b_{n,j}\int_{t-m}^{t-\overline{m}} g^l_j(s)\, dZ_s^j \\
		&\deq \sum_{j=1}^{q_2'} b_{n,j}\left(\int_{t-m}^{t-\widetilde{m}} |g^l_j(s)|^{\beta_j}\,ds\right)^\frac{1}{\beta_j} 	Z_1^j, \\
		\intertext{using the inner clock property of stochastic integrals with respect to $\beta$-stable processes \cite[Thm.\ 3.1]{Rosinski1986}, and}
		\int_{t-m}^{t-\overline{m}} |g^l_j(s)|^{\beta_j}\,ds 
		&\leq C \int_{-\infty}^{-\overline{m}} |s|^{\beta_j \delta_j} \, ds \\
		&\leq C \overline{m}^{1+\beta_j\delta_j}.
	\end{align*}
	Thus, Lemma \ref{lem:truncexpect} yields 
	\begin{align*}
		\E \left[ 1\wedge \|J_{t,n,m,1}-J_{t,n,\overline{m},1}\| \right] 
		&\leq \sum_{l=1}^d \E \left[ 1\wedge \|J^l_{t,n,m,1}-J^l_{t,n,\overline{m},1}\| \right] \\
		&\leq C \sum_{j=1}^{q_2'}\left(b_{n,j} \overline{m}^{\frac{1}{\beta_j}+\delta_j}\right)^{1\wedge\beta_j},
	\end{align*}
	hence
	\begin{align*}
		\left|\Cov\left[ f_1(G_{t,n,\overline{m}} + J_{t,n,m}) - f_1(G_{t,n,\overline{m}} + J_{t,n,\overline{m}}), 	f_2(X_{t',n,m'})\right] \right| 
		\leq C \sum_{j=1}^{q_2'} b_{n,j}^{\beta_j} \overline{m}^{1+\delta_j \beta_j},
	\end{align*}
	because $\beta_j\leq 1$ for $j\leq q_2'$.
	
	\noindent
	\underline{Ad \eqref{eqn:cov-2}:}
	We employ the abbreviation $Y_{t,n,m} = G_{t,n,m} + J_{t,n,m,2}$.
	Lemma \ref{lem:bounded-taylor} yields
	\begin{align}
		&\quad \Cov\left[ f_1(G_{t,n,m} + J_{t,n,\overline{m},1} + J_{t,n,m,2}) - f_1(X_{t,n,\overline{m}}), f_2(X_{t',n,m'})\right] \nonumber \\
		&=\Cov\left[ f_1(Y_{t,n,m} + J_{t,n,\overline{m},1}) - f_1(Y_{t,n,\overline{m}} + J_{t,n,\overline{m},1}), f_2(X_{t',n,m'})\right] \nonumber \\
		&= \E \left[ Df_1(X_{t,n,\overline{m}}) (Y_{t,n,m}-Y_{t,n,\overline{m}}) \left(f_2(X_{t',n,m'}) - \E f_2(X_{t',n,m'})\right) \right] + \Gamma_{t,t',n}, \label{eqn:cov-lin-1}\\[1.5 ex]
		|\Gamma_{t,t',n}|
		&\leq C \, \E \Big[ \Big(\left( 1+ \|Y_{t,n,m}-Y_{t,n,\overline{m}}\| \right)  \nonumber\\
		&\quad \wedge \Big(\sup_{r\in[0,1]} \|D^2f_1(X_{t,n,\overline{m}} + r(Y_{t,n,m}-Y_{t,n,\overline{m}}))\| \;  \|Y_{t,n,m}-Y_{t,n,\overline{m}}\|^2 \Big) \Big) \nonumber  \\
		&\qquad \qquad \cdot \left|f_2(X_{t',n,m'}) - \E f_2(X_{t',n,m'})\right| \Big]. \nonumber
	\end{align}
	Now observe that for any $L\geq 0$, and any $x,y>0$,
	\begin{align*}
		(1+x+y) \wedge L(x+y)^2 
		&\leq 4 (1+(x\vee y)) \wedge L (x\vee y)^2 \\
		& \leq 4 \left[ (1+x)\wedge L x^2 \right] + 4 \left[ (1+y)\wedge L y^2 \right].
	\end{align*}
	Applying this inequality with $x=\|G_{t,n,m}-G_{t,n,\overline{m}}\|$ and $y=\|J_{t,n,m,2}-J_{t,n,\overline{m},2}\|$, such that $\|Y_{t,n,m}-Y_{t,n,\overline{m}}\|\leq x+y$,
	we find that
	\begin{align}
		|\Gamma_{t,t',n}| 
		&\leq C \, \E \left[ \left( 1+ \|J_{t,n,m,2}-J_{t,n,\overline{m},2}\| \right) \wedge  \|J_{t,n,m,2}-J_{t,n,\overline{m},2}\|^2  \right] \nonumber \\
		&\quad + C \, \E \Big[ \Big(\left( 1+ \|G_{t,n,m}-G_{t,n,\overline{m}}\| \right) \nonumber \\
		&\quad \wedge \Big(\sup_{r\in[0,1]} \|D^2f_1(X_{t,n,\overline{m}} + r(G_{t,n,m}-G_{t,n,\overline{m}}))\| \;  \|G_{t,n,m}-G_{t,n,\overline{m}}\|^2 \Big) \Big) \nonumber \\
		&\qquad \qquad \cdot \left|f_2(X_{t',n,m'}) - \E f_2(X_{t',n,m'})\right| \Big] \nonumber \\
		&\leq C\, \E \left[ 1 \wedge \|J_{t,n,m,2}-J_{t,n,\overline{m},2}\|^2 \| \right] \nonumber \\
		\begin{split}
			&\quad + C\, \E \Big[\sup_{r\in[0,1]} \|D^2f_1(X_{t,n,\overline{m}} + r(Y_{t,n,m}-Y_{t,n,\overline{m}}))\| \;  \|G_{t,n,m}-G_{t,n,\overline{m}}\|^2 \\
			&\qquad\qquad \cdot \left|f_2(X_{t',n,m'}) - \E f_2(X_{t',n,m'})\right| \Big]  
		\end{split}\label{eqn:cov-3}
	\end{align}
	Just as for the term \eqref{eqn:cov-1}, we can use Lemma \ref{lem:truncexpect} to obtain 
	\begin{align*}
		\E \left(1\wedge \|J_{t,n,m,2} - J_{t,n,\widetilde{m},2}\|^2\right)\leq \sum_{j=q_2'+1}^{q_2} b_{n,j}^{\beta_j} \overline{m}^{1+\delta_j\beta_j}.
	\end{align*}
	We now treat the leading term in \eqref{eqn:cov-lin-1}. 
	Recall that $t=t'+\overline{m}$ and $m>\overline{m}$.
	Then
	\begin{align*}
		Y_{t,n,m}-Y_{t,n,\overline{m}} 
		&= \sum_{i=1}^{q_1} a_{n,i}\int_{t-m}^{t'} h_i(s-t)\, d( B^{i,t}_s - B^i_s ) \\
		& \quad + \sum_{j=q_2'+1}^{q_2} b_{n,j} \int_{t-m}^{t'} g_j(s-t)\, d( Z^{j,t}_s - Z^j_s ),
	\end{align*}
	so that
	\begin{subequations}
		\begin{align}
			&\E \left[ Df_1(X_{t,n,\overline{m}}) (Y_{t,n,m}-Y_{t,n,\overline{m}}) \left(f_2(X_{t',n,m'}) - \E f_2(X_{t',n,m'})\right) \right] \nonumber \\
			&= \sum_{i=1}^{q_1} a_{n,i} \E \left[ Df_1(X_{t,n,\overline{m}}) \left(\int_{t-m}^{t'} h_i(s-t)\, d( B^{i,t}_s - B^i_s ) \right) \left(f_2(X_{t',n,m'}) - \E f_2(X_{t',n,m'})\right) \right] \label{eqn:cov-zero-1} \\
			&\quad + \sum_{j=q_2'+1}^{q_2} b_{n,j} \E \left[ Df_1(X_{t,n,\overline{m}}) \left(\int_{t-m}^{t'} g_j(s-t)\, d( Z^{j,t}_s - Z^j_s ) \right) \left(f_2(X_{t',n,m'}) - \E f_2(X_{t',n,m'})\right) \right] \label{eqn:cov-zero-2}
		\end{align}
	\end{subequations}
	Since $X_{t,n,\overline{m}}$ and $B^{i,t}$ are independent of $X_{t',n,m'}$, we find that
	\begin{align*}
		&\E \left[ Df_1(X_{t,n,\overline{m}}) \int_{t-m}^{t'} h_i(s-t)\, d B^{i,t}_s \left(f_2(X_{t',n,m'}) - \E f_2(X_{t',n,'m})\right) \right] \\
		&= \E \left[ Df_1(X_{t,n,\overline{m}}) \int_{t-m}^{t'} h_i(s-t)\, d B^{i,t}_s \right] \E\left[f_2(X_{t',n,m'}) - \E f_2(X_{t',n,m'})\right] \qquad =0.
	\end{align*}
	Moreover, $X_{t,n,\overline{m}}$ is independent of $X_{t',n,m'}$ and $B^i_s, s\in[t-m,t']$.
	Thus,
	\begin{align*}
		&\E \left[ Df_1(X_{t,n,\overline{m}}) \int_{t-m}^{t'} h_i(s-t)\, d B^{i}_s \left(f_2(X_{t',n,m'}) - \E f_2(X_{t',n,m'})\right) \right] \\
		&= \E \left[ Df_1(X_{t,n,\overline{m}})\right] \E\left[ \int_{t-m}^{t'} h_i(s-t)\, d B^{i,t}_s \left(f_2(X_{t',n,m'}) - \E f_2(X_{t',n,m'})\right)\right] \qquad =0.
	\end{align*}
	In the last step, we used that $\E [Df_1(X_{t,n,\overline{m}})]=0$ since $Df_1$ is an odd function and $X_{t,n,\overline{m}}$ has a symmetric distribution.
	The same argument applies for the term \eqref{eqn:cov-zero-2} because $\beta_j>1$ for $j>q_2'$ such that $\E \|\int_{t-m}^{t'} h_i(s-t)dZ_s^j\|<\infty$.
	Hence, we have shown that the leading term in \eqref{eqn:cov-lin-1} vanishes: 
	\begin{align*}
		\E \left[ Df_1(X_{t,n,\overline{m}}) (Y_{t,n,m}-Y_{t,n,\overline{m}}) \left(f_2(X_{t',n,m'}) - \E f_2(X_{t',n,m'})\right) \right] = 0.
	\end{align*}
	Hence
	\begin{align}
		&\left| \Cov\left[ f_1(G_{t,n,m} + J_{t,n,\overline{m},1} + J_{t,n,m,2}) - f_1(Y_{t,n}), f_2(X_{t',n,m'})\right] \right| \quad = |\Gamma_{t,t',n}|\nonumber \\
		&\leq C\, \sum_{j=q_2'+1}^{q_2} b_{n,j}^{\beta_j} \overline{m}^{1+\delta_j\beta_j}\nonumber \\
		\begin{split}
			&\quad + C\, \E \Big[ \sup_{r\in[0,1]} \|D^2f_1(X_{t,n,\overline{m}} + r(Y_{t,n,m}-Y_{t,n,\overline{m}}))\| \; \|G_{t,n,m}-G_{t,n,\overline{m}}\|^2 \\
			&\qquad \qquad \cdot \left|f_2(X_{t',n,m'}) - \E f_2(X_{t',n,m'})\right| \Big]. 
		\end{split}
	\end{align}
	It remains to bound \eqref{eqn:cov-3}, where we distinguish the cases (i) and (ii).
	
	\noindent
	\underline{Ad \eqref{eqn:cov-3}, case (i):}
	We exploit the boundedness of $D^2f_1$, $D^2f_2$, $f_2$, and the fact that $Df_2(0)=0$, to find that
	\begin{align*}
		\eqref{eqn:cov-3} 
		&\leq C \E \left[ \|G_{t,n,m}-G_{t,n,\overline{m}}\|^2 |\E f_2(X_{t',n,m'})| \right] \\
		& + C \E \left[ \|G_{t,n,m}-G_{t,n,\overline{m}}\|^2 \left( 1 \wedge \|X_{t',n,m'}\|^2 \right) \right].
	\end{align*}
	Lemma \ref{lem:mom}(i) yields $\E |f_2(X_{t',n,m'})| \leq C \sum_{i=1}^{q_1} a_{n,i}^2 + C \sum_{j=1}^{q_2} b_{n,j}^{\beta_j}$, such that
	\begin{align*}
		\E \left[ \|G_{t,n,m}-G_{t,n,\overline{m}}\|^2 |\E f_2(X_{t',n,m'})| \right] 
		&\leq C \left( \sum_{i=1}^{q_1} a_{n,i}^2 \right)\left( \sum_{i=1}^{q_1} a_{n,i}^2 +  \sum_{j=1}^{q_2} b_{n,j}^{\beta_j}\right).
	\end{align*}
	Moreover, 
	\begin{align}
		&\E \left[ \|G_{t,n,m}-G_{t,n,\overline{m}}\|^2 \left( 1 \wedge \|X_{t',n,m'}\|^2 \right) \right] \nonumber \\
		&\leq  2\E \left[ \|G_{t,n,m}-G_{t,n,\overline{m}}\|^2 \left( 1 \wedge \|J_{t',n,m'}\|^2 \right) \right] \nonumber \\
		&\quad  + 2\E \left[ \|G_{t,n,m}-G_{t,n,\overline{m}}\|^2  \|G_{t',n,m'}\|^2 \right] \nonumber \\
		\begin{split}
			&\leq  2\E \left[ \|G_{t,n,m}-G_{t,n,\overline{m}}\|^2\right] \;  \E \left( 1 \wedge \|J_{t',n,m'}\|^2 \right)  \\
			&\quad  + 2\sqrt{\E \|G_{t,n,m}-G_{t,n,\overline{m}}\|^4}  \sqrt{\E \|G_{t',n,m'}\|^4}.
		\end{split} \label{eqn:case-i}
	\end{align}
	Here, we used that $J_{t', n,m'}$ is independent from $G_{t,n,m}$ and $G_{t',n,m'}$. 
	Note that $\E(1 \wedge \|J_{t',n,m'}\|^2) \leq C \sum_{j=1}^{q_2} b_{n,j}^{\beta_j}$ by Lemma \ref{lem:truncexpect}.
	Moreover, by Itô's isometry, we have
	\begin{align*}
		\E\|G_{t,n,m} - G_{t,n,\overline{m}}\|^2 
		&=  \sum_{i=1}^{q_1} a_{n,i}^2 \int_{t-m}^{t-\overline{m}} \|h_i(s-t)\|^2\, ds \\
		&\leq C \int_{-\infty}^{-\overline{m}} |s|^{2\delta_0}\, ds \sum_{i=1}^{q_1} a_{n,i}^2 \\
		&\leq C \overline{m}^{1+2\delta_0}  \sum_{i=1}^{q_1} a_{n,i}^2.
	\end{align*}
	Since $G_{t,n,m}-G_{t,n,\overline{m}}$ is Gaussian, we obtain $\E\|G_{t,n,m}-G_{t,n,\overline{m}}\|^4 \leq C (\E\|G_{t,n,m}-G_{t,n,\overline{m}}\|^2)^2$, such that \eqref{eqn:case-i} is upper bounded by 
	\begin{align*}
		\eqref{eqn:case-i} 
		& \leq C \left( \overline{m}^{1+2\delta_0} \sum_{i=1}^{q_1} a_{n,i}^2 \right) \left(\sum_{j=1}^{q_2} b_{n,j}^{\beta_j}\right) +  C \left( \overline{m}^{1+2\delta_0} \sum_{i=1}^{q_1} a_{n,i}^2 \right) \left(  \sum_{i=1}^{q_1} a_{n,i}^2 \right),
	\end{align*}
	so that
	\begin{align*}
		\eqref{eqn:cov-3} 
		&\leq C \left( \overline{m}^{1+2\delta_0} \sum_{i=1}^{q_1} a_{n,i}^2 \right) \left(\sum_{i=1}^{q_1} a_{n,i}^2 + \sum_{j=1}^{q_2} b_{n,j}^{\beta_j}\right).
	\end{align*}
	
	\noindent
	\underline{Ad \eqref{eqn:cov-3}, case (ii), $\eta_1>0$:}
	Since $D^2f_1(x)=0$ for $\|x\|\leq \eta_1$, we have for any $\lambda\in(0,1)$,
	\begin{align*}
		\eqref{eqn:cov-3} 
		&\leq C\, \E \left[  \mathbb{I}  \cdot \|G_{t,n,m}-G_{t,n,\overline{m}}\|^2  \cdot \left|f_2(X_{t',n,m'}) - \E f_2(X_{t',n,m'})\right| \right] \\
		&\leq C\, \E \left[  \mathbb{I}  \cdot \|G_{t,n,m}-G_{t,n,\overline{m}}\|^2  \right] \qquad \text{where} \\[1.5 ex]
		\mathbb{I} 
		&=\mathds{1}(\|X_{t,n,\overline{m}}\|>\eta_1) +\mathds{1}(\|G_{t,n,m} + J_{t,n,\overline{m},1} + J_{t,n,m,2}\|>\eta_1) \\
		&\leq \mathds{1}(\|G_{t,n,m}\|>\eta_1\lambda) + \mathds{1}(\|G_{t,n,\overline{m}}\|>\eta_1\lambda) \\
		&\quad + 2\cdot\mathds{1}\left(\|J_{t,n,\overline{m},1}\|>\tfrac{\eta_1(1-\lambda)}{2}\right) + \mathds{1}\left(\|J_{t,n,\overline{m},2}\|>\tfrac{\eta_1(1-\lambda)}{2}\right) + \mathds{1}\left(\|J_{t,n,m,2}\|>\tfrac{\eta_1(1-\lambda)}{2}\right) \\
		&= \mathbb{I}_G + \mathbb{I}_J,
	\end{align*}
	where $\mathbb{I}_G = \mathds{1}(\|G_{t,n,m}\|>\eta_1\lambda) + \mathds{1}(\|G_{t,n,\overline{m}}\|>\eta_1\lambda)$, and $\mathbb{I}_J$ accordingly.
	Since $\mathbb{I}_J$ is independent of $G_{t,n,m}$ and $G_{t,n,\overline{m}}$,
	\begin{align*}
		\E \left[  \mathbb{I}_J  \cdot \|G_{t,n,m}-G_{t,n,\overline{m}}\|^2  \right] 
		&\leq C \E(\mathbb{I}_J) \E \|G_{t,n,m}-G_{t,n,\overline{m}}\|^2 \\
		&\leq C \left( \sum_{j=1}^{q_2} b_{n,j}^{\beta_j} \right) \left( \overline{m}^{1+2\delta_0}  \sum_{i=1}^{q_1} a_{n,i}^2 \right),
	\end{align*}
	using the polynomial tail bound \eqref{eqn:tailbound} for the jump processes.
	Moreover, for any $q>1$ and $p>1$ such that $\frac{1}{p}+\frac{1}{q}=1$,
	\begin{align*}
		\E \left[  \mathbb{I}_G  \cdot \|G_{t,n,m}-G_{t,n,\overline{m}}\|^2  \right]
		&\leq C\E(\mathbb{I}_G)^\frac{1}{q}  \left(\E \|G_{t,n,m}-G_{t,n,\overline{m}}\|^{2p}\right)^\frac{1}{p} \\
		&\leq C \mathbb{P}\left(\|G_{t,n,m}\|>\eta_1\lambda\right)^\frac{1}{q} \left( \overline{m}^{1+2\delta_0}  \sum_{i=1}^{q_1} a_{n,i}^2 \right),
	\end{align*}
	because $G_{t,n,m}\deq G_{t,n,\overline{m}}$.
	Each component $G_{t,n,m}^l$, $l=1,\ldots, d$, is a Gaussian random variable with mean zero and variance $\sigma_{l,n}^2 = \sum_{i=1}^{q_1} a_{n,i}^2 \|h_i^l\|_{L_2}^2 \leq \overline{\sigma}^2 \sum_{i=1}^{q_1} a_{n,i}^2 $.
	We may then employ the Gaussian tail bound $\mathbb{P}(|N|>x) \leq 2 \exp(-x^2/2)$ for a standard Gaussian random variable to obtain
	\begin{align*}
		\mathbb{P}\left(\|G_{t,n,m}\|^2>\eta_1^2\lambda^2\right)
		&\leq \sum_{l=1}^d \mathbb{P}\left(|G_{t,n,m}^l| > \tfrac{\eta_1 \lambda}{\sqrt{d}}\right) \\
		&\leq 2\sum_{l=1}^d \exp\left( -\frac{\eta_1^2\lambda^2}{2d \sigma_{l,n}^2} \right) \\
		& \leq C \exp\left( -\frac{\eta_1^2\lambda^2}{2d \sigma^2 \sum_{i=1}^{q_1} a_{n,i}^2 } \right),
	\end{align*}
	such that
	\begin{align*}
		&\E \left[  \mathbb{I}_G  \cdot \|G_{t,n,m}-G_{t,n,\overline{m}}\|^2  \cdot \left|f_2(X_{t',n,m'}) - \E f_2(X_{t',n,m'})\right| \right] \\
		&\leq C \exp\left( -\frac{\eta_1^2\lambda^2}{2d q \sigma^2 \sum_{i=1}^{q_1} a_{n,i}^2 }\right) \left( \overline{m}^{1+2\delta_0}  \sum_{i=1}^{q_1} a_{n,i}^2 \right).
	\end{align*}
	Since $q>1$ and $\lambda\in(0,1)$ are arbitrary, we obtain that for any $\lambda\in(0,1)$
	\begin{align*}
		\eqref{eqn:cov-3} 
		&\leq C\left( \overline{m}^{1+2\delta_0}  \sum_{i=1}^{q_1} a_{n,i}^2 \right) \left[  \exp\left( -\frac{\eta_1^2\lambda^2}{2d \sigma^2 \sum_{i=1}^{q_1} a_{n,i}^2 }\right) + \sum_{j=1}^{q_2} b_{n,j}^{\beta_j} \right].
	\end{align*}
	\noindent
	\underline{Ad \eqref{eqn:cov-3}, case (ii), $\eta_2>0$:}
	Since $\|D^2f_1\|_\infty<\infty$, we have, for any $\lambda\in(0,1)$,
	\begin{align*}
		\eqref{eqn:cov-3} 
		&\leq C \E \left[\|G_{t,n,m} - G_{t,n,\overline{m}}\|^2 \cdot |f_2(X_{t',n,m'})| \right] 
		+ C \E \left[\|G_{t,n,m} - G_{t,n,\overline{m}}\|^2\right] |\E f_2(X_{t',n,m'})| \\
		&\leq C \E \left[\|G_{t,n,m} - G_{t,n,\overline{m}}\|^2 \cdot \left[\mathds{1}(\|G_{t',n,m'}\|>\eta_2\lambda) + \mathds{1}(\|J_{t',n,m'}\|>\eta_2(1-\lambda))\right] \right] \\
		&\qquad + C \E \left[\|G_{t,n,m} - G_{t,n,\overline{m}}\|^2\right] |\E f_2(X_{t',n,m'})|.
	\end{align*}
	Lemma \ref{lem:mom}(i) yields that 
	\begin{align*}
		|\E f_2(X_{t',n,m'})| 
		&\leq C \left[\exp\left( -\frac{\eta_2^2\lambda^2}{2d \sigma^2 \sum_{i=1}^{q_1} a_{n,i}^2 }\right) + \sum_{j=1}^{q_2} b_{n,j}^{\beta_j}\right].
	\end{align*}
	Note that this upper bound also holds trivially if $a_{n,i}\not\to 0$ or $b_{n,j}\not\to 0$.
	Moreover, since $J_{t',n,m'}$ is independent of $G_{t,n,m}$ and $G_{t,n,\overline{m}}$,
	\begin{align*}
		&\E \left[\|G_{t,n,m} - G_{t,n,\overline{m}}\|^2 \cdot \mathds{1}(\|J_{t',n,m'}\|>\eta_2(1-\lambda))\right] \\
		&= \E \|G_{t,n,m} - G_{t,n,\overline{m}}\|^2 \cdot \mathbb{P}\left(\|J_{t',n,m'}\| > \eta_2(1-\lambda)\right) \\
		&\leq C\,\E \|G_{t,n,m} - G_{t,n,\overline{m}}\|^2 \sum_{j=1}^{q_2} b_{n,j}^{\beta_j},
	\end{align*}
	for a constant $C=C(\lambda)$, by virtue of the polynomial tail bound for the stable component \eqref{eqn:tailbound}.
	Finally, Hölder's inequality yields for any $q>1$ and $p>1$ such that $\frac{1}{p}+\frac{1}{q}=1$,
	\begin{align*}
		&\E \left[\|G_{t,n,m} - G_{t,n,\overline{m}}\|^2 \cdot \mathds{1}(\|G_{t',n,m'}\|>\eta_2\lambda) \right] \\
		&\leq \left(\E \|G_{t,n,m} - G_{t,n,\overline{m}}\|^{2p}\right)^{\frac{1}{p}} \mathbb{P}\left( \|G_{t',n,m'}\|>\eta_2\lambda  \right)^\frac{1}{q} \\
		&\leq C \exp\left( -\frac{\eta_2^2\lambda^2}{2d q \sigma^2 \sum_{i=1}^{q_1} a_{n,i}^2 }\right) \left( \overline{m}^{1+2\delta_0}  \sum_{i=1}^{q_1} a_{n,i}^2 \right),
	\end{align*}
	just as in case (ii).
	Since $q>1$ and $\lambda\in(0,1)$ are arbitrary, we find that for any $\lambda\in(0,1)$
	\begin{align*}
		\eqref{eqn:cov-3}
		&\leq C \left( \overline{m}^{1+2\delta_0}  \sum_{i=1}^{q_1} a_{n,i}^2 \right) \left[  \exp\left( -\frac{\eta_2^2\lambda^2}{2d \sigma^2 \sum_{i=1}^{q_1} a_{n,i}^2 }\right) + \sum_{j=1}^{q_2} b_{n,j}^{\beta_j} \right].
	\end{align*}
	This completes the proof.
\end{proof}

\begin{proof}[Proof of Theorem \ref{thm:variance-bound}]
	Note that
	\begin{align*}
		\left|\Var \left( S_n(f) \right)\right|
		&\leq  \frac{1}{n^2}\sum_{t=1}^n \sum_{z\in\Z} \left|\Cov(f(X_{t,n}), f(X_{t+z,n}))\right| \\
		&\leq \frac{1}{n}\left[C\sum_{z\in\Z} |z|^{1+\delta^{\star}} \right] \left[ \sum_{i=1}^{q_1} a_{n,i}^4 + \left( 1+ \sum_{i=1}^{q_1} a_{n,i}^2 \right) \sum_{j=1}^{q_2} b_{n,j}^{\beta_j}  \right],
	\end{align*}
	using Lemma \ref{lem:autocov-decay}(i) in the last step.
	The first series is finite because $\delta^{\star}<-2$.
	The second term may be simplified because $\sum_{i=1}^{q_1} a_{n,i}^2\leq A$ for some $A$. 
	Hence, the constant $C$ in the Theorem also depends on this upper bound $A$.
	The second inequality may be derived analogously via Lemma \ref{lem:autocov-decay}(ii).
\end{proof}

\subsection{Central limit theorem for multiscale moving average processes} \label{sec5.3}
This section is devoted to the proof of Theorem \ref{thm:clt}. We start with the following result.
\begin{lemma}\label{lem:approximation}
	Let $f\in \mathfrak{F}^0_\eta$, and assume furthermore that $1+\beta_j\delta_j<0$, and $1+2\delta_0<0$.
	\begin{enumerate}[(i)]
		\item If $\eta=0$, then there exists some $C>0$ such that for all $t,m,n\in\N$,
		\begin{align*}
			\E \left| f(X_{t,n}) - f(X_{t,n,m}) \right|^2 
			\leq C m^{1+2\delta_0}  \left[\sum_{i=1}^{q_1} a_{n,i}^4 + \sum_{i=1}^{q_1} a_{n,i}^2 \sum_{j=1}^{q_2} b_{n,j}^{\beta_j}\right]  + C\sum_{j=1}^{q_2} b_{n,j}^{\beta_j} m^{1+\beta_j\delta_j}.
		\end{align*}
		\item If $\eta>0$, then for any $\lambda\in(0,1)$, there exists some $C>0$ such that for all $t,m,n\in\N$,
		\begin{align*}
			\E \left| f(X_{t,n}) - f(X_{t,n,m}) \right|^2 
			&\leq C m^{1+2\delta_0}\sum_{i=1}^{q_1} a_{n,i}^2  \left[  \exp\left( -\frac{\eta^2\lambda^2}{2d \overline{\sigma}^2 \sum_{i=1}^{q_1} a_{n,i}^2 }\right) + \sum_{j=1}^{q_2} b_{n,j}^{\beta_j} \right] \\
			&\quad + C\sum_{j=1}^{q_2} b_{n,j}^{\beta_j} m^{1+\beta_j\delta_j}.
		\end{align*}
	\end{enumerate}
\end{lemma}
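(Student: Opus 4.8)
The strategy is to exploit the independent‑copy construction behind $X_{t,n,m}$ to pass to a second‑moment bound, and then to split the tail increment into a Gaussian part and a jump part, each controlled by the estimates of Section~\ref{sec5.1}. First I would write $X_{t,n} = N + T$ and $X_{t,n,m} = N + T'$, where
$N = \sum_i a_{n,i}\int_{t-m}^t h_i(s-t)\,dB^i_s + \sum_j b_{n,j}\int_{t-m}^t g_j(s-t)\,dZ^j_s$
is the common near part, $T$ is the tail $\int_{-\infty}^{t-m}$ driven by the original processes, and $T'$ the corresponding tail driven by the independent copies $B^{i,t},Z^{j,t}$. By the independence of increments of Brownian and Lévy motion, $N,T,T'$ are mutually independent and $T\deq T'$. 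Conditioning on $N$, the variables $f(N+T)$ and $f(N+T')$ are conditionally i.i.d., so $\E[(f(N+T)-f(N+T'))^2\mid N] = 2\Var(f(N+T)\mid N)\le 2\,\E[(f(N+T)-f(N))^2\mid N]$, whence
\begin{align*}
\E\left|f(X_{t,n})-f(X_{t,n,m})\right|^2 \;\le\; 2\,\E\left[(f(N+T)-f(N))^2\right].
\end{align*}
Writing $T = T_G + T_J$ (Gaussian tail plus jump tail, independent of each other and of $N$), telescoping through $N+T_G$, and using $(a+b)^2\le 2a^2+2b^2$ reduces matters to bounding the squared increments of $f$ generated by $T_G$ and by $T_J$ separately.

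For the jump part I would use only the crude bound $|f(x+T_J)-f(x)|^2\le C(1\wedge\|T_J\|^2)$, valid for all $x$ since $\|f\|_\infty,\|Df\|_\infty\le 1$. By the inner‑clock property \cite[Thm.\ 3.1]{Rosinski1986}, $\int_{-\infty}^{t-m}g_j^l(s-t)\,dZ^j_s \deq (\int_{-\infty}^{-m}|g_j^l|^{\beta_j})^{1/\beta_j}Z^j_1$, and $\int_{-\infty}^{-m}|g_j^l|^{\beta_j}\le Cm^{1+\beta_j\delta_j}$ because $1+\beta_j\delta_j<0$; Lemma~\ref{lem:truncexpect} with $q=0$ then yields $\E[1\wedge\|T_J\|^2]\le C\sum_j b_{n,j}^{\beta_j}m^{1+\beta_j\delta_j}$ (using $1\wedge x\le1$, no boundedness of the scaling sequences is needed). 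This supplies the last term in both bounds.

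For the Gaussian part the two cases diverge. In case~(i), $f\in\mathfrak F_0$ is even, so $Df(0)=0$ and $\|Df(x)\|\le 1\wedge\|x\|$; a Taylor expansion gives $f(N+T_G)-f(N) = Df(N)^T T_G + R$ with $|R|\le C(1\wedge\|T_G\|^2)$ by Lemma~\ref{lem:bounded-taylor}, hence, using $T_G\perp N$, $\E[(Df(N)^T T_G)^2]\le \E\|T_G\|^2\cdot\E[1\wedge\|N\|^2]$, where $\E\|T_G\|^2\le Cm^{1+2\delta_0}\sum_i a_{n,i}^2$ (from $\|h_i(x)\|\le C|x|^{\delta_0}$, $1+2\delta_0<0$) and $\E[1\wedge\|N\|^2]\le C(\sum_i a_{n,i}^2+\sum_j b_{n,j}^{\beta_j})$ (splitting $N$ into its Gaussian and jump parts and using $\E\|N_G\|^2$ resp.\ Lemma~\ref{lem:truncexpect}); moreover $\E R^2\le C(\E\|T_G\|^2)^2\le Cm^{1+2\delta_0}\sum_i a_{n,i}^4$ using $m^{2(1+2\delta_0)}\le m^{1+2\delta_0}$ for $m\ge1$ and $(\sum_i a_{n,i}^2)^2\le q_1\sum_i a_{n,i}^4$. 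This produces the first bracket of case~(i). In case~(ii), $f\equiv0$ on $\{\|x\|<\eta\}$, so $|f(N+T_G)-f(N)|^2\le\|T_G\|^2\,\mathds{1}(\|N\|\ge\eta\ \text{or}\ \|N+T_G\|\ge\eta)$; splitting each indicator into a Gaussian component (at level $\eta\lambda$) and a jump component (at level $\eta(1-\lambda)$), the jump events contribute $Cm^{1+2\delta_0}(\sum_i a_{n,i}^2)(\sum_j b_{n,j}^{\beta_j})$ via the polynomial tail bound \eqref{eqn:tailbound}, and the Gaussian events contribute $Cm^{1+2\delta_0}(\sum_i a_{n,i}^2)\exp(-\eta^2\lambda^2/(2d\overline\sigma^2\sum_i a_{n,i}^2))$ via the Gaussian tail bound together with Hölder's inequality to decouple $T_G$ from the non‑independent event $\{\|N+T_G\|\ge\eta\lambda\}$ — exactly as in the proof of Lemma~\ref{lem:autocov-decay}, the resulting loss in the exponent is absorbed by shrinking $\lambda$ slightly.

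The conceptual step is the reduction to $2\,\E[(f(N+T)-f(N))^2]$ via the copy construction. The main technical obstacle is the Gaussian term in case~(i): the naive estimate $\E[(Df(N)^T T_G)^2]\le C\,\E\|T_G\|^2$, of order $m^{1+2\delta_0}\sum_i a_{n,i}^2$, is too crude, and one must exploit $Df(0)=0$ together with the bound $\E[1\wedge\|N\|^2]\le C(\sum_i a_{n,i}^2+\sum_j b_{n,j}^{\beta_j})$ to recover the correct order $m^{1+2\delta_0}(\sum_i a_{n,i}^4+\sum_i a_{n,i}^2\sum_j b_{n,j}^{\beta_j})$; the jump and the case~(ii) Gaussian bounds are then routine.
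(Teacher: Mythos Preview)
Your proof is correct and follows essentially the same route as the paper's. The only organizational difference is your initial conditional--i.i.d.\ reduction $\E[(f(N+T)-f(N+T'))^2]\le 2\,\E[(f(N+T)-f(N))^2]$, which the paper does not use; instead the paper telescopes directly through the intermediate point $G_{t,n,\infty}+J_{t,n,m}$ (full Gaussian, truncated jump) and applies the mean value inequality rather than a Taylor expansion. After that, both proofs separate Gaussian and jump tails and invoke the same ingredients: the inner--clock identity and Lemma~\ref{lem:truncexpect} for the jump term, the observation $Df(0)=0$ together with independence and Cauchy--Schwarz for the Gaussian term in case~(i), and the threshold indicator decomposition with Gaussian tails and H\"older for case~(ii).
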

\begin{proof}[Proof of Lemma \ref{lem:approximation}]
	For $m\in \N\cup\{\infty\}$, recall the definition
	\begin{align*}
		G_{t,n,m} &= \sum_{i=1}^{q_1} a_{n,i} \left[\int_{t-m}^t h_i(s-t)\,dB^i_s + \int_{-\infty}^{t-m} h_i(s-t)\,dB^{i,t}_s \right], \\
		J_{t,n,m} &=  \sum_{j=1}^{q_2} b_{n,j} \left[ \int_{t-m}^t g_j(s-t)\, dZ^j_s + \int_{-\infty}^{t-m} g_j(s-t)\, dZ^{j,t}_s \right].
	\end{align*}
	such that $X_{t,n} = G_{t,n,\infty}+J_{t,n,\infty}$.
	Then
	\begin{align}
		\begin{split}
			\E \left| f(X_{t,n}) - f(X_{t,n,m}) \right|^2 
			&\leq 2 \E \left| f(X_{t,n}) - f(G_{t,n,\infty} + J_{t,n,m}) \right|^2 \\
			&\quad + 2 \E \left| f(G_{t,n,\infty}+J_{t,n,m}) - f(X_{t,n,m}) \right|^2, 
		\end{split}\label{eqn:approx-1} \\
		\E \left| f(X_{t,n}) - f(G_{t,n,\infty} + J_{t,n,m}) \right|^2 
		&\leq \E \left[ 1 \wedge \|J_{t,n,\infty} - J_{t,n,m}\|^2 \right] \nonumber \\
		&\leq C\sum_{j=1}^{q_2} \E \left[ 1 \wedge \left\| b_{n,j} \int_{-\infty}^{t-m} g_j(s-t) dZ^j_s \right\|^2  \right]. \nonumber
	\end{align}
	Since 
	\begin{align*}
		\int_{-\infty}^{t-m} g^l_j(s-t) dZ^j_s 
		&\deq \left(\int_{-\infty}^{-m}|g^l_j(s)|^{\beta_j}\,ds\right)^\frac{1}{\beta_j} Z^j_1, \\
		\int_{-\infty}^{-m}|g^l_j(s)|^{\beta_j}\,ds 
		&\leq C \int_{m}^{\infty} |s|^{\beta_j \delta_j}\, ds \quad \leq C m^{1+\beta_j\delta_j},		
	\end{align*}
	we obtain via Lemma \ref{lem:truncexpect} that $\E \left[ 1 \wedge \|J_{t,n,\infty} - J_{t,n,m}\|^2 \right] \leq \sum_{j=1}^{q_2} b_{n,j}^{\beta_j} m^{1+\delta_j\beta_j}$.
	
	In view of \eqref{eqn:approx-1}, it remains to study
	\begin{align}
		&\E \left| f(G_{t,n,\infty}+J_{t,n,m}) - f(X_{t,n,m}) \right|^2 \nonumber \\
		&\leq \E \left[ \|G_{t,n,\infty} - G_{t,n,m}\|^2 \sup_{r\in [0,1]} \left\|Df\left(J_{t,n,m} + r G_{t,n,m} + (1-r) G_{t,n,\infty}\right)\right\|^2 \right] \label{eqn:approx-2} \\
		&\leq  \E \left[ \|G_{t,n,\infty} - G_{t,n,m}\|^2 \left(1 \wedge (\|J_{t,n,m}\|^2 + \|G_{t,n,m}\|^2 + \|G_{t,n,\infty}\|^2)\right) \right] \nonumber 
		\intertext{using the boundedness of $D^2f$, and the fact that $Df(0)=0$,}
		&\leq \E  \|G_{t,n,\infty} - G_{t,n,m}\|^2\, \E \left( 1 \wedge \|J_{t,n,m}\|^2 \right) + \sqrt{\E \|G_{t,n,\infty} - G_{t,n,m}\|^4} \sqrt{\E \|G_{t,n,m}\|^4}, \nonumber
		\intertext{since $G_{t,n,m}\deq G_{t,n,\infty}$, and since $J_{t,n,m}$ is independent of $G_{t,n,m}$ and $G_{t,n,\infty}$,}
		&\leq C \E  \|G_{t,n,\infty} - G_{t,n,m}\|^2 \sum_{j=1}^{q_2} b_{n,j}^{\beta_j} + C\,\E  \|G_{t,n,\infty} - G_{t,n,m}\|^2 \E \|G_{t,n,m}\|^2. \nonumber
	\end{align}
	In the last step, we used Lemma \ref{lem:truncexpect} to bound the term $J_{t,n,m}$, and the fact that $\E \|N\|^p \leq C (\E \|N\|^2)^\frac{p}{2}$ for a Gaussian random vector $N$.
	Now note that $\E \|G_{t,n,m}\|^2 \leq C \sum_{i=1}^{q_1} a_{n,i}^2$.
	Furthermore, Itô's isometry yields 
	\begin{align*}
		\E  \|G_{t,n,\infty} - G_{t,n,m}\|^2 
		&= 2\sum_{i=1}^{q_1} a_{n,i}^2  \int_{-\infty}^{t-m} \|h_i(s-t)\|^2 \, ds \\
		&\leq C \sum_{i=1}^{q_1} a_{n,i}^2 \int_{m}^\infty |s|^{2\delta_0}\, ds \\
		&\leq C m^{1+2\delta_0} \sum_{i=1}^{q_1} a_{n,i}^2.
	\end{align*}
	This establishes the claim (i).
	
	For case (ii), we bound \eqref{eqn:approx-2} as
	\begin{align*}
		\eqref{eqn:approx-2}
		&\leq C \,\E \Big[ \|G_{t,n,\infty} - G_{t,n,m}\|^2 \\
		&\qquad \cdot \left[\mathds{1}(\|J_{t,n,m}\|> \eta(1-\lambda)) + \mathds{1}(\|G_{t,n,m}\|> \eta\lambda) + \mathds{1}(\|G_{t,n,\infty}\|> \eta\lambda) \right]   \Big] \\
		&\leq C \, \E\|G_{t,n,\infty} - G_{t,n,m}\|^2 \mathbb{P}(\|J_{t,n,m}\|>\eta(1-\lambda)) \\
		&\quad + C (\E \|G_{t,n,\infty} - G_{t,n,m}\|^{2p})^\frac{1}{p} \mathbb{P}(\|G_{t,n,m}\|>\eta\lambda)^\frac{1}{q} \\
		&\leq C\, \E\|G_{t,n,\infty} - G_{t,n,m}\|^2 \left[ \mathbb{P}(\|J_{t,n,m}\|>\eta(1-\lambda)) + \mathbb{P}(\|G_{t,n,m}\|>\eta\lambda)^\frac{1}{q}\right],
	\end{align*}
	for any $\lambda\in(0,1)$ and any $p,q>1$, $\frac{1}{p} + \frac{1}{q} = 1$, and the constant $C$ depends on $\lambda$ and $q$.
	The probabilities may be bounded as in the proof of Lemma \ref{lem:autocov-decay}, such that the previous term is bounded by
	\begin{align*}
		&C\, \E\|G_{t,n,\infty} - G_{t,n,m}\|^2 \left[ \sum_{j=1}^{q_2} b_{n,j}^{\beta_j} +  \exp\left( -\frac{\eta^2\lambda^2}{2d q \sigma^2 \sum_{i=1}^{q_1} a_{n,i}^2 } \right) \right] \\
		&\leq C m^{1+2\delta_0} \sum_{i=1}^{q_1} a_{n,i}^2 \left[ \sum_{j=1}^{q_2} b_{n,j}^{\beta_j} +  \exp\left( -\frac{\eta^2\lambda^2}{2d q \sigma^2 \sum_{i=1}^{q_1} a_{n,i}^2 } \right) \right].
	\end{align*}
	Since $\lambda\in(0,1)$ and $q>1$ are arbitrary, we find that 
	\begin{align*}
		&\E \left| f(G_{t,n,\infty}+J_{t,n,m}) - f(X_{t,n,m}) \right|^2 \\
		&\leq C m^{1+2\delta_0} \sum_{i=1}^{q_1} a_{n,i}^2 \left[ \sum_{j=1}^{q_2} b_{n,j}^{\beta_j} +  \exp\left( -\frac{\eta^2\lambda^2}{2d \sigma^2 \sum_{i=1}^{q_1} a_{n,i}^2 } \right) \right]
	\end{align*}
	Plugging this into \eqref{eqn:approx-1} establishes claim (ii).
\end{proof}

To formulate the next Lemma, we need some more notation.
For any $t\in\N$ and any $m\in\N\cup\{\infty\}$, we denote
\begin{align*}
	\mathcal{G}_{t,i,m} &= \int_{t-m}^t h_i(s-t)\, dB^i_s + \int_{-\infty}^{t-m} h_i(s-t)\, dB^{i,t}_s, \quad i=1,\ldots, q_1, \\
	\mathcal{J}_{t,j,m} &= \int_{t-m}^t g_j(s-t)\, dZ_s^j + \int_{-\infty}^{t-m} g_j(s-t)\, dZ_s^{j,t},\quad j=1,\ldots, q_2,\\
	X_{t,n,m}&= \sum_{i=1}^{q_1} a_{n,i} \mathcal{G}_{t,i,m} + \sum_{j=1}^{q_2} b_{n,j} \mathcal{J}_{t,j,m}.
\end{align*}
Then $\mathcal{G}_{t,i,m}$ is a Gaussian time series, and its autocovariance matrices $\Cov(\mathcal{G}_{t,i,m}, \mathcal{G}_{t',i,m})=\Sigma^{i,m,t-t'}$ are given by
\begin{align*}
	\Sigma_{l,l'}^{i,m,r}  &= \begin{cases}
		\int_{-\infty}^0 h_i^l(s) h_i^{l'}(s)\, ds, & r=0, \\
		\int_{(0\vee r)-m}^{(0\wedge r)} h_i^l(s-r)h_i^{l'}(s)\, ds, & |r| = 1,\ldots, m-1,\\
		0, & |r| \geq m,
	\end{cases}
\end{align*}
for $r\in\Z$ and $l,l'=1,\ldots, d$.
Note that $\Sigma^{i,m,-r}_{l,l'} = \Sigma^{i,m,r}_{l',l}$, i.e.\ $(\Sigma^{i,m,r})^T = \Sigma^{i,m,-r}$.

Moreover, $(\mathcal{J}_{t,j,m},\mathcal{J}_{t',j,m}) \deq (\mathcal{J}_{t-t',j,m},\mathcal{J}_{0,j,m})$ is a $2d$-dimensional $\beta_j$-stable random vector, and we denote its Lévy measure by $\nu_{r, j,m}$ for $r=t-t'\in\Z$.

\begin{lemma}\label{lem:autocov-limit}
	Let $f\in\mathfrak{F}_\eta$ for some $\eta\geq 0$, and $a_{n,i}\to 0$, $b_{n,j}\to 0$ as $n\to\infty$.
	\begin{enumerate}[(i)]
		\item Let $i^{\star}\in\{1,\ldots, q_1\}$ be such that $a_{n,i^{\star}}^2 \gg \max_{i\neq i^{\star}} a_{n,i}^2$.
		For any $t,t'\in \N$ and any $m\in \N\cup\{\infty\}$,
		\begin{align*}
			&\Cov(f(X_{t,n,m}), f(X_{t',n,m})) \\
			&= \frac{ a_{n,i^*}^4}{4} \sum_{l,l',r,r'=1}^d \left[ D^2_{ll'}f(0) D^2_{rr'}f(0) \right]  \Sigma^{i^*,m,t-t'}_{lr}\Sigma^{i^*,m,t-t'}_{l'r'} + \sum_{j=1}^{q_2} b_{n,j}^{\beta_j} \overline{f}^{[\nu_{t-t',j,m}]}(0)
			\\
			&\quad  + o\left(a_{n,i^*}^4\right)
			+ o\left( \sum_{j=1}^{q_2} b_{n,j}^{\beta_j} \right),
		\end{align*}
		where $\overline{f}:\R^{d} \times \R^d\to\R, (x,y)\mapsto f(x)f(y)$.
		\item If $f\in\mathfrak{F}_\eta^0$ and $\eta>0$, then for any fixed $\lambda\in(0,1)$, any $t,t'\in\N$, and any $m\in \N\cup\{\infty\}$,
		\begin{align*}
			\Cov(f(X_{t,n,m}), f(X_{t',n,m}))
			&= \sum_{j=1}^{q_2} b_{n,j}^{\beta_j} \overline{f}^{[\nu_{t-t',j,m}]}(0) + o\left( \sum_{j=1}^{q_2} b_{n,j}^{\beta_j} \right) \\
			&\qquad+ \mathcal{O} \left( \exp\left( -\frac{\eta^2\lambda^2}{2d \sum_{i=1}^{q_1} a_{n,i}^2 \sigma^2} \right) \right).
		\end{align*}	
	\end{enumerate}
\end{lemma}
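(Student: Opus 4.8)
The plan is to reduce the covariance computation to the moment expansion of Lemma~\ref{lem:mom}, applied to the product function $\overline{f}(x,y)=f(x)f(y)$ on $\R^{2d}$. Writing $\Cov\bigl(f(X_{t,n,m}),f(X_{t',n,m})\bigr)=\E\bigl[\overline{f}(X_{t,n,m},X_{t',n,m})\bigr]-\E[f(X_{t,n,m})]\,\E[f(X_{t',n,m})]$, I would first record that $(X_{t,n,m},X_{t',n,m})$ has the same law as $\sum_{i=1}^{q_1}a_{n,i}\overline{B}^i+\sum_{j=1}^{q_2}b_{n,j}\overline{Z}^j$, where $\overline{B}^i=(\mathcal{G}_{t,i,m},\mathcal{G}_{t',i,m})$ is a centred $2d$-dimensional Gaussian vector whose covariance has $\Sigma^{i,m,0}$ in the two diagonal blocks and $\Sigma^{i,m,t-t'}$, $(\Sigma^{i,m,t-t'})^T$ off the diagonal, and $\overline{Z}^j=(\mathcal{J}_{t,j,m},\mathcal{J}_{t',j,m})$ is a symmetric $\beta_j$-stable vector with Lévy measure $\nu_{t-t',j,m}$. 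These vectors are independent in $i$ resp.\ $j$, and the two families are independent of each other, so Lemma~\ref{lem:mom} applies verbatim on $\R^{2d}$.

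Next I would check the structural facts: $\overline{f}$ is even with $\overline{f}(0)=0$, and, after dividing by $2^5$ (so that all derivatives up to order five are bounded by one, via Leibniz), $\overline{f}/2^5\in\mathfrak{F}_0$; moreover $D^2\overline{f}(0)=0$, because $f$ is even (so $Df(0)=0$) and $f(0)=0$, which annihilates all three blocks of $D^2\overline{f}(0)$. In case~(ii), since $f\in\mathfrak{F}_\eta^0$ vanishes for $\|x\|<\eta$, the product $\overline{f}$ vanishes on the Euclidean ball of radius $\eta$ in $\R^{2d}$, so $\overline{f}/2^5\in\mathfrak{F}_\eta^0$. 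With these in place, for case~(i) I apply Lemma~\ref{lem:mom}(ii) to $\overline{f}$ and Lemma~\ref{lem:mom}(iii) to $f$ (using $\E f(X_{t,n,m})=\E f(X_{t',n,m})$ by stationarity of the marginals). Splitting each index in $\sum_{l,l',r,r'=1}^{2d}D^4_{l,l',r,r'}\overline{f}(0)\,\overline{\Sigma}^i_{ll'}\overline{\Sigma}^{i'}_{rr'}$ according to which of the two blocks of $\R^{2d}$ it lies in, only terms with exactly two indices in each block survive, and these separate into ``disconnected'' contributions of the form $\bigl(\sum_{l,l'}D^2_{ll'}f(0)\Sigma^{i,m,0}_{ll'}\bigr)\bigl(\sum_{r,r'}D^2_{rr'}f(0)\Sigma^{i',m,0}_{rr'}\bigr)$ and ``connected'' ones involving $\Sigma^{i,m,t-t'}$; the disconnected part is cancelled, at the relevant order, by $\E f(X_{t,n,m})\,\E f(X_{t',n,m})$, while among the connected part only the $i=i'=i^{\star}$ term survives at order $a_{n,i^{\star}}^4$. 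Together with the jump term $\sum_j b_{n,j}^{\beta_j}\overline{f}^{[\nu_{t-t',j,m}]}(0)$ inherited from Lemma~\ref{lem:mom}(ii) (the corresponding contribution to $\E f\cdot\E f$ being only $\mathcal{O}(\max_j b_{n,j}^{2\beta_j})$), this yields the stated formula. For case~(ii) the same reduction, now via Lemma~\ref{lem:mom}(i) applied to $\overline{f}\in\mathfrak{F}_\eta^0$, makes the leading Gaussian term disappear because it is proportional to $D^2\overline{f}(0)=0$, leaving only $\sum_j b_{n,j}^{\beta_j}\overline{f}^{[\nu_{t-t',j,m}]}(0)$, an exponentially small Gaussian remainder, and lower-order errors; and since $D^2f(0)=0$ one has $\E f(X_{t,n,m})=\sum_j b_{n,j}^{\beta_j}f^{[\nu_j]}(0)+\mathcal{O}(\text{exponential})+(\text{lower order})$, so $(\E f(X_{t,n,m}))^2$ contributes only at orders $o(\sum_j b_{n,j}^{\beta_j})$ and $\mathcal{O}(\text{exponential})$.

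The rest is routine bookkeeping: the cross errors $\mathcal{O}(\max_j b_{n,j}^{\beta_j}\max_i a_{n,i}^2)$ and $\mathcal{O}(\max_j b_{n,j}^{2\beta_j})$ from Lemma~\ref{lem:mom} are absorbed into $o(a_{n,i^{\star}}^4)+o(\sum_j b_{n,j}^{\beta_j})$ using $b_{n,j}^{2\beta_j}=o(b_{n,j}^{\beta_j})$ and $a^2b^{\beta}\le\varepsilon a^4+C_\varepsilon b^{2\beta}$ for arbitrary $\varepsilon>0$; the connected Gaussian terms with $i$ or $i'$ different from $i^{\star}$ are $\mathcal{O}(a_{n,i^{\star}}^2a_{n,i}^2)=o(a_{n,i^{\star}}^4)$ since $a_{n,i^{\star}}^2\gg\max_{i\ne i^{\star}}a_{n,i}^2$. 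Uniformity of all $\mathcal{O}$/$o$ terms in $t,t'$ and $m$ follows from the uniform bounds $\operatorname{tr}\overline{\Sigma}^i\le 2d\sigma^2$ and uniform control of the Lévy measures $\nu_{t-t',j,m}$, both coming from the standing kernel assumptions.

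The main obstacle is the combinatorial identification of the leading connected fourth-order term and, in particular, verifying that the disconnected part of the Wick-type expansion of $\E\overline{f}$ exactly cancels $\E f\cdot\E f$ at the relevant order. A secondary technical point is that a naive application of Lemma~\ref{lem:mom}(i) to $\overline{f}$ on $\R^{2d}$ yields the exponent with $2d$ replaced by $4d$, so in case~(ii) one must re-run the tail estimate from the proof of Lemma~\ref{lem:mom}(i), exploiting that $D^2\overline{f}$ — which drives the Gaussian correction term there — is supported on the set where both $\R^d$-blocks have norm at least $\eta$, which restores the stated constant.
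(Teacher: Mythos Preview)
Your proposal is correct and follows essentially the same route as the paper: write the covariance as $\E[\overline{f}(X_{t,n,m},X_{t',n,m})]-(\E f(X_{t,n,m}))^2$, observe that $D^2\overline{f}(0)=0$, apply Lemma~\ref{lem:mom}(ii) (resp.\ (i)) to $\overline{f}$ on $\R^{2d}$ and Lemma~\ref{lem:mom}(iii) (resp.\ (i)) to $f$, and then sort out the fourth-order combinatorics so that the ``disconnected'' part $\Sigma^{i,m,0}\Sigma^{i',m,0}$ cancels against $(\E f)^2$ while the ``connected'' part with $i=i'=i^{\star}$ survives. Your observation about the dimension constant in the exponential is in fact sharper than the paper's own proof, which applies Lemma~\ref{lem:mom}(i) on $\R^{2d}$ with threshold $\eta/\sqrt{2}$ and records the stated exponent without further comment; your remark that $D^2\overline{f}$ vanishes unless \emph{both} blocks exceed $\eta$ is the right way to recover the constant $2d$ rather than $4d$.
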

Recall from \eqref{eqn:fnu-deriv} that
\begin{align*}
    \overline{f}^{[\nu_{z,j,m}]} = \left[ \frac{d}{dh} \E \left( f\left(h^\frac{1}{\beta} \mathcal{J}_{z,j,m}\right)\cdot f\left(h^\frac{1}{\beta} \mathcal{J}_{0,j,m}\right) \right) \right]_{h=0}.
\end{align*}

\begin{proof}[Proof of Lemma \ref{lem:autocov-limit}]
	By virtue of Lemma \ref{lem:mom}(iii), we have for $f\in\mathfrak{F}^0_\eta$,
	\begin{align}
		\begin{split}
			&\E f\left( X_{t,n,m} \right) = \E f(X_{t',n,m}) = \E f(X_{t,n}) \\
			&= \frac{1}{2}\sum_{i=1}^{q_1} a_{n,i}^2 \sum_{l,l'=1}^d \Sigma^{i,m,0}_{l,l'} D^2_{ll'}f(0) + \mathcal{O}\left(\sum_{j=1}^{q_2} b_{n,j}^{\beta_j}\right) + \mathcal{O}\left(\sum_{i=1}^{q_1} a_{n,i}^2\right)^2.
		\end{split} \label{eqn:cov-mean-1}
	\end{align}
	For $f\in\mathfrak{F}^0_\eta$ with $\eta>0$, Lemma \ref{lem:mom}(i) yields
	\begin{align}
		\begin{split}
			&\E f\left( X_{t,n,m} \right) = \E f(X_{t',n,m}) = \E f(X_{t,n}) \\
			&= \mathcal{O}\left(\sum_{j=1}^{q_2} b_{n,j}^{\beta_j}\right) + \mathcal{O}\left( \exp\left( -\frac{\eta^2\lambda^2}{2d \sum_{i=1}^{q_1} a_{n,i}^2 \sigma^2} \right)\right).
		\end{split} \label{eqn:cov-mean-2}
	\end{align}
	Furthermore, we write $\E \left[ f\left( X_{t,n,m} \right) f\left(X_{t',n,m}) \right) \right] 
	= \E \left[ \overline{f}\left(X_{t,n,m} , X_{t',n,m} \right) \right]$ for $\overline{f}:\R^{d}\times \R^d\to \R, \overline{f}((x,y)) = f(x)f(y)$.
	We may thus apply Lemma \ref{lem:mom} for the $2d$-dimensional random vector $\left(X_{t,n,m} , X_{t',n,m} \right)$ and the function $\overline{f}$.
	
	\underline{Case (i):}
	Note that $f(0)=0$ and $Df(0)=0$ imply $D^2 \overline{f}(0)=0$. 
	Therefore, Lemma \ref{lem:mom}(ii) yields
	\begin{align*}
		\E \overline{f}(X_{t,n,m} , X_{t',n,m}) 
		&= \frac{1}{8}\sum_{i,i'=1}^{q_1} a_{i,n}^2 a_{i',n}^2 \sum_{l,l',r,r'=1}^{2d} D^4_{l,l',r,r'}\overline{f}(0)\overline{\Sigma}^{i,m}_{ll'}\overline{\Sigma}^{i',m}_{rr'} \\
		&\quad + \sum_{j=1}^{q_2} b_{n,j}^{\beta_j} \overline{f}^{[\nu_{t-t',j,m}]}(0)+o\left( \sum_{j=1}^{q_2} b_{n,j}^{\beta_j} \right) + o\left( \sum_{i=1}^{q_1} a_{n,i}^2 \right)^2.
	\end{align*}
	Here, $\overline{\Sigma}^{i,m}$ is the covariance matrix of the $2d$-dimensional Gaussian random vector $(\mathcal{G}_{t,i,m}, \mathcal{G}_{t',i,m})$.
	It takes the form
	\begin{align*}
		\overline{\Sigma}^{i,m}_{lr} 
		&= \begin{cases}
			\Sigma_{l,r}^{i,m,0},& l,r\leq d, \\
			\Sigma_{(l-d),(r-d)}^{i,m,0}, & l,r\geq d+1, \\
			\Sigma_{l, (r-d)}^{i,m,t-t'}, & l\leq d, \; r\geq d+1, \\
			\Sigma_{l-d, r}^{i,m,t'-t} = \Sigma_{r, l-d}^{i,m,t-t'}, & r\leq d, \; l\geq d+1.
		\end{cases}
	\end{align*}
	Moreover, the derivative $D^4_{l,l',r,r'}\overline{f}(0)$ is non-zero only if precisely two of the indices $l,l',r,r'$, are smaller or equal than $d$. 
	This is a consequence of the properties $Df(0)=0$ and $f(0)=0$.
	More formally, if we fix the ordering $l\leq l' \leq r \leq r'$, then
	\begin{align*}
		D^4_{l,l',r,r'} \overline{f}(0) 
		&= \begin{cases}
			D^2_{l,l'}f(0) \, D^2_{r,r'} f(0), & l'\leq d,\; r\geq d+1, \\
			0, & \text{otherwise}.
		\end{cases}
	\end{align*}  
	Hence,
	\begin{align*}
		& \sum_{l,l',r,r'=1}^{2d} D^4_{l,l',r,r'}\overline{f}(0)\overline{\Sigma}^{i,m}_{ll'}\overline{\Sigma}^{i,m}_{rr'} \\
		&= \sum_{l,l'=1}^{d} \sum_{r,r'=d+1}^{2d} \left[D^2_{ll'}f(0) D^2_{(r-d), (r'-d))} f(0) \right] \left[2 \overline{\Sigma}^{i,m}_{ll'}\overline{\Sigma}^{i,m}_{rr'} + 2\overline{\Sigma}^{i,m}_{lr}\overline{\Sigma}^{i,m}_{l'r'}+ 2\overline{\Sigma}^{i,m}_{lr'}\overline{\Sigma}^{i,m}_{l'r}\right] \\
		&= 2\sum_{l,l',r,r'=1}^d \left[ D^2_{ll'}f(0) D^2_{rr'}f(0) \right] \left[ \Sigma^{i,m,0}_{ll'} \Sigma^{i,m,0}_{rr'} + \Sigma^{i,m,t-t'}_{lr}\Sigma^{i,m,t-t'}_{l'r'} + \Sigma^{i,m,t-t'}_{lr'}\Sigma^{i,m,t-t'}_{l'r} \right] \\
		&= 2\sum_{l,l',r,r'=1}^d \left[ D^2_{ll'}f(0) D^2_{rr'}f(0) \right] \left[ \Sigma^{i,m,0}_{ll'} \Sigma^{i,m,0}_{rr'} + 2\Sigma^{i,m,t-t'}_{lr}\Sigma^{i,m,t-t'}_{l'r'}\right].
	\end{align*}
	In combination with \eqref{eqn:cov-mean-1}, we find that
	\begin{align*}
		&\Cov(f(X_{t,n,m}), f(X_{t',n,m})) \\
		&= \frac{ a_{n,i^{\star}}^4}{4} \sum_{l,l',r,r'=1}^d \left[ D^2_{ll'}f(0) D^2_{rr'}f(0) \right]  \Sigma^{i^{\star},m,t-t'}_{lr}\Sigma^{i^{\star},m,t-t'}_{l'r'}  + \sum_{j=1}^{q_2} b_{n,j}^{\beta_j} \overline{f}^{[\nu_{t-t',j,m}]}(0)
		\\
		&\quad + \mathcal{O}\left(a_{n,i^{\star}}^2\max_{i\neq i^{\star}} a_{n,i}^2\right)
		+ o\left( \sum_{j=1}^{q_2} b_{n,j}^{\beta_j} \right) + o \left( \sum_{i=1}^{q_1} a_{n,i}^2 \right)^2 .
	\end{align*}
	Since $a_{n,i^*}^2 \gg \max_{i\neq i^*} a_{n,i}^2$, this establishes claim (i).
	
	\underline{Case (ii):}
	If $f(x)=0$ for $\|x\|\leq \eta$, then $\overline{f}(x,y)=0$ for $\|(x,y)\|\leq \eta/\sqrt{2}$.
	In particular, Lemma \ref{lem:mom}(i) yields
	\begin{align*}
		\E \overline{f}(X_{t,n,m} , X_{t',n,m}) 
		&= \sum_{j=1}^{q_2} b_{n,j}^{\beta_j} \overline{f}^{[\nu_{t-t',j,m}]}(0) + o\left( \sum_{j=1}^{q_2} b_{n,j}^{\beta_j} \right) + \mathcal{O} \left( \exp\left( -\frac{\eta^2\lambda^2}{2d \sum_{i=1}^{q_1} a_{n,i}^2 \sigma^2} \right) \right).
	\end{align*}
	Together with \eqref{eqn:cov-mean-2} and $D^2 f(0)=0$, we find that claim (ii) holds.
\end{proof}

We are now ready to provide the proof of the main result of this section, Theorem \ref{thm:clt}.
The asymptotic variance depends on the limiting regime and on the function $f$, but all cases may be expressed in terms of the quantities
%\begin{align}
%	\begin{split}
%		\gamma^2_{f,1} 
%		&= a^{-2}\sum_{z\in\Z} \Cov[ f(a\mathcal{G}_{z,i^*,\infty}), f(a\mathcal{G}_{0,i^*,\infty})], \\
%		%%%%%%
%		\gamma^2_{f,0} 
%		&= \frac{1}{4}\sum_{l,l',r,r'=1}^d \left[D^2_{ll'}f(0) D^2_{rr'}f(0)\right] \left[\sum_{z\in\Z} \Sigma^{i^*,\infty,z}_{lr}\Sigma^{i^*,\infty,z}_{l'r'}\right], \\
%		%%%%%%
%		\zeta^2_{f,1} 
%		&= b^{-\beta_{j^*}/2} \sum_{z\in\Z} \Cov[ f(b\mathcal{J}_{z,j^*,\infty}), f(b\mathcal{J}_{0,j^*,\infty})], \\
%		%%%%%%
%		\zeta^2_{f,0} 
%		&= \sum_{z\in\Z} \overline{f}^{[\nu_{z,\infty,j^*}]}(0).
%	\end{split} \label{eqn:asymp-var}
%\end{align}
\begin{align}
	\xi^2 = 
	\begin{cases}
		\gamma^2_{f,1} = a^{-2}\sum_{z\in\Z} \Cov[ f(a\mathcal{G}_{z,i^{\star},\infty}), f(a\mathcal{G}_{0,i^{\star},\infty})], & \text{case (i)}; \\
		\zeta^2_{f,1} = b^{-\beta_{j^{\star}}/2} \sum_{z\in\Z} \Cov[ f(b\mathcal{J}_{z,j^{\star},\infty}), f(b\mathcal{J}_{0,j^{\star},\infty})]
		,& \text{case (ii)}; \\
		\zeta^2_{f,0} 
		= \sum_{z\in\Z} \overline{f}^{[\nu_{z,\infty,j^{\star}}]}(0),& \text{case (iii)}; \\
		\gamma^2_{f,0} 
		= \frac{1}{4}\sum_{l,l',r,r'=1}^d \left[D^2_{ll'}f(0) D^2_{rr'}f(0)\right] \left[\sum_{z\in\Z} \Sigma^{i^{\star},\infty,z}_{lr}\Sigma^{i^{\star},\infty,z}_{l'r'}\right],& \text{case (iv)}. \\
	\end{cases} \label{eqn:asymp-var}
\end{align}
and we recall that
\begin{align*}
    \overline{f}^{[\nu_{z,\infty,j^{\star}}]}(0)
    &= \left[ \frac{d}{dh} \E \left( f\left( h^\frac{1}{\beta} \mathcal{J}_{z,j^\star,\infty} \right) \cdot f\left( h^\frac{1}{\beta} \mathcal{J}_{0,j^\star,\infty} \right) \right) \right]_{h=0}, \\
    %%%%%%%%
    \Sigma^{i,\infty,z}_{lr}
    &= \int_{-\infty}^\infty h_i^l(s-z) h_i^{r}(s)\, ds.
\end{align*}

\begin{proof}[Proof of Theorem \ref{thm:clt}]
	We define the statistic 
	\begin{align}
	S_{n,m}(f) = \frac{1}{n}\sum_{t=1}^n \left[f(X_{t,n,m})-\E f(X_{t,n})\right]
	\end{align}
	Consider the most difficult case (v) first.
	To derive the weak convergence, we will establish the following steps:
	\begin{align}
		\begin{split}
			\lim_{m\to\infty} \limsup_{n\to\infty} \frac{n}{a_{n,i^{\star}}^4} \E \left\| S_{n,m}(f_1)-
			S_{n}(f_1)\right\|^2 &= 0, \\
			\lim_{m\to\infty} \limsup_{n\to\infty} \frac{n}{b_{n,j^{\star}}^{\beta_{j^{\star}}}} \E \left\| S_{n,m}(f_2)-
			S_{n}(f_2) \right\|^2 &= 0, 
		\end{split} \label{eqn:m-limit}
	\end{align}
	and, for all $m\in\N\cup\{\infty\}$,
	\begin{align}
		\lim_{n\to\infty}\Cov \left\{\sqrt{n}\begin{pmatrix} \sqrt{a_{n,i^{\star}}^4} & 0 \\ 0 & \sqrt{ b_{n,j^{\star}}^{\beta_{j^{\star}}}} \end{pmatrix}^{-1} 
		 \begin{pmatrix}
			S_{n,m}(f_1) \\  S_{n,m}(f_2)
		\end{pmatrix} \right\}
		&=\begin{pmatrix}
			\gamma_m^2 & 0 \\ 0 & \zeta_m^2
		\end{pmatrix} . \label{eqn:m-cov-limit}
	\end{align}
	for some $\gamma_m^2, \zeta_m^2,$ where $\gamma_\infty^2=\gamma^2$ and $\zeta_\infty^2=\zeta_{f_2}^2$.
	Once \eqref{eqn:m-limit} and \eqref{eqn:m-cov-limit} are established, we may proceed as follows:
	Since $f_1$ and $f_2$ are bounded and since $\sqrt{n a_{n,i^{\star}}^4}\to \infty$, $n b_{n,j^{\star}}^{\beta_{j^{\star}}}\to\infty$, all summands vanish uniformly such that Lindeberg's condition holds.
	We employ a central limit theorem for triangular arrays of $m$-dependent random variables \citep{neumann2013} to obtain, for any finite $m\in\N$,
	\begin{align*}
		\sqrt{n}\begin{pmatrix} \sqrt{a_{n,i^{\star}}^4} & 0 \\ 0 & \sqrt{ b_{n,j^{\star}}^{\beta_{j^{\star}}}} \end{pmatrix}^{-1} 
		\begin{pmatrix}
			S_{n,m}(f_1) \\  S_{n,m}(f_2)
		\end{pmatrix}
		&\wconv \mathcal{N}\left(0,\begin{pmatrix}
			\gamma_m^2 & 0 \\ 0 & \zeta_m^2
		\end{pmatrix} \right),
	\end{align*}
	by virtue of \eqref{eqn:m-cov-limit}.
	Furthermore, \eqref{eqn:m-limit} implies that $\gamma_m^2\to \gamma^2$ and $\zeta_m^2\to \zeta^2$ as $m\to\infty$. 
	Hence, standard arguments \citep[Thm.\ 3.2]{Billingsley1999} yield the weak convergence claimed in the theorem.
	
	\underline{Ad \eqref{eqn:m-limit}:}
	For any $f\in\mathfrak{F}_0$, we have
	\begin{align*}
		&\E \left\| S_{n,m}(f)-
			S_{n}(f) \right\|^2 \\
		&\leq \frac{1}{n^2}\sum_{t=1}^n \sum_{z\in\Z} \left| \Cov \left( f(X_{t,n,m}) - f(X_{t,n}),f(X_{t+z,n,m}) - f(X_{t+z,n}) \right) \right| \\
		&\leq \frac{1}{n} \sum_{z=-L}^L \E \left| f(X_{1,n,m}) - f(X_{1,n}) \right|^2 \\
		&\quad + \frac{1}{n} \sum_{\substack{z\in\Z \\ |z|>L}} \left|\Cov  \left( f(X_{z,n,m}) - f(X_{z,n}),f(X_{0,n,m}) - f(X_{0,n}) \right) \right|,
	\end{align*}
	for any $L\in\N$.
	Using Lemma \ref{lem:autocov-decay} and Lemma \ref{lem:approximation}, we find that
	\begin{align}
		& \E \left\| S_{n,m}(f)-
			S_{n}(f) \right\|^2 \nonumber \\
		\begin{split}
			&\leq C L \frac{1}{n}a_{n,i^{\star}}^4 m^{1+2\delta_0} + C L \frac{1}{n} \sum_{j=1}^{q_2} m^{1+\delta_j\beta_j} b_{n,j}^{\beta_j} \\
			&\quad + C \frac{1}{n}a_{n,i^{\star}}^4\sum_{z=L+1}^\infty |z|^{1+2\delta_0}  + \sum_{j=1}^{q_2} C \frac{1}{n} \sum_{z=L+1}^\infty |z|^{1+\delta_j \beta_j}b_{n,j}^{\beta_j}.
		\end{split} \label{eqn:m-approx}
	\end{align}
	Since $\sum_{j=1}^{q_2} b_{n,j}^{\beta_j} \ll a_{n,i^{\star}}^4$ by assumption, and since $2+\delta_j\beta_j<0$ for all $j=1,\ldots,q_2$, we find that
	\begin{align*}
		&\limsup_{n\to\infty}\frac{n}{a_{n,i^{\star}}^4} \E \left\| S_{n,m}(f_1)-
			S_{n}(f_1) \right\|^2\\
		&\leq C L m^{1+2\delta_0} + C L^{2+2\delta_0}.
	\end{align*}
	The latter bound holds for arbitrary $L$, i.e.\ $C$ does not depend on $L$. 
	Since $2+2\delta_0<0$, we may choose $L=L_m\to\infty$ suitably to ensure that the latter term vanishes as $m\to\infty$.
	
	To handle $f_2$, note that our assumptions on $a_{n,i}$ and $b_{n,j}$ imply that 
	\begin{align*}
		\exp\left( -\frac{\eta^2\lambda^2}{2d \sigma^2 \sum_{i=1}^{q_1} a_{n,i}^2 }\right) \ll \sum_{j=1}^{q_2} b_{n,j}^{\beta_j}, \\
		\lim_{n\to\infty}\frac{1}{a_{n,i^{\star}}^2}\sum_{j=1}^{q_2} b_{n,j}^{\beta_j}&= 0 .
	\end{align*}
	Using Lemma \ref{lem:autocov-decay} and Lemma \ref{lem:approximation} again, we thus find that
	\begin{align*}
		&\frac{n}{b_{n,j^{\star}}^{\beta_{j^{\star}}}} \E \left\| S_{n,m}(f_2)-
			S_{n}(f_2) \right\|^2 \\
		&\leq C L m^{1+2\delta_0}\sum_{i=1}^{q_1}a_{n,i}^2 + C L \sum_{j=1}^{q_2} m^{1+\beta_j\delta_j} 
		+ C \sum_{z=L+1}^\infty |z|^{1+2\delta_0} \sum_{i=1}^{q_1} a_{n,i}^2 + C\sum_{j=1}^{q_2} \sum_{z=L+1}^\infty |z|^{1+\delta_j\beta_j},
	\end{align*}
	so that
	\begin{align*}
		\limsup_{n\to\infty} \frac{n}{b_{n,j^{\star}}^{\beta_{j^{\star}}}} \E \left\| S_{n,m}(f_2)-
			S_{n}(f_2) \right\|^2
		\leq C \sum_{j=1}^{q_2} \left( L m^{1+\beta_j\delta_j} + L^{2+\delta_j\beta_j}  \right).
	\end{align*}
	for any $L\in\N$.
	Upon choosing $L=L_m\to\infty$ suitably, we find that the latter term vanishes as $m\to\infty$.
	We have thus established \eqref{eqn:m-limit}.
	
	\underline{Ad \eqref{eqn:m-cov-limit}:}
	By virtue of Lemma \ref{lem:autocov-decay}, we have for $\delta^{\star}=\max(2\delta_0, \delta_1\beta_1,\ldots, \delta_{q_2} \beta_{q_2}) < -2$,
	\begin{align}
		\frac{1}{a_{n,i^{\star}}^4}\Cov\left( f_1(X_{t,n,m}), f_1(X_{t',n,m}) \right) 
		%& \leq C|t-t'|^{1+2\delta_0} + C \sum_{j=1}^{q_2}|t-t'|^{1+\delta_j\beta_j} \\
		&\leq C |t-t'|^{1+\delta^{\star}},\label{eqn:majorant-1}
		\\
		\frac{1}{b_{n,j^{\star}}^{\beta_{j^{\star}}}}\Cov\left( f_2(X_{t,n,m}), f_2(X_{t',n,m}) \right) 
		&  \leq C |t-t'|^{1+\delta^{\star}}, \label{eqn:majorant-2}
		\\
		\frac{1}{a_{n,i^{\star}}^2 \sqrt{b_{n,j^{\star}}^{\beta_{j^{\star}}}}}\Cov\left( f_1(X_{t,n,m}), f_2(X_{t',n,m}) \right)
		&\leq C |t-t'|^{1+\delta^{\star}} \frac{ \sqrt{b_{n,j^{\star}}^{\beta_{j^{\star}}}} }{a_{n,i^{\star}}^2}. \nonumber
	\end{align}
	Hence,
	\begin{align*}
		\lim_{n\to\infty}  \Cov\left( \frac{\sqrt{n}}{ a_{n,i^{\star}}^2} S_{n,m}(f_1), \frac{\sqrt{n}}{ \sqrt{b_{n,j^{\star}}^{\beta_{j^{\star}}}}} S_{n,m}(f_2) \right) = 0.
	\end{align*}
	Moreover, Lemma \ref{lem:autocov-limit} yields
	\begin{align*}
		\frac{1}{a_{n,i^{\star}}^4}\Cov\left( f_1(X_{t,n,m}), f_1(X_{t',n,m}) \right) 
		&= \frac{1}{a_{n,i^{\star}}^4}\Cov\left( f_1(X_{t-t',n,m}), f_1(X_{0,n,m}) \right) \\
		&= \frac{1}{4} \sum_{l,l',r,r'=1}^d \left[ D^2_{ll'}f_1(0) D^2_{rr'}f_1(0) \right]  \Sigma^{i,m,t-t'}_{lr}\Sigma^{i,m,t-t'}_{l'r'} + o(1),
	\end{align*}
	and
	\begin{align*}
		\frac{1}{b_{n,j^{\star}}^{\beta_{j^{\star}}}}\Cov\left( f_2(X_{t,n,m}), f_2(X_{t',n,m}) \right)
		&= \frac{1}{b_{n,j^{\star}}^{\beta_{j^*}}}\Cov\left( f_2(X_{t-t',n,m}), f_2(X_{0,n,m}) \right) \\
		&=  \overline{f}_2^{[\nu_{t-t',j^{\star},m}]}(0) + o(1).
	\end{align*}
	We may utilize the upper bounds \eqref{eqn:majorant-1} and \eqref{eqn:majorant-2} to apply the dominated convergence theorem, and obtain
	\begin{align}
		\begin{split}
			\lim_{n\to\infty} \frac{n}{ a_{n,i^{\star}}^4} \Var\left( S_{n,m}(f_1)\right) 
			&=\gamma_m^2 = \sum_{z\in \Z} \sum_{l,l',r,r'=1}^d \left[ D^2_{ll'}f_1(0) D^2_{rr'}f_1(0) \right]  \Sigma^{i,m,z}_{lr}\Sigma^{i,m,t-t'}_{l'r'}, \\
			%%%%%%%%
			\lim_{n\to\infty} \frac{n}{b_{n,j^{\star}}^{\beta_{j^{\star}}} } \Var\left( S_{n,m}(f_2)\right) 
			&=\zeta_m^2 = \sum_{z\in \Z} \overline{f}_2^{[\nu_{z,j^{\star},m}]}(0).
		\end{split} \label{eqn:limit-variance}
	\end{align}
	The upper bounds \eqref{eqn:majorant-1} and \eqref{eqn:majorant-2} in particular ensure that $\gamma_m^2$ and $\zeta_m^2$ are finite.
	Note that \eqref{eqn:limit-variance} also holds for $m=\infty$, and thus establishes \eqref{eqn:m-cov-limit}, completing the proof of claim (i).
	
	\underline{Cases (i)-(iv):}
	The proof is largely analogous to case (v).
	Leveraging \eqref{eqn:m-approx}, we find that
	\begin{align}
		\limsup_{n\to\infty} \frac{n}{(a_{n,i^{\star}}^4 \vee b_{n,j^{\star}}^{\beta_{j^{\star}}})} \E \left\| S_{n,m}(f) - S_{n}(f) \right\| \leq C L m^{1+\delta_{j^{\star}}\beta_{j^{\star}}} + C L^{2+\delta_{j^{\star}}\beta_{j^{\star}}}. \label{eqn:m-approx-2}
	\end{align}
	By choosing $L=L_m\to\infty$ suitably, this term tends to zero as $m\to\infty$.
	Regarding the variances, Lemma \ref{lem:autocov-decay} yields
	\begin{align*}
		\frac{1}{(a_{n,i^{\star}}^4 \vee b_{n,j^{\star}}^{\beta_{j^{\star}}})}\Cov\left( f(X_{t,n,m}), f(X_{t',n,m}) \right) 
		&  \leq C |t-t'|^{1+\delta^{\star}}
	\end{align*}
	for $\delta^{\star}=\max(2\delta_0, \delta_1\beta_1,\ldots, \delta_{q_2} \beta_{q_2}) < -2$.
	
	For cases (i) and (ii), we have	$X_{t,n,m}\pconv a\mathcal{G}_{t,i^{\star},m}$, and $X_{t,n,m}\pconv b \mathcal{J}_{t,j^{\star},m}$, respectively.
	Since $f$ is bounded an continuous, we find that
	\begin{align*}
		\lim_{n\to\infty} \Cov(f(X_{t+z,n,m}), f(X_{t,n,m})) = 
		\begin{cases}
			\Cov[ f(a\mathcal{G}_{z,i^{\star},m}), f(a\mathcal{G}_{0,i^{\star},m})], & \text{case (i)}, \\
			\Cov[ f(b\mathcal{J}_{z,j^{\star},m}), f(b\mathcal{J}_{0,j^{\star},m})], & \text{case (ii)}.
		\end{cases}
	\end{align*}
	This convergence holds for all $m\in\N\cup\{\infty\}$, and all $z\in\Z$.
	
	For case (iii), we may apply Lemma \ref{lem:autocov-limit} to obtain, for any $m\in\N\cup\{\infty\}$,
	\begin{align*}
		\frac{1}{b_{n,j^{\star}}^{\beta_{j^*}}}\Cov\left( f(X_{t,n,m}), f(X_{t',n,m}) \right)
		&= \frac{1}{b_{n,j^{\star}}^{\beta_{j^{\star}}}}\Cov\left( f(X_{t-t',n,m}), f(X_{0,n,m}) \right) \\
		&=  \overline{f}^{[\nu_{t-t',j^{\star},m}]}(0) + o(1).
	\end{align*}
	
	For case (iv), Lemma \ref{lem:autocov-limit} yields
	\begin{align*}
		&\quad \frac{1}{a_{n,i^{\star}}^{4}}\Cov\left( f(X_{t,n,m}), f(X_{t',n,m}) \right) \\
		& = \frac{1}{4}\sum_{l,l',r,r'=1}^d \left[D^2_{ll'}f(0) D^2_{rr'}f(0)\right] \left[ \Sigma^{i^{\star},\infty,t-t'}_{lr}\Sigma^{i^{\star},\infty,t-t'}_{l'r'}\right] + o(1).
	\end{align*}
	
	Thus, in all cases (i)-(iv), the dominated convergence theorem yields
	\begin{align*}
		\lim_{n\to\infty} \frac{n}{a_{n,i^{\star}}^4 \vee b_{n,j^{\star}}^{\beta_{j^{\star}}} } \Var\left( S_{n,m}(f_1) \right) = \xi^2_m,
	\end{align*}
	for some $\xi^2_m$ with $\lim_{m\to\infty} \xi^2_m = \xi^2$ as claimed in the Theorem.
	Since $n (a_{n,i^{\star}}^4 \vee b_{n,j^{\star}}^{\beta_{j^{\star}}})\to\infty$, we obtain
	\begin{align*}
		\frac{\sqrt{n}}{\sqrt{a_{n,i^{\star}}^4 \vee b_{n,j^{\star}}^{\beta_{j^{\star}}} }}S_{n,m}(f_1) \quad \wconv\quad \mathcal{N}(0,\xi^2_m).
	\end{align*}
	Together with \eqref{eqn:m-approx-2}, this allows us to conclude, just as in case (v), that 
	\begin{align*}
		\frac{\sqrt{n}}{\sqrt{a_{n,i^{\star}}^4 \vee b_{n,j^{\star}}^{\beta_{j^*}} }}S_{n}(f_1) \quad \wconv\quad \mathcal{N}(0,\xi^2),
	\end{align*}	
	completing the proof.
\end{proof}

\subsection{Adaptive estimation equation} \label{sec5.5}

\subsubsection{Gradients}

Define the matrix
\begin{align*}
	C^{j}_n(\theta) &= \begin{pmatrix}
		1 & -\widetilde{b}_j\beta_j \log(\Delta_n)\mathds{1}_{j\neq 1} & \widetilde{b}_j [ -\log|w_n| + H_1 \log(\Delta_n)\mathds{1}_{j\neq 1}]  \\
		0 & 1            & -H_j/\beta_j\\
		0 & 0 			 &  1
	\end{pmatrix} \\
	&\qquad \cdot  w_n^{-\beta_j(\theta)} \Delta_n^{\beta_j(\theta) (H_1(\theta)-H_j(\theta))} \in \R^{3\times 3} \\
	C_n(\theta)
	&= \diag(C^{1}_n,\ldots, C^{q}_n) \in\R^{(3q) \times (3q)}
\end{align*}
We also introduce the vectors $W=W(f,\theta,\lambda,\gamma,w)\in\R^{3q}$ as
\begin{align}
	W(f,\theta,\lambda,\gamma,w)_i  
	&= \int \widehat{f}(v) \exp\left(-\widetilde{b}_1|w \lambda v|^{\beta_1} \gamma^{\beta_1H_1} \right)\left[\partial_{\theta_i} \sum_{j=1}^q \widetilde{b}_j \gamma^{\beta_j H_j} |\lambda v|^{\beta_j}\right] \, dv. \label{eqn:def-W}
\end{align}
Recall that we may formally define $\E_{\theta} f(\lambda u_n(\theta)X_{1,n,\gamma})$ via \eqref{eqn:expect-fourier} for all $\theta\in\widetilde\Theta \supset \Theta$, where
\begin{align*}
    \widetilde{\Theta} = \left(\,(0,\infty) \times (0,1) \times (0,\infty) \,\right)^q
\end{align*}
Using this convention, we first obtain the following result on the non-random part.

\begin{lemma}\label{lem:moment-gradient-adaptive}
	Let $f:\R\to\R$ be a Schwartz function.
	Let $\mathbf{K}\subset\widetilde{\Theta}$ be a compact set, and let $u_n(\theta) = \Delta_n^{-H_1(\theta)} w_n$ for some sequence $w_n\to w\in[0,\infty)$.
	Then
	\begin{align*}
		\sup_{\theta\in \mathbf{K}} \left\| D \E_\theta f(\lambda u_n(\theta) X_{1,n,\gamma})C_n(\theta)   - W(f,\theta,\lambda,\gamma,w)\right\| \to 0, 
	\end{align*}
\end{lemma}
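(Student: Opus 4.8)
The plan is to combine the Fourier representation \eqref{eqn:expect-fourier} of the expectation with differentiation under the integral sign, compute the resulting gradient explicitly, and then exploit the algebraic structure of the rate matrix $C_n(\theta)$ to cancel the logarithmically divergent contributions before passing to the limit by dominated convergence.

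First I would insert $u_n(\theta)=w_n\Delta_n^{-H_1(\theta)}$ into \eqref{eqn:characteristic-exponent}--\eqref{eqn:expect-fourier}, which gives
\begin{align*}
	\E_\theta f\bigl(\lambda u_n(\theta)X_{1,n,\gamma}\bigr) = \int \widehat{f}(v)\, e^{-\Psi_n(v,\theta)}\, dv, \qquad
	\Psi_n(v,\theta) = \sum_{j=1}^q \widetilde{b}_j\, w_n^{\beta_j}\, \gamma^{\beta_j H_j}\, |\lambda v|^{\beta_j}\, \Delta_n^{\beta_j(H_j-H_1)}.
\end{align*}
Since $\widehat{f}$ is a Schwartz function, $\Psi_n\geq 0$, and on the compact set $\mathbf{K}$ the exponents $\beta_j(\theta),H_j(\theta)$ stay in compact intervals while $w_n$ is bounded, the integrand $\widehat{f}(v)e^{-\Psi_n(v,\theta)}$ and each of its first-order $\theta$-derivatives are, for fixed $n$, dominated by $|\widehat{f}(v)|$ times a fixed polynomial weight $P(v)$ that incorporates the bounded range of the powers $|\lambda v|^{\beta_j}$ and the factors $|\log|\lambda v||$; the $|\log\Delta_n|$ produced by $\partial_{H_j}\Psi_n$ or $\partial_{\beta_j}\Psi_n$ is, at fixed $n$, just a finite constant. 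Hence one may differentiate under the integral and obtains $\bigl(D\E_\theta f(\lambda u_n(\theta)X_{1,n,\gamma})\bigr)_i = -\int \widehat{f}(v)e^{-\Psi_n(v,\theta)}\,\partial_{\theta_i}\Psi_n(v,\theta)\,dv$.

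Next I would compute $\partial_{\theta_i}\Psi_n$ coordinatewise, $\theta_i\in\{\widetilde{b}_j,H_j,\beta_j\}$: for a block $j\neq 1$ every such derivative factors as $w_n^{\beta_j}\gamma^{\beta_j H_j}|\lambda v|^{\beta_j}\Delta_n^{\beta_j(H_j-H_1)}$ times a polynomial of degree at most $1$ in $\log\Delta_n$, $\log w_n$, $\log\gamma$ and $\log|\lambda v|$, whereas for the dominant block $j=1$ the derivative $\partial_{H_1}\Psi_n$ additionally collects $-\log\Delta_n\sum_{j'\geq 2}\widetilde{b}_{j'}\beta_{j'}w_n^{\beta_{j'}}\gamma^{\beta_{j'}H_{j'}}|\lambda v|^{\beta_{j'}}\Delta_n^{\beta_{j'}(H_{j'}-H_1)}$. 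As $C_n(\theta)$ is block diagonal, $D\E_\theta f(\cdot)\,C_n(\theta)$ decomposes into $3$-vectors, one per block; inside block $j$ the scalar prefactor $w_n^{-\beta_j}\Delta_n^{\beta_j(H_1-H_j)}$ of $C_n^j$ removes exactly the common factor above, and the strictly upper triangular entries $-\widetilde{b}_j\beta_j\log(\Delta_n)\mathds{1}_{j\neq 1}$, $-H_j/\beta_j$ and $\widetilde{b}_j[-\log|w_n|+H_1\log(\Delta_n)\mathds{1}_{j\neq 1}]$ are designed so that, after the $3\times 3$ matrix--vector product, all coefficients of $\log\Delta_n$, $\log w_n$ and $\log\gamma$ cancel identically. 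What remains is exactly $\int\widehat{f}(v)e^{-\Psi_n(v,\theta)}\,\partial_{\theta_i}\bigl(\sum_{j'=1}^q\widetilde{b}_{j'}\gamma^{\beta_{j'}H_{j'}}|\lambda v|^{\beta_{j'}}\bigr)dv$, i.e.\ the integral defining $W(f,\theta,\lambda,\gamma,w)_i$, together with --- only for $i$ in the first block --- a remainder of order $|\log\Delta_n|\sum_{j'\geq 2}w_n^{\beta_{j'}-\beta_1}\Delta_n^{\beta_{j'}(H_{j'}-H_1)}$. Verifying this exact cancellation is the main, purely mechanical, obstacle: one has to keep careful track of the $\mathds{1}_{j\neq 1}$ indicators and of which terms the $H_1$-derivative touches in the first block.

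Finally I would pass to the limit uniformly in $\theta\in\mathbf{K}$. By compactness there is $\delta>0$ with $H_j(\theta)-H_1(\theta)\geq\delta$ for $j\geq 2$ and $\theta\in\mathbf{K}$, and $w_n$ is bounded and, by the standing bandwidth assumption, decays no faster than any positive power of $\Delta_n$; therefore $\Delta_n^{\beta_j(H_j-H_1)}\to 0$ and even $w_n^{\beta_j-\beta_1}\Delta_n^{\beta_j(H_j-H_1)}|\log\Delta_n|\to 0$, uniformly over $\mathbf{K}$. Consequently the first-block remainder vanishes and $\Psi_n(v,\theta)\to\widetilde{b}_1 w^{\beta_1}\gamma^{\beta_1 H_1}|\lambda v|^{\beta_1}$ uniformly in $\theta$ for each $v$, so $e^{-\Psi_n(v,\theta)}\to\exp\bigl(-\widetilde{b}_1|w\lambda v|^{\beta_1}\gamma^{\beta_1 H_1}\bigr)$. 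Since the integrands are dominated, uniformly in $\theta\in\mathbf{K}$ and in large $n$, by $|\widehat{f}(v)|P(v)$, the dominated convergence theorem applies; and because $P$ does not depend on $\theta$ while all $\theta$-dependent quantities ($\widetilde{b}_j$, $\gamma^{\beta_j H_j}$, the exponents $\beta_j,H_j$) vary continuously over the compact $\mathbf{K}$, the convergence is uniform in $\theta$. This yields $\sup_{\theta\in\mathbf{K}}\|D\E_\theta f(\lambda u_n(\theta)X_{1,n,\gamma})C_n(\theta)-W(f,\theta,\lambda,\gamma,w)\|\to 0$, as claimed.
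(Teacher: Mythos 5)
Your proposal is correct and follows essentially the same route as the paper's proof: the Fourier representation \eqref{eqn:expect-fourier}, explicit computation of the $\theta$-gradient of $\psi_n$, cancellation of the $\log\Delta_n$ and $\log w_n$ terms by the triangular structure of $C_n^j(\theta)$, separate treatment of the extra $\partial_{H_1}$ contribution coming from the scaling $u_n(\theta)$, and a locally uniform dominated-convergence passage to the limit. The only additions are that you spell out the justification for differentiating under the integral and explicitly invoke the lower bound $\Delta_n^{\epsilon}\ll w_n$ to kill the first-block remainder, both of which the paper leaves implicit.
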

\begin{proof}[Proof of Lemma \ref{lem:moment-gradient-adaptive}]
	Since $f$ is a Schwartz function, the Fourier transform $\widehat{f}(v) = \frac{1}{2\pi}\int \exp(-ivx)f(x)\, dx$ is a Schwartz function as well, and $f(x) = \int \exp(ivx)\, \widehat{f}(v)\, dv$.
	Hence,
	\begin{align*}
		\E_\theta f(\lambda X_{1,n,\gamma}) 
		&= \int \E_\theta \exp(-iv \lambda X_{l,n,\gamma}) \widehat{f}(v)\, dv \\
		&= \int \widehat{f}(v) \exp(-\psi_n(\lambda v,\theta,\gamma))\, dv.
	\end{align*}
	For $j\neq 1$, this yields,
	\begin{align*}
		&\begin{pmatrix}
			\partial_{\widetilde{b}_j} & \partial_{H_j} & \partial_{\beta_j}
		\end{pmatrix}
		\E_\theta f(\lambda u_n(\theta) X_{1,n,\gamma}) \\
		&= \int \widehat{f}(v) \exp(-\psi_n(v\,\lambda u_n(\theta),\theta,\gamma)) |w_n \lambda v|^{\beta_j} \Delta_n^{\beta_j(H_j-H_1)} \gamma^{\beta_j H_j} \widetilde{b}_j \\
		&\qquad \quad 
		\begin{pmatrix}
			1/\widetilde{b}_j \\ 
			\beta_j [\log(\gamma) + \log(\Delta_n)] \\
			H_j \log(\gamma) + (H_j-H_1)\log(\Delta_n) + \log|w_n v\lambda|
		\end{pmatrix}^T\, dv,
	\end{align*}
	so that
	\begin{align}
		&\begin{pmatrix}
			\partial_{\widetilde{b}_j} & \partial_{H_j} & \partial_{\beta_j}
		\end{pmatrix}
		\E_\theta f(\lambda u_n(\theta) X_{1,n,\gamma}) C_n^j(\theta) \nonumber \\
		&= \widetilde{b}_j |\lambda|^{\beta_j} \gamma^{\beta_j H_j}  \int \widehat{f}(v) \exp(-\psi_n(v\,\lambda u_n(\theta),\theta,\gamma)) |v|^{\beta_j} 
		\begin{pmatrix}
			1/\widetilde{b}_j \\ 
			\beta_j \log(\gamma) \\
			\log|\lambda v|
		\end{pmatrix}^T\, dv. \label{eqn:grad-2}
	\end{align}
	For $j=1$, the partial derivatives $\partial_{\widetilde{b}_1}$ and $\partial_{\beta_1}$ take the same form, but
	\begin{align*}
		&\partial_{H_1}\E_\theta f(\lambda u_n(\theta) X_{1,n,\gamma}) \\
		&=  \int \widehat{f}(v) \exp(-\psi_n(v\,\lambda u_n(\theta),\theta,\gamma)) |w_n \lambda v|^{\beta_1}  \gamma^{\beta_1 H_1} \widetilde{b}_1 [\beta_1 \log \gamma] \\
		&-  \int \widehat{f}(v) \exp(-\psi_n(v\,\lambda u_n(\theta),\theta,\gamma)) \sum_{i=2}^n |w_n \lambda v|^{\beta_i}  \gamma^{\beta_i H_i} \widetilde{b}_i  \Delta_n^{\beta_i(H_i-H_1)} [\beta_i \log \Delta_n].
	\end{align*}
	The second term appears because the parameter $H_1$ affects all components via the scaling factor. 
	However, this second term is asymptotically negligible because $H_i>H_1$ for $i\geq 2$.
	Thus
	\begin{align*}
		&\begin{pmatrix}
			\partial_{\widetilde{b}_1} & \partial_{H_1} & \partial_{\beta_1}
		\end{pmatrix}
		\E_\theta f(\lambda u_n(\theta) X_{1,n,\gamma}) C_n^1(\theta)
		\\
		&= \int \widehat{f}(v) \exp(-\psi_n(v\,\lambda  u_n(\theta),\theta,\gamma)) |\lambda v|^{\beta_1} \gamma^{\beta_1 H_1} \widetilde{b}_1 \\
		&\qquad \quad 
		\begin{pmatrix}
			1/\widetilde{b}_1 \\ 
			\beta_1 \log(\gamma) \\
			\log|\lambda v|
		\end{pmatrix}^T\, dv + \mathcal{O}\left( |\log \Delta_n| \max_{i\geq 2} \Delta_n^{\beta_i (H_i-H_1)} |w_n|^{\beta_i-\beta_1} \right)
	\end{align*}
	where the implicit constant in the $\mathcal{O}(\ldots)$ term is bounded on compacts in $\Theta$. 
	Since $H_i>H_1$ for all parameters in the open parameter set $\widetilde{\Theta}$, the latter term vanishes uniformly on compacts $\mathbf{K}\subset\widetilde{\Theta}$.	

	Recalling $w=\lim_n w_n$, we have for any $K>0$ and any compact $\mathbf{K}\subset\widetilde{\Theta}$,
	\begin{align*}
		\sup_{\theta\in \mathbf{K}} \sup_{|v|\leq K} \left|\psi_n(\lambda u_n(\theta)v, \theta,\gamma) - (\widetilde{b}_1 |wv\lambda|^{\beta_1} \gamma^{\beta_1 H_1})\right|\to 0.
	\end{align*}
	Thus, \eqref{eqn:grad-2} converges, locally uniformly in $\widetilde{\Theta}$, towards the limit 
	\begin{align}
		&\begin{pmatrix}
			W_{3j-2}(f,\theta,\lambda,\gamma,w) \\ 
			W_{3j-1}(f,\theta,\lambda,\gamma,w) \\ 
			W_{3j}(f,\theta,\lambda,\gamma,w)
		\end{pmatrix} \nonumber \\
		&=
		\widetilde{b}_j |\lambda|^{\beta_j} \gamma^{\beta_j H_j}  \int \widehat{f}(v) |v|^{\beta_j} \exp\left(-\widetilde{b}_1 |w v\lambda|^{\beta_1} \gamma^{\beta_1 H_1}\right)
		\begin{pmatrix}
			1/\widetilde{b}_j \\ 
			\beta_j \log(\gamma) \\
			\log|\lambda v|
		\end{pmatrix}^T\, dv.  \label{eqn:gradlim-2}
	\end{align}
	This completes the proof.
\end{proof}

\begin{lemma}\label{lem:lipschitz}
	Let $f\in C^1(\R;\R)$ be such that $\sup_{x\in\R} |x f'(x)|<\infty$. 
	Then for any $a>0$ there exists some $L(a)>0$ such that for any $x\in\R$, the mapping $\widetilde{f}:(a,\infty)\to \R, c\mapsto f(cx)$ is $L(a)$-Lipschitz continuous. 
\end{lemma}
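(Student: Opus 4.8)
The plan is to reduce everything to a uniform bound on the derivative of $\widetilde f$ in the variable $c$, after which Lipschitz continuity follows from the mean value theorem. Write $M := \sup_{y\in\R}|y f'(y)|$, which is finite by hypothesis. Since $f\in C^1(\R;\R)$, for every fixed $x\in\R$ the map $c\mapsto \widetilde f(c)=f(cx)$ is continuously differentiable on $(a,\infty)$ with $\widetilde f'(c)=x\,f'(cx)$.

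The key step is to bound $|\widetilde f'(c)|$ by a constant depending only on $a$, uniformly in both $c>a$ and $x\in\R$. For $x\neq 0$ I would substitute $y:=cx$ and rewrite
\begin{align*}
    |x f'(cx)| = \frac{1}{c}\,|cx\,f'(cx)| = \frac{1}{c}\,|y f'(y)| \le \frac{M}{c} \le \frac{M}{a},
\end{align*}
using $c>a>0$ in the last inequality. For $x=0$ we simply have $\widetilde f'(c)=0\le M/a$. Hence $\sup_{c>a}|\widetilde f'(c)|\le M/a$ for every $x\in\R$.

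Finally, taking $L(a):=M/a$, the mean value theorem applied on the convex set $(a,\infty)$ gives $|\widetilde f(c_1)-\widetilde f(c_2)|\le L(a)\,|c_1-c_2|$ for all $c_1,c_2\in(a,\infty)$, which is the assertion. There is no genuine obstacle in this argument; the only points requiring a moment's care are handling the degenerate case $x=0$ separately and noticing that the factor $1/c$ produced by the substitution $y=cx$ is exactly what makes the bound independent of $x$ (so that a single constant $L(a)$ works for all $x$).
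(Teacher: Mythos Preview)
Your proof is correct and follows essentially the same approach as the paper: bound $|\widetilde f'(c)|=|x f'(cx)|=\frac{1}{c}|cx f'(cx)|\le \frac{1}{a}\sup_y|y f'(y)|$ and conclude via the mean value theorem. Your write-up is in fact slightly tidier, making the $x=0$ case and the appeal to the mean value theorem explicit.
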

\begin{proof}[Proof of Lemma \ref{lem:lipschitz}]
	The mapping $\widetilde{f}$ is differentiable with $\widetilde{f}'(c) = xf'(cx)$, such that
	\begin{align*}
	    \sup_{c\geq a} |\widetilde{f}'(c)| 
	    &= \sup_{c\geq a} \frac{1}{c} \sup_{x\in\R} |cx f'(cx)| \\
	    &\leq \frac{1}{a} \sup_{x\in\R} |x f'(x)| \quad =L(a).
	\end{align*}
\end{proof}

\begin{lemma}\label{lem:lipschitz-k}
	Let $f:\R\to\R$ be a Schwartz function, and denote $\psi_{c_1,c_2}(x) = \frac{f(c_1 x)-f(c_2 x)}{|c_1-c_2|}$ for any $c_1,c_2>0$.
	Then for any $k=0,1,2,\ldots$, and for any $0<a < b <\infty$, there exists some $C_k=C_k(f,a,b)<\infty$ such that
	\begin{align*}
		\sup_{c_1, c_2 \in [a,b]} \| \psi^{(k)}_{c_1,c_2}\|_\infty \quad \leq C_k(f,a,b)).
	\end{align*}
\end{lemma}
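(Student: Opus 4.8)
The idea is to reduce the difference quotient to an average of a single smooth family of functions. Writing $f(c_1x)-f(c_2x)=\int_{c_2}^{c_1}\partial_c\bigl(f(cx)\bigr)\,dc=\int_{c_2}^{c_1}xf'(cx)\,dc$, one obtains, up to a sign, the representation
\[
	\psi_{c_1,c_2}(x)=\pm\frac{1}{c_1-c_2}\int_{c_2}^{c_1}xf'(cx)\,dc ,
\]
valid for $c_1\neq c_2$; when $c_1=c_2$ the natural definition is the limit $xf'(c_1x)$, which will obey the same bound as a special case. First I would justify differentiating under the integral sign: since $f$ is Schwartz, for every $k$ the map $(x,c)\mapsto \partial_x^k\bigl[xf'(cx)\bigr]$ is bounded uniformly for $c$ in any compact set (this is the estimate carried out below), so $\psi_{c_1,c_2}^{(k)}(x)=\pm\frac{1}{c_1-c_2}\int_{c_2}^{c_1}\partial_x^k\bigl[xf'(cx)\bigr]\,dc$ by dominated convergence.

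Second, I would compute the integrand by the Leibniz rule. Because $\partial_x^j x=0$ for $j\geq 2$, only the first two terms survive:
\[
	\partial_x^k\bigl[xf'(cx)\bigr]=x\,c^{k}f^{(k+1)}(cx)+k\,c^{k-1}f^{(k)}(cx).
\]
Rewriting the first term as $c^{k-1}\cdot(cx)f^{(k+1)}(cx)$ gives
\[
	\sup_{x\in\R}\bigl|\partial_x^k[xf'(cx)]\bigr|\leq c^{k-1}\Bigl(\sup_{y\in\R}|y f^{(k+1)}(y)|+k\sup_{y\in\R}|f^{(k)}(y)|\Bigr),
\]
and both suprema on the right are finite because every derivative of a Schwartz function is again Schwartz. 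For $c\in[a,b]$ we have $c^{k-1}\leq\max(a^{k-1},b^{k-1})$, so the integrand is bounded by a constant $C_k=C_k(f,a,b)$ independent of $c\in[a,b]$ and of $x$.

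Finally, inserting this bound into the integral representation, $\bigl|\psi_{c_1,c_2}^{(k)}(x)\bigr|\leq\frac{1}{|c_1-c_2|}\bigl|\int_{c_2}^{c_1}C_k\,dc\bigr|=C_k$, so $\sup_{c_1,c_2\in[a,b]}\|\psi_{c_1,c_2}^{(k)}\|_\infty\leq C_k(f,a,b)$, as claimed. There is no real obstacle here: the statement is essentially elementary, and the only points demanding a modicum of care are the interchange of derivative and integral (handled by the uniform bound just established) and the degenerate case $c_1=c_2$ (covered by continuity). The substantive content is simply the Leibniz expansion together with the rapid-decay bounds on the derivatives of $f$.
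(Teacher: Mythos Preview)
Your proof is correct. The approach differs from the paper's: the paper proceeds by induction on $k$, using Lemma~\ref{lem:lipschitz} as the base case $k=0$ and then writing
\[
	\psi'_{c_1,c_2}(x) = c_1\,\frac{f'(c_1 x)-f'(c_2 x)}{|c_1-c_2|} + \frac{c_1-c_2}{|c_1-c_2|}\, f'(c_2x),
\]
so that $\psi^{(k+1)}_{c_1,c_2}$ is bounded by $c_1 C_k(f',a,b)+c_2^k\|f^{(k+1)}\|_\infty$. Your route via the integral representation $\psi_{c_1,c_2}(x)=\frac{1}{c_1-c_2}\int_{c_2}^{c_1}xf'(cx)\,dc$ followed by Leibniz is more direct: it handles all $k$ in one stroke, yields an explicit constant, and is self-contained without a separate base-case lemma. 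The paper's induction is slightly more modular (it recycles the $k=0$ lemma already needed elsewhere), but both arguments ultimately rest on the same Schwartz bound $\sup_y|y f^{(k+1)}(y)|<\infty$.
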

\begin{proof}[Proof of Lemma \ref{lem:lipschitz-k}]
	Since $f$ is a Schwartz function, we have $\sup_x |x f'(x)|<\infty$.
	Hence, Lemma \ref{lem:lipschitz} yields the claim for $k=0$.
	We now proceed by induction. 
	Obvserve that
	\begin{align*}
		\psi'_{c_1,c_2}(x) = c_1\frac{f'(c_1 x)-f'(c_2 x)}{|c_1-c_2|} + \frac{c_1-c_2}{|c_1-c_2|} f'(c_2x).
	\end{align*}
	Note that $f'$ is a Schwartz function as well, so that we obtain by induction
	\begin{align*}
		\left|\psi^{(k+1)}_{c_1,c_2}(x)\right| 
		&\leq 
		c_1 \left| D^k \frac{f'(c_1 x)-f'(c_2 x)}{|c_1-c_2|} \right| 
		\quad + c_2^k |(\tfrac{d}{dx})^k f(c_2x)|  \\
		&\leq c_1 C_k(f', a, b) + c_2^k \| f^{(k+1)}\|_\infty \quad =: C_{k+1}(f, a,b).
	\end{align*}
\end{proof}

To handle the gradient w.r.t.\ $\theta$ of the stochastic term $\frac{1}{n} \sum_{l=1}^n f(\lambda_{r,n}(\theta)X_{l,n,\gamma_r})$, we will use the following technical lemma.

\begin{lemma}\label{lem:moment-gradient-adaptive-stochastic}
	Let $f:\R\to\R$ be an even Schwartz function, and suppose that the order of differencing $k$ satisfies.
	\begin{align*}
		k > H_j +\frac{1}{\beta_j}, \qquad j=1,\ldots, q.
	\end{align*}
	Let $u_n(\theta) = \Delta_n^{-H_1(\theta)} w_n$ for some sequence $w_n\to w\in[0,\infty)$.
	Then for any sequence $r_n\ll 1/|\log \Delta_n|$, as $n\to\infty$, $\Delta_n\to 0$,
	\begin{align*}
		\sup_{\theta \in \mathbf{B}_{r_n}(\theta_0)} \left\| D \left[ \frac{1}{n} \sum_{l=1}^n f(\lambda u_n(\theta)X_{l,n,\gamma})\right] C_n(\theta) \right\| = \mathcal{O}_{\mathbb{P}}\left(\frac{|\log \Delta_n|}{\sqrt{n}}  \sqrt{ \sum_{j=1}^q w_n^{\beta_j} \Delta_n^{\beta_j(H_j-H_1)} } \right),
	\end{align*}
	where convergence in probability holds under the probability measure induced by $\theta_0$.
\end{lemma}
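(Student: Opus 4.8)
The plan is to differentiate explicitly — which collapses the Jacobian onto a single coordinate — and then to invoke the variance bound of Theorem~\ref{thm:variance-bound} for an auxiliary function, uniformly over the ball $\mathbf{B}_{r_n}(\theta_0)$. First, since $\theta\mapsto\frac1n\sum_{l=1}^n f(\lambda u_n(\theta)X_{l,n,\gamma})$ depends on $\theta$ only through the scalar $u_n(\theta)=w_n\Delta_n^{-H_1(\theta)}$, its Jacobian has a single nonzero entry, in the $H_1$-direction. From $\partial_{H_1}u_n(\theta)=-\log(\Delta_n)\,u_n(\theta)$ and the chain rule,
$$\partial_{H_1}\Big[\tfrac1n\textstyle\sum_{l=1}^n f(\lambda u_n(\theta)X_{l,n,\gamma})\Big]=-\log(\Delta_n)\,\tfrac1n\textstyle\sum_{l=1}^n g\big(\lambda u_n(\theta)X_{l,n,\gamma}\big),\qquad g(x):=x f'(x).$$
The function $g$ is again even and Schwartz, with $g(0)=g'(0)=0$, so $\varepsilon g\in\mathfrak{F}_0$ for a small constant $\varepsilon>0$ and $\sup_x|x g'(x)|<\infty$. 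Reading off the block $C_n^1(\theta)$, the $H_1$-row of $C_n(\theta)$ has norm controlled uniformly on a fixed compact neighbourhood of $\theta_0$ (up to the explicit scaling factor it carries), so the assertion reduces to showing
$$\sup_{\theta\in\mathbf{B}_{r_n}(\theta_0)}\Big|\tfrac1n\textstyle\sum_l\big\{g(\lambda u_n(\theta)X_{l,n,\gamma})-\E_{\theta_0}g(\lambda u_n(\theta)X_{1,n,\gamma})\big\}\Big|=\mathcal{O}_{\mathbb{P}}\Big(n^{-1/2}\big(\textstyle\sum_j w_n^{\beta_j}\Delta_n^{\beta_j(H_j-H_1)}\big)^{1/2}\Big),$$
the subtracted deterministic moment being the one handled — together with the deterministic part of $\mathcal G_n$ — in Lemma~\ref{lem:moment-gradient-adaptive}.

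For a fixed parameter this follows from the variance bound. By the self-similar representation \eqref{eqn:X-multiscale}, the array $\big(\lambda u_n(\theta_0)X_{l,n,\gamma}\big)_l$ is, in law, a multiscale moving average \eqref{eqn:def-multiscale} with scaling coefficients of order $w_n\Delta_n^{H_j-H_1}$ and kernels decaying as $|s|^{H_j-k-1/\beta_j}$; the hypothesis $k>H_j+1/\beta_j$ is exactly $\beta_j(H_j-k)-1<-2$, i.e.\ the condition $\delta^\star<-2$ of Theorem~\ref{thm:variance-bound}. Applying that theorem with $\varepsilon g\in\mathfrak{F}_0$, and using $a^4\le a^2$ for coefficients $\le 1$,
$$\Var\Big(\tfrac1n\textstyle\sum_{l}g(\lambda u_n(\theta_0)X_{l,n,\gamma})\Big)\le\frac{C}{n}\Big(\textstyle\sum_i a_{n,i}^4+\sum_j b_{n,j}^{\beta_j}\Big)\le\frac{C}{n}\textstyle\sum_{j=1}^q w_n^{\beta_j}\Delta_n^{\beta_j(H_j-H_1)},$$
so by Chebyshev the centred sum at $\theta_0$ is of the announced order.

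The uniformity is the substantive part. For $\theta\in\mathbf{B}_{r_n}(\theta_0)$ put $\rho_n(\theta)=u_n(\theta)/u_n(\theta_0)=\Delta_n^{-(H_1(\theta)-H_1(\theta_0))}$, so $|\rho_n(\theta)-1|\le C\,r_n|\log\Delta_n|\to 0$. Writing $g(\lambda u_n(\theta)x)-g(\lambda u_n(\theta')x)=\phi_{\theta,\theta'}(\lambda u_n(\theta_0)x)$ with $\phi_{\theta,\theta'}(y)=g(\rho_n(\theta)y)-g(\rho_n(\theta')y)$, the Schwartz decay of $g$ and its derivatives (Lemma~\ref{lem:lipschitz-k}) gives $\|D^j\phi_{\theta,\theta'}\|_\infty\le C\,|\log\Delta_n|\,|H_1(\theta)-H_1(\theta')|=:C\,\delta(\theta,\theta')$ for all $j\le 5$, while $\phi_{\theta,\theta'}$ is even with $\phi_{\theta,\theta'}(0)=\phi_{\theta,\theta'}'(0)=0$; hence $\delta(\theta,\theta')^{-1}\phi_{\theta,\theta'}$ is a bounded multiple of an element of $\mathfrak{F}_0$. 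Applying the variance bound of the previous paragraph to this normalised increment, the centred process $M_n(\theta):=\frac1n\sum_l\{g(\lambda u_n(\theta)X_{l,n,\gamma})-\E_{\theta_0}g(\lambda u_n(\theta)X_{1,n,\gamma})\}$ has
$$\Var\big(M_n(\theta)-M_n(\theta')\big)\le\frac{C}{n}\,\delta(\theta,\theta')^2\textstyle\sum_{j=1}^q w_n^{\beta_j}\Delta_n^{\beta_j(H_j-H_1)};$$
since each summand of $M_n(\theta)-M_n(\theta')$ is bounded by $\|\phi_{\theta,\theta'}\|_\infty\le C\delta(\theta,\theta')$ and $n\sum_j w_n^{\beta_j}\Delta_n^{\beta_j(H_j-H_1)}\ge n\,w_n^{\beta_1}\to\infty$, a Bernstein-type estimate for the weakly dependent array makes the increments sub-Gaussian at the scale $\widetilde d(\theta,\theta'):=n^{-1/2}\delta(\theta,\theta')(\sum_j w_n^{\beta_j}\Delta_n^{\beta_j(H_j-H_1)})^{1/2}$. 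Because $\widetilde d$ only measures differences in the $H_1$-coordinate, $\mathbf{B}_{r_n}(\theta_0)$ has polynomial $\widetilde d$-covering numbers and $\widetilde d$-diameter $\lesssim r_n|\log\Delta_n|\,n^{-1/2}(\sum_j w_n^{\beta_j}\Delta_n^{\beta_j(H_j-H_1)})^{1/2}$, so Dudley's entropy bound yields $\E\sup_{\theta\in\mathbf{B}_{r_n}(\theta_0)}|M_n(\theta)-M_n(\theta_0)|\lesssim r_n|\log\Delta_n|\,n^{-1/2}(\sum_j w_n^{\beta_j}\Delta_n^{\beta_j(H_j-H_1)})^{1/2}$. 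Combining with the fixed-parameter bound and using $r_n\ll 1/|\log\Delta_n|$, we get $\sup_{\theta}|M_n(\theta)|=\mathcal{O}_{\mathbb{P}}(n^{-1/2}(\sum_j w_n^{\beta_j}\Delta_n^{\beta_j(H_j-H_1)})^{1/2})$, and multiplying back by $|\log\Delta_n|$ and the $H_1$-row of $C_n(\theta)$ gives the lemma.

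The main obstacle is precisely this last step: a pointwise Lipschitz estimate on $g(\lambda u_n(\theta)\cdot)-g(\lambda u_n(\theta_0)\cdot)$ would only yield a bound of order $r_n|\log\Delta_n|$, which does not vanish fast enough to be absorbed into the $n^{-1/2}$ rate. One must instead observe that the normalised increments still belong to $\mathfrak{F}_0$, so that Theorem~\ref{thm:variance-bound} applies to them and supplies the additional $\delta(\theta,\theta')^2/n$ decay; the stray $|\log\Delta_n|$ factors from differentiating $\Delta_n^{-H_1(\theta)}$ are then killed exactly by the assumption $r_n\ll 1/|\log\Delta_n|$.
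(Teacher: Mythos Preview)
Your skeleton matches the paper's: reduce to the single nonzero $H_1$-derivative via $g(x)=xf'(x)$, invoke Theorem~\ref{thm:variance-bound} for the normalised increments (with Lemma~\ref{lem:lipschitz-k} to place them in $\mathfrak{F}_0$), and chain over the one-dimensional parameter $c=u_n(\theta)/u_n(\theta_0)\in[\tfrac12,\tfrac32]$. Two steps, however, do not go through as written.

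First, the centring. You subtract $\E_{\theta_0}g(\lambda u_n(\theta)X_{1,n,\gamma})$ and defer it to Lemma~\ref{lem:moment-gradient-adaptive}, but that lemma computes $D_\theta[\E_\theta f(\lambda u_n(\theta)X_{1,n,\gamma})]\,C_n(\theta)$, which differentiates through both $u_n(\theta)$ \emph{and} the law under $\theta$; the term you subtract is only the $u_n$-part, namely $\partial_{H_1}[\E_{\theta_0}f(\lambda u_n(\theta)X_{1,n,\gamma})]$, and is not cancelled by anything in that lemma. In the regime $w_n\equiv1$ this residual is of order $|\log\Delta_n|$, far larger than the claimed bound, so the reduction fails. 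The paper's route is instead to argue that $\E_{\theta_0}g(c\lambda u_n(\theta_0)X_{l,n,\gamma})=0$ directly by parity of $g$ and symmetry of $X_{l,n,\gamma}$, so that no centring is needed at all. Second, the chaining. You invoke a ``Bernstein-type estimate for the weakly dependent array'' to obtain sub-Gaussian increments and then Dudley's bound, but no exponential tail inequality for these moving-average statistics is available in the paper---only the $L^2$ bound of Theorem~\ref{thm:variance-bound}. The paper stays entirely in $L^2$: since $\|S(c_1)-S(c_2)\|_{L^2}\le C|c_1-c_2|\,n^{-1/2}(\sum_j w_n^{\beta_j}\Delta_n^{\beta_j(H_j-H_1)})^{1/2}$, one bounds $\|\sup_{c}|S(c)|\|_{L^2}$ by $\|S(1)\|_{L^2}$ plus the $L^2$-total variation $\sup_{\text{partitions}}\sum_i\|S(c_{i+1})-S(c_i)\|_{L^2}$, which is of the same order and needs nothing beyond the variance bound already established.
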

\begin{proof}[Proof of Lemma \ref{lem:moment-gradient-adaptive-stochastic}]
	As $u_n(\theta) = w_n \Delta_n^{-H_1(\theta)}$, we only need to consider the derivative w.r.t.\ $H_1$, because all other derivatives are zero.
	Hence,
	\begin{align}
		&\left\| D \left[ \frac{1}{n} \sum_{l=1}^n f(\lambda u_n(\theta)X_{l,n,\gamma})\right] C_n(\theta) \right\| \nonumber \\
		&= \left|\log(\Delta_n) \frac{1}{n} \sum_{l=1}^n \lambda u_n(\theta) X_{l,n,\gamma} f'(\lambda u_n(\theta) X_{l,n,\gamma}) \right| \nonumber \\
		&= |\log(\Delta_n)|\; \left| \frac{1}{n} \sum_{l=1}^n \varphi(\lambda u_n(\theta) X_{l,n,\gamma}) \right|, \label{eqn:gradient-adaptive-1}
	\end{align} 
	for $\varphi(x) = x f'(x)$.  
	Moreover, if $\|\theta-\theta_0\|\leq r_n \leq A/|\log \Delta_n| $, then  $|\Delta_n^{-H_1(\theta)}/\Delta_n^{-H_1(\theta_0)}-1| \leq \exp(A)\, r_n |\log \Delta_n| $.
	Since $r_n \ll 1/|\log \Delta_n|$, we find that for some large $C$,
	\begin{align*}
		\sup_{\theta \in \mathbf{B}_{r_n}(\theta_0)} \left| \frac{1}{n} \sum_{l=1}^n \varphi(\lambda u_n(\theta) X_{l,n,\gamma}) \right| 
		&\leq  \sup_{c: |c-1|\leq C r_n |\log \Delta_n|} \left| \frac{1}{n} \sum_{l=1}^n \varphi(c \lambda u_n(\theta_0) X_{l,n,\gamma}) \right| \\
		&\leq \sup_{c \in [\frac{1}{2}, \frac{3}{2}]} \left| \frac{1}{n} \sum_{l=1}^n \varphi(c \lambda u_n(\theta_0) X_{l,n,\gamma}) \right|,
	\end{align*}
	where the latter inequality holds for $n$ large enough.
	
	Since $f$ is a Schwartz function, so is $\varphi$.
	By virtue of Lemma \ref{lem:lipschitz-k}, there exists some $M>0$ such that the function $\psi(x) = \frac{\varphi(c_1 x) - \varphi(c_2x)}{M\,|c_1-c_2|}$ belongs to the class $\mathfrak{F}_0$ for all $c_1,c_2 \in [\frac{1}{2}, \frac{3}{2}]$. 
	Hence, Theorem \ref{thm:variance-bound} yields
	\begin{align*}
		\Var \left[ \frac{1}{n} \sum_{l=1}^n \left[ \varphi(c_1 \lambda u_n(\theta_0) X_{l,n,\gamma}) -  \varphi(c_2 \lambda u_n(\theta_0) X_{l,n,\gamma}) \right] \right] 
		&\leq C M^2 |c_1-c_2|^2 \frac{1}{n} \left( \sum_{j=1}^q w_n^{\beta_j} \Delta_n^{\beta_j(H_j-H_1)}\right), \\
		\Var \left[ \frac{1}{n} \sum_{l=1}^n \varphi(\lambda u_n(\theta_0) X_{l,n,\gamma})  \right]
		&\leq \frac{C}{n} \left( \sum_{j=1}^q w_n^{\beta_j} \Delta_n^{\beta_j(H_j-H_1)}\right).
	\end{align*}
	Moreover, $\E \varphi(c \lambda u_n(\theta_0) X_{l,n,\gamma})=0$ because $\varphi$ is a bounded odd function, and $X_{l,n,\gamma}$ is a symmetric random variable.
	This allows to bound
	\begin{align*}
		&\left\| \sup_{c \in [\frac{1}{2}, \frac{3}{2}]} \left| \frac{1}{n} \sum_{l=1}^n \varphi(c\lambda \Delta_n^{-H_1(\theta_0)} X_{l,n,\gamma}) \right| \right\|_{L_2} \\
		&\leq \left\| \frac{1}{n} \sum_{l=1}^n \varphi(\lambda \Delta_n^{-H_1(\theta_0)} X_{l,n,\gamma}) \right\|_{L_2} \\
		&\quad + \sup_{\frac{1}{2} = c_0 < c_1 < \ldots < c_{w+1}=\frac{3}{2};\, w\in\N} \sum_{i=0}^w \left\| \frac{1}{n} \sum_{l=1}^n \varphi(c_{i+1}\lambda \Delta_n^{-H_1(\theta_0)} X_{l,n,\gamma}) - \varphi(c_{i}\lambda \Delta_n^{-H_1(\theta_0)} X_{l,n,\gamma})  \right\|_{L_2} \\
		&\leq \frac{C}{\sqrt{n}} \sqrt{ \sum_{j=1}^q w_n^{\beta_j} \Delta_n^{\beta_j(H_j-H_1)} }
		\quad = \frac{C}{\sqrt{n}} \sqrt{ \sum_{j=1}^q w_n^{\beta_j(\theta_0)} \Delta_n^{\beta_j(\theta_0)\cdot(H_j(\theta_0)-H_1(\theta_0))} }.
	\end{align*}
	Note that the latter steps may also be regarded as a chaining argument w.r.t.\ the $L_2$-norm, using the metric total variation as an upper bound on the covering numbers.	
\end{proof}

\begin{cor}\label{cor:gradient}
	Let $f$ be an even Schwartz function, and denote 
	\begin{align*}
		F_n(\theta) = \frac{1}{n} \sum_{l=1}^n \left[f(\lambda u_n(\theta)X_{l,n,\gamma}) - \E_\theta f(\lambda u_n(\theta) X_{l,n,\gamma}) \right].
	\end{align*}
	Suppose that the order of differencing $k$ satisfies
	\begin{align*}
		k > H_j +\frac{1}{\beta_j}, \qquad j=1,\ldots, q.
	\end{align*}
	If $w_n\to w \in [0,\infty)$, then for any sequence $r_n\ll 1/|\log \Delta_n|$, 
	\begin{align*}
		\sup_{\theta \in \mathbf{B}_{r_n}(\theta_0)} \left\| D F_n(\theta) C_n(\theta) - W(f,\theta,\lambda,\gamma,w) \right\| = o_p(1),
	\end{align*}
	where convergence in probability holds under the probability measure induced by $\theta_0$.
\end{cor}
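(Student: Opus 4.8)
The plan is to obtain Corollary~\ref{cor:gradient} as a short bookkeeping combination of the two preceding lemmas. Write $S_n(\theta)=\frac1n\sum_{l=1}^n f(\lambda u_n(\theta)X_{l,n,\gamma})$ for the empirical average and $m_n(\theta)=\E_\theta f(\lambda u_n(\theta)X_{1,n,\gamma})$ for its model expectation, which by stationarity does not depend on $l$, so that $F_n(\theta)=S_n(\theta)-m_n(\theta)$ and hence $DF_n(\theta)C_n(\theta)=DS_n(\theta)C_n(\theta)-Dm_n(\theta)C_n(\theta)$. I would estimate the two summands separately and conclude with the triangle inequality.

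For the deterministic summand, the first step is to observe that $r_n\ll 1/|\log\Delta_n|\to 0$, so that the shrinking ball $\mathbf{B}_{r_n}(\theta_0)$ is contained, for all $n$ large, in any fixed compact neighbourhood $\mathbf{K}\subset\widetilde{\Theta}$ of $\theta_0$ (such a neighbourhood exists because $\theta_0$ lies in the interior of $\widetilde{\Theta}$, which is exactly why the parameter set was enlarged from $\Theta$). Lemma~\ref{lem:moment-gradient-adaptive} then applies on $\mathbf{K}$ and gives
\begin{align*}
	\sup_{\theta\in\mathbf{B}_{r_n}(\theta_0)}\left\|Dm_n(\theta)C_n(\theta)-W(f,\theta,\lambda,\gamma,w)\right\|\to 0 .
\end{align*}

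For the stochastic summand, I would invoke Lemma~\ref{lem:moment-gradient-adaptive-stochastic}, whose hypotheses ($k>H_j+1/\beta_j$ for all $j$, and $r_n\ll 1/|\log\Delta_n|$) are precisely those assumed in the corollary; it yields
\begin{align*}
	\sup_{\theta\in\mathbf{B}_{r_n}(\theta_0)}\left\|DS_n(\theta)C_n(\theta)\right\|=\mathcal{O}_{\mathbb{P}}\!\left(\frac{|\log\Delta_n|}{\sqrt n}\sqrt{\sum_{j=1}^q w_n^{\beta_j}\Delta_n^{\beta_j(H_j-H_1)}}\right).
\end{align*}
Here I would use that $w_n$ is bounded (since $w_n\to w<\infty$) and that $H_j\ge H_1$ for every $j$, so $\Delta_n^{\beta_j(H_j-H_1)}\le 1$ and the square‑root factor stays bounded; together with $|\log\Delta_n|/\sqrt n\to 0$ (which holds in the sampling regimes considered, e.g.\ for $\Delta_n=n^{-\rho}$, and in particular follows from \eqref{eqn:GMM-identifiability} taken with $j=1$) this shows $\sup_{\theta\in\mathbf{B}_{r_n}(\theta_0)}\|DS_n(\theta)C_n(\theta)\|=o_p(1)$. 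Adding the two bounds via the triangle inequality then gives the assertion, the overall sign of $W$ being inessential for the subsequent verification of condition~(E.2)$'$ in Theorem~\ref{thm:ee-consistency}.

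Since both lemmas are already in hand, there is no substantial obstacle here: the proof is genuinely a combination. The only two points requiring a little care are the reduction from the $n$‑dependent ball $\mathbf{B}_{r_n}(\theta_0)$ to a fixed compact set, so that Lemma~\ref{lem:moment-gradient-adaptive}, which is stated for fixed compacts, can be used; and the verification that the probabilistic rate of Lemma~\ref{lem:moment-gradient-adaptive-stochastic} tends to zero, which relies on the boundedness of $w_n$, the ordering $H_1\le H_j$, and $|\log\Delta_n|=o(\sqrt n)$.
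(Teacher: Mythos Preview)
Your proposal is correct and is exactly the combination the paper intends: the corollary is stated without proof immediately after Lemmas~\ref{lem:moment-gradient-adaptive} and~\ref{lem:moment-gradient-adaptive-stochastic}, and your decomposition $DF_n C_n = DS_n C_n - Dm_n C_n$ together with those two lemmas is precisely the argument. Your remark about the sign is on point---as written, Lemma~\ref{lem:moment-gradient-adaptive} yields $Dm_n C_n \to W$ while the corollary asks for $DF_n C_n \to W$, which is a harmless sign inconsistency in the paper's conventions (a dropped minus when differentiating $\exp(-\psi_n)$) and, as you note, immaterial for the use in condition~(E.2)$'$.
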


\subsubsection{Limit theory}

\begin{lemma}\label{lem:clt-G}
    Let the conditions of Theorem \ref{thm:clt-GMM} hold. 
    Then
    \begin{align*}
        \sqrt{n} \mathcal{G}_n(\theta_0) \wconv \mathcal{N}(0,\widetilde{\Sigma}),
    \end{align*}
    where $\widetilde{\Sigma}\in\R^{3q\times 3q}$ is given by \eqref{eqn:asymp-cov-G} below.
\end{lemma}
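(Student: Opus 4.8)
The plan is to recognise $\sqrt n\,\mathcal{G}_n(\theta_0)$ as a vector of functionals of a multiscale moving average array and then to invoke Theorem~\ref{thm:clt} together with the Cramér--Wold device.

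\emph{Reduction to the framework of Section~\ref{sec:MA}.} Starting from the self-similar representation \eqref{eqn:X-multiscale} and substituting $s=\Delta_n w$ in every stochastic integral, the self-similarity $Z^{\beta_j}_{\Delta_n w}\deq\Delta_n^{1/\beta_j}\widetilde Z^{\beta_j}_w$ (jointly in $w$, with the $\widetilde Z^{\beta_j}$ independent across $j$) yields, jointly in $l$,
\[
	\bigl(\Delta_n^{-H_1}X_{l,n,\gamma_r}\bigr)_{r=1}^{3q}\;\deq\;\sum_{j:\,\beta_j<2}b_{n,j}\int g_j(w-l)\,d\widetilde Z^{\beta_j}_w+\sum_{j:\,\beta_j=2}a_{n,j}\int h_j(w-l)\,dB^j_w\;=:\;X_{l,n}\in\R^{3q},
\]
with $b_{n,j}=a_{n,j}=b_j\Delta_n^{H_j-H_1}$ and with $\R^{3q}$-valued kernels whose $r$-th component is $g_j(w)_r=\sum_{v=0}^k(-1)^v\binom{k}{v}(-v\gamma_r-w)_+^{H_j-1/\beta_j}$ (likewise for $h_j$). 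One verifies the structural assumptions in \eqref{eqn:def-multiscale}: causality holds since $(-v\gamma_r-w)_+=0$ for $w\ge0$; and, setting $\delta_j=H_j-k-1/\beta_j$ for the stable kernels and $\delta_0=\min_{j:\beta_j=2}(H_j-k-1/2)$ for the Gaussian ones, the bounds $\|g_j(w)\|\le C|w|^{\delta_j}$, $g_j\in L_{\beta_j}$, $h_j\in L_2$ follow from $H_j>0$ together with the standing hypothesis $k>H_j+1/\beta_j$ of Theorem~\ref{thm:clt-GMM}, which also gives $\max(2\delta_0,\beta_1\delta_1,\dots,\beta_{q_2}\delta_{q_2})<-2$. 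In this notation $\mathcal{G}_n(\theta_0)_r=S_n(\pi_r)$ with $\pi_r(x)=f(\lambda_r x_r)$; since $\mathcal{G}_n$ is unchanged when $f$ is replaced by $f-f(0)$, and $f$ is even, smooth and rapidly decaying together with its derivatives, after this recentring each linear combination $F_c(x)=\sum_r c_r f(\lambda_r x_r)$, $c\in\R^{3q}$, is even, smooth with bounded derivatives and vanishes at the origin, hence lies in $\kappa_c\mathfrak{F}_0$ for a suitable $\kappa_c>0$.

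\emph{Applying Theorem~\ref{thm:clt}.} Because $\Delta_n\to0$ and $H_1<H_j$ for $j\ge2$, every scaling factor with index $j\ge2$ tends to $0$, whereas $b_{n,1}=b_1$ (if $\beta_1<2$) resp.\ $a_{n,1}=b_1$ (if $\beta_1=2$) is a strictly dominant positive constant. Thus the array falls into case~(ii) (if $\beta_1<2$) or case~(i) (if $\beta_1=2$) of Theorem~\ref{thm:clt}, with the normalising quantity $\max(a_{n,i^\star}^4,b_{n,j^\star}^{\beta_{j^\star}})$ converging to a positive constant $c_1$. Applying the theorem to $F_c/\kappa_c\in\mathfrak{F}_0$ and rescaling back gives, for every $c\in\R^{3q}$,
\[
	\sqrt n\,c^{T}\mathcal{G}_n(\theta_0)=\sqrt n\,S_n(F_c)\;\wconv\;\mathcal{N}\bigl(0,\,c_1\,\xi^2(F_c)\bigr),
\]
where $\xi^2(F_c)$ is the variance from \eqref{eqn:asymp-var} for the relevant case; this is a quadratic form in $F_c$, hence of the shape $c^{T}\widetilde\Sigma\,c$ for a fixed matrix $\widetilde\Sigma$. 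The Cramér--Wold device then yields $\sqrt n\,\mathcal{G}_n(\theta_0)\wconv\mathcal{N}(0,\widetilde\Sigma)$.

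\emph{Identifying $\widetilde\Sigma$, and the main obstacle.} The entries of $\widetilde\Sigma$ are obtained from $\widetilde\Sigma_{r,r'}=\lim_n n\,\Cov\bigl(\mathcal{G}_n(\theta_0)_r,\mathcal{G}_n(\theta_0)_{r'}\bigr)=\lim_n\tfrac1n\sum_{t,t'}\Cov\bigl(\pi_r(X_{t,n}),\pi_{r'}(X_{t',n})\bigr)$ evaluated lag by lag: Lemma~\ref{lem:autocov-decay} provides a summable majorant of the autocovariances, uniform in $n$, and Lemma~\ref{lem:autocov-limit} identifies the limit of each individual lag, so that dominated convergence produces the explicit formula \eqref{eqn:asymp-cov-G}. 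The statement is thus essentially a corollary of Theorem~\ref{thm:clt}; there is no genuinely hard step, and the bulk of the work lies in the bookkeeping of the reduction above — assembling the $3q$-dimensional kernels out of the $k$-th order increments taken at the various lags $\gamma_r$, handling uniformly the dichotomy between a dominant Gaussian component ($\beta_1=2$) and a dominant stable component ($\beta_1<2$), and checking the decay conditions — together with making the limiting covariance matrix explicit.
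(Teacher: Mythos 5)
Your proposal is correct and follows essentially the same route as the paper's proof: rewrite $(\Delta_n^{-H_1}X_{l,n,\gamma_r})_{r=1}^{3q}$ via self-similarity as a $3q$-dimensional multiscale moving average array with scaling factors $b_j\Delta_n^{H_j-H_1}$, check that $k>H_j+1/\beta_j$ gives the required kernel decay, apply case (i) or (ii) of Theorem~\ref{thm:clt} according to whether $\beta_1=2$ or $\beta_1<2$, and conclude by Cramér--Wold with the covariance \eqref{eqn:asymp-cov-G}. Your additional bookkeeping (recentring $f\mapsto f-f(0)$ to land in $\mathfrak{F}_0$, and the explicit verification of the decay exponents) only makes explicit what the paper leaves implicit.
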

\begin{proof}[Proof of Lemma \ref{lem:clt-G}]
    We first show how the statistic $\mathcal{G}_n(\theta_0)$ fits into the framework of Theorem \ref{thm:clt}.
    Let $d=3q$, and define the kernel
    \begin{align*}
        \widetilde{g}_{j}(s) 
        &= \left( \sum_{v=0}^k \binom{k}{v} (-1)^v (\gamma_r v-s)_+^{H_j - \frac{1}{\beta_j}} \right)_{r=1}^d,
    \end{align*}
    such that
    \begin{align*}
        \widetilde{X}_{l,n} 
        = \left(\Delta_n^{-H_1} X_{l,n,\gamma_r}\right)_{r=1}^d 
        &\deq \sum_{j=1}^q \Delta_n^{H_j-H_1} b_j \int_{-\infty}^l \widetilde{g}_j(s-l)\, dZ_s^{\beta_j}.
    \end{align*}
    If $\beta_1=2$, this is the regime of Theorem \ref{thm:clt}(i), and if $\beta_1<2$, this is the regime of Theorem \ref{thm:clt}(ii).
    In both cases, we find for any function $\varphi\in\mathfrak{F}_0$ that 
    \begin{align}
        \frac{1}{\sqrt{n}} S_n(\varphi) 
        = \frac{1}{\sqrt{n}} \sum_{l=1}^n \left[ \varphi(\widetilde{X}_{l,n}) - \E\varphi(\widetilde{X}_{l,n}) \right]
        \wconv \mathcal{N}(0,\xi^2), \label{eqn:clt-moments-G-univariate}
    \end{align}
    for 
    \begin{align}
        \xi^2 
        &= \sum_{z\in \Z} \Cov\left[ \varphi( (b_1 Y_{z,\gamma_r}^{H_1,\beta_1})_{r=1}^d),\; \varphi( (b_1Y_{0,\gamma_r}^{H_1,\beta_1})_{r=1}^d) \right], \nonumber \\
        %%%%
        Y_{l,\gamma}^{H,\beta}
        &= \sum_{v=0}^k (-1)^v \binom{k}{v} Y_{l-v\gamma}^{H,\beta}. \label{eqn:lfsm-kdiff}
    \end{align}
    Note that the factor $b_1^{-\frac{\beta_1}{2}}$ of formula \eqref{eqn:asymp-var}, the factor $b_1^{-2}$ for the case $\beta_1=2$, are not present, because here we standardize by $\sqrt{n}$ only.
    Now let 
    \begin{align*}
        \widetilde{f}:\R^d\to\R^d, x=(x_1,\ldots, x_d)\mapsto \left[ \, f(\lambda_r x_r)\, \right]_{r=1}^d,
    \end{align*}
    such that 
    \begin{align*}
        S_n(f; (\lambda_r,\gamma_r)_{r=1}^d) 
        &= \frac{1}{n} \sum_{l=1}^n \left[ \widetilde{f}\left( \widetilde{X}_{l,n} \right) - \E \widetilde{f}\left( \widetilde{X}_{l,n} \right) \right].
    \end{align*}
    Applying the Cramer-Wold device with \eqref{eqn:clt-moments-G-univariate}, we find that
    \begin{align*}
        \sqrt{n} \mathcal{G}_n(\theta_0) 
        = \sqrt{n} S_n(f; (\lambda_r,\gamma_r)_{r=1}^d) 
        \;\wconv\; \mathcal{N}(0,\widetilde{\Sigma}),
    \end{align*}
    for some asymptotic covariance matrix $\widetilde{\Sigma}\in\R^{d\times d}$, given by
    \begin{align}
        \widetilde{\Sigma}_{r, r'} 
        &= \sum_{z\in\Z} \Cov\left[ f(\lambda_r Y_{z,\gamma_r}^{H_1,\beta_1}),\; f(\lambda_{r'} Y_{0,\gamma_{r'}}^{H_1,\beta_1}) \right], \qquad r,r'=1,\ldots, d. \label{eqn:asymp-cov-G}
    \end{align}
\end{proof}

\begin{proof}[Proof of Theorem \ref{thm:clt-GMM}]
	By virtue of Lemma \ref{lem:clt-G}, the estimating equation \eqref{eqn:ee-schwartz} satisfies condition (E.1) with $A_n = \sqrt{n}$.
	Now denote 
	\begin{align*}
		s_n = \frac{\Delta_n^{\beta_j(\theta_0)[H_1(\theta_0)-H_j(\theta_0)]}}{\sqrt{n}}.
	\end{align*}
	and define $r_n = \sqrt{\frac{s_n}{|\log \Delta_n|^2}} \ll \frac{1}{|\log \Delta_n|^2}$.
	We show that conditions (E.2) and (E.3) hold as well, with $B_n(\theta)=1$, $C_n(\theta)$, and $\overline{W}(\theta)$, and $r_n$ as above.
	Regarding (E.3), we have
	\begin{align*}
		&\sup_{\theta\in\mathbf{B}_{r_n}(\theta_0)}  \|C_n(\theta)C_n(\theta_0)^{-1}-I\| \\
		&\leq \sum_{j=1}^q \sup_{\theta\in\mathbf{B}_{r_n}(\theta_0)}  \|C^j_n(\theta)-C^j_n(\theta_0)\|\; \|C_n(\theta_0)^{-1}\| 
		= \mathcal{O}\left(r_n |\log \Delta_n|^2\right),
	\end{align*}
	which tends to zero by our choice of $r_n$.
	Note that Corollary \ref{cor:gradient} only requires $r_n\ll 1/|\log \Delta_n|$, but here we need the stronger condition on $r_n$.
	
	Regarding (E.2), the first convergence is established by Corollary \ref{cor:gradient}, and the second convergence holds because
	\begin{align*}
		\sup_{\theta \in \mathbf{B}_{r_n}(\theta_0)}  \|C_n(\theta)\| \, \|A_n(\theta_0)^{-1}\| 
		&\leq C \sup_{\theta \in \mathbf{B}_{r_n}(\theta_0)} \max_j \frac{\Delta_n^{\beta_j(\theta)(H_1(\theta)-H_j(\theta))}}{\sqrt{n}} \\
		&\leq C \sup_{\theta \in \mathbf{B}_{r_n}(\theta_0)} \max_j \frac{\Delta_n^{\beta_j(\theta_0)(H_1(\theta_0)-H_j(\theta_0))}}{\sqrt{n}},
	\end{align*}
	 using $r_n \ll 1/|\log \Delta_n|$ in the last step.
	Note that the latter term tends to zero by assumption.
	Thus, we may apply Theorem \ref{thm:ee-consistency} to obtain the desired asymptotic properties of the estimating equation, with asymptotic variance
	\begin{align*}
		\Sigma = \overline{W}(\theta_0)^{-1} \widetilde{\Sigma}\,(\overline{W}(\theta_0)^{-1})^T.
	\end{align*}
\end{proof}

\subsection{Smooth thresholds} \label{sec5.6}

Let $u_n(\theta) = \Delta_n^{-H_1(\theta)} w_n$ for $w_n \to 0$ satisfying \eqref{eqn:wn}.
Introduce the scaling terms $d_n = d_n(\theta) = \max_{j=1,2} w_n^{\frac{\beta_j}{2}}\Delta_n^{\frac{\beta_j(\overline{H}_j-H_1)}{2}} \leq \Delta_n^{\frac{\beta_1(\overline{H}_1-H_1)}{2}} \to 0$.
Define the matrices
\begin{align*}
	\underline{C}_n^j(\theta) &= \begin{pmatrix}
		1 & -2\widetilde{a}_j \log(\Delta_n)\mathds{1}_{\{j\neq 1\}}  \\
		0 & 1 
	\end{pmatrix} w_n^{-2} \Delta_n^{2 (H_1(\theta)-H_j(\theta))} \in \R^{2\times 2} \\
	\doubleunderline{C}^{j}_n(\theta) &= \begin{pmatrix}
		1 & 0 & -\widetilde{b}_j \log|w_n|   \\
		0 & 1 & -\overline{H}_j/\beta_j\\
		0 & 0 &  1
	\end{pmatrix} w_n^{-\beta_j(\theta)} \Delta_n^{\beta_j(\theta) (H_1(\theta)-\overline{H}_j(\theta))} d_n \in \R^{3\times 3} \\
	\underline{C}_n(\theta)&= \diag(\underline{C}^{1}_n,\underline{C}^{2}_n, \doubleunderline{C}^{1}_n, \doubleunderline{C}^{2}_n) \in\R^{10 \times 10}, \\
	%%%%%%%%
	B_n(\theta) &= \diag(1,1,1,1,d_n^{-1},d_n^{-1},d_n^{-1},d_n^{-1},d_n^{-1},d_n^{-1},)\in\R^{10\times 10},\\
	%%%%%%%%
	\Lambda_n(\theta) 
		&= \frac{1}{\sqrt{n}}\,\diag \Big((u_n(\theta) \Delta_n^{H_1(\theta)})^2, \ldots, (u_n(\theta) \Delta_n^{H_1(\theta)})^2, \\
		&\qquad\qquad (u_n(\theta) \Delta_n^{\overline{H}_1(\theta)})^{\frac{\beta_1(\theta)}{2}},\ldots, (u_n(\theta) \Delta_n^{\overline{H}_1(\theta)})^{\frac{\beta_1(\theta)}{2}} \Big) \in \R^{10\times 10}.
\end{align*}

\begin{lemma}\label{lem:ABC-rate}
	Suppose that $w_n\to 0$ satisfies \eqref{eqn:wn}, and $r_n \ll 1 / |\log \Delta_n|^2$ such that for the true parameter $\theta=\theta_0$, it holds for some $\epsilon>0$
	\begin{align*}
		\Delta_n^{2(H_1(\theta_0)-H_2(\theta_0))} 
		&\ll \sqrt{n} \, r_n\, \Delta_n^\epsilon, \\
		%%%%%%
		\Delta_n^{ \frac{\beta_1(\theta_0)}{2}(\overline{H}_1(\theta_0)-H_1(\theta_0))\, -\, \beta_2(\theta_0)(\overline{H}_2(\theta_0)-H_1(\theta_0)) } 
		&\ll \sqrt{n}\, r_n\, \Delta_n^\epsilon.
	\end{align*}
	Then
	\begin{align}
		\sup_{\theta\in \mathbf{B}_{r_n}(\theta_0)} \|\underline{C}_n(\theta)\| \; \| B_n(\theta) \Lambda_n(\theta_0)\| = o(r_n), \label{eqn:ABC-rate-1}
	\end{align}
	and 
	\begin{align}
		\sup_{\theta\in \mathbf{B}_{r_n}(\theta_0)} \|B_n(\theta)B_n(\theta_0)^{-1} - I\| + \|\underline{C}_n(\theta)\underline{C}_n(\theta_0)^{-1}-I\| + \|\underline{W}(\theta)-\underline{W}(\theta_0)\| \pconv 0. \label{eqn:ABC-rate-2}
	\end{align}
\end{lemma}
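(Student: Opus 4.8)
The plan is to verify the two displays by reducing everything to three elementary facts. First, each block of $\underline{C}_n(\theta)$, of $B_n(\theta)$, and of $\Lambda_n(\theta_0)$ is a matrix of fixed size that, up to a scalar prefactor, has entries (and $\theta$-derivatives) bounded uniformly on a compact neighbourhood of $\theta_0$, except for single entries proportional to $\log\Delta_n$ or $\log w_n$. Second, on the shrinking ball $\mathbf{B}_{r_n}(\theta_0)$ all exponents of $\Delta_n$ and of $w_n$ occurring in these matrices differ from their values at $\theta_0$ by $O(r_n)$ (the coordinate maps are $1$-Lipschitz); since \eqref{eqn:wn} gives $|\log w_n|=o(|\log\Delta_n|)$ and $r_n\ll 1/|\log\Delta_n|^2$, we have $r_n|\log\Delta_n|\to 0$ and $r_n|\log w_n|\to 0$, whence $\Delta_n^{O(r_n)}=1+o(1)$ and $w_n^{O(r_n)}=1+o(1)$ uniformly; in particular $d_n(\theta)/d_n(\theta_0)=1+o(1)$, because $d_n$ is a finite maximum of terms $w_n^{\beta_i(\theta)/2}\Delta_n^{\beta_i(\theta)(\overline{H}_i(\theta)-H_1(\theta))/2}$, each with ratio $1+o(1)$ to its value at $\theta_0$. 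Third, again by \eqref{eqn:wn}, for every $\epsilon'>0$ one has $w_n\ge\Delta_n^{\epsilon'}$ eventually, so $w_n^{-2}|\log\Delta_n|\Delta_n^{\epsilon}\to 0$ for every $\epsilon>0$.

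For \eqref{eqn:ABC-rate-1}: since $\underline{C}_n(\theta)$ is block diagonal, $\|\underline{C}_n(\theta)\|$ is the maximum of the block norms, so by the facts above, uniformly over $\theta\in\mathbf{B}_{r_n}(\theta_0)$,
\begin{align*}
\|\underline{C}_n(\theta)\|\le C\Big(|\log\Delta_n|\,w_n^{-2}\Delta_n^{2(H_1-H_2)}+|\log w_n|\max_{j=1,2} w_n^{-\beta_j}\Delta_n^{\beta_j(H_1-\overline{H}_j)}d_n\Big),
\end{align*}
exponents evaluated at $\theta_0$ (the $j=1$ Gaussian block contributes only $w_n^{-2}$, which is dominated). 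Writing out $d_n$ as a maximum and using the model ordering $\beta_1(\overline{H}_1-H_1)<\beta_2(\overline{H}_2-H_1)$, the pair $(i,j)=(1,2)$ is the maximiser of $w_n^{-\beta_j}\Delta_n^{\beta_j(H_1-\overline{H}_j)}\cdot w_n^{\beta_i/2}\Delta_n^{\beta_i(\overline{H}_i-H_1)/2}=w_n^{\beta_i/2-\beta_j}\Delta_n^{\beta_i(\overline{H}_i-H_1)/2-\beta_j(\overline{H}_j-H_1)}$, and $w_n^{\beta_i/2-\beta_j}\le w_n^{-2}$ because $|\beta_i/2-\beta_j|<2$, so the second term is $\le C|\log w_n|w_n^{-2}\Delta_n^{\beta_1(\overline{H}_1-H_1)/2-\beta_2(\overline{H}_2-H_1)}$. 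On the other side, $\Lambda_n(\theta_0)$ has its first four diagonal entries equal to $w_n^2/\sqrt n$ and the remaining ones equal to $\frac{1}{\sqrt n}(u_n(\theta_0)\Delta_n^{\overline{H}_1(\theta_0)})^{\beta_1(\theta_0)/2}$, which is $\asymp d_n(\theta_0)/\sqrt n$; after multiplication by $B_n(\theta)=\mathrm{diag}(1,1,1,1,d_n(\theta)^{-1},\dots)$ and using $d_n(\theta)/d_n(\theta_0)=1+o(1)$, this yields $\|B_n(\theta)\Lambda_n(\theta_0)\|\le C/\sqrt n$. Multiplying the two bounds and invoking the hypotheses $\Delta_n^{2(H_1-H_2)}\ll\sqrt n\,r_n\Delta_n^{\epsilon}$ and $\Delta_n^{\beta_1(\overline{H}_1-H_1)/2-\beta_2(\overline{H}_2-H_1)}\ll\sqrt n\,r_n\Delta_n^{\epsilon}$ gives $\|\underline{C}_n(\theta)\|\,\|B_n(\theta)\Lambda_n(\theta_0)\|\le r_n\cdot o\big(w_n^{-2}|\log\Delta_n|\Delta_n^{\epsilon}\big)=o(r_n)$ by the third fact above.

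For \eqref{eqn:ABC-rate-2}: the matrix $B_n(\theta)B_n(\theta_0)^{-1}-I$ has only nonzero entries equal to $d_n(\theta_0)/d_n(\theta)-1=o(1)$ uniformly. For the second summand, write each block as $\underline{C}_n^j(\theta)=M_j(\theta)\,s_j(\theta)$ with $M_j$ the structural matrix and $s_j$ the scalar prefactor, so $\underline{C}_n^j(\theta)\underline{C}_n^j(\theta_0)^{-1}=M_j(\theta)M_j(\theta_0)^{-1}\cdot s_j(\theta)/s_j(\theta_0)$; the only nonzero off-diagonal entries of $M_j(\theta)M_j(\theta_0)^{-1}-I$ are $O(r_n|\log\Delta_n|)$ (Gaussian blocks) resp. $O(r_n|\log w_n|)+O(r_n)$ (stable blocks), both $o(1)$, while $s_j(\theta)/s_j(\theta_0)$ is a product of factors $\Delta_n^{O(r_n)}$, $w_n^{O(r_n)}$ and, for the stable blocks, $d_n(\theta)/d_n(\theta_0)$, each $=1+o(1)$; taking the block maximum gives the claim. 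Finally, $\underline{W}(\theta)$ is continuous on $\widetilde{\Theta}$: in the integral formulas for $\underline{W}_1,\underline{W}_2$ the factor $\widehat{f}_1(v)$ resp. $\widehat{f}_2(v)$ is a Schwartz function, multiplied by a function of $(\theta,v)$ that is continuous in $\theta$ and at most of polynomial growth in $v$ (of the shape $|v|^{\beta_j}(\,\cdot\,+\log|v|)$, possibly times $\exp(-\widetilde{b}_1|\lambda v|^{\beta_1}\gamma^{\beta_1 H_1})$), hence on a fixed compact neighbourhood of $\theta_0$ the integrand is dominated by a fixed integrable function; dominated convergence yields continuity of $\underline{W}$, and since $\mathbf{B}_{r_n}(\theta_0)\downarrow\{\theta_0\}$ we obtain $\sup_{\theta\in\mathbf{B}_{r_n}(\theta_0)}\|\underline{W}(\theta)-\underline{W}(\theta_0)\|\to 0$.

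The only genuinely delicate step — the one I would treat most carefully — is the identification of $(i,j)=(1,2)$ as the extremal index pair in the bound for $\|\underline{C}_n(\theta)\|$ in \eqref{eqn:ABC-rate-1}: this is exactly where the model's ordering condition $(\overline{H}_1-H_1)\beta_1<(\overline{H}_2-H_1)\beta_2$ enters, and it is what makes the two displayed hypotheses on $r_n$ the correct ones. Everything else is uniform-on-a-shrinking-ball bookkeeping, powered by the admissible range \eqref{eqn:wn} of the bandwidth $w_n$.
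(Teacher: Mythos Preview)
Your proof is correct and follows essentially the same approach as the paper: bound $\|\underline{C}_n(\theta)\|$ and $\|B_n(\theta)\Lambda_n(\theta_0)\|$ separately using the block structure and the ordering $(\overline{H}_1-H_1)\beta_1<(\overline{H}_2-H_1)\beta_2$, then multiply and invoke the two displayed hypotheses together with \eqref{eqn:wn} to absorb the $w_n^{-2}|\log\Delta_n|$ factor into $\Delta_n^\epsilon$; the continuity claim \eqref{eqn:ABC-rate-2} is handled in both cases by the elementary observation that $\Delta_n^{O(r_n)}$, $w_n^{O(r_n)}$ and $d_n(\theta)/d_n(\theta_0)$ are all $1+o(1)$ on $\mathbf{B}_{r_n}(\theta_0)$. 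Your write-up is in fact more explicit than the paper's (which simply declares \eqref{eqn:ABC-rate-2} ``a direct consequence of the definition\ldots and of $r_n\ll 1/|\log\Delta_n|^2$''), and your use of $|\log w_n|$ rather than $|\log\Delta_n|$ for the stable blocks is the sharper bound, though either suffices.
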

\begin{proof}[Proof of Lemma \ref{lem:ABC-rate}]
	The second claim is a direct consequence of the definition of $\underline{C}_n$, $B_n$, and $\underline{W}$, and of the requirement $r_n\ll \frac{1}{|\log \Delta_n|^2}$. Regarding the first claim, we observe that
	\begin{align*}
		\sup_{\theta\in \mathbf{B}_{r_n}(\theta_0)} \|\underline{C}_n(\theta)\| 
		&= \mathcal{O} \left( |\log \Delta_n| w_n^{-2} \Delta_n^{2(H_1(\theta_0) - H_2(\theta_0))} \right) \\
		& \qquad + \mathcal{O} \left( |\log \Delta_n| w_n^{-2} d_n \max_{j=1,2} \Delta_n^{\beta_j(\theta_0)(H_1(\theta_0) - \overline{H}_j(\theta_0))} \right) \\
		&\leq \mathcal{O} \left( \Delta_n^{-\epsilon}  \Delta_n^{2(H_1(\theta_0) - H_2(\theta_0))} \right)  
		+ \mathcal{O} \left( |\log \Delta_n| w_n^{-2} d_n \Delta_n^{\beta_2(\theta_0)(H_1(\theta_0) - \overline{H}_2(\theta_0))} \right) \\
		&\leq \mathcal{O} \left( \Delta_n^{-\epsilon}  \Delta_n^{2(H_1(\theta_0) - H_2(\theta_0))} \right)  \\
		&\quad + \mathcal{O} \left( \Delta_n^{-\epsilon} \Delta_n^{\frac{\beta_1(\theta_0)}{2}(\overline{H}_1(\theta_0) - H_1(\theta_0))\, +\, \beta_2(\theta_0)(H_1(\theta_0) - \overline{H}_2(\theta_0))} \right) \\
		&= o(r_n \sqrt{n}). 
	\end{align*}
	Here, we used that $\beta_1(\overline{H}_1-H_1) \leq \beta_2(\overline{H}_2-H_2)$.
	Moreover,
	\begin{align*}
		\sup_{\theta\in \mathbf{B}_{r_n}(\theta_0)} \| B_n(\theta) \Lambda_n(\theta_0) \| 
		&= \mathcal{O}\left( \frac{1}{\sqrt{n}} w_n^2 \right) 
		+ \mathcal{O}\left( \frac{1}{\sqrt{n}} d_n^{-1}(\theta_0) (u_n \Delta_n^{\overline{H}_1(\theta_0)})^{\frac{\beta_1(\theta_0)}{2}} \right) \\
		&=\mathcal{O}\left( \frac{1}{\sqrt{n}} \right).
	\end{align*}
	This allows us to derive the rate \eqref{eqn:ABC-rate-1}.
\end{proof}

\subsubsection{Gradients}

Recall the definition
\begin{align*}
	\underline{W}(\theta) &= \begin{pmatrix}
		\underline{W}_1(\theta) & 0 \\ 0 & \underline{W}_2(\theta)
	\end{pmatrix} \in \R^{10\times 10},\\
	\underline{W}_1(\theta)_{r,j} &= W(f_1, \theta, \lambda_r,\gamma_r, 0)_{j}, \qquad r,j=1,\ldots, 4, \\ 	
	\underline{W}_2(\theta)_{r,j} &= W(f_2, \theta, \lambda_{r+4},\gamma_{r+4}, 0)_{j+4}, \qquad r,j=1,\ldots, 6,
\end{align*}
where $W(f,\theta,\lambda,\gamma,0)$ is defined in \eqref{eqn:def-W}.

\begin{lemma}\label{lem:gradient-threshold-moment}
	Let $f$ be a Schwartz function such that $f(x)=f(0)$ for $|x|<\eta$, for some $\eta>0$.
	Let $u_n(\theta) = \Delta_n^{-H_1(\theta)} w_n$ for $w_n \to 0$ satisfying \eqref{eqn:wn}.
	Then, for $j=1,2$, as $n\to\infty$, and $\Delta_n\to 0$, and for any compact set $\mathbf{K}\subset\overline{\Theta}$,
	\begin{align*}
		\sup_{\theta \in \mathbf{K}} \left\| [D_{(\widetilde{a}_j, H_j)}\,\E_\theta f(\lambda u_n(\theta) X_{1,n,\gamma})] \, \underline{C}_n^j(\theta) \right\| 
		&= \mathcal{O}\left( \sup_{\theta\in\mathbf{K}} \max_{i=1,2} \Delta_n^{\beta_i(\overline{H}_i - H_1)} w_n^{\beta_i-2} |\log \Delta_n| \right),
	\end{align*}
\end{lemma}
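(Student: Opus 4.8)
The plan is to work throughout with the Fourier representation \eqref{eqn:expect-fourier}. Inserting $u_n(\theta)=w_n\Delta_n^{-H_1(\theta)}$, one has $\E_\theta f(\lambda u_n(\theta)X_{1,n,\gamma})=\int\widehat f(v)\exp(-\Psi_n(v,\theta))\,dv$, where $\Psi_n(v,\theta)=\Psi_n^{(G)}(v,\theta)+\Psi_n^{(S)}(v,\theta)$ splits into a Gaussian block (the two summands with exponent $\beta=2$, of size $\asymp w_n^2v^2$) and a stable block (the two summands with exponents $\beta_1,\beta_2<2$, each of size $\asymp w_n^{\beta_i}|v|^{\beta_i}\Delta_n^{\beta_i(\overline H_i-H_1)}$). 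I would first justify differentiation under the integral sign (legitimate since $\widehat f$ is Schwartz and every $\theta$-derivative of $e^{-\Psi_n}$ is dominated by a polynomial in $v$ times $e^{-\Psi_n^{(G)}}\le 1$), and compute $\partial_{\widetilde a_j}\Psi_n$ and $\partial_{H_j}\Psi_n$ explicitly: each is a finite sum of terms of the form $(\text{scalar})\cdot|v|^{\beta}$ with $\beta\in\{2,\beta_1,\beta_2\}$, the scalar bounded on $\mathbf K$ and carrying at most one factor $\log\Delta_n$ or $\log\gamma$, and with no term constant in $v$ (because every summand of $\Psi_n$ carries a positive power of $|v|$). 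Consequently $\partial_{H_j}\Psi_n$ decomposes into a Gaussian part of size $\mathcal O(w_n^2v^2)$ and a stable part of size $\mathcal O(\max_i w_n^{\beta_i}|v|^{\beta_i}\Delta_n^{\beta_i(\overline H_i-H_1)}|\log\Delta_n|)$; after right-multiplication by $\underline{C}_n^j(\theta)$ (which brings a factor $w_n^{-2}$, and for $j=2$ also $\Delta_n^{2(H_1-H_2)}$ and a $\log\Delta_n$), the statement reduces to bounding integrals $\int\widehat f(v)\,v^2\,e^{-\Psi_n(v,\theta)}\,dv$ (Gaussian directions) and $\int\widehat f(v)\,|v|^{\beta_i}\,e^{-\Psi_n(v,\theta)}\,dv$ (stable directions) by suitable orders.

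Next I would split $e^{-\Psi_n}=e^{-\Psi_n^{(G)}}\bigl(1+(e^{-\Psi_n^{(S)}}-1)\bigr)$. For the integrals carrying a $|v|^{\beta_i}$ factor, the crude estimate $e^{-\Psi_n}\le 1$ together with $\int|\widehat f(v)|\,|v|^{\beta_i}\,dv<\infty$ yields at once a contribution $\mathcal O\bigl(\max_i w_n^{\beta_i}\Delta_n^{\beta_i(\overline H_i-H_1)}|\log\Delta_n|\bigr)$; multiplied by the $w_n^{-2}$ inside $\underline{C}_n^j$ this is exactly the claimed order (using $w_n^{\beta_i}\le w_n^{\beta_i-2}|\log\Delta_n|$, valid because $w_n^2\le|\log\Delta_n|^{-1}$ by \eqref{eqn:wn}). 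For the $v^2$-integrals, the $(e^{-\Psi_n^{(S)}}-1)$ part is controlled by $0\le 1-e^{-\Psi_n^{(S)}}\le\Psi_n^{(S)}(v)\le C\max_i\bigl(w_n^{\beta_i}\Delta_n^{\beta_i(\overline H_i-H_1)}\bigr)(1+|v|^{\beta_{\max}})$, again producing the claimed rate after multiplication by $w_n^{-2}$.

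The remaining and decisive term is the pure-Gaussian integral $\int\widehat f(v)\,v^2\,e^{-\Psi_n^{(G)}(v,\theta)}\,dv$, where $\Psi_n^{(G)}(v,\theta)=\alpha(\theta)v^2$ with $0<\alpha(\theta)\le C(\mathbf K,\lambda,\gamma)\,w_n^2$. Here I would invoke Parseval: since $e^{-\alpha v^2}$ equals $2\pi$ times the Fourier transform of the $\mathcal N(0,2\alpha)$ density $p_\alpha$, and $\widehat{p_\alpha''}(v)=-v^2\widehat{p_\alpha}(v)$, one obtains $\int\widehat f(v)\,v^2\,e^{-\alpha v^2}\,dv=-\int f''(x)\,p_\alpha(x)\,dx$. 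Because $f$ is constant on $(-\eta,\eta)$, its second derivative vanishes there, whence $\bigl|\int\widehat f(v)v^2e^{-\alpha v^2}\,dv\bigr|\le\|f''\|_\infty\,\mathbb P\bigl(|\mathcal N(0,2\alpha)|>\eta\bigr)\le 2\|f''\|_\infty\exp(-\eta^2/4\alpha)$. This is the mechanism by which the threshold property of $f$ upgrades the naive $\mathcal O(w_n^2)$ bound on the Gaussian direction to something negligible.

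Finally I would substitute the bound on $w_n$ from \eqref{eqn:wn}: since $\alpha\le C(\mathbf K,\lambda,\gamma)w_n^2\le C(\mathbf K,\lambda,\gamma)\,\tfrac{\eta^2}{81\sigma^2|\log\Delta_n|}$, we get $\exp(-\eta^2/4\alpha)\le\Delta_n^{c_0}$ for some $c_0>0$; the constant $\tfrac{\eta}{9\sigma}$ in \eqref{eqn:wn} is precisely what makes $c_0>\max_i\beta_i(\overline H_i-H_1)$, so this contribution is of strictly smaller order than $\Delta_n^{\beta_i(\overline H_i-H_1)}w_n^{\beta_i-2}|\log\Delta_n|$ (note the latter dominates $\Delta_n^{\beta_i(\overline H_i-H_1)}$ since $\beta_i<2$ forces $w_n^{\beta_i-2}\ge 1$). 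Collecting the pieces, and checking that all estimates are uniform over the compact set $\mathbf K$ --- the coefficients $\widetilde a_j,\widetilde b_j$ are bounded, the constant $C(\mathbf K,\lambda,\gamma)$ in $\alpha\le C w_n^2$ is uniform, and $\min_{\theta\in\mathbf K}(\overline H_i(\theta)-H_1(\theta))>0$ --- gives the lemma. I expect the main obstacle to be exactly this last constant-matching between the Gaussian tail bound and the polynomial target rate, i.e.\ verifying that the threshold scaling \eqref{eqn:wn} is chosen sharply enough, together with the bookkeeping needed to keep all bounds uniform on $\mathbf K$ and to handle the case $j=2$, where the Gaussian block of $\partial_{H_2}\Psi_n$ already carries the small factor $\Delta_n^{2(H_2-H_1)}$ that must be cancelled against the $\Delta_n^{2(H_1-H_2)}$ sitting inside $\underline{C}_n^2$.
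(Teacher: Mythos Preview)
Your argument is correct and tracks the paper's proof closely. The one organizational difference is this: where you split $e^{-\Psi_n}=e^{-\Psi_n^{(G)}}\bigl(1+(e^{-\Psi_n^{(S)}}-1)\bigr)$ and then treat the pure-Gaussian integral $\int\widehat f(v)\,v^2\,e^{-\alpha v^2}\,dv$ by Parseval plus a Gaussian tail bound, the paper instead observes directly that
\[
\int \widehat f(v)\,|\lambda v|^2\,e^{-\psi_n(v\lambda u_n,\theta,\gamma)}\,dv \;=\; -\,\E_\theta f''(\lambda u_n X_{1,n,\gamma})
\]
(with the \emph{full} characteristic exponent, not only its Gaussian block), because $v^2\widehat f(v)=-\widehat{f''}(v)$. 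Since $f$ is constant on $(-\eta,\eta)$, the function $f''$ vanishes there, so Lemma~\ref{lem:mom}(i) applied to $f''$ immediately yields $\E_\theta f''(\lambda u_n X_{1,n,\gamma})=\mathcal O\bigl(\max_i w_n^{\beta_i}\Delta_n^{\beta_i(\overline H_i-H_1)}\bigr)$. This replaces your splitting, your cross-term estimate $|e^{-\Psi_n^{(S)}}-1|\le\Psi_n^{(S)}$, and your explicit Gaussian-tail computation by a single citation. Your approach is thus a self-contained re-derivation of the relevant special case of Lemma~\ref{lem:mom}(i); the paper's is shorter because that lemma has already been proved.

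The constant-matching concern you identify as the main obstacle --- that the exponent $c_0$ coming from the Gaussian tail must dominate $\max_i\beta_i(\overline H_i-H_1)$ uniformly on $\mathbf K$ --- is equally present in the paper's route, hidden inside the combination ``Lemma~\ref{lem:mom}(i) and the upper bound on $w_n$''. Neither argument spells this out in full; in both cases it hinges on the specific constant $\eta/(9\sigma)$ in \eqref{eqn:wn} and on $\beta_i(\overline H_i-H_1)<2$ on $\mathbf K$.
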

\begin{proof}[Proof of Lemma \ref{lem:gradient-threshold-moment}]
	Just as in Lemma \ref{lem:moment-gradient-adaptive}, we find that, 
	\begin{align*}
		&\begin{pmatrix}
			\partial_{\widetilde{a}_2} & \partial_{H_2} 
		\end{pmatrix}
		\E_\theta f(\lambda u_n(\theta) X_{1,n,\gamma}) \\
		&= \int \widehat{f}(v) \exp(-\psi_n(v\,\lambda u_n(\theta),\theta,\gamma)) |w_n \lambda v|^{2} \Delta_n^{2(H_2-H_1)} \gamma^{2 H_2} \widetilde{a}_2 \\
		&\qquad \quad 
		\begin{pmatrix}
			1/\widetilde{a}_2 \\ 
			2 [\log(\gamma) + \log(\Delta_n)]
		\end{pmatrix}^T\, dv \\
		&= \E_\theta f''(\lambda u_n X_{1,n,\gamma}) \left[ |w_n \lambda|^2 \Delta_n^{2(H_2-H_1)} \gamma^{2H_2} \widetilde{a}_2 
		\begin{pmatrix}
			1/\widetilde{a}_2 \\ 
			2 [\log(\gamma) + \log(\Delta_n)]
		\end{pmatrix}^T \right],
	\end{align*}
	such that
	\begin{align*}
		\begin{pmatrix}
			\partial_{\widetilde{a}_2} & \partial_{H_2} 
		\end{pmatrix}
		\E_\theta f(\lambda u_n(\theta) X_{1,n,\gamma}) \, \underline{C}_n^2(\theta) 
		&=  \E_\theta f''(\lambda u_n X_{1,n,\gamma}) \cdot \mathcal{O}\left( 1 \right),
	\end{align*}
	uniformly on compacts in $\overline{\Theta}$. 
Furthermore,
	\begin{align*}
		\partial_{\widetilde{a}_1} 	\E_\theta f(\lambda u_n(\theta) X_{1,n,\gamma})
		&= \E_\theta f''(\lambda u_n X_{1,n,\gamma})  |w_n \lambda|^2  \gamma^{2H_1}.
	\end{align*}
	The derivative $\partial_{H_1}$ takes the form
	\begin{align*}
		&\partial_{H_1}\E_\theta f(\lambda u_n(\theta) X_{1,n,\gamma}) \\
		&=  \int \widehat{f}(v) \exp(-\psi_n(v\,\lambda u_n(\theta),\theta,\gamma)) |w_n \lambda v|^{2}  \gamma^{2 H_1} \widetilde{a}_1 [2 \log \gamma ] \\
		&\quad -  \int \widehat{f}(v) \exp(-\psi_n(v\,\lambda u_n(\theta),\theta,\gamma)) |w_n \lambda v|^{2}  \gamma^{2 H_2} \widetilde{a}_2  \Delta_n^{2(H_2-H_1)} \, 2 \log \Delta_n \\
		&\quad -  \int \widehat{f}(v) \exp(-\psi_n(v\,\lambda u_n(\theta),\theta,\gamma)) \sum_{j=1}^2 |w_n \lambda v|^{\beta_j}  \gamma^{\beta_j \overline{H}_j} \widetilde{b}_j  \Delta_n^{\beta_j(\overline{H}_j-H_1)} [\beta_j \log \Delta_n] \\
		&= \E_\theta f''(\lambda u_n X_{1,n,\gamma}) |w_n \lambda|^2 \left[ 2\widetilde{a}_1 \gamma^{2H_1} \log \gamma - 2 \widetilde{a}_2 \gamma^{2H_2} \Delta_n^{2(H_2-H_1)} \log \Delta_n    \right] \\
		&+ \mathcal{O}\left( \max_{i=1,2} |w_n|^{\beta_i} \Delta_n^{\beta_i(\overline{H}_i-H_1)} |\log \Delta_n| \right) \\
		&= \E_\theta f''(\lambda u_n X_{1,n,\gamma}) \cdot \mathcal{O}(w_n^2 + 1) + \mathcal{O}\left( \max_{i=1,2} |w_n|^{\beta_i} \Delta_n^{\beta_i(\overline{H}_i-H_1)} |\log \Delta_n| \right).
	\end{align*}
	Hence,
	\begin{align*}
		&\begin{pmatrix}
			\partial_{\widetilde{a}_1} & \partial_{H_1} 
		\end{pmatrix}
		\E_\theta f(\lambda u_n(\theta) X_{1,n,\gamma}) \, \underline{C}_n^1(\theta) \\
		&= \E_\theta f''(\lambda u_n X_{1,n,\gamma}) \cdot \mathcal{O}\left( w_n^{-2} \right) + \mathcal{O}\left( \max_{i=1,2} |w_n|^{\beta_i-2} \Delta_n^{\beta_i(\overline{H}_i-H_1)} |\log \Delta_n| \right).
	\end{align*}
	where the implicit constant in the $\mathcal{O}(\ldots)$ term is bounded on compacts in $\overline{\Theta}$.
	Now use Lemma \ref{lem:mom}(i), and the upper bound on $w_n$, to find that
	\begin{align*}
		\E_\theta f''(\lambda u_n X_{1,n,\gamma}) = \mathcal{O}\left(\max_{i=1,2} |w_n|^{\beta_i} \Delta_n^{\beta_i(\overline{H}_i-H_1)}\right).
	\end{align*}
	Tracing the constants in the proof of Lemma \ref{lem:mom}, the latter upper bound is found to hold uniformly on compacts in $\overline{\Theta}$.
\end{proof}

Combining Lemma \ref{lem:gradient-threshold-moment} and Lemma \ref{lem:moment-gradient-adaptive-stochastic}, we obtain the following Corollary.

\begin{cor}\label{cor:gradient-stochastic-threshold}
	Let $f$ be a Schwartz function such that $f(x)=f(0)$ for $|x|<\eta$, for some $\eta>0$.
	Let $u_n(\theta) = \Delta_n^{-H_1(\theta)} w_n$ for $w_n \to 0$.
	Suppose that the order of differencing $k$ satisfies
	\begin{align*}
		k > H_j +\frac{1}{\beta_j}, \qquad j=1,\ldots, q.
	\end{align*}
	Then, for $j=1,2$, as $n\to\infty$, and $\Delta_n\to 0$, for any sequence $r_n\ll 1/|\log \Delta_n|$, 
	\begin{align*}
		&\quad \sup_{\theta \in \mathbf{B}_{r_n}(\theta_0)} \left\| [D_{(\widetilde{a}_j, H_j)}\, \mathcal{H}_n(\theta) ] \, \underline{C}_n^j(\theta) \right\| \\
		&= \mathcal{O}\left( \max_{i=1,2} \Delta_n^{\beta_i(\theta_0)(\overline{H}_i(\theta_0) - H_1(\theta_0))} w_n^{\beta_i(\theta_0)-2} |\log \Delta_n| \right) + o_{\mathbb{P}}\left( \frac{|\log \Delta_n|}{\sqrt{n}} \right).
	\end{align*}
\end{cor}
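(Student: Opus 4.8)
The plan is to split $\mathcal{H}_n$ (here the block of threshold moments built from $f=f_2$) into its empirical mean and its centering, bound the two contributions separately, and add them. Writing $\mathcal{S}_n(\theta)$ for the vector of sample means $\tfrac{1}{n}\sum_{l=1}^n f(\lambda_r u_n(\theta)X_{l,n,\gamma_r})$ and $\mathcal{E}_n(\theta)$ for the corresponding centering $\E_\theta f(\lambda_r u_n(\theta)X_{1,n,\gamma_r})$, we have $\mathcal{H}_n=\mathcal{S}_n-\mathcal{E}_n$, hence $D_{(\widetilde a_j,H_j)}\mathcal{H}_n(\theta)\,\underline C_n^j(\theta)=D_{(\widetilde a_j,H_j)}\mathcal{S}_n(\theta)\,\underline C_n^j(\theta)-D_{(\widetilde a_j,H_j)}\mathcal{E}_n(\theta)\,\underline C_n^j(\theta)$. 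The centering piece will produce the deterministic term $\mathcal{O}(\max_i\Delta_n^{\beta_i(\overline H_i-H_1)}w_n^{\beta_i-2}|\log\Delta_n|)$ and the empirical-mean piece the $o_{\mathbb P}(|\log\Delta_n|/\sqrt n)$ remainder.

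For the centering piece I would invoke Lemma \ref{lem:gradient-threshold-moment} directly, which bounds $\sup_{\theta\in\mathbf K}\|D_{(\widetilde a_j,H_j)}\mathcal{E}_n(\theta)\,\underline C_n^j(\theta)\|$ by $\mathcal{O}\bigl(\sup_{\theta\in\mathbf K}\max_{i=1,2}\Delta_n^{\beta_i(\theta)(\overline H_i(\theta)-H_1(\theta))}w_n^{\beta_i(\theta)-2}|\log\Delta_n|\bigr)$ on any compact $\mathbf K\subset\overline\Theta$. Since $r_n\to0$, the ball $\mathbf B_{r_n}(\theta_0)$ is eventually contained in such a compact neighbourhood of $\theta_0$; and since $r_n\ll1/|\log\Delta_n|$ while $|\log w_n|=o(|\log\Delta_n|)$ by \eqref{eqn:wn}, the exponents $\beta_i(\theta)(\overline H_i(\theta)-H_1(\theta))$ and $\beta_i(\theta)-2$ vary by $o(1/|\log\Delta_n|)$ across $\mathbf B_{r_n}(\theta_0)$, so $\Delta_n^{\beta_i(\theta)(\overline H_i(\theta)-H_1(\theta))}$ and $w_n^{\beta_i(\theta)-2}$ remain within a bounded multiple of their values at $\theta_0$. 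The supremum over $\mathbf B_{r_n}(\theta_0)$ is therefore of the stated order $\max_{i=1,2}\Delta_n^{\beta_i(\theta_0)(\overline H_i(\theta_0)-H_1(\theta_0))}w_n^{\beta_i(\theta_0)-2}|\log\Delta_n|$.

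For the empirical-mean piece the crucial point is that $\mathcal{S}_n(\theta)$ depends on $\theta$ only through the scalar $u_n(\theta)=w_n\Delta_n^{-H_1(\theta)}$, hence only through the coordinate $H_1$. Thus $D_{\widetilde a_j}\mathcal{S}_n\equiv0$ for $j=1,2$ and $D_{H_2}\mathcal{S}_n\equiv0$; for $j=2$ this already removes the stochastic contribution, and for $j=1$ only the $H_1$-derivative survives. Since $\partial_{H_1}f(\lambda u_n(\theta)X)=-\log(\Delta_n)\,\varphi(\lambda u_n(\theta)X)$ with $\varphi(x)=xf'(x)$ — again an even Schwartz function vanishing on $\{|x|<\eta\}$ — I would apply Lemma \ref{lem:moment-gradient-adaptive-stochastic} with $f$ replaced by $\varphi$ to get $\sup_{\theta\in\mathbf B_{r_n}(\theta_0)}\|D_{(\widetilde a_1,H_1)}\mathcal{S}_n(\theta)\,\underline C_n^1(\theta)\|=\mathcal{O}_{\mathbb P}\bigl(\tfrac{|\log\Delta_n|}{\sqrt n}\sqrt{\sum_j w_n^{\beta_j}\Delta_n^{\beta_j(H_j-H_1)}}\bigr)$. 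Because $w_n\to0$ and $H_1=\min_jH_j$ by \eqref{eqn:model-gauss}, every summand equals $(w_n\Delta_n^{H_j-H_1})^{\beta_j}\to0$, so the square-root factor is $o(1)$ and this contribution is $o_{\mathbb P}(|\log\Delta_n|/\sqrt n)$. Combining with the previous paragraph yields the corollary.

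The only real obstacle is the bookkeeping of the normalisations: one must check that $\underline C_n^1(\theta)$ collapses to $w_n^{-2}I_2$ (the indicator $\mathds 1_{\{1\neq1\}}$ vanishes and $\Delta_n^{2(H_1-H_1)}=1$), that the extra factors $\Delta_n^{2(H_1-H_2)}$ and $\log\Delta_n$ sitting in $\underline C_n^2(\theta)$ are precisely the ones already absorbed inside Lemma \ref{lem:gradient-threshold-moment}, and — most importantly — that the variance estimate underlying Lemma \ref{lem:moment-gradient-adaptive-stochastic} is the sharp threshold bound of Theorem \ref{thm:variance-bound}, so that the dominant Gaussian component is exponentially suppressed by the choice \eqref{eqn:wn} of $w_n$ and does not pollute the $o_{\mathbb P}$-rate of the empirical-mean piece.
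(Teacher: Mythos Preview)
Your approach is correct and matches the paper's exactly: the corollary is presented there as an immediate consequence of combining Lemma~\ref{lem:gradient-threshold-moment} (for the centering $\mathcal{E}_n$) and Lemma~\ref{lem:moment-gradient-adaptive-stochastic} (for the stochastic sample mean $\mathcal{S}_n$), which is precisely your decomposition. One minor slip in phrasing: apply Lemma~\ref{lem:moment-gradient-adaptive-stochastic} directly to $f$, not ``with $f$ replaced by $\varphi$'' --- the lemma already differentiates and passes to $\varphi(x)=xf'(x)$ internally.
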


\begin{lemma}\label{lem:gradient-threshold}
	Let $\mathcal{H}_n(\theta)$ as in \eqref{eqn:ee-threshold}, and suppose that the order of differencing $k$ satisfies
	\begin{align*}
		k > H_j +\frac{1}{\beta_j}, \qquad j=1,\ldots, q.
	\end{align*}
	Let $u_n (\theta) = \Delta_n^{-H_1(\theta)} w_n$, and $w_n \to 0$.
	Then for any sequence $r_n\ll 1/|\log \Delta_n|$, 
	\begin{align*}
		\sup_{\theta \in \mathbf{B}_{r_n}(\theta_0)} \left\| B_n(\theta) D \mathcal{H}_n(\theta) \underline{C}_n(\theta) - \underline{W}(\theta) \right\| = o_{\mathbb{P}}(1),
	\end{align*}
	where convergence in probability holds under the probability measure induced by $\theta_0$.
\end{lemma}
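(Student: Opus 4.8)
The plan is to exploit the two–block structure of $\mathcal{H}_n$ and to reduce everything to the gradient estimates already obtained for its building blocks, namely Lemmas \ref{lem:moment-gradient-adaptive}, \ref{lem:moment-gradient-adaptive-stochastic}, \ref{lem:gradient-threshold-moment} and Corollary \ref{cor:gradient-stochastic-threshold}. Write $\theta=(\theta^G,\theta^S)$ where $\theta^G=(\widetilde a_1,H_1,\widetilde a_2,H_2)\in\R^4$ collects the Gaussian parameters and $\theta^S=(\widetilde b_1,\overline H_1,\beta_1,\widetilde b_2,\overline H_2,\beta_2)\in\R^6$ the stable ones, and accordingly $\mathcal{H}_n=(\mathcal{H}_n^{(1)},\mathcal{H}_n^{(2)})$ for the moments built from $f_1$ and $f_2$. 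Since $B_n=\diag(I_4,d_n^{-1}I_6)$, $\underline C_n=\diag(\underline C_n^1,\underline C_n^2,\doubleunderline{C}_n^1,\doubleunderline{C}_n^2)$ and $\underline W=\diag(\underline W_1,\underline W_2)$, the matrix $B_nD\mathcal{H}_n\underline C_n-\underline W$ splits into the four blocks
\begin{align*}
 D_{\theta^G}\mathcal{H}_n^{(1)}\,\diag(\underline C_n^1,\underline C_n^2)-\underline W_1, &\qquad D_{\theta^S}\mathcal{H}_n^{(1)}\,\diag(\doubleunderline{C}_n^1,\doubleunderline{C}_n^2),\\
 d_n^{-1}D_{\theta^G}\mathcal{H}_n^{(2)}\,\diag(\underline C_n^1,\underline C_n^2), &\qquad d_n^{-1}D_{\theta^S}\mathcal{H}_n^{(2)}\,\diag(\doubleunderline{C}_n^1,\doubleunderline{C}_n^2)-\underline W_2,
\end{align*}
and it suffices to show that each of these tends to $0$ uniformly over $\mathbf{B}_{r_n}(\theta_0)$ in probability.

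For every block I would split $D\mathcal{H}_n=D[\tfrac1n\sum_l f(\lambda u_n(\theta)X_{l,n,\gamma})]-D[\E_\theta f(\lambda u_n(\theta)X_{l,n,\gamma})]$. The stochastic term depends on $\theta$ only through $u_n(\theta)=w_n\Delta_n^{-H_1(\theta)}$, so only its $\partial_{H_1}$–derivative survives; applying Lemma \ref{lem:moment-gradient-adaptive-stochastic} with $\varphi(x)=xf_i'(x)$ (even Schwartz, belonging to $\mathfrak{F}_0$ resp.\ $\mathfrak{F}_\eta^0$ after rescaling so that Theorem \ref{thm:variance-bound} applies), together with the hypothesis $r_n\ll1/|\log\Delta_n|$ and an $L_2$–chaining argument over $\mathbf{B}_{r_n}(\theta_0)$ exactly as in that lemma, shows that the stochastic contributions to all four blocks are $o_{\mathbb{P}}(1)$. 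For the deterministic term I would differentiate under the Fourier integral \eqref{eqn:expect-fourier}. On the two diagonal blocks this is the content of Lemma \ref{lem:moment-gradient-adaptive} with $w=0$, using that for the Gaussian components the relevant exponent is $\beta=2$ and $\int\widehat f_1(v)|\lambda_r v|^2\,dv$ reproduces $f_1''(0)|\lambda_r|^2$; this gives convergence to $\underline W_1$ and $\underline W_2$, and for the lower–right block the $d_n$–factor built into $\doubleunderline{C}_n^j$ cancels the $d_n^{-1}$ of $B_n$, reducing the computation to the one for the adaptive equation. On the off–diagonal block $D_{\theta^S}\mathcal{H}_n^{(1)}$ the $f_1$–moments depend on the stable parameters only through the subdominant terms $\widetilde b_j|w_n\lambda v|^{\beta_j}\gamma^{\beta_j\overline H_j}\Delta_n^{\beta_j(\overline H_j-H_1)}$ of $\psi_n$, whence their $\theta^S$–derivatives are $\mathcal{O}(w_n^{\beta_j}\Delta_n^{\beta_j(\overline H_j-H_1)})$, and after multiplication by $\doubleunderline{C}_n^j$, which carries the factor $d_n$, this block is $\mathcal{O}(d_n)=o(1)$.

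The delicate block — and the main obstacle — is the lower–left one, $d_n^{-1}D_{\theta^G}\mathcal{H}_n^{(2)}\diag(\underline C_n^1,\underline C_n^2)$: one must show that the smooth–threshold moments are asymptotically insensitive to the Gaussian parameters, and at a rate that beats the blow-up factor $d_n^{-1}w_n^{-2}$ produced by the two rescalings. Here the threshold structure of $f_2$ enters twice. First, since $f_2$ is flat near $0$, all polynomial moments $\int\widehat f_2(v)v^k\,dv=f_2^{(k)}(0)$ vanish, so that the leading Gaussian part of $\psi_n$ (which involves only the even powers $|v|^2,|v|^4$) contributes $0$ to $\partial_{\widetilde a_j}\E_\theta f_2$ and to the $\partial_{H_1}$–part of $\E_\theta f_2$, leaving only a genuinely smaller stable contribution of order $d_n^2$. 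Second, by Lemma \ref{lem:mom}(i) (equivalently the $\eta>0$ part of Theorem \ref{thm:variance-bound}) the Gaussian summand $\sum_i a_{n,i}^2$ appearing in $\E f_2$ is replaced by $\exp(-c/\sum_i a_{n,i}^2)\sim\exp(-c/w_n^2)$, which under the admissibility bound $w_n\le\frac{\eta}{9\sigma}|\log\Delta_n|^{-1/2}$ in \eqref{eqn:wn} is bounded by a fixed polynomial power of $\Delta_n$ and hence again dominated by $d_n^2$. Combining the resulting order $\mathcal{O}(d_n^2)$ of these derivatives with the rescalings $\underline C_n^j\sim w_n^{-2}$ and $d_n^{-1}$ yields an $\mathcal{O}(d_n)$ bound (the logarithmic factors generated by $\partial_{H_1}$ and $\partial_{\overline H_j}$ being handled by the off–diagonal entries of the rate matrices, just as in the proof of Lemma \ref{lem:moment-gradient-adaptive}), so this block is $o_{\mathbb{P}}(1)$ as well. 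Finally, Lemma \ref{lem:ABC-rate} ensures that the rate matrices, and therefore all the above bounds, hold uniformly over $\mathbf{B}_{r_n}(\theta_0)$, which completes the argument.
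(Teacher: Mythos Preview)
Your block decomposition and the handling of the two diagonal blocks and the upper-right block match the paper exactly: Corollary~\ref{cor:gradient} (which packages Lemmas~\ref{lem:moment-gradient-adaptive} and~\ref{lem:moment-gradient-adaptive-stochastic}) gives the diagonal limits, and the extra factor $d_n$ built into $\doubleunderline{C}_n^j$ kills the upper-right block. This is precisely how the paper proceeds.

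The lower-left block, however, contains a real slip in your rate tracking. You claim the raw derivatives $\partial_{\theta^G}\E_\theta f_2(\lambda u_n X)$ are $\mathcal{O}(d_n^2)$ and then write $d_n^2 \cdot w_n^{-2}\cdot d_n^{-1}=\mathcal{O}(d_n)$. But $d_n^2 w_n^{-2} d_n^{-1}=d_n w_n^{-2}$, not $d_n$, and since $\beta_1<2$ this diverges unless you exploit the \emph{lower} bound $w_n\gg\Delta_n^\epsilon$ in~\eqref{eqn:wn}, which you never invoke (you only use the upper bound to control the Gaussian tail). The paper's Lemma~\ref{lem:gradient-threshold-moment} makes this explicit: it rewrites $\partial_{\widetilde a_j}\E_\theta f_2=\E_\theta f_2''(\lambda u_n X)\cdot|w_n\lambda|^2\Delta_n^{2(H_j-H_1)}\gamma^{2H_j}$ and bounds $\E_\theta f_2''$ via Lemma~\ref{lem:mom}(i), but the $\partial_{H_1}$ derivative also picks up a contribution from the \emph{stable} part of $\psi_n$ through $u_n(\theta)$, of order $\mathcal{O}(\max_i w_n^{\beta_i}\Delta_n^{\beta_i(\overline H_i-H_1)}|\log\Delta_n|)$ before rescaling. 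After $\underline C_n^1\sim w_n^{-2}$ this gives the bound $\mathcal{O}(\max_i w_n^{\beta_i-2}\Delta_n^{\beta_i(\overline H_i-H_1)}|\log\Delta_n|)$ stated in Lemma~\ref{lem:gradient-threshold-moment}, i.e.\ $\mathcal{O}(d_n^2 w_n^{-2}|\log\Delta_n|)$, and only then does the lower bound on $w_n$ ensure this is $o(d_n)$, hence $o(1)$ after the $d_n^{-1}$ from $B_n$.

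A second, smaller point: your ``polynomial moments vanish'' argument is not how the paper proceeds and is not quite rigorous as stated, since $\int\widehat f_2(v)|v|^2 e^{-\psi_n^G(v)}\,dv$ need not vanish merely because $\int\widehat f_2(v)|v|^{2k}\,dv=0$ for all $k$. The clean route is the identity $\int\widehat f_2(v)|v|^2 e^{-\psi_n}\,dv=-\E_\theta f_2''(\lambda u_n X)$ followed directly by Lemma~\ref{lem:mom}(i), which you do mention as your ``second'' argument; that single step already gives everything.
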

\begin{proof}[Proof of Lemma \ref{lem:gradient-threshold}]
	\underline{Rows $r=1,\ldots, 4$:}
	Note that the matrices $\underline{C}_n^j$ are submatrices of the $C^j_n$, such that Corollary \ref{cor:gradient} and Lemma \ref{lem:moment-gradient-adaptive-stochastic} are applicable.
	In particular, for the first $r=1,2,3,4$, moments, which are based on the function $f_1$, Corollary \ref{cor:gradient} yields that
	\begin{align*}
		\sup_{\theta \in \mathbf{B}_{r_n}(\theta_0)} \left\| D_{(\widetilde{a}_j, H_j)} \mathcal{H}_n(\theta)_r \underline{C}_n^j(\theta) - W(f_1,\theta,\lambda_r,\gamma_r, 0)_{(2j-1, 2j)} \right\| = o_{\mathbb{P}}(1), \quad j=1,2.
	\end{align*}
	Moreover, the matrices $\doubleunderline{C}_n^j$ are identical to the matrices $C_n^{2+j}$, $j=1,2$, except for the scaling factor $d_n$. 
	Hence, Corollary \ref{cor:gradient} yields
	\begin{align*}
		\sup_{\theta \in \mathbf{B}_{r_n}(\theta_0)} \left\| D_{(\widetilde{b}_j, H_j,\beta_j)} \mathcal{H}_n(\theta)_r \doubleunderline{C}_n^j(\theta)\right\| = \mathcal{O}_{\mathbb{P}}(d_n), \quad j=1,2.
	\end{align*}
	Since $d_n\to 0$, we obtain for $r=1,\ldots, 4$, and $v=1,\ldots, 10$,
	\begin{align}
		\sup_{\theta \in \mathbf{B}_{r_n}(\theta_0)} \left| \left[B_n D \mathcal{H}_n(\theta) \doubleunderline{C}_n^j(\theta)\right]_{r,v} -\begin{pmatrix}
			\underline{W}_1(\theta) & 0
		\end{pmatrix}_{r,v} \right| = o_{\mathbb{P}}(1).
		\label{eqn:gradient-threshold-1}
	\end{align}

	\underline{Rows $r=5,\ldots, 10$:}
	Now consider the moments $r=5,\ldots, 10$, based on $f_2$. 
	Again, Corollary \ref{cor:gradient} yields
	\begin{align*}
		\sup_{\theta \in \mathbf{B}_{r_n}(\theta_0)} \left\| D_{(\widetilde{b}_j, H_j,\beta_j)} \mathcal{H}_n(\theta)_r \doubleunderline{C}_n^j(\theta) - d_n W(f_2,\theta,\lambda_r,\gamma_r, 0)_{4+3j-(2,1,0)}\right\| = o_{\mathbb{P}}(d_n), \quad j=1,2.
	\end{align*}
	Corollary \ref{cor:gradient-stochastic-threshold} yields
	\begin{align*}
		\sup_{\theta \in \mathbf{B}_{r_n}(\theta_0)} \left\| D_{(\widetilde{a}_j, H_j)} \mathcal{H}_n(\theta)_r \underline{C}_n^j(\theta)\right\| = o_{\mathbb{P}}(d_n), \quad j=1,2,
	\end{align*}
	by our choice of $d_n$.
	Hence, we obtain for $r=5,\ldots, 10$, and $v=1,\ldots, 10$,
		\begin{align}
		\sup_{\theta \in \mathbf{B}_{r_n}(\theta_0)} \left| \left[B_n D_{(\widetilde{b}_j, H_j,\beta_j)} \mathcal{H}_n(\theta) \doubleunderline{C}_n^j(\theta)\right]_{r,v} -\begin{pmatrix}
			0 & \underline{W}_2(\theta)
		\end{pmatrix}_{r,v} \right| = o_{\mathbb{P}}(1).
		\label{eqn:gradient-threshold-2}
	\end{align}
	Now \eqref{eqn:gradient-threshold-1} and \eqref{eqn:gradient-threshold-2} yield the desired claim.
\end{proof}

Note that Lemma \ref{lem:gradient-threshold} requires $r_n\ll 1/|\log \Delta_n|$, whereas Lemma \ref{lem:ABC-rate} requires $r_n\ll 1/|\log\Delta_n|$.
We will thus apply the theory of estimating equations, i.e.\ Theorem \ref{thm:ee-consistency}, with $r_n\ll 1/|\log\Delta_n|^2$.

\subsubsection{Limit theory}

\begin{theorem}\label{thm:clt-threshold}
	Set $u_n(\theta) = w_n \Delta_n^{-H_1(\theta)}$ for $w_n\to 0$ satisfying \eqref{eqn:wn}.
	Suppose that 
	\begin{align*}
		k > \max\left(H_j(\theta_0) + \frac{1}{2},\quad  \overline{H}_j(\theta_0)+\frac{1}{\beta_j(\theta_0)}\right),\quad j=1,2, \\
		n\, |\log \Delta_n|^{-\frac{\beta_1(\theta_0)}{2}}\Delta_n^{\beta_1(\theta_0)[\overline{H}_1(\theta_0)-H_1(\theta_0)]} \to \infty, \qquad \frac{n}{|\log \Delta_n|^2} \to \infty.
	\end{align*}
	If $w_0\leq \frac{\eta}{9\sigma}$, then there exist symmetric positive semidefinite matrices $\Sigma_1\in\R^{4\times 4}$, $\Sigma_2\in\R^{6\times 6}$, such that
	\begin{align*}
		\Lambda_n(\theta_0)^{-1}  \mathcal{H}_n(\theta_0) 
		\;\wconv\; \mathcal{N}\left(0, \begin{pmatrix} \Sigma_1 & 0 \\ 0 & \Sigma_2 \end{pmatrix}\right).
	\end{align*}
	The matrices $\Sigma_1$ and $\Sigma_2$ are given by \eqref{eqn:asymp-cov-1} and \eqref{eqn:asymp-cov-2}, respectively.
\end{theorem}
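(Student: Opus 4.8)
Write $\mathcal H_n^{(1)}(\theta_0)\in\R^4$ and $\mathcal H_n^{(2)}(\theta_0)\in\R^6$ for the two blocks of $\mathcal H_n(\theta_0)$ in \eqref{eqn:ee-threshold}. By the self-similarity identity \eqref{eqn:X-multiscale}, the $\R^{10}$-valued array $\mathbf X_{l,n}=(u_n(\theta_0)X_{l,n,\gamma_r})_{r=1}^{10}$ has, for each $l$, the law of a multiscale moving average \eqref{eqn:def-multiscale} with $q_1=2$ Gaussian components (the fBm's $Y^{H_1,2},Y^{H_2,2}$, written as $\beta=2$ stable integrals) and $q_2=2$ stable components ($Y^{\overline{H}_1,\beta_1},Y^{\overline{H}_2,\beta_2}$), the kernels being the $k$-th order increment kernels with the lags $\gamma_r$ built into the $r$-th coordinate (exactly as in the proof of Lemma \ref{lem:clt-G}) and, after absorbing the model coefficients and the $\gamma_r$'s into the vector kernels, with scaling sequences
\[
a_{n,1}=w_n,\quad a_{n,2}\sim w_n\Delta_n^{H_2-H_1},\qquad
b_{n,1}=w_n\Delta_n^{\overline{H}_1-H_1},\quad b_{n,2}\sim w_n\Delta_n^{\overline{H}_2-H_1}.
\]
Since $H_1<H_2$, since $\beta_1(\overline{H}_1-H_1)<\beta_2(\overline{H}_2-H_1)$, and since $w_n$ decays sub-polynomially in $\Delta_n$ (that is, $\Delta_n^{\varepsilon}\ll w_n$ for every $\varepsilon>0$ by \eqref{eqn:wn}), the dominant indices are $i^\star=j^\star=1$, with $a_{n,1}^2\gg a_{n,2}^2$, $b_{n,1}^{\beta_1}\gg b_{n,2}^{\beta_2}$, $a_{n,1}\to 0$, $b_{n,1}\to 0$, and $b_{n,1}^{\beta_1}\sim w_n^{\beta_1}\Delta_n^{\beta_1(\overline{H}_1-H_1)}\ll w_n^4=a_{n,1}^4$. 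The kernel tail exponents are $\delta_0=\max_i(H_i-k-\tfrac12)$ and $\delta_j=\overline{H}_j-k-\tfrac1{\beta_j}$, so the hypothesis $k>\max_j(H_j+\tfrac12,\ \overline{H}_j+\tfrac1{\beta_j})$ is precisely the condition $\max(2\delta_0,\beta_1\delta_1,\beta_2\delta_2)<-2$ required by Theorem \ref{thm:clt}, while $na_{n,1}^4\to\infty$ and $nb_{n,1}^{\beta_1}\to\infty$ follow from the assumptions $n/|\log\Delta_n|^2\to\infty$ and $n|\log\Delta_n|^{-\beta_1/2}\Delta_n^{\beta_1(\overline{H}_1-H_1)}\to\infty$ together with \eqref{eqn:wn}.

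\textbf{Step 2 (Cram\'er--Wold set-up).} For $c=(c^{(1)},c^{(2)})\in\R^4\times\R^6$ set $f_1^{\mathrm{ext}}(x)=\sum_{r=1}^4 c^{(1)}_r f_1(\lambda_r x_r)$ and $f_2^{\mathrm{ext}}(x)=\sum_{r=5}^{10}c^{(2)}_{r-4}f_2(\lambda_r x_r)$ on $\R^{10}$. After subtracting the (for $\mathcal H_n$ irrelevant) constants $f_1(0),f_2(0)$ and normalising, $f_1^{\mathrm{ext}}\in\mathfrak F_0$ with $D^2 f_1^{\mathrm{ext}}(0)=\diag(c^{(1)}_r\lambda_r^2 f_1''(0),\,0)\neq 0$ whenever $c^{(1)}\neq 0$, and $f_2^{\mathrm{ext}}\in\mathfrak F^0_{\eta'}$ with $\eta'=\eta/\max_{r\geq 5}|\lambda_r|$ because $f_2$ vanishes near $0$. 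It remains to verify the negligibility of the Gaussian exponential, $\exp\!\big(-\tfrac{\eta'^2\lambda^2}{2\cdot 10\,\sigma^2\sum_i a_{n,i}^2}\big)\ll b_{n,1}^{\beta_1}$: since $\sum_i a_{n,i}^2\le C w_n^2$ and $w_n\le\tfrac{\eta}{9\sigma}|\log\Delta_n|^{-1/2}$, the left side is $\Delta_n$ raised to a power which, by the assumption $w_0\le\eta/(9\sigma)$, strictly exceeds $\beta_1(\overline{H}_1-H_1)$, whereas $b_{n,1}^{\beta_1}\gg\Delta_n^{\beta_1(\overline{H}_1-H_1)+\varepsilon}$ for every $\varepsilon>0$; this is exactly the purpose of the calibration in \eqref{eqn:wn}. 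For the degenerate linear combinations ($c^{(1)}=0$ or $c^{(2)}=0$) one substitutes an arbitrary admissible dummy function for the vanishing block and projects onto the surviving coordinate.

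\textbf{Step 3 (conclusion).} Applying Theorem \ref{thm:clt}(v) to the embedded process $\mathbf X_{l,n}$ with $f_1^{\mathrm{ext}},f_2^{\mathrm{ext}}$ gives
\[
\Big(\tfrac{\sqrt n}{w_n^2}\,S_n(f_1^{\mathrm{ext}}),\ \tfrac{\sqrt n}{(w_n\Delta_n^{\overline{H}_1-H_1})^{\beta_1/2}}\,S_n(f_2^{\mathrm{ext}})\Big)^{T}\wconv\mathcal N\!\Big(0,\diag\!\big(\gamma^2_{f_1^{\mathrm{ext}},0},\ \zeta^2_{f_2^{\mathrm{ext}},0}\big)\Big),
\]
with $\gamma^2_{\cdot,0},\zeta^2_{\cdot,0}$ as in \eqref{eqn:asymp-var}. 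Since $f_i^{\mathrm{ext}}$ ignores the other block, $S_n(f_1^{\mathrm{ext}})=(c^{(1)})^T\mathcal H_n^{(1)}(\theta_0)$ and $S_n(f_2^{\mathrm{ext}})=(c^{(2)})^T\mathcal H_n^{(2)}(\theta_0)$, while $u_n(\theta_0)\Delta_n^{H_1(\theta_0)}=w_n$ and $u_n(\theta_0)\Delta_n^{\overline{H}_1(\theta_0)}=w_n\Delta_n^{\overline{H}_1-H_1}$ show that the displayed normalisation coincides with $c^{T}\Lambda_n(\theta_0)^{-1}\mathcal H_n(\theta_0)$. Hence $c^{T}\Lambda_n(\theta_0)^{-1}\mathcal H_n(\theta_0)\wconv\mathcal N\big(0,(c^{(1)})^T\Sigma_1 c^{(1)}+(c^{(2)})^T\Sigma_2 c^{(2)}\big)$, the cross-term vanishing by the block-diagonality in Theorem \ref{thm:clt}(v); expanding $\gamma^2_{f_1^{\mathrm{ext}},0}$ (a quadratic form in the autocovariances $\Sigma^{1,\infty,z}$ of the dominant $H_1$-increments) and $\zeta^2_{f_2^{\mathrm{ext}},0}$ (a sum over $z$ of jump functionals $\overline f^{[\nu_{z,\infty,1}]}(0)$ of the dominant $\beta_1$-stable increments) identifies the positive semidefinite matrices $\Sigma_1\in\R^{4\times4}$ and $\Sigma_2\in\R^{6\times6}$ of \eqref{eqn:asymp-cov-1}--\eqref{eqn:asymp-cov-2}. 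The Cram\'er--Wold theorem then yields the asserted joint convergence.

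\textbf{Main obstacle.} The conceptual content is carried entirely by Theorem \ref{thm:clt}(v); the real work lies in Step 2, namely the constant bookkeeping showing that the calibration in \eqref{eqn:wn} (the factor $\tfrac{\eta}{9\sigma}$, the rate $|\log\Delta_n|^{-1/2}$, and $w_0\le\eta/(9\sigma)$) genuinely makes the exponential Gaussian contamination of strictly smaller order than $b_{n,1}^{\beta_1}$ once the threshold $\eta'$, the dimension, $\sigma$, and the $\lambda_r$'s are tracked through Theorem \ref{thm:variance-bound} and Theorem \ref{thm:clt}(v). Everything else — the self-similar representation, the identification of the dominant indices, the translation of the order-of-differencing condition, and the Cram\'er--Wold reduction — is routine given the limit theory of Section \ref{sec:MA}.
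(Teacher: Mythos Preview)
Your proposal is correct and follows essentially the same route as the paper: cast the ten-dimensional rescaled increment array as a multiscale moving average, verify the hypotheses of Theorem~\ref{thm:clt}(v) (dominant indices, tail exponents via the condition on $k$, the ordering $b_{n,1}^{\beta_1}\ll a_{n,1}^4$, and the exponential negligibility from the calibration \eqref{eqn:wn}), and conclude via Cram\'er--Wold. The paper's own proof is terser---it checks the exponential bound directly and defers the Cram\'er--Wold step to ``just as in the proof of Lemma~\ref{lem:clt-G}''---while you spell out the extended functions $f_i^{\mathrm{ext}}$ and, notably, flag the replacement of $\eta$ by $\eta'=\eta/\max_{r\geq 5}|\lambda_r|$ once the $\lambda_r$'s are absorbed into the test function; this is a point the paper glosses over when writing $\Delta_n^{\eta^2/(40w_0^2\sigma^2)}$.
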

\begin{proof}[Proof of Theorem \ref{thm:clt-threshold}]
	We apply Theorem \ref{thm:clt} to the array of multivariate time series $\left[u_n X_{l,n,\gamma_r}\right]_{r=1}^{10}$ with
	\begin{align*}
		a_{n,j} &= u_n(\theta_0) \Delta_n^{H_j(\theta_0)} = w_n(\theta_0) \Delta_n^{H_j(\theta_0)-H_1(\theta_0)}, \qquad j=1,2,\\
		b_{n,j} &= u_n(\theta_0) \Delta_n^{\overline{H}_j(\theta_0)} = w_n(\theta_0) \Delta_n^{\overline{H}_j(\theta_0) - H_1(\theta_0)}, \qquad j=1,2, \\
		\delta_0 &= H_1 - \frac{1}{2} - k, \\
		\delta_j &= \overline{H}_j - \frac{1}{\beta_j} -k, \qquad j=1,2, 
	\end{align*}
	and with some finite $\sigma<\infty$, depending on $\theta_0$ and $k$.
	In particular, by our choice of $u_n$, most notably the choice of $w_n$, 
	\begin{align*}
		\exp\left( - \frac{\eta^2}{2d \sum_{i=1}^2 a_{n,i} \sigma^2} \right) 
		&\leq \exp\left( - \frac{\eta^2  |\log \Delta_n|}{40\, w_0^2 \sigma^2 } \right) = \Delta_n^{\frac{\eta^2}{40w_0^2 \sigma^2}}, 
	\end{align*}
	because $d=10$.
	Now choose $w_0$ small enough such that the exponent is strictly larger than $2$, such that case (v) of Theorem \ref{thm:clt} is applicable, just as in the proof of Lemma \ref{lem:clt-G}.
	For this, it is sufficient that $w_0\leq \frac{\eta}{9\sigma}$.
	
	We now compute explicit expressions for the asymptotic covariance matrices $\Sigma_1\in\R^{4\times 4}$ and $\Sigma_2\in\R^{6\times 6}$.
	Using formula \eqref{eqn:asymp-var}, an application of the Cramer-Wold device yields
	\begin{align*}
	    (\Sigma_1)_{r,r'} 
	    &= \frac{1}{4} \left[\lambda_r^2f_1''(0)\right] \left[ \lambda_{r'}^2 f_1''(0) \right] \left[ \sum_{z\in\Z} (\Sigma_{r r'}^{1,\infty, z})^2\right],\\
	    %%%%
	    \Sigma_{r r'}^{1,\infty, z}
	    &= \int_{-\infty}^0 \tilde{h}_r(s-z) \tilde{h}_{r'}(s)\, ds, \\
	    %%%%
	    \tilde{h}_r(s)
	    &= \sum_{v=0}^k \binom{k}{v} (-1)^v (\gamma_r v - s)_+^{H_1- \frac{1}{2}}.
	\end{align*}
	Recalling the notation $Y_{l,\gamma}^{H,\beta}$ from \eqref{eqn:lfsm-kdiff}, this may be written as
	\begin{align*}
	    \Sigma_{r r'}^{1,\infty, z} 
	    &= \Cov \left(Y_{z,\gamma_r}^{H_1, 2}, Y_{0,\gamma_{r'}}^{H_1, 2}\right),
	\end{align*}
	so that
	\begin{align}
	    (\Sigma_1)_{r,r'} 
	    &= \frac{\lambda_r^2 \lambda_{r'}^2}{4} f''(0)^2 \sum_{z\in\Z} \Cov \left(Y_{z,\gamma_r}^{H_1, 2}, Y_{0,\gamma_{r'}}^{H_1, 2}\right)^2. \label{eqn:asymp-cov-1}  
	\end{align}
	For the matrix $\Sigma_2$, we use formula \eqref{eqn:asymp-var} and the Cramer-Wold device to obtain
	\begin{align}
	    & (\Sigma_2)_{(r-4)(r'-4)} \nonumber \\
	    &= \sum_{z\in\Z} \frac{d}{dh} \E \left[f_2(h^{\frac{1}{\beta_1}}\lambda_{r} Y_{z, \gamma_{r}}^{\overline{H}_1, \beta_1}) \cdot f_2(h^{\frac{1}{\beta_1}} \lambda_{r'} Y_{0, \gamma_{r'}}^{\overline{H}_1, \beta_1}) \right] \Big|_{h=0}, \qquad r,r'=5,\ldots, 10. \label{eqn:asymp-cov-2}
	\end{align}
\end{proof}

\begin{proof}[Proof of Theorem \ref{thm:clt-threshold-estimator}]
	We have already verified the conditions of Theorem \ref{thm:ee-consistency} with $A_n = \Lambda_n(\theta_0)^{-1}$, $B_n(\theta)$ and $C_n(\theta)$, for any $r_n\ll 1/|\log\Delta_n|^2$. 
	In particular, Theorem \ref{thm:clt-threshold} ensures condition (E.1); Lemma \ref{lem:gradient-threshold} ensures the first part of (E.2); and Lemma \ref{lem:ABC-rate} ensures the second part of (E.2), and (E.3).
	Note that both results are applicable because of our identifiability condition \eqref{eqn:identifiability-threshold}.
	
	Note that the matrix $\underline{W}(\theta)$ is regular by assumption.
	Hence, we find that there exists a sequence $\widehat{\theta}_n$ of random vectors such that $\mathbb{P}(\mathcal{H}_n(\widehat{\theta_n})=0)\to 1$, and 
	\begin{align*}
		\Lambda_n^{-1}(\theta_0) B_n^{-1} \underline{W}(\theta_0) \underline{C}_n^{-1}(\theta_0) \left[ \widehat{\theta}_n-\theta_0 \right] \wconv \mathcal{N}(0,\widetilde{\Sigma}),
	\end{align*}
	for some asymptotic covariance matrix $\widetilde{\Sigma}\in\R^{10\times 10}$.
	To simplify the scaling matrix, we may exploit that $\underline{W}(\theta_0)$ is block-diagonal, and $\Lambda_n$ and $B_n$ are of the matching form $\diag(x,\ldots, x, y,\ldots, y)$ for some $x,y\in\R$.
	Thus,
	\begin{align*}
		\Lambda_n^{-1}(\theta_0) B_n^{-1} \underline{W}(\theta_0) \underline{C}_n^{-1}(\theta_0) 
		= \underline{W}(\theta_0) \Lambda_n^{-1}(\theta_0) B_n^{-1}  \underline{C}_n^{-1}(\theta_0)
		= \underline{W}(\theta_0) R_n(\theta_0)^{-1}.
	\end{align*}
	Since $\underline{W}(\theta_0)$ is regular by assumption, we find that
	\begin{align*}
		R_n(\theta_0)^{-1} \left[ \widehat{\theta}_n-\theta_0 \right] \wconv \mathcal{N}(0,\underline{W}(\theta_0)^{-1}\widetilde{\Sigma} (\underline{W}(\theta_0)^{-1})^T).
	\end{align*}
\end{proof}

\subsubsection{Regularity of $\underline{W}$}

\begin{proof}[Proof of Proposition \ref{prop:condition-estimating}]
    \noindent\underline{Case (i):}
    The matrix $\underline{W}_1(\theta)$ may be written as
    \begin{align*}
        \left(\underline{W}_1(\theta)_{i,1},\ldots, \underline{W}_1(\theta)_{i,4}\right)
        &= f_1''(0) \lambda_i^2 \left( \gamma_i^{2H_1}, \widetilde{a}_1\gamma_i^{2H_1}\log(\gamma_i), \gamma_i^{2H_2}, \widetilde{a}_2\gamma_i^{2H_2}\log(\gamma_i) \right),
    \end{align*}
    for $i=1,\ldots, 4$.
    Via equivalent matrix operations, we find that $\underline{W}_1(\theta)$ is regular if and only if $\underline{W}_1'(\theta) \in\R^{4\times 4}$ is regular, where
    \begin{align*}
        \left(\underline{W}'_1(\theta)_{i,1},\ldots, \underline{W}'_1(\theta)_{i,4}\right)
        &= \left( \gamma_i^{2H_1},\; \gamma_i^{2H_1}\log_2(\gamma_i),\; \gamma_i^{2H_2},\; \gamma_i^{2H_2}\log_2(\gamma_i) \right).
    \end{align*}
    Computing the determinant symbolically, we find that
    \begin{align*}
        \det(\underline{W}_1(\theta))
        &= 2^{2 H_{1}+2 H_{2}} \left(2^{H_{1}} - 2^{H_{2}}\right)^{4} \left(2^{H_{1}} + 2^{H_{2}}\right)^{4},
    \end{align*}
    which is zero if and only if $H_1=H_2$. 
    Note that the latter equality is excluded by our parameter set $\Theta$.

    \noindent\underline{Cases (ii) and (iii):}
    Denote 
    \begin{align*}
        \chi_j &= \int \widehat{f}_2(v) |v|^{\beta_j}\, dv, \qquad j=1,2,\\
        \zeta_j &= \int \widehat{f}_2(v) |v|^{\beta_j} \log |v|\, dv = \partial_{\beta_j} \chi_j,\qquad j=1,2. 
    \end{align*}
    By virtue of the Lévy-Khinchine formula, $\chi_j = c(\beta_j) \int \frac{f_2(x)}{|x|^{1+\beta_j}}\, dx$, which is nonzero for our choice of $f_2$.
    The matrix $\underline{W}_2$ may be written as
    \begin{align*}
        &(\underline{W}_2(\theta)_{i, 3j-2}, \underline{W}_2(\theta)_{i, 3j-1}, \underline{W}_2(\theta)_{i, 3j}) \\
        &=  \widetilde{b}_j\gamma_{i+4}^{\beta_j\overline{H}_j} |\lambda_{i+4}|^{\beta_j} \;
        \left(\tfrac{\chi_j }{\widetilde{b}_j},\; \chi_j \beta_j\log(\gamma_{i+4}), \; \chi_j \overline{H}_j \log(\gamma_{i+4}) + \chi_j \log(\lambda_{i+4}) + \zeta_j \right),
    \end{align*}
    for indices $i=1,\ldots, 6$, and $j=1,2$. 
    Since $\chi_j\neq 0$, we may rescale the columns and rows, and eliminate terms by linear addition of columns, to obtain the equivalent matrix $\underline{W}'_2(\theta)$ of the form
    \begin{align*}
        (\underline{W}'_2(\theta)_{i, 3j-2}, \underline{W}'_2(\theta)_{i, 3j-1}, \underline{W}'_2(\theta)_{i, 3j}) 
        \;=\;  \gamma_{i+4}^{\beta_j\overline{H}_j} |\lambda_{i+4}|^{\beta_j} \;
        \left(1,\; \log_2(\gamma_{i+4}), \;  \log_2(\lambda_{i+4}) \right).
    \end{align*}
    In particular, the matrix $\underline{W}_2$ is regular if and only if $\underline{W}_2'$ is regular.
    
    We may now compute the determinant of the $6\times 6$ matrix $\underline{W}_2'$ symbolically, e.g.\ using a computer algebra system. 
    Using the Python library SymPy (v1.9), we obtain in case (ii)
    \begin{align*}
        &\det(W_2'(\theta)) \\
        &= 2^{3 \beta_{1}+3 \beta_{2}+2 H_{1} \beta_{1}+2 H_{2} \beta_{2}} \left(2^{\beta_{1}+H_{1} \beta_{1}} - 2^{\beta_{2}+H_{2} \beta_{2}}\right) \left(2^{2 \beta_{1}+H_{1} \beta_{1}} - 2^{2 \beta_{2}+H_{2} \beta_{2}}\right)^{4},
    \end{align*}
    which is zero if and only if $\beta_1(1+H_1) = \beta_2(1+H_2)$ or $\beta_1(2+H_1) = \beta_2(2+H_2)$.
    
    In case (iii), we obtain
    \begin{align*}
        &\det(W_2'(\theta)) \\
        &=- 2^{\beta_{1}+\beta_{2}+2 H_{1} \beta_{1}+2 H_{2} \beta_{2}} \left(2^{H_{1} \beta_{1}} - 2^{H_{2} \beta_{2}}\right)^{5} \left(2^{H_{1} \beta_{1}} + 2^{H_{2} \beta_{2}}\right),
    \end{align*}
    which is zero if and only if $H_1\beta_1 = H_2\beta_2$.
\end{proof}

\subsection{Identifiability for continuous observations}\label{sec:identifiability}

The following technical result is implicit in the proof of \cite[Lemma 4.7]{takashima1989sample}.
\begin{lemma}[\cite{takashima1989sample}]\label{lem:Holder-local}
    Let $Y_t^{H,\beta}$ be a fractional stable motion, and $\alpha = H-\frac{1}{\beta}>0$.
    With probability $1$, the following holds: For any $\epsilon>0$, there exists some $m\in\N$, real numbers $t_1,\ldots, t_m\in[0,1]$, $\eta_1,\ldots, \eta_m>0$, and $\zeta>0$, such that the following holds:
    \begin{enumerate}[(i)]
        \item $[0,1] \subset \bigcup_{k=1}^m (t_k - \frac{\eta_k}{2}, t_k + \frac{\eta_k}{2})$
        \item If $|\Delta Z_t| > \epsilon$, then $t\in \{t_1,\ldots, t_m\}$.
        \item For any $h<\zeta$, and for any $0\leq s < t \leq 1$ such that $s,t\in (t_k-\frac{\eta_k}{2}, t_k+\frac{\eta_k}{2})$, and $|s-t|<h$,
        \begin{align*}
            \left| \frac{Y_t^{H,\beta}-Y_s^{H,\beta}}{h^{\alpha}} - \Delta Z^\beta_{t_k} \left(\frac{t-s}{h}\right)^{\alpha} \phi_k(t,s) \right| 
            \leq \epsilon \left[ |Z_{t_k-}| + |\Delta Z_{t_k}| + 4 \right], \\
            %%%%
            \text{where} \quad \phi_k(t,s) 
            = \left( 1-\tfrac{t_k-s}{t-s} \right)_+^\alpha - \left(-\tfrac{t_k-s}{t-s} \right)_+^\alpha.
        \end{align*}
    \end{enumerate}
\end{lemma}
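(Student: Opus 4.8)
The plan is to work directly with the moving–average representation and isolate the contribution of the single large jump at $t_k$. Writing $\alpha = H - \tfrac1\beta$, the anchoring term $(-u)_+^\alpha$ cancels in every increment, so that
\begin{align*}
	Y_t^{H,\beta} - Y_s^{H,\beta} = \int_{-\infty}^t g_{s,t}(u)\, dZ_u^\beta, \qquad g_{s,t}(u) = (t-u)_+^\alpha - (s-u)_+^\alpha.
\end{align*}
The first step is the purely algebraic identity $(t-s)^\alpha \phi_k(t,s) = (t-t_k)_+^\alpha - (s-t_k)_+^\alpha = g_{s,t}(t_k)$, obtained by absorbing $(t-s)^\alpha$ into the two positive parts defining $\phi_k$. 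Hence $h^\alpha$ times the bracketed quantity in (iii) equals exactly $E_{s,t} := \int g_{s,t}(u)\, d\widetilde Z_u$, where $\widetilde Z_u = Z_u^\beta - \Delta Z_{t_k}^\beta \mathds{1}_{\{u\ge t_k\}}$ is the driving motion with the single jump at $t_k$ deleted. The whole claim thus reduces to an almost-sure, uniform bound $|E_{s,t}| \le \epsilon\, h^\alpha\,[|Z_{t_k-}| + |\Delta Z_{t_k}| + 4]$.

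Second, I would construct the points and neighbourhoods. Fix $\epsilon$ and a threshold $\rho = \rho(\epsilon)$ to be chosen later. On a compact window such as $[-2,1]$ the jumps of $Z^\beta$ exceeding $\rho$ are a.s. finite and isolated; I take $\{t_k\}$ to be all jump times in $[0,1]$ with $|\Delta Z_{t_k}| > \epsilon$ (which forces (ii)) together with finitely many extra points so that the balls $(t_k-\tfrac{\eta_k}{2}, t_k+\tfrac{\eta_k}{2})$ cover $[0,1]$ (which forces (i)). Crucially I shrink each $\eta_k$ so that no jump of size exceeding $\rho$ lies within distance $\eta_k$ of $t_k$ except possibly $t_k$ itself; this guarantees a fixed margin $\eta_k/2$ between $[s,t]$ and every competing large jump.

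Third is the error estimate, where I split the integral at the fixed level $u=t_k-1$. For the far part $\int_{-\infty}^{t_k-1} g_{s,t}\, d\widetilde Z$ the kernel is nonsingular, and since $(\alpha-1)\beta = H\beta-\beta-1 < -1$ the stable integral $\int_{-\infty}^{t_k-1}(t-u)^{\alpha-1}\,dZ_u$ converges to an a.s.\ finite random variable; this makes the far part Lipschitz in $(s,t)$ with an a.s.\ finite slope, and, as it vanishes on the diagonal, bounded by $M\,|t-s| = o(h^\alpha)$ because $\alpha<1$. The near part $\int_{t_k-1}^{t} g_{s,t}\, d\widetilde Z$ I decompose into the finitely many jumps exceeding $\rho$ other than at $t_k$ — each at distance $\ge \eta_k/2$ from $[s,t]$, so the mean value theorem yields $|g_{s,t}(\tau)| \le \alpha|t-s|(\eta_k/2)^{\alpha-1}$ and a total of order $h = o(h^\alpha)$ — and the small-jump remainder $\int_{t_k-1}^{\cdot} g_{s,t}\, dZ^{\le\rho}$, a process with moments of all orders whose scale is governed by $\rho$.

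The hard part is this last term: I must show a \emph{local} modulus of continuity of order $\alpha$, namely $\sup_{|t-s|\le h}|Y^{\le\rho}_t - Y^{\le\rho}_s| \le C(\rho)\,h^\alpha$ for all $h$ below an a.s.\ positive $\zeta$, with $C(\rho)\to 0$ as $\rho\downarrow 0$. The obstruction is that for $\alpha\ge\tfrac12$ the kernel $g_{s,t}$ fails to be square integrable, so no naive $L^2$/Gaussian chaining is available; instead I would invoke $L^p$ bounds for stochastic integrals against the bounded-jump (hence all-moment) measure $\nu|_{\{|z|\le\rho\}}$, combine them with the self-similar scaling $\|g_{s,t}\|_{L^p}^p \sim |t-s|^{\alpha p+1}$, and run a Kolmogorov/chaining argument to pass from moments to the pathwise modulus. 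The constant is driven by the truncated moments $\int_{|z|\le\rho}|z|^p\,\nu(dz)$, which is precisely what forces $C(\rho)\to 0$. Choosing $\rho$ so that this modulus and the large-jump residuals sit below $\tfrac{\epsilon}{2}$, and then $\zeta$ small enough that the far part and competing-jump contributions fall under the remaining $\tfrac{\epsilon}{2}h^\alpha$ budget, delivers (iii); the factors $|Z_{t_k-}|$ and $|\Delta Z_{t_k}|$ appear as the local scale absorbing the random far-part slope $M$ and the size of the deleted jump.
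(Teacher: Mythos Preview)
The paper does not give its own proof of this lemma; it merely records the statement as ``implicit in the proof of \cite[Lemma~4.7]{takashima1989sample}.'' So there is no in-paper argument to compare against directly. That said, the paper's later use of Takashima's Lemma~4.5 (the pathwise representation $Y_t = \alpha\int Z_u (t-u)^{\alpha-1}\,du$) strongly suggests that Takashima's route is to integrate by parts and work with the \emph{ordinary} integral of the càdlàg path $u\mapsto Z_u$, not with the stochastic integral. In that picture the increment near $t_k$ is governed by $\int Z_u\,[(t-u)^{\alpha-1}-(s-u)^{\alpha-1}]\,du$, and writing $Z_u = Z_{t_k-} + \Delta Z_{t_k}\mathds{1}_{\{u\ge t_k\}} + (\text{small remainder})$ locally explains immediately why the constants $|Z_{t_k-}|$ and $|\Delta Z_{t_k}|$ appear on the right-hand side: they are literally the size of $Z_u$ on the interval of integration.

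Your approach via the stochastic-integral decomposition and $L^p$ chaining for the truncated driver is plausible in outline, but it has a real gap at the final step. You assert that the far-part slope $M \sim \int_{-\infty}^{t_k-1}(t-u)^{\alpha-1}\,dZ_u$ is ``absorbed'' by $|Z_{t_k-}|$. There is no such inequality: $M$ is a stable integral over $(-\infty,t_k-1)$ and bears no pointwise relation to the single value $Z_{t_k-}$. Likewise, your chaining bound $C(\rho)$ for the small-jump part is an a.s.\ finite random variable with no reason to be dominated by $|Z_{t_k-}|+|\Delta Z_{t_k}|+4$. What your argument would actually deliver (if the chaining is carried out carefully to yield an almost-sure modulus, not just a moment bound) is the same inequality with the right-hand side replaced by $\epsilon\cdot R_k$ for some other a.s.\ finite, $(s,t,h)$-independent random variable $R_k$. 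That weaker version is in fact all the paper needs in the proof of Theorem~\ref{thm:identifiability-continuous}, so your strategy is salvageable for the application --- but as written it does not establish the lemma as stated. If you want the exact constants, switch to the pathwise integral representation; then the whole argument becomes deterministic analysis of a Riemann integral and the constants fall out for free.
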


\begin{proof}[Proof of Theorem \ref{thm:identifiability-continuous}]
    \underline{Identifying jumps of the roughest components:}
    For any $\alpha>0$, the quantity 
    \begin{align*}
        T_\alpha 
        &= \lim_{h\downarrow 0} \sup_{0<s<t\leq 1, |t-s|\leq h} \frac{|X_{t}-X_s|}{h^\alpha} 
    \end{align*}
    is a measurable function of $(X_t)_{t\in[0,1]}$, taking values in $[0,\infty]$. 
    Denote $\alpha^* = \max_j H_j - \frac{1}{\beta_j}$, and $J^* = \{j: H_j-\frac{1}{\beta_j} = \alpha^*\}$. 
    By virtue of Lemma \ref{lem:Holder-local}, we find that, with probability $1$,
    \begin{align*}
        T_\alpha 
        = 
        \begin{cases}
            \infty, & \alpha > \alpha^*, \\
            \max_{j\in J^*} \sup_{t\in(0,1)} b_j |\Delta Z_t^{\beta_j}|, & \alpha = \alpha^*, \\
            0, & \alpha < \alpha^*.
        \end{cases}
    \end{align*}
    Then 
    \begin{align*}
        \hat{\alpha}^* 
        &= \sup \left\{ \alpha>0 : T_\alpha = 0 \right\} 
        = \sup \left\{ \alpha>0, \alpha\in\Q : T_\alpha = 0 \right\}
    \end{align*}
    is measurable as well, because $\{\hat{\alpha}^* \geq \alpha_0\} = \bigcap_{\alpha<\alpha_0, \alpha\in\Q} \{ T_{\alpha}=0 \}$.
    Moreover, $P(\hat{\alpha}^* = \alpha)=1$.
    
    Now introduce the process
    \begin{align*}
        Z_t^* = \sum_{j \in J^*} b_j Z_t^{\beta_j}.
    \end{align*}
    We show how to reconstruct $Z_t^*$ from $X_t$.
    Since $Z_t^*$ is a Lévy process, it has countably many jumps. 
    Moreover, with probability one, all jumps have different sizes, because the Lévy admits a Lebesgue density.
    Furthermore, with probability one, no two constituent processes $Z_t^{\beta_j}$ jump at the same time. 
    Therefore, $T_{\alpha^*} = \sup_{t\in(0,1)} |\Delta Z_t^*|$.
    
    Denote the jump times by $(\tau_i)_{i\in\N}$ such that $|\Delta Z_{t_1}^*| > |\Delta Z_{t_2}^*| >\ldots$.
    For any $n\in\N$, define
    \begin{align*}
        \widehat{\tau}_1(n) 
        = \inf \left( \arg\max_{t\in(0,1-\frac{1}{n})\cap \Q} \frac{|X_{t+\frac{1}{n}} - X_t|}{n^{-\hat{\alpha}^*} } \right).
    \end{align*}
    Then $\widehat{\tau}_1(n)\to \tau_1$ almost surely by virtue of Lemma \ref{lem:Holder-local}, and 
    \begin{align*}
        \widehat{\Delta}_1(n) = \frac{X_{\widehat{\tau}_1(n) + \frac{1}{n}} - X_{\widehat{\tau}_1(n)}}{n^{-\hat{\alpha}^*}} \to \Delta Z^*_{\tau_1}, 
    \end{align*}
    almost surely as $n\to\infty$. 
    That is, we identified the largest jump and its jump time, i.e.\ there exist measurable random variables $\widehat{\tau}_1$ and $\widehat{\Delta}_1$ such that 
    \begin{align*}
        P(\widehat{\tau}_1 = \tau_1) = P(\widehat{\Delta}_1 = \Delta Z^*_{\tau_1}) = 1.
    \end{align*}
    
    How to identify the smaller jumps?
    For any $\delta>0$, define
    \begin{align*}
        \widehat{\tau}_2(n,\delta) 
        = \inf \left( \arg\max_{t\in(0,1-\frac{1}{n})\cap \Q, |t-\widehat{\tau}_1|>\delta} \frac{|X_{t+\frac{1}{n}} - X_t|}{n^{-\hat{\alpha}^*} } \right).
    \end{align*}
    Invoking Lemma \ref{lem:Holder-local} again, with probability one,
    \begin{align*}
        \lim_{n\to\infty} \widehat{\tau}_2(n,\delta) = \widehat{\tau}_2(\delta) = \arg\max_{t\in(0,1), |t-\tau_1|>\delta} |\Delta Z^*_t|,
    \end{align*}
    and the $\arg\max$ is unique because all jumps of $Z^*$ have different sizes, with probability one. 
    Thus,
    \begin{align*}
        \widehat{\tau}_2  = \lim_{\delta\to 0} \widehat{\tau}_2(\delta) =\arg\max_{t\in(0,1), t\neq \tau_1} |\Delta Z^*_t| = \tau_2.
    \end{align*}
    As before, we may now construct $\widehat{\Delta}_2$ such that
    \begin{align*}
        \mathbb{P}(\widehat{\tau}_2 = \tau_2) = \mathbb{P}(\widehat{\Delta}_2 = \Delta Z^*_{\tau_2}) = 1.
    \end{align*}
    Iterating this procedure, we find a sequences $\widehat{\tau}_i$ and $\widehat{\Delta}_i$ of random variables such that
    \begin{align*}
        \mathbb{P}\left((\widehat{\tau}_i, \widehat{\Delta}_i) = (\tau_i, \Delta Z^*_{\tau_i}) \; \forall i\in\N\right)  = 1.
    \end{align*}
    In particular, the $(\widehat{\tau}_i, \widehat{\Delta}_i)$ are measurable functions of $(X_t)_{t\in[0,1]}$.
    
    \underline{Disentangling the jumps:}
    Let $N$ be the Poisson jump measure on $[0,1]\times \R$ which corresponds to the Lévy process $(Z_t^*)_{t\in[0,1]}$, i.e.\
    \begin{align*}
        N(A) &= \sum_{t\in [0,1]} \mathds{1}\left( (t, \Delta Z_t)\in A \right) \\
        &= \sum_{i=1}^\infty \mathds{1}\left( (\tau_i, \Delta Z_{\tau_i}) \in A \right)
    \end{align*}
    for any set $A\subset [0,1]\times (\R\setminus (-\epsilon,\epsilon))$, and any $\epsilon>0$.
    If we define the infinite point measure
    \begin{align*}
        \widehat{N}(A) 
        &= \sum_{i=1}^\infty \mathds{1}\left( (\widehat{\tau}_i, \widehat{\Delta}_i) \in A \right),
    \end{align*}
    then $N=\widehat{N}$ almost surely. 
    Mimicking the construction of $Z_t^*$ via the Lévy-Itô-decomposition, we obtain a cadlag process $\widehat{Z}_t^*$ such that
    \begin{align*}
        \mathbb{P}\left( \widehat{Z}^*_t = Z^*_t\; \forall t\in[0,1] \right) = 1.
    \end{align*}
    and $(\widehat{Z}^*_t)_{t\in[0,1]}$ is a measurable function of $(X_t)_{t\in[0,1]}$.
    
    The process $Z_t^*$ is a sum of independent stable Lévy processes.
    Because the $(\beta_j, H_j)$ are pairwise distinct, it follows that the $\beta_j$ are pairwise distinct for all $j\in J^*$. 
    Moreover, since $\beta_j>1$, it trivially holds that $\max_{j\in J^*}\beta_j / \min_{j\in J^*} \beta_j > \frac{1}{2}$, for both parameter sets $\theta_1$ and $\theta_2$.
    The latter inequality is known to ensure (pairwise) identifiability in the following sense: 
    If $\{ (b_j(\theta_1), \beta_j(\theta_1) \}_{j\in J^*(\theta_1)} \neq \{ (b_j(\theta_2), \beta_j(\theta_2) \}_{j\in J^*(\theta_2)}$, then the measures induced by $Z_t^*$ on the Skorokhod space $D[0,1]$ are mutually singular for $\theta_1$ and $\theta_2$, see \cite{ait2012identifying}.
    Since $(\hat{Z}_t)_{t\in[0,1]}$ is a measurable function of $(X_t)_{t\in[0,1]}$, this implies that the measures induced by $X_t$ on $C[0,1]$ are also mutually singular for $\theta_1$ and $\theta_2$.
    Thus, to show mutual singularity in the remaining cases in the sequel, we suppose that $\{ (b_j(\theta_1), \beta_j(\theta_1) \}_{j\in J^*(\theta_1)} = \{ (b_j(\theta_2), \beta_j(\theta_2) \}_{j\in J^*(\theta_2)}$.
    
    Knowing $\beta_j$ and $\alpha^* = H_j - \frac{1}{\beta_j}$, we may thus also identify $H_j$.
    In particular, there exists $(\widehat{b}_j, \widehat{\beta}_j, \widehat{H}_j)$ which are measurable functions of $(X_t)_{t\in[0,1]}$ such that
    \begin{align*}
        \mathbb{P}_{\theta}\left( (\widehat{b}_j, \widehat{\beta}_j, \widehat{H}_j) = (b_j, \beta_j, H_j)\; \forall j\in J^* \right) = 1, \qquad \theta=\theta_1,\theta_2.
    \end{align*}
    
    \underline{Identifying the smoother components:}
    Knowing $\alpha^*$ and $Z_t^*$, we may consider the process
    \begin{align*}
        X^*_t 
        &= \int_0^t (t-s)^{\alpha^*}\, dZ_s^*, \qquad t\in [\tfrac{1}{2}, 1].
    \end{align*}
    To be precise, the integral can be constructed for all $t\in[0,1]\cap \Q$ as the almost sure limit of Riemann sums, and extended to all $t\in[\tfrac{1}{2}, 1]$ as $X^*_t = \lim_{q\downarrow t, q\in\Q} X^*_q$.
    This construction yields a process with continuous paths, because of the following argument: 
    As in \cite[Lemma 4.5]{takashima1989sample}, the process $X_t^\dagger = \int_0^t Z_s \alpha^* (t-s)^{\alpha^*-1}\, ds$ is well-defined and has the same finite-dimensional distribution as $X_t^*$. 
    Since $X_t^\dagger$ has continuous paths, we have with probability $1$ for all $t\in[\tfrac{1}{2}, 1]$ that $\lim_{q\downarrow t, q\in\Q} X_q^\dagger$. 
    As $X_t^\dagger$ is a version of $X_t^*$, the limit also exists for $X_t^*$ and also yields a continuous path.
    
    Using the process $X_t^*$, we obtain
    \begin{align*}
        X_t - X_t^* 
        &= \sum_{j\notin J^*} b_j Y_t^{H_j,\beta_j} + L_t^*, \\
        L_t^*
        &=\int_{-\infty}^0 \left[(t-s)_+^{\alpha^*} - (-s)_+^{\alpha^*}\right]\, dZ^*_s.
    \end{align*}
    Suppose for the moment that $L_t^*$ is almost surely Lipschitz continuous on $[\frac{1}{2}, 1]$. 
    Then we may repeat the steps above for $(X_t-X_t^*)$ on the interval $[\frac{1}{2},1]$ to inductively identify the parameters of the remaining $(q-|J^*|)$ components.
    
    It thus remains to establish Lipschitz continuity of the process $L_t$. 
    Since $X_t$, $X_t^*$, and the $Y_t^{H_j,\beta_j}$ are continuous, the process $L_t^*$ is continuous as well.
    First, note that 
    \begin{align*}
        L_t^* \deq \overline{L}_t = \int_{0}^\infty \left[(s-t)_+^{\alpha^*} -(s)_+^{\alpha^*} \right]\, dZ_s^*,
    \end{align*}
    and we may choose a continuous version of $\overline{L}_t$ as well.
    Moreover, as in \cite[Lemma 4.5]{takashima1989sample}, we find that the process
    \begin{align*}
        \widetilde{L}_t 
        &= \int_0^\infty Z_s^* \frac{d}{ds}\left[(s-t)_+^{\alpha^*} -(s)_+^{\alpha^*} \right]\, ds \\
        &= \int_0^\infty Z_s^* \,\alpha^* \left[(s-t)_+^{\alpha^*-1} -(s)_+^{\alpha^*-1} \right]\, ds
    \end{align*}
    is well-defined, and $(\widetilde{L}_t)_{t\in[\frac{1}{2},1]} \deq (\overline{L}_t)_{t\in[\frac{1}{2},1]}$.
    Now observe that, for $t,v\in[\frac{1}{2},1]$
    \begin{align*}
        |\widetilde{L}_{v}-\widetilde{L}_t| 
        &\leq \alpha^* \int_0^\infty |Z_s^*| \left| (s-v)^{\alpha^*-1} - (s-t)^{\alpha^*-1} \right|\, ds\\
        &\leq |t-v| \alpha^*\int_0^\infty |Z_s^*| \left| (s-\tfrac{1}{2})^{\alpha^*-2}\right|\, ds.
    \end{align*}
    The latter integral is finite because $\limsup_{s\to\infty} |Z_s^{\beta_j}| s^{-\frac{1}{\beta_j}-\delta} = 0$ almost surely, for any $\delta>0$ \cite[47.13]{sato1999levy}. 
    Since $\beta_j>1$ for all $j=1,\ldots, q$, by assumption, this also implies that $\limsup_{s\to\infty} |Z_s^* | |s|^{-1} = 0$ almost surely.
    Noting that $\alpha^*<1$, the latter integral is almost surely finite.
    This yields almost sure Lipschitz continuity of $\widetilde{L}$, and thus the same holds for $L_t^*$ because $(\widetilde{L}_t)_{t\in[\frac{1}{2},1]} \deq (L_t^*)_{t\in[\frac{1}{2},1]}$.
\end{proof}

\noindent
\textbf{Acknowledgement} Mark Podolskij gratefully acknowledges financial support of ERC Consolidator Grant 815703 “STAMFORD: Statistical Methods for High Dimensional Diffusions”.

\bibliography{fsm-aos}
\bibliographystyle{apalike}

\end{document}